\numberwithin{equation}{section}
\theoremstyle{plain}
	\newtheorem{theorem}{Theorem}[section]
	\newtheorem{lemma}[theorem]{Lemma}
	\newtheorem{proposition}[theorem]{Proposition}
	\newtheorem{corollary}[theorem]{Corollary}
\theoremstyle{definition}
	\newtheorem{remark}[theorem]{Remark}
	\newtheorem{open.problem}[theorem]{Open Problem}
\newcommand{\N}{\mathbb{N}}
\newcommand{\R}{\mathbb{R}}
\newcommand{\eps}{\varepsilon}
\newcommand{\closure}[2][3]{%
  {}\mkern#1mu\overline{\mkern-#1mu#2}}
\newcommand{\de}{\partial}
\newcommand{\weakto}{\rightharpoonup}
\newcommand{\Id}{\mathcal{I}}
\renewcommand{\phi}{\varphi}
\renewcommand{\rho}{\varrho}
\renewcommand{\theta}{\vartheta}
\DeclareMathOperator{\dist}{dist}
\DeclareMathOperator{\diam}{diam}
\DeclareMathOperator{\supp}{supp}
\DeclareMathOperator{\sgn}{sgn}
\DeclareMathOperator{\diverg}{div}
\DeclareMathOperator{\Lip}{Lip}
\DeclareMathOperator{\loc}{loc}
\DeclareMathOperator*{\esssup}{ess\,sup}
\DeclareMathOperator*{\essinf}{ess\,inf}
\DeclarePairedDelimiter{\set}{\{}{\}}
\DeclarePairedDelimiter{\abs}{|}{|}
\mathchardef\ordinarycolon\mathcode`\:
\newcommand{\mhypen}{\mbox{\,-}}
\newcommand{\Haus}[1]{\mathscr{H}^{#1}} 
\newcommand{\Leb}[1]{\mathscr{L}^{#1}} 
\renewcommand{\div}{\mathrm{div}} 
\newcommand{\redb}{\mathscr{F}} 
\newcommand{\M}{\mathscr{M}}
\begin{document}

\title[A distributional approach to fractional variation: asymptotics I]{A distributional approach to fractional Sobolev spaces and fractional variation: asymptotics I}

\author[G.~E.~Comi]{Giovanni E. Comi}
\address[G.~E.~Comi]{Dipartimento di Matematica, Universit\`a di Pisa, Largo Bruno Pontecorvo 5, 56127 Pisa, Italy}
\email{giovanni.comi@dm.unipi.it}

\author[G.~Stefani]{Giorgio Stefani}
\address[G.~Stefani]{Department Mathematik und Informatik, Universit\"at Basel, Spiegelgasse 1, CH-4051 Basel, Switzerland}
\email{giorgio.stefani@unibas.ch}

\date{\today}

\keywords{Function with bounded fractional variation, fractional perimeter, fractional derivative, fractional gradient, fractional divergence, Gamma-convergence.}

\subjclass[2010]{26A33, 26B30, 28A33}

\thanks{\textit{Acknowledgements}. %
The authors thank Luigi Ambrosio, Elia Brué, Mattia Calzi, Quoc-Hung Nguyen and Daniel Spector for many valuable suggestions and useful comments. The authors also wish to express their gratitude to the anonymous referees for their insightful remarks. This research was partially supported by the PRIN2015 MIUR Project ``Calcolo delle Variazioni''.
The second author is a member of INdAM--GNAMPA and is partially supported by the ERC Starting Grant 676675 FLIRT -- \textit{Fluid Flows and Irregular Transport} and by the INdAM--GNAMPA Project 2020 \textit{Problemi isoperimetrici con anisotropie} (n.\ prot.\ U-UFMBAZ-2020-000798 15-04-2020)%
}

\begin{abstract}
We continue the study of the space~$BV^\alpha(\R^n)$ of functions with bounded fractional variation in~$\R^n$ of order $\alpha\in(0,1)$ introduced in our previous work~\cite{CS18}.
After some technical improvements of certain results of~\cite{CS18} which may be of some separated insterest, we  deal with the asymptotic behavior of the fractional operators involved as $\alpha\to1^-$.
We prove that the $\alpha$-gradient of a $W^{1,p}$-function converges in $L^p$ to the gradient for all $p\in[1,+\infty)$ as $\alpha\to1^-$.
Moreover, we prove that the fractional $\alpha$-variation converges to the standard De Giorgi's variation both pointwise and in the $\Gamma$-limit sense as $\alpha\to1^-$. 
Finally, we prove that the fractional $\beta$-variation converges to the fractional $\alpha$-variation both pointwise and in the $\Gamma$-limit sense as $\beta\to\alpha^-$ for any given $\alpha\in(0,1)$.
\end{abstract}

\maketitle

\tableofcontents

\section{Introduction}

\subsection{A distributional approach to fractional variation}

In our previous work~\cite{CS18}, we introduced the space~$BV^\alpha(\R^n)$ of functions with bounded fractional variation in~$\R^n$ of order $\alpha\in(0,1)$. Precisely, a function $f\in L^1(\R^n)$ belongs to the space~$BV^\alpha(\R^n)$ if its \emph{fractional $\alpha$-variation} 
\begin{equation}\label{intro_eq:def_frac_var}
|D^\alpha f|(\R^n):=
\sup\set*{\int_{\R^n} f\,\div^\alpha\phi\,dx : \phi\in C^\infty_c(\R^n;\R^n),\ \|\phi\|_{L^\infty(\R^n;\,\R^n)}\le1}
\end{equation}
is finite. Here
\begin{equation}\label{intro_eq:def_div_alpha}
\div^\alpha\phi(x):=\mu_{n,\alpha}\int_{\R^n}\frac{(y-x)\cdot(\phi(y)-\phi(x))}{|y-x|^{n+\alpha+1}}\,dy,
\qquad
x\in\R^n,	
\end{equation} 
is the \emph{fractional $\alpha$-divergence} of $\phi\in C^\infty_c(\R^n;\R^n)$, where 
\begin{equation}\label{intro_eq:def_mu_n_alpha}
\mu_{n, \alpha} := 2^{\alpha} \pi^{- \frac{n}{2}} \frac{\Gamma\left ( \frac{n + \alpha + 1}{2} \right )}{\Gamma\left ( \frac{1 - \alpha}{2} \right )}
\end{equation}
for any given $\alpha\in(0,1)$. The operator~$\div^\alpha$ was introduced in~\cite{S19} as the natural \emph{dual} operator of the much more studied \emph{fractional $\alpha$-gradient} 
\begin{equation}\label{intro_eq:def_nabla_alpha}
\nabla^\alpha f(x):=\mu_{n,\alpha}\int_{\R^n}\frac{(y-x)(f(y)-f(x))}{|y-x|^{n+\alpha+1}}\,dy,
\qquad
x\in\R^n,	
\end{equation} 
defined for all $f\in C^\infty_c(\R^n)$. For an account on the existing literature on the operator~$\nabla^\alpha$, see~\cite{SSS18}*{Section~1}. Here we only refer to~\cites{SSS15,SSVanS17,SSS18,SS15,SS18,S19,Spector18,Spector19} for the articles tightly connected to the present work and to~\cite{P16}*{Section 15.2} for an agile presentation of the fractional operators defined in~\eqref{intro_eq:def_div_alpha} and in~\eqref{intro_eq:def_nabla_alpha} and of some of their elementary properties. According to~\cite{SS18}*{Section~1}, it is interesting to notice that~\cite{H59} seems to be the earliest reference for the operator defined in~\eqref{intro_eq:def_nabla_alpha}. 

The operators in~\eqref{intro_eq:def_div_alpha} and in~\eqref{intro_eq:def_nabla_alpha} are \emph{dual} in the sense that
\begin{equation}\label{intro_eq:duality}
\int_{\R^n}f\,\div^\alpha\phi \,dx=-\int_{\R^n}\phi\cdot\nabla^\alpha f\,dx
\end{equation}
for all $f\in C^\infty_c(\R^n)$ and $\phi\in C^\infty_c(\R^n;\R^n)$, see~\cite{S19}*{Section~6} and~\cite{CS18}*{Lemma~2.5}. Moreover, both operators have good integrability properties when applied to test functions, namely $\nabla^\alpha f\in L^p(\R^n)$ and $\div^\alpha\phi\in L^p(\R^n;\R^n)$ for all $p\in[1,+\infty]$ for any given $f\in C^\infty_c(\R^n)$ and $\phi\in C^\infty_c(\R^n;\R^n)$, see~\cite{CS18}*{Corollary~2.3}.

The integration-by-part formula~\eqref{intro_eq:duality} represents the starting point for the distributional approach to fractional Sobolev spaces and the fractional variation we developed in~\cite{CS18}. In fact, similarly to the classical case, a function $f\in L^1(\R^n)$ belongs to $BV^\alpha(\R^n)$ if and only if there exists a finite vector-valued Radon measure $D^{\alpha} f \in \mathscr M(\R^n; \R^{n})$ such that 
\begin{equation}\label{intro_eq:BV_alpha_duality} 
\int_{\R^{n}} f\, \div^{\alpha} \phi \, dx = - \int_{\R^n} \phi \cdot d D^{\alpha} f 
\end{equation}
for all $\phi \in C^{\infty}_{c}(\R^n; \R^{n})$, see~\cite{CS18}*{Theorem~3.2}. 

Motivated by~\eqref{intro_eq:BV_alpha_duality} and similarly to the classical case, we can define the \emph{weak fractional $\alpha$-gradient} of a function $f\in L^p(\R^n)$, with $p\in[1,+\infty]$, as the function $\nabla^\alpha_w f\in L^1_{\loc}(\R^n;\R^n)$ satisfying
\begin{equation*}
\int_{\R^n}f\,\div^\alpha\phi\, dx
=-\int_{\R^n}\nabla^\alpha_w f\cdot\phi\, dx
\end{equation*} 
for all $\phi\in C^\infty_c(\R^n;\R^n)$. For $\alpha\in(0,1)$ and $p\in[1,+\infty]$, we can thus define the \emph{distributional fractional Sobolev space} 
\begin{equation}\label{eq:def_distrib_frac_Sobolev}
S^{\alpha,p}(\R^n):=\set*{f\in L^p(\R^n) : \exists\, \nabla^\alpha_w f \in L^p(\R^n;\R^n)}
\end{equation}
naturally endowed with the norm
\begin{equation}\label{eq:def_distrib_frac_Sobolev_norm}
\|f\|_{S^{\alpha,p}(\R^n)}:=\|f\|_{L^p(\R^n)}+\|\nabla^\alpha_w f\|_{L^p(\R^n;\,\R^{n})}
\qquad
\forall f\in S^{\alpha,p}(\R^n).
\end{equation}

It is interesting to compare the distributional fractional Sobolev spaces $S^{\alpha,p}(\R^n)$ with the well-known \emph{fractional Sobolev space} $W^{\alpha,p}(\R^n)$, that is, the space
\begin{equation*}
W^{\alpha,p}(\R^n):=\set*{f\in L^p(\R^n) : [f]_{W^{\alpha,p}(\R^n)}:=\left(\int_{\R^{n}} \int_{\R^{n}} \frac{|f(x)-f(y)|^p}{|x-y|^{n+p\alpha}}\,dx\,dy\right)^{\frac{1}{p}}<+\infty}
\end{equation*}
endowed with the norm
\begin{equation*}
\|f\|_{W^{\alpha,p}(\R^n)}:=\|f\|_{L^p(\R^n)}+[f]_{W^{\alpha,p}(\R^n)}
\qquad
\forall f\in W^{\alpha,p}(\R^n).
\end{equation*}
If $p=+\infty$, then $W^{\alpha,\infty}(\R^n)$ naturally coincides with the space of bounded $\alpha$-H\"older continuous functions endowed with the usual norm (see~\cite{DiNPV12} for a detailed account on the spaces $W^{\alpha,p}$).  

For the case $p=1$, starting from the very definition of the fractional gradient~$\nabla^\alpha$, it is plain to see that $W^{\alpha,1}(\R^n)\subset S^{\alpha,1}(\R^n)\subset BV^\alpha(\R^n)$ with both (strict) continuous embeddings, see~\cite{CS18}*{Theorems 3.18 and~3.25}.

For the case $p\in(1,+\infty)$, instead, it is known that $S^{\alpha,p}(\R^n)\supset L^{\alpha,p}(\R^n)$ with continuous embedding, where $L^{\alpha,p}(\R^n)$ is the \emph{Bessel potential space} of parameters $\alpha\in(0,1)$ and $p\in(1,+\infty)$, see~\cite{CS18}*{Section~3.9} and the references therein. In the subsequent paper~\cite{BCCS}, it will be proved that also the inclusion $S^{\alpha,p}(\R^n)\subset L^{\alpha,p}(\R^n)$ holds continuously, so that the spaces $S^{\alpha,p}(\R^n)$ and $L^{\alpha,p}(\R^n)$ coincide. In particular, we get the following relations: $S^{\alpha+\eps,p}(\R^n)\subset W^{\alpha, p}(\R^{n})\subset S^{\alpha-\eps,p}(\R^n)$ with continuous embeddings for all $\alpha\in(0,1)$, $p\in(1,+\infty)$ and $0<\eps<\min\set*{\alpha,1-\alpha}$, see~\cite{SS15}*{Theorem~2.2}; $S^{\alpha, 2}(\R^{n})=W^{\alpha, 2}(\R^{n})$ for all $\alpha \in (0, 1)$, see~\cite{SS15}*{Theorem 2.2}; $W^{\alpha,p}(\R^n)\subset S^{\alpha,p}(\R^n)$ with continuous embedding for all $\alpha\in(0,1)$ and $p\in(1,2]$, see~\cite{S70}*{Chapter~V, Section~5.3}.

In the \emph{geometric regime} $p=1$, our distributional approach to the fractional variation naturally provides a new definition of distributional fractional perimeter. Precisely, for any open set $\Omega\subset\R^n$, the \emph{fractional Caccioppoli $\alpha$-perimeter in $\Omega$} of a measurable set $E\subset\R^n$ is the \emph{fractional $\alpha$-variation} of $\chi_E$ in $\Omega$, i.e.\ 
\begin{equation*}
|D^\alpha\chi_E|(\Omega)=\sup\set*{\int_E\div^\alpha\phi\,dx : \phi\in C^\infty_c(\Omega;\R^n),\ \|\phi\|_{L^\infty(\Omega;\R^n)}\le1}.
\end{equation*}
Thus, $E$ is a set with \emph{finite fractional Caccioppoli $\alpha$-perimeter in $\Omega$} if $|D^\alpha\chi_E|(\Omega)<+\infty$. 

Similarly to the aforementioned embedding $W^{\alpha,1}(\R^n)\subset BV^\alpha(\R^n)$, we have the inequality
\begin{equation} \label{intro_eq:bound_D_alpha_P}
|D^\alpha\chi_E|(\Omega)\le\mu_{n,\alpha}P_\alpha(E;\Omega)
\end{equation}
for any open set $\Omega\subset\R^n$, see~\cite{CS18}*{Proposition~4.8}, where
\begin{equation}\label{intro_eq:def_frac_relative_perim}
P_{\alpha}(E; \Omega) 
:= \int_{\Omega} \int_{\Omega} \frac{|\chi_{E}(x) - \chi_{E}(y)|}{|x - y|^{n + \alpha}} \, dx \, dy + 2 \int_{\Omega} \int_{\R^{n} \setminus \Omega} \frac{|\chi_{E}(x) - \chi_{E}(y)|}{|x - y|^{n + \alpha}} \, dx \, dy
\end{equation} 
is the standard \emph{fractional $\alpha$-perimeter} of a measurable set $E\subset\R^n$ relative to the open set $\Omega\subset\R^n$ (see~\cite{CF17} for an account on the fractional perimeter~$P_\alpha$). Note that, by definition, the \emph{fractional $\alpha$-perimeter} of~$E$ in~$\R^n$ is simply $P_\alpha(E):=P_\alpha(E;\R^n)=[\chi_E]_{W^{\alpha,1}(\R^n)}$. 
We remark that inequality~\eqref{intro_eq:bound_D_alpha_P} is strict in most of the cases, as shown in \cref{sec:comparison_seminorms} below. This completely answers a question left open in our previous work~\cite{CS18}.

\subsection{Asymptotics and \texorpdfstring{$\Gamma$}{Gamma}-convergence in the standard fractional setting}

The fractional Sobolev space $W^{\alpha,p}(\R^n)$ can be understood as an `intermediate space' between the space $L^p(\R^n)$ and the standard Sobolev space $W^{1,p}(\R^n)$. In fact, $W^{\alpha,p}(\R^n)$ can be recovered as a suitable \emph{(real) interpolation space} between the spaces $L^p(\R^n)$ and $W^{1,p}(\R^n)$. We refer to~\cites{BL76,T07} for a general introduction on interpolation spaces and to~\cite{M05} for a more specific treatment of the interpolation space between $L^p(\R^n)$ and $W^{1,p}(\R^n)$.

One then naturally expects that, for a sufficiently regular function~$f$, the fractional Sobolev seminorm $[f]_{W^{\alpha,p}(\R^n)}$, multiplied by a suitable renormalising constant, should tend to $\|f\|_{L^p(\R^n)}$ as $\alpha\to0^+$ and to $\|\nabla f\|_{L^p(\R^n)}$ as $\alpha\to1^-$. Indeed, for $p\in[1,+\infty)$, it is known that
\begin{equation}\label{intro_eq:banana}
\lim_{\alpha\to0^+}\alpha\,[f]_{W^{\alpha,p}(\R^n)}^p
=A_{n,p}\,\|f\|_{L^p(\R^n)}^p
\end{equation}   
for all $f\in\bigcup_{\alpha\in(0,1)}W^{\alpha,p}(\R^n)$, while 
\begin{equation}\label{intro_eq:limit_1}
\lim_{\alpha\to1^-}(1-\alpha)\,[f]_{W^{\alpha,p}(\R^n)}^p
=B_{n,p}\,\|\nabla f\|_{L^p(\R^n; \R^{n})}^p
\end{equation}   
for all $f\in W^{1,p}(\R^n)$. Here $A_{n,p},B_{n,p}>0$ are two constants depending only on~$n,p$. The limit~\eqref{intro_eq:banana} was proved in~\cites{MS02,MS03}, while the limit~\eqref{intro_eq:limit_1} was established in~\cite{BBM01}. As proved in~\cite{D02}, when $p=1$ the limit~\eqref{intro_eq:limit_1} holds in the more general case of $BV$ functions, that is,
\begin{equation}\label{intro_eq:limit_1_BV}
\lim_{\alpha\to1^-}(1-\alpha)\,[f]_{W^{\alpha,1}(\R^n)}
=B_{n,1}\,|Df|(\R^n)
\end{equation}   
for all $f\in BV(\R^n)$. For a different approach to the limits in~\eqref{intro_eq:banana} and in~\eqref{intro_eq:limit_1_BV} based on interpolation techniques, see~\cite{M05}.

The limits~\eqref{intro_eq:limit_1} and~\eqref{intro_eq:limit_1_BV} are special consequences of the celebrated \emph{Bourgain--Brezis--Mironescu} (BBM, for short) \emph{formula}
\begin{equation}
\label{intro_eq:BBM}
\lim_{k\to+\infty}
\int_{\R^n}\int_{\R^n}
\frac{|f(x)-f(y)|^p}{|x-y|^p}\,\rho_k(|x-y|)\,dx\,dy
=
\begin{cases}
C_{n,p}\,\|\nabla f\|_{L^p(\R^n)}^p 
& \text{for}\ p\in(1,+\infty),\\[3mm]
C_{n,1}\,|Df|(\R^n)
& \text{for}\ p=1,
\end{cases}
\end{equation}
where $C_{n,p}>0$ is a constant depending only on~$n$ and~$p$, and $(\rho_k)_{k\in\N}\subset L^1_{\loc}([0,+\infty))$ is a sequence of non-negative radial mollifiers such that
\begin{equation*}
\int_{\R^n}\rho_k(|x|)\,dx=1\
\text{for all $k\in\N$}
\quad
\text{and}\quad
\lim_{k\to+\infty}
\int_\delta^{+\infty}
\rho_k(r)\,r^{n-1}\,dr=0\
\text{for all $\delta>0$.}
\end{equation*}
The BBM formula~\eqref{intro_eq:BBM} has stimulated a profound development in the asymptotic analysis in the fractional framework.
On the one hand, the limit~\eqref{intro_eq:BBM} played a central role in several applications, such as Brezis' analysis~\cite{Brezis02} on how to recognize constant functions, innovative characterizations of Sobolev and $BV$ functions and $\Gamma$-convergence results~\cites{AGMP18,AGMP20,AGP20,BMR20,BN06,LS11,LS14,N07,N08,N11,P04-2}, approximation of Sobolev norms and image processing~\cites{B15,BN16-bis,BN18,BN20}, and last but not least fractional Hardy and Poincaré inequalities~\cites{BBM02-bis,FS08,P04-1}.
On the other hand, the BBM formula~\eqref{intro_eq:BBM} has suggested an alternative path to fractional asymptotic analysis by means of interpolation techniques~\cites{M05,PS17}.
Recently, the BBM formula in~\eqref{intro_eq:BBM} has been revisited in terms of a.e.\ pointwise convergence~\cite{BN16} and in connection with weak $L^p$ quasi-norms~\cite{BVY20}, where the now-called \emph{Brezis--Van Schaftingen--Yung space}
\begin{equation*}
BSY^{\alpha,p}(\R^n)
=
\set*{f\in L_{\loc}^1(\R^n) : \left\|\frac{|f(x)-f(y)|}{|x-y|^{\frac np+\alpha}}\right\|_{L^p_w(\R^n\times\R^n)}<+\infty},
\end{equation*}
defined for $\alpha\in(0,1]$ and $p\in[1,+\infty)$, has opened a very promising perspective in the field~\cite{DM20}. 

The limits \eqref{intro_eq:banana} -- \eqref{intro_eq:BBM} have been connected to variational problems~\cite{AK09}, generalized to various function spaces, for example Besov spaces~\cites{KL05,T11}, Orlicz spaces~\cites{ACPS20,FHR20,FS19} and magnetic and anisotropic Sobolev spaces~\cites{LMP19,NS19,PSV17,PSV19,SV16}, and extended to various ambient spaces, like compact connected Riemannian manifolds~\cite{KM19}, the flat torus~\cite{A20}, Carnot groups~\cites{B11,MP19} and complete doubling metric-measure spaces supporting a local Poincaré inequality~\cite{DiMS19}.

Concerning the fractional perimeter $P_\alpha$ given in~\eqref{intro_eq:def_frac_relative_perim}, one has some additional information besides equations~\eqref{intro_eq:banana} and~\eqref{intro_eq:limit_1_BV}. 

On the one hand, thanks to~\cite{PS17-box}*{Theorem~1.2}, the fractional $\alpha$-perimeter~$P_\alpha$ enjoys the following fractional analogue of Gustin's \emph{Boxing Inequality} (see~\cite{G60} and~\cite{F69}*{Corollary~4.5.4}): there exists a dimensional constant $c_n>0$ such that, for any bounded open set $E\subset\R^n$, one can find a covering 
\begin{equation*}
E\subset\bigcup_{k\in\N} B_{r_k}(x_k)
\end{equation*}
of open balls such that
\begin{equation}\label{intro_eq:boxing}
\sum_{k\in\N}r_k^{n-\alpha}\le c_n\alpha(1-\alpha)P_\alpha(E).
\end{equation}
Inequality~\eqref{intro_eq:boxing} bridges the two limiting behaviors given by~\eqref{intro_eq:banana} and~\eqref{intro_eq:limit_1_BV} and provides a useful tool for recovering Gagliardo--Nirenberg--Sobolev and Poincaré--Sobolev inequalities that remain stable as the exponent $\alpha\in(0,1)$ approaches the endpoints. 

On the other hand, by~\cite{ADPM11}*{Theorem~2}, the fractional $\alpha$-perimeter~$P_\alpha$ $\Gamma$-converges in $L^1_{\loc}(\R^n)$ to the standard De Giorgi's perimeter~$P$ as~$\alpha\to1^-$, that is, if $\Omega\subset\R^n$ is a bounded open set with Lipschitz boundary, then
\begin{equation}\label{intro_eq:gamma_limit_ADPM}
\Gamma(L^1_{\loc})\mhypen\lim_{\alpha\to1^-}(1-\alpha)\,P_\alpha(E;\Omega)
=2 \omega_{n-1} P(E;\Omega)
\end{equation}
for all measurable sets~$E\subset\R^n$, where~$\omega_{n}$ is the volume of the unit ball in~$\R^{n}$ (it should be noted that in~\cite{ADPM11} the authors use a slightly different definition of the fractional $\alpha$-perimeter, since they consider the functional $\mathcal{J}_{\alpha}(E, \Omega) := \frac{1}{2} P_{\alpha}(E, \Omega)$). For a complete account on \emph{$\Gamma$-convergence}, we refer the reader to the monographs~\cites{B02,D93} (throughout all the paper, with the symbol $\Gamma(X)\mhypen\lim$ we denote the $\Gamma$-convergence in the ambient metric space~$X$). The convergence in~\eqref{intro_eq:gamma_limit_ADPM}, besides giving a $\Gamma$-convergence analogue of the limit in~\eqref{intro_eq:limit_1_BV}, is tightly connected with the study of the regularity properties of \emph{non-local minimal surfaces}, that is, (local) minimisers of the fractional $\alpha$-perimeter~$P_\alpha$.   

\subsection{Asymptotics and \texorpdfstring{$\Gamma$}{Gamma}-convergence for the fractional \texorpdfstring{$\alpha$}{alpha}-variation as \texorpdfstring{$\alpha\to1^-$}{alpha tends to 1-}}

The main aim of the present work is to study the asymptotic behavior of the fractional $\alpha$-variation~\eqref{intro_eq:def_frac_var} as $\alpha\to1^-$, both in the pointwise and in the $\Gamma$-convergence sense.

We provide counterparts of the limits~\eqref{intro_eq:limit_1} and~\eqref{intro_eq:limit_1_BV} for the fractional $\alpha$-variation. Indeed, we prove that, if $f\in W^{1,p}(\R^n)$ for some $p\in[1,+\infty)$, then $f\in S^{\alpha,p}(\R^n)$ for all $\alpha\in(0,1)$ and, moreover, 
\begin{equation}\label{intro_eq:limit_1_p_frac_var}
\lim_{\alpha\to1^-}\|\nabla_w^\alpha f-\nabla_w f\|_{L^p(\R^n;\,\R^n)}=0.
\end{equation}
In the geometric regime $p=1$, we show that if $f\in BV(\R^n)$ then $f\in BV^\alpha(\R^n)$ for all $\alpha\in(0,1)$ and, in addition,
\begin{equation}\label{intro_eq:weak_limit_1_1_frac_var}
D^\alpha f\weakto Df\ 
\text{in $\mathscr{M}(\R^n;\R^n)$ and}\ 
|D^\alpha f|\weakto |Df|\
\text{in $\mathscr{M}(\R^n)$ as $\alpha\to1^-$}
\end{equation}
and 
\begin{equation}\label{intro_eq:limit_1_1_frac_var}
\lim_{\alpha\to1^-}|D^\alpha f|(\R^n)=|Df|(\R^n).
\end{equation}
We are also able to treat the case $p=+\infty$. In fact, we prove that if $f\in W^{1,\infty}(\R^n)$ then $f\in S^{\alpha,\infty}(\R^n)$ for all $\alpha\in(0,1)$ and, moreover,
\begin{equation}\label{intro_eq:weak_limit_1_infty_frac_var}
\nabla_w^\alpha f\weakto\nabla_w f
\quad
\text{in $L^\infty(\R^n;\R^n)$ as $\alpha\to1^-$}
\end{equation}
and
\begin{equation}\label{intro_eq:liminf_1_infty_frac_var}
\|\nabla_w f\|_{L^\infty(\R^n;\,\R^n)}\le\liminf_{\alpha\to1^-}\|\nabla^\alpha_w f\|_{L^\infty(\R^n;\,\R^n)}.
\end{equation}
We refer the reader to \cref{res:weak_conv_tot_var}, \cref{res:weak_conv_alpha_p} and \cref{res:weak_conv_alpha_infty} below for the precise statements. We warn the reader that the symbol `$\weakto$' appearing in~\eqref{intro_eq:weak_limit_1_1_frac_var} and~\eqref{intro_eq:weak_limit_1_infty_frac_var} denotes the \emph{weak*-convergence}, see \cref{subsec:notation} below for the notation.

Some of the above results were partially announced in~\cite{S16}. In a similar perspective, we also refer to the work~\cite{MS15}, where the authors proved convergence results for non-local gradient operators on $BV$ functions defined on bounded open sets with smooth boundary. The approach developed in~\cite{MS15} is however completely different from the asymptotic analysis we presently perform for the fractional operator defined in~\eqref{intro_eq:def_nabla_alpha}, since the boundedness of the domain of definition of the integral operators considered in~\cite{MS15} plays a crucial role.

Notice that the renormalising factor $(1-\alpha)^\frac{1}{p}$ is not needed in the limits~\eqref{intro_eq:limit_1_p_frac_var} -- \eqref{intro_eq:liminf_1_infty_frac_var}, contrarily to what happened for the limits~\eqref{intro_eq:limit_1} and~\eqref{intro_eq:limit_1_BV}. In fact, this difference should not come as a surprise, since the constant $\mu_{n,\alpha}$ in~\eqref{intro_eq:def_mu_n_alpha}, encoded in the definition of the operator~$\nabla^\alpha$, satisfies
\begin{equation}\label{intro_eq:mu_n_alpha_asym_1}
\mu_{n,\alpha}\sim\frac{1-\alpha}{\omega_n}
\quad
\text{as $\alpha\to1^-$},
\end{equation}
and thus plays a similar role of the factor $(1-\alpha)^\frac{1}{p}$ in the limit as $\alpha\to1^-$. Thus, differently from our previous work~\cite{CS18}, the constant~$\mu_{n,\alpha}$ appearing in the definition of the operators~$\nabla^\alpha$ and~$\div^\alpha$ is of crucial importance in the asymptotic analysis developed in the present paper. 

Another relevant aspect of our approach is that convergence as $\alpha\to1^-$ holds true not only for the total energies, but also at the level of differential operators, in the strong sense when $p\in(1,+\infty)$ and in the weak* sense for $p=1$ and $p=+\infty$. In simpler terms, the \emph{non-local} fractional $\alpha$-gradient~$\nabla^\alpha$ converges to the \emph{local} gradient~$\nabla$ as $\alpha\to1^-$ in the most natural way every time the limit is well defined.

We also provide a counterpart of~\eqref{intro_eq:gamma_limit_ADPM} for the fractional $\alpha$-variation as $\alpha\to1^-$. Precisely, we prove that, if $\Omega\subset\R^n$ is a bounded open set with Lipschitz boundary, then
\begin{equation}\label{intro_eq:gamma_lim}
\Gamma(L^1_{\loc})\mhypen\lim_{\alpha\to1^-}|D^\alpha\chi_E|(\Omega)
=P(E;\Omega)
\end{equation}
for all measurable set $E\subset\R^n$, see \cref{res:gamma_lim_alpha_to_1}. 
In view of~\eqref{intro_eq:bound_D_alpha_P}, one may ask whether the $\Gamma\mhypen\limsup$ inequality in~\eqref{intro_eq:gamma_lim} could be deduced from the $\Gamma\mhypen\limsup$ inequality in~\eqref{intro_eq:gamma_limit_ADPM}. In fact, by employing~\eqref{intro_eq:bound_D_alpha_P} together with~\eqref{intro_eq:gamma_limit_ADPM} and~\eqref{intro_eq:mu_n_alpha_asym_1}, one can estimate
\begin{equation*}
\Gamma(L^1_{\loc})\mhypen\limsup_{\alpha\to1^-}|D^\alpha\chi_E|(\Omega) \le \Gamma(L^1_{\loc})\mhypen\limsup_{\alpha\to1^-}\mu_{n, \alpha} P_{\alpha}(E, \Omega) = \frac{2 \omega_{n - 1}}{\omega_{n}} P(E, \Omega).
\end{equation*}
However, we have $\frac{2\omega_{n - 1}}{\omega_{n}} > 1$ for any $n \ge 2$ and thus the $\Gamma\mhypen\limsup$ inequality in~\eqref{intro_eq:gamma_lim} follows from the $\Gamma\mhypen\limsup$ inequality in~\eqref{intro_eq:gamma_limit_ADPM} only in the case $n=1$. In a similar way, one sees that the $\Gamma\mhypen\liminf$ inequality in~\eqref{intro_eq:gamma_lim} implies the $\Gamma\mhypen\liminf$ inequality in~\eqref{intro_eq:gamma_limit_ADPM} only in the case $n=1$.

Besides the counterpart of~\eqref{intro_eq:gamma_limit_ADPM}, our approach allows to prove that $\Gamma$-convergence holds true also at the level of functions. 
Indeed, if $f\in BV(\R^n)$ and $\Omega\subset\R^n$ is an open set such that either $\Omega$ is bounded with Lipschitz boundary or $\Omega=\R^n$, then
\begin{equation}\label{intro_eq:gamma_lim_L1}
\Gamma(L^1)\mhypen\lim_{\alpha\to1^-}|D^\alpha f|(\Omega)
=|D f|(\Omega).
\end{equation}
One can regard the limit~\eqref{intro_eq:gamma_lim_L1} as an analogue of the $\Gamma$-convergence results known in the usual fractional setting, see~\cites{P04-2,N11}. We refer the reader to \cref{res:Gamma_liminf}, \cref{res:Gamma_limsup} and \cref{res:gamma_lim_BV_alpha_to_1} for the (even more general) results in this direction. Again, as before and thanks to the asymptotic behavior~\eqref{intro_eq:mu_n_alpha_asym_1}, the renormalising factor $(1-\alpha)$ is not needed in the limits~\eqref{intro_eq:gamma_lim} and~\eqref{intro_eq:gamma_lim_L1}. 

As a byproduct of the techniques developed for the asymptotic study of the fractional $\alpha$-variation as $\alpha\to1^-$, we are also able to characterize the behavior of the fractional $\beta$-variation as $\beta\to\alpha^-$, for any given $\alpha\in(0,1)$. 
On the one hand, if $f\in BV^\alpha(\R^n)$, then
\begin{equation*}
D^\beta f\weakto D^\alpha f\ 
\text{in $\mathscr{M}(\R^n;\R^n)$ and}\ 
|D^\beta f|\weakto |D^\alpha f|\
\text{in $\mathscr{M}(\R^n)$  as $\beta\to\alpha^-$}
\end{equation*}
and, moreover,
\begin{equation*}
\lim_{\beta\to\alpha^-}|D^\beta f|(\R^n)=|D^\alpha f|(\R^n),
\end{equation*}
see \cref{res:weak_conv_nabla_beta_alpha}.
On the other hand, if $f\in BV^\alpha(\R^n)$ and $\Omega\subset\R^n$ is an open set such that either $\Omega$ is bounded and $|D^\alpha f|(\de\Omega)=0$ or $\Omega=\R^n$, then
\begin{equation*}
\Gamma(L^1)\mhypen\lim_{\beta\to\alpha^-}|D^\beta f|(\Omega)
=|D^\alpha f|(\Omega),
\end{equation*}
see \cref{res:Gamma_liminf_alpha_beta} and \cref{res:Gamma_limsup_alpha_beta}.

\subsection{Future developments: asymptotics for the fractional \texorpdfstring{$\alpha$}{alpha}-variation as \texorpdfstring{$\alpha\to0^+$}{alpha tends to 0+}}

Having in mind the limit~\eqref{intro_eq:banana}, it would be interesting to understand what happens to the fractional $\alpha$-variation~\eqref{intro_eq:def_frac_var} as $\alpha\to0^+$. Note that
\begin{equation}\label{intro_eq:mu_n_alpha_asym_zero}
\lim_{\alpha\to0^+}\mu_{n,\alpha}
= \pi^{- \frac{n}{2}} \frac{\Gamma\left ( \frac{n + 1}{2} \right )}{\Gamma\left ( \frac{1}{2} \right )}
=:\mu_{n,0},
\end{equation} 
so there is no renormalization factor as $\alpha\to0^+$, differently from~\eqref{intro_eq:mu_n_alpha_asym_1}.

At least formally, as $\alpha\to0^+$ the fractional $\alpha$-gradient in~\eqref{intro_eq:def_nabla_alpha} is converging to the operator
\begin{equation}\label{intro_eq:def_nabla_0}
\nabla^0 f(x):=\mu_{n,0}\int_{\R^n}\frac{(y-x)(f(y)-f(x))}{|y-x|^{n+1}}\,dy,
\qquad
x\in\R^n.	
\end{equation}
The operator in~\eqref{intro_eq:def_nabla_0} is well defined (in the principal value sense) for all $f\in C^\infty_c(\R^n)$ and, actually, coincides with the well-known vector-valued \emph{Riesz transform}~$Rf$, see~\cite{G14-C}*{Section~5.1.4} and~\cite{S70}*{Chapter~3}. Similarly, the fractional $\alpha$-divergence in~\eqref{intro_eq:def_div_alpha} is formally converging to the operator
\begin{equation}\label{intro_eq:def_div_zero}
\div^0\phi(x):=\mu_{n,0}\int_{\R^n}\frac{(y-x)\cdot(\phi(y)-\phi(x))}{|y-x|^{n+1}}\,dy,
\qquad
x\in\R^n,	
\end{equation} 
which is well defined (in the principal value sense) for all $\phi\in C^\infty_c(\R^n;\R^n)$. 

In perfect analogy with what we did before, we can introduce the space $BV^0(\R^n)$ as the space of functions $f\in L^1(\R^n)$ such that the quantity
\begin{equation*}
|D^0 f|(\R^n):=\sup\set*{\int_{\R^n}f\,\div^0\phi\,dx : \phi\in C^\infty_c(\R^n;\R^n),\ \|\phi\|_{L^\infty(\R^n;\,\R^n)}\le1}
\end{equation*}
is finite.
Surprisingly (and differently from the fractional $\alpha$-variation, recall~\cite{CS18}*{Section~3.10}), it turns out that $|D^0 f|\ll\Leb{n}$ for all $f\in BV^0(\R^n)$. More precisely, one can actually prove that $BV^0(\R^n)=H^1(\R^n)$, in the sense that $f\in BV^0(\R^n)$ if and only if $f\in H^1(\R^n)$, with $D^0f=Rf\Leb{n}$ in~$\mathscr M(\R^n;\R^n)$. Here
\begin{equation*}
H^1(\R^n):=\set*{f\in L^1(\R^n) : Rf\in L^1(\R^n;\R^n)}	
\end{equation*}
is the (real) \emph{Hardy space}, see~\cite{S93}*{Chapter~III} for the precise definition. Thus, it would be interesting to understand for which functions $f\in L^1(\R^n)$ the fractional $\alpha$-gradient $\nabla^\alpha f$ tends (in a suitable sense) to the Riesz transform $Rf$ as $\alpha\to0^+$. Of course, if $Rf\notin L^1(\R^n;\R^n)$, that is, $f\notin H^1(\R^n)$, then one cannot expect strong convergence in~$L^1$ and, instead, may consider the asymptotic behavior of the rescaled fractional gradient $\alpha\,\nabla^\alpha f$ as~$\alpha\to0^+$, in analogy with the limit in~\eqref{intro_eq:banana}.     
This line of research, as well as the identifications $BV^0=H^1$ and $S^{\alpha,p}=L^{\alpha,p}$ mentioned above, it is the subject of the subsequent paper~\cite{BCCS}.

\subsection{Organization of the paper}

The paper is organized as follows.

In \cref{sec:preliminaires}, after having briefly recalled the definitions and the main properties of the operators~$\nabla^\alpha$ and~$\div^\alpha$, we extend certain technical results of~\cite{CS18}. 

In \cref{sec:estimates_representations}, we prove several integrability properties of the fractional $\alpha$-gradient and two useful representation formulas for the fractional $\alpha$-variation of functions with bounded De Giorgi's variation. We are also able to prove similar results for the fractional $\beta$-gradient of functions with bounded fractional $\alpha$-variation, see \cref{subsec:inclusion_BV_alpha_Sobolev_beta}. 

In \cref{sec:asymptotic_alpha_to_1}, we study the asymptotic behavior of the fractional $\alpha$-variation as $\alpha\to1^-$ and prove pointwise-convergence and $\Gamma$-convergence results, dealing separately with the integrability exponents $p=1$, $p\in(1,+\infty)$ and $p=+\infty$. 

In \cref{sec:asymptotic_beta_to_alpha}, we show that the fractional $\beta$-variation weakly converges and $\Gamma$-converges to the fractional $\alpha$-variation as $\beta\to\alpha^-$ for any $\alpha\in(0,1)$. 

In \cref{sec:appendix}, for the reader's convenience, we state and prove two known results on the truncation and the approximation of~$BV$ functions and sets with finite perimeter that are used in \cref{sec:estimates_representations} and in \cref{sec:asymptotic_alpha_to_1}.

\section{Preliminaries}
\label{sec:preliminaires}

\subsection{General notation}
\label{subsec:notation}

We start with a brief description of the main notation used in this paper. In order to keep the exposition the most reader-friendly as possible, we retain the same notation adopted in our previous work~\cite{CS18}.

Given an open set $\Omega$, we say that a set $E$ is compactly contained in $\Omega$, and we write \mbox{$E\Subset\Omega$}, if the $\overline{E}$ is compact and contained in $\Omega$.
We denote by $\Leb{n}$ and $\Haus{\alpha}$ the $n$-dimensional Lebesgue measure and the $\alpha$-dimensional Hausdorff measure on $\R^n$ respectively, with $\alpha \ge 0$. Unless otherwise stated, a measurable set is a $\Leb{n}$-measurable set. We also use the notation $|E|=\Leb{n}(E)$. All functions we consider in this paper are Lebesgue measurable, unless otherwise stated. We denote by $B_r(x)$ the standard open Euclidean ball with center $x\in\R^n$ and radius $r>0$. We let $B_r=B_r(0)$. Recall that $\omega_{n} := |B_1|=\pi^{\frac{n}{2}}/\Gamma\left(\frac{n+2}{2}\right)$ and $\Haus{n-1}(\partial B_{1}) = n \omega_n$, where $\Gamma$ is Euler's \emph{Gamma function}, see~\cite{A64}. 

We let $\mathrm{GL}(n)\supset\mathrm{O}(n)\supset\mathrm{SO}(n)$ be the \emph{general linear group}, the \emph{orthogonal group} and the \emph{special orthogonal group} respectively. We tacitly identify $\mathrm{GL}(n)\subset\R^{n^2}$ with the space of invertible $n\times n$\,-\,matrices and we endow it with the usual Euclidean distance in~$\R^{n^2}$.

For $k \in \N_{0} \cup \set{+ \infty}$ and $m \in \N$, we denote by $C^{k}_{c}(\Omega ; \R^{m})$ and $\Lip_c(\Omega; \R^{m})$ the spaces of $C^{k}$-regular and, respectively, Lipschitz-regular, $m$-vector-valued functions defined on~$\R^n$ with compact support in~$\Omega$.

For any exponent $p\in[1,+\infty]$, we denote by $L^p(\Omega;\R^m)$ the space of $m$-vector-valued Lebesgue $p$-integrable functions on~$\Omega$.
For $p\in[1,+\infty]$, we say that $(f_k)_{k\in\N}\subset L^p(\Omega;\R^m)$ \emph{weakly converges} to $f\in L^p(\Omega;\R^m)$, and we write $f_k\weakto f$ in $L^p(\Omega;\R^m)$ as $k\to+\infty$, if 
\begin{equation}\label{eq:def_weak_conv_Lp}
\lim_{k\to+\infty}\int_\Omega f_k\cdot\phi\,dx
=\int_\Omega f\cdot\phi\,dx
\end{equation}
for all $\phi\in L^q(\Omega;\R^m)$, with $q\in[1,+\infty]$ the \emph{conjugate exponent} of~$p$, that is, $\frac{1}{p}+\frac{1}{q}=1$ (with the usual convention $\frac{1}{+\infty}=0$). Note that in the case $p=+\infty$ we make a little abuse of terminology, since the limit in~\eqref{eq:def_weak_conv_Lp} actually defines the \emph{weak*-convergence} in~$L^\infty(\Omega;\R^m)$. 

We let
\begin{equation*}
W^{1,p}(\Omega;\R^m):=\set*{u\in L^p(\Omega;\R^m) : [u]_{W^{1,p}(\Omega;\,\R^m)}:=\|\nabla u\|_{L^p(\Omega;\,\R^{nm})}<+\infty}
\end{equation*}
be the space of $m$-vector-valued Sobolev functions on~$\Omega$, see for instance~\cite{L09}*{Chapter~10} for its precise definition and main properties. We also let
\begin{equation*}
w^{1,p}(\Omega;\R^m):=\set*{u\in L^1_{\loc}(\Omega;\R^m) : [u]_{W^{1,p}(\Omega;\,\R^m)}<+\infty}.
\end{equation*}
We let
\begin{equation*}
BV(\Omega;\R^m):=\set*{u\in L^1(\Omega;\R^m) : [u]_{BV(\Omega;\,\R^m)}:=|Du|(\Omega)<+\infty}
\end{equation*}
be the space of $m$-vector-valued functions of bounded variation on~$\Omega$, see for instance~\cite{AFP00}*{Chapter~3} or~\cite{EG15}*{Chapter~5} for its precise definition and main properties. We also let 
\begin{equation*}
bv(\Omega;\R^m):=\set*{u\in L^1_{\loc}(\Omega;\R^m) : [u]_{BV(\Omega;\,\R^m)}<+\infty}.
\end{equation*}
For $\alpha\in(0,1)$ and $p\in[1,+\infty)$, we let
\begin{equation*}
W^{\alpha,p}(\Omega;\R^m):=\set*{u\in L^p(\Omega;\R^m) : [u]_{W^{\alpha,p}(\Omega;\,\R^m)}\!:=\left(\int_\Omega\int_\Omega\frac{|u(x)-u(y)|^p}{|x-y|^{n+p\alpha}}\,dx\,dy\right)^{\frac{1}{p}}\!<+\infty}
\end{equation*}
be the space of $m$-vector-valued fractional Sobolev functions on~$\Omega$, see~\cite{DiNPV12} for its precise definition and main properties. We also let 
\begin{equation*}
w^{\alpha,p}(\Omega; \R^m):=\set*{u\in L^1_{\loc}(\Omega;\R^m) : [u]_{W^{\alpha,p}(\Omega;\,\R^m)}<+\infty}.
\end{equation*}
For $\alpha\in(0,1)$ and $p=+\infty$, we simply let
\begin{equation*}
W^{\alpha,\infty}(\Omega;\R^m):=\set*{u\in L^\infty(\Omega;\R^m) : \sup_{x,y\in \Omega,\, x\neq y}\frac{|u(x)-u(y)|}{|x-y|^\alpha}<+\infty},
\end{equation*}
so that $W^{\alpha,\infty}(\Omega;\R^m)=C^{0,\alpha}_b(\Omega;\R^m)$, the space of $m$-vector-valued bounded $\alpha$-H\"older continuous functions on~$\Omega$.

We let $\mathscr{M}(\Omega;\R^m)$ be the space of $m$-vector-valued Radon measures with finite total variation, precisely
\begin{equation*}
|\mu|(\Omega):=\sup\set*{\int_\Omega\phi\cdot d\mu : \phi\in C_c^0(\Omega;\R^m),\ \|\phi\|_{L^\infty(\Omega;\,\R^m)}\le1}
\end{equation*}
for $\mu\in\mathscr{M}(\Omega;\R^m)$. We say that $(\mu_k)_{k\in\N}\subset\mathscr{M}(\Omega;\R^m)$ \emph{weakly converges} to $\mu\in\mathscr{M}(\Omega;\R^m)$, and we write $\mu_k\weakto\mu$ in $\mathscr{M}(\Omega;\R^m)$ as $k\to+\infty$, if 
\begin{equation}\label{eq:def_weak_conv_meas}
\lim_{k\to+\infty}\int_\Omega\phi\cdot d\mu_k=\int_\Omega\phi\cdot d\mu
\end{equation} 
for all $\phi\in C_c^0(\Omega;\R^m)$. Note that we make a little abuse of terminology, since the limit in~\eqref{eq:def_weak_conv_meas} actually defines the \emph{weak*-convergence} in~$\mathscr{M}(\Omega;\R^m)$. 

In order to avoid heavy notation, if the elements of a function space $F(\Omega;\R^m)$ are real-valued (i.e.~$m=1$), then we will drop the target space and simply write~$F(\Omega)$.

\subsection{Basic properties of~\texorpdfstring{$\nabla^\alpha$}{nablaˆalpha} and~\texorpdfstring{$\div^\alpha$}{divˆalpha}}

We recall the non-local operators~$\nabla^\alpha$ and~$\diverg^\alpha$ introduced by \v{S}ilhav\'{y} in~\cite{S19} (see also our previous work~\cite{CS18}).

Let $\alpha\in(0,1)$ and set  
\begin{equation*}
\mu_{n, \alpha} := 2^{\alpha} \pi^{- \frac{n}{2}} \frac{\Gamma\left ( \frac{n + \alpha + 1}{2} \right )}{\Gamma\left ( \frac{1 - \alpha}{2} \right )}.
\end{equation*}
We let
\begin{equation*}
\nabla^{\alpha} f(x) := \mu_{n, \alpha} \lim_{\eps \to 0} \int_{\{ |z| > \eps \}} \frac{z f(x + z)}{|z|^{n + \alpha + 1}} \, dz
\end{equation*}
be the \emph{fractional $\alpha$-gradient} of $f\in\Lip_c(\R^n)$ at $x\in\R^n$. We also let
\begin{equation*}
\div^{\alpha} \varphi(x) := \mu_{n, \alpha} \lim_{\eps \to 0} \int_{\{ |z| > \eps \}} \frac{z \cdot \varphi(x + z)}{|z|^{n + \alpha + 1}} \, dz
\end{equation*}
be the \emph{fractional $\alpha$-divergence} of $\phi\in\Lip_c(\R^n;\R^n)$ at $x\in\R^n$. The non-local operators~$\nabla^\alpha$ and~$\diverg^\alpha$ are well defined in the sense that the involved integrals converge and the limits exist, see~\cite{S19}*{Section~7} and \cite{CS18}*{Section 2}. Moreover, since 
\begin{equation*}
\int_{\set*{|z| > \eps}} \frac{z}{|z|^{n + \alpha + 1}} \, dz=0,
\qquad
\forall\eps>0,
\end{equation*}
it is immediate to check that $\nabla^{\alpha}c=0$ for all $c\in\R$ and
\begin{align*}
\nabla^{\alpha} f(x)
&=\mu_{n, \alpha} \lim_{\eps \to 0} \int_{\{ |y -x| > \eps \}} \frac{(y - x)}{|y - x|^{n + \alpha + 1}} f(y) \, dy\\
&= \mu_{n, \alpha} \lim_{\eps \to 0} \int_{\{ |x - y| > \eps \}} \frac{(y - x)  (f(y) - f(x)) }{|y - x|^{n + \alpha + 1}} \, dy\\
&=\mu_{n, \alpha} \int_{\R^{n}} \frac{(y - x)  (f(y) - f(x)) }{|y - x|^{n + \alpha + 1}} \, dy,
\qquad
\forall x\in\R^n,
\end{align*}
for all $f\in\Lip_c(\R^n)$. Analogously, we also have
\begin{align*}
\div^{\alpha} \varphi(x) 
&= \mu_{n, \alpha} \lim_{\eps \to 0} \int_{\{ |x - y| > \eps \}} \frac{(y - x) \cdot \varphi(y) }{|y - x|^{n + \alpha + 1}} \, dy,\\
&= \mu_{n, \alpha} \lim_{\eps \to 0} \int_{\{ |x - y| > \eps \}} \frac{(y - x) \cdot (\varphi(y) - \varphi(x)) }{|y - x|^{n + \alpha + 1}} \, dy,\\
&= \mu_{n, \alpha} \int_{\R^{n}} \frac{(y - x) \cdot (\varphi(y) - \varphi(x)) }{|y - x|^{n + \alpha + 1}} \, dy,
\qquad
\forall x\in\R^n,
\end{align*}
for all $\phi\in\Lip_c(\R^n)$.

Given $\alpha\in(0,n)$, we let
\begin{equation}\label{eq:Riesz_potential_def} 
I_{\alpha} u(x) := 
\frac{\mu_{n, 1-\alpha}}{n - \alpha}
\int_{\R^{n}} \frac{u(y)}{|x - y|^{n - \alpha}} \, dy, 
\qquad
x\in\R^n,
\end{equation}
be the \emph{Riesz potential} of order $\alpha\in(0,n)$ of a function $u\in C^\infty_c(\R^n;\R^m)$. We recall that, if $\alpha,\beta\in(0,n)$ satisfy $\alpha+\beta<n$, then we have the following \emph{semigroup property}
\begin{equation}\label{eq:Riesz_potential_semigroup}
I_{\alpha}(I_\beta u)=I_{\alpha+\beta}u
\end{equation}
for all $u\in C^\infty_c(\R^n;\R^m)$. In addition, if $1<p<q<+\infty$ satisfy 
\begin{equation*}
\frac{1}{q}=\frac{1}{p}-\frac{\alpha}{n},	
\end{equation*}
then there exists a constant $C_{n,\alpha,p}>0$ such that the operator in~\eqref{eq:Riesz_potential_def} satisfies
\begin{equation}\label{eq:Riesz_potential_boundedness}
\|I_\alpha u\|_{L^q(\R^n;\,\R^m)}\le C_{n,\alpha,p}\|u\|_{L^p(\R^n;\,\R^m)}
\end{equation}
for all $u\in C^\infty_c(\R^n;\,\R^m)$. As a consequence, the operator in~\eqref{eq:Riesz_potential_def} extends to a linear continuous operator from $L^p(\R^n;\R^m)$ to $L^q(\R^n;\R^m)$, for which we retain the same notation. For a proof of~\eqref{eq:Riesz_potential_semigroup} and~\eqref{eq:Riesz_potential_boundedness}, we refer the reader to~\cite{S70}*{Chapter~V, Section~1} and to~\cite{G14}*{Section~1.2.1}.

We can now recall the following result, see~\cite{CS18}*{Proposition~2.2 and Corollary 2.3}.

\begin{proposition}\label{prop:frac_div_repr} 
Let $\alpha\in(0,1)$. If $f\in\Lip_c(\R^{n})$, then 
\begin{equation}\label{eq:frac_nabla_repres}
\nabla^{\alpha} f
= I_{1-\alpha}\nabla f 
= \nabla I_{1 - \alpha} f
\end{equation}
and $\nabla^{\alpha}f \in L^1(\R^n; \R^{n})\cap L^{\infty}(\R^{n}; \R^{n})$, with
\begin{equation}\label{eq:frac_nabla_repr_Lip_L1_estimate} 
\| \nabla^{\alpha} f \|_{L^1(\R^{n};\,\R^n)} \le \mu_{n, \alpha}  [f]_{W^{\alpha,1}(\R^{n})}
\end{equation}
and
\begin{equation} \label{eq:frac_nabla_repr_Lip_estimate} 
\| \nabla^{\alpha} f \|_{L^{\infty}(\R^{n};\,\R^n)} \le C_{n, \alpha, U}  \|\nabla f\|_{L^{\infty}(\R^{n};\,\R^n)}
\end{equation}
for any bounded open set $U\subset\R^n$ such that $\supp(f) \subset U$, where 
\begin{equation} \label{eq:constant_estimate} 
C_{n, \alpha, U} := 
\frac{n \mu_{n, \alpha}}{(1 - \alpha)(n + \alpha - 1)}
\left(
	\omega_n\diam(U)^{1 - \alpha} 
	+\left(
		\frac{n \omega_{n}}{n+\alpha-1}
	\right)^\frac{n + \alpha - 1}{n}|U|^\frac{1 - \alpha}{n}
\right). 
\end{equation}
Analogously, if $\phi \in\Lip_c(\R^{n}; \R^{n})$ then 
\begin{equation}\label{eq:frac_div_repres}
\div^{\alpha} \phi= I_{1-\alpha}\div \phi 
= \div I_{1 - \alpha} \phi
\end{equation}
and $\div^{\alpha} \phi \in L^1(\R^n)\cap L^{\infty}(\R^{n})$, with
\begin{equation}\label{eq:frac_div_repr_Lip_L1_estimate} 
\| \div^{\alpha} \phi \|_{L^1(\R^{n})} \le \mu_{n, \alpha}  [\phi]_{W^{\alpha,1}(\R^{n};\,\R^n)}
\end{equation}
and
\begin{equation} \label{eq:frac_div_repr_Lip_estimate} 
\| \div^{\alpha} \phi \|_{L^{\infty}(\R^{n})} \le C_{n, \alpha, U}  \|\div \phi \|_{L^{\infty}(\R^{n})}
\end{equation}
for any bounded open set $U\subset\R^n$ such that $\supp(\phi) \subset U$, where $C_{n, \alpha, U}$ is as in~\eqref{eq:constant_estimate}.
\end{proposition}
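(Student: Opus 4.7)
My plan is to deduce the representation formulas \eqref{eq:frac_nabla_repres} first, and then to read off the $L^1$ and $L^\infty$ estimates from them (together with the absolutely convergent integral for $\nabla^\alpha f$ displayed just before the statement). The corresponding assertions for $\div^\alpha$ are wordwise analogous, obtained by applying $\nabla^\alpha$ componentwise and using the scalar identity $\div^\alpha\varphi=\sum_{i=1}^n(\nabla^\alpha\varphi_i)_i$ readable off the two integral representations.

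The equality $I_{1-\alpha}\nabla f=\nabla I_{1-\alpha}f$ is the easier half: $I_{1-\alpha}$ is convolution with the locally integrable radial kernel $K(z):=\frac{\mu_{n,\alpha}}{(n+\alpha-1)|z|^{n+\alpha-1}}$, and since $f,\nabla f\in L^\infty(\R^n)$ have compact support contained in $U$, both $K\ast f$ and $K\ast\nabla f$ are continuous, and $\nabla(K\ast f)=K\ast\nabla f$ follows by a routine cutoff-and-dominated-convergence argument (multiply $K$ by a smooth cutoff vanishing in a shrinking ball around the origin, differentiate freely, and let the cutoff shrink). The harder half is $\nabla^\alpha f=I_{1-\alpha}\nabla f$, which I would establish by integrating by parts componentwise on the truncated integral $\int_{|x-y|>\eps}K(x-y)\,\partial_i f(y)\,dy$. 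Using $\partial_i^y|x-y|^{-(n+\alpha-1)}=-(n+\alpha-1)(y-x)_i|x-y|^{-(n+\alpha+1)}$, the bulk term becomes
\[
\mu_{n,\alpha}\int_{|x-y|>\eps}\frac{(y-x)_i}{|y-x|^{n+\alpha+1}}f(y)\,dy,
\]
which, in the limit $\eps\to0^+$, is the $i$-th component of $\nabla^\alpha f$ in its first principal-value form. The boundary contribution on $\partial B_\eps(x)$ equals
\[
\frac{\mu_{n,\alpha}}{(n+\alpha-1)\eps^{n+\alpha}}\int_{\partial B_\eps(x)}f(y)(x-y)_i\,d\Haus{n-1}(y),
\]
and vanishes as $\eps\to0^+$ after splitting $f(y)=f(x)+(f(y)-f(x))$: the constant part drops out by $\int_{\partial B_\eps(x)}(x-y)_i\,d\Haus{n-1}=0$, while the remainder is controlled via $|f(y)-f(x)|\le\Lip(f)\,\eps$ and a surface-measure computation yielding an $O(\eps^{1-\alpha})$ bound (here $\alpha<1$ is crucial).

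The $L^1$ bound \eqref{eq:frac_nabla_repr_Lip_L1_estimate} is then immediate: from
\[
|\nabla^\alpha f(x)|\le\mu_{n,\alpha}\int_{\R^n}\frac{|f(y)-f(x)|}{|y-x|^{n+\alpha}}\,dy,
\]
Fubini--Tonelli in $x$ yields the Gagliardo seminorm on the right. For the $L^\infty$ bound \eqref{eq:frac_nabla_repr_Lip_estimate} I would use $\nabla^\alpha f=I_{1-\alpha}\nabla f$, factor out $\|\nabla f\|_{L^\infty}$, and (thanks to $\supp(\nabla f)\subset U$) reduce to bounding $\int_U|x-y|^{-(n+\alpha-1)}\,dy$ uniformly in $x\in\R^n$. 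Splitting this integral into the near part $U\cap B_R(x)$ and the tail $U\setminus B_R(x)$ gives, for any $R>0$,
\[
\int_U\frac{dy}{|x-y|^{n+\alpha-1}}\le\frac{n\omega_n R^{1-\alpha}}{1-\alpha}+\frac{|U|}{R^{n+\alpha-1}}.
\]
The choice $R=\diam(U)$ (for which $U\subset B_{\diam(U)}(x)$ whenever $x\in\closure{U}$, and the tail contribution can be discarded) produces the $\omega_n\diam(U)^{1-\alpha}$ summand in $C_{n,\alpha,U}$, while optimizing in $R$ selects $R^n=\frac{(n+\alpha-1)|U|}{n\omega_n}$ and, after simplification, produces the $|U|^{(1-\alpha)/n}$ summand; summing both contributions yields a single constant that is manifestly uniform in $x\in\R^n$ and symmetric in $\diam(U)$ and $|U|^{1/n}$, which is convenient for the asymptotic analysis carried out in later sections.

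The main technical subtlety will be the integration-by-parts step in proving $\nabla^\alpha f=I_{1-\alpha}\nabla f$: one must simultaneously justify the truncation (so that the singular kernel is handled as a bona fide principal value) and the vanishing of the spherical boundary term, reconciling the absolute convergence of the Riesz-potential side with the principal-value definition of $\nabla^\alpha$. Once this identification is in place, both norm estimates are essentially mechanical, and the analogous statements for $\div^\alpha$ follow by the componentwise argument indicated above, using the same kernel bounds and the same integration-by-parts identity applied to each component $\varphi_i$.
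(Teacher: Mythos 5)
Your argument is correct and follows essentially the same route as the proof in the cited reference \cite{CS18} (the present paper only recalls this proposition without reproducing its proof): integration by parts against the truncated Riesz kernel to obtain $\nabla^\alpha f=I_{1-\alpha}\nabla f$ with the spherical boundary term killed by the Lipschitz bound, Tonelli for the $L^1$ estimate, and the near/far splitting of $\int_U|x-y|^{-(n+\alpha-1)}\,dy$ — whose two choices of radius ($R=\diam(U)$ and the optimized $R$) reproduce exactly the two summands of $C_{n,\alpha,U}$ — for the $L^\infty$ estimate. No gaps.
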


\subsection{Extension of~\texorpdfstring{$\nabla^\alpha$}{nablaˆalpha} and~\texorpdfstring{$\div^\alpha$}{divˆalpha} to \texorpdfstring{$\Lip_b$}{bounded Lip}-regular tests}

In the following result, we extend the fractional $\alpha$-divergence to $\Lip_b$-regular vector fields.

\begin{lemma}[Extension of~$\div^\alpha$ to~$\Lip_b$]
\label{res:frac_div_Lip_b}
Let $\alpha\in(0,1)$. The operator 
\begin{equation*}
\div^\alpha\colon\Lip_b(\R^n;\R^n)\to L^\infty(\R^n)
\end{equation*}
given by
\begin{equation}\label{eq:def_frac_div_Lip_b}
\div^\alpha\phi(x):=\mu_{n,\alpha}\int_{\R^n}\frac{(y-x)\cdot(\phi(y)-\phi(x))}{|y-x|^{n+\alpha+1}}\,dy,
\quad x\in\R^n,
\end{equation}
for all $\phi\in\Lip_b(\R^n;\R^n)$, is well defined, with
\begin{equation}\label{eq:frac_div_Lip_b_bounded}
\|\div^\alpha\phi\|_{L^\infty(\R^n)}	
\le\frac{2^{1-\alpha}n\omega_n\mu_{n,\alpha}}{\alpha(1-\alpha)}\Lip(\phi)^\alpha\|\phi\|_{L^\infty(\R^n;\,\R^n)}^{1-\alpha},
\end{equation}
and satisfies
\begin{equation}\label{eq:frac_div_Lip_b_def_eps}
\begin{split}
\div^\alpha\phi(x)
&=\mu_{n,\alpha}\lim_{\eps\to0^+}\int_{\set*{|y-x|>\eps}}\frac{(y-x)\cdot(\phi(y)-\phi(x))}{|y-x|^{n+\alpha+1}}\,dy\\
&=\mu_{n,\alpha}\lim_{\eps\to0^+}\int_{\set*{|y-x|>\eps}}\frac{(y-x)\cdot\phi(y)}{|y-x|^{n+\alpha+1}}\,dy
\end{split}
\end{equation}
for all $x\in\R^n$. Moreover, if in addition $I_{1-\alpha}|\div\phi|\in L^1_{\loc}(\R^n)$, then 
\begin{equation}\label{eq:frac_div_Lip_b_riesz}
\div^\alpha\phi(x)=I_{1-\alpha}\div\phi(x)
\end{equation}
for a.e.~$x\in\R^n$.
\end{lemma}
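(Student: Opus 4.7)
The plan is to establish the three assertions in turn.

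\textbf{Step 1: well-definedness and the $L^\infty$-estimate.} I would fix $x \in \R^n$ and split the integral in~\eqref{eq:def_frac_div_Lip_b} at a radius $r > 0$ to be optimised. On $B_r(x)$ the Lipschitz bound $|\phi(y)-\phi(x)|\le \Lip(\phi)|y-x|$ controls the integrand by $\Lip(\phi)|y-x|^{-(n+\alpha-1)}$, integrable near $x$ because $\alpha<1$; integration in spherical coordinates yields $n\omega_n\Lip(\phi)\,r^{1-\alpha}/(1-\alpha)$. On $\R^n\setminus B_r(x)$ the bound $|\phi(y)-\phi(x)|\le 2\|\phi\|_{L^\infty(\R^n;\,\R^n)}$ controls the integrand by $2\|\phi\|_{L^\infty(\R^n;\,\R^n)}|y-x|^{-(n+\alpha)}$, integrable at infinity because $\alpha>0$, with integral $2n\omega_n\|\phi\|_{L^\infty(\R^n;\,\R^n)}\,r^{-\alpha}/\alpha$. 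Summing and choosing $r := 2\|\phi\|_{L^\infty(\R^n;\,\R^n)}/\Lip(\phi)$ to balance the two contributions produces exactly the constant in~\eqref{eq:frac_div_Lip_b_bounded}.

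\textbf{Step 2: principal-value formulas.} The absolute integrability from Step~1 gives the first equality in~\eqref{eq:frac_div_Lip_b_def_eps} by dominated convergence as $\eps\to 0^+$. For the second equality, I would observe that for every fixed $\eps>0$ the integral $\int_{\{|y-x|>\eps\}}(y-x)/|y-x|^{n+\alpha+1}\,dy$ converges absolutely (here $\alpha>0$ gives integrability at infinity) and vanishes by antisymmetry, so the constant-in-$y$ term involving $\phi(x)$ may be added inside the limit without changing its value.

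\textbf{Step 3: Riesz-potential identity.} I would approximate $\phi$ by $\Lip_c$-fields and invoke \cref{prop:frac_div_repr}. Pick $\eta_k\in C_c^\infty(\R^n)$ with $\eta_k\equiv 1$ on $B_k$, $\supp\eta_k\subset B_{2k}$, $0\le\eta_k\le 1$ and $\|\nabla\eta_k\|_{L^\infty}\le C/k$, and set $\phi_k:=\eta_k\phi\in\Lip_c(\R^n;\R^n)$; then $\div^\alpha\phi_k = I_{1-\alpha}\div\phi_k$ pointwise. Fix $x$ and let $k$ be large enough that $|x|<k/2$. On the left, $\phi_k(x)=\phi(x)$, and using the second representation from Step~2 the difference $\div^\alpha\phi(x)-\div^\alpha\phi_k(x)$ reduces to the integral of $(y-x)\cdot(1-\eta_k(y))\phi(y)/|y-x|^{n+\alpha+1}$ over $\{|y|\ge k\}$, where $|y-x|\ge k/2$; this is $O(\|\phi\|_{L^\infty(\R^n;\,\R^n)}\,k^{-\alpha})\to 0$. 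On the right, writing $\div\phi_k = \eta_k\div\phi + \phi\cdot\nabla\eta_k$, the first piece converges to $I_{1-\alpha}\div\phi(x)$ by dominated convergence, with dominating function supplied by the hypothesis $I_{1-\alpha}|\div\phi|\in L^1_{\loc}(\R^n)$; the second piece has $L^\infty$-norm $O(1/k)$ on the annulus $B_{2k}\setminus B_k$ (of volume $\sim k^n$), against which the Riesz kernel is bounded by $(k/2)^{-(n+\alpha-1)}$, so its contribution is $O(k^{-\alpha})\to 0$.

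The main obstacle is Step~3: the lack of decay of $\phi$ at infinity forces the cutoff approximation, and the commutator term $\phi\cdot\nabla\eta_k$ appearing upon differentiating $\phi_k$ vanishes only quantitatively, because its support recedes to infinity. The local integrability hypothesis on $I_{1-\alpha}|\div\phi|$ is precisely what controls the dominated convergence of $I_{1-\alpha}(\eta_k\div\phi)$.
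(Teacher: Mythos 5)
Your Steps~1 and~2 coincide with the paper's proof: the same split of the integral at a radius $r$, the same optimisation (your choice $r=2\|\phi\|_{L^\infty(\R^n;\,\R^n)}/\Lip(\phi)$ is indeed the exact minimiser and reproduces the constant in~\eqref{eq:frac_div_Lip_b_bounded}), and the same use of dominated convergence plus the antisymmetry cancellation $\int_{\{|z|>\eps\}}z\,|z|^{-n-\alpha-1}\,dz=0$ for~\eqref{eq:frac_div_Lip_b_def_eps}.

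Step~3 is where you genuinely diverge, and your route is correct. The paper proves~\eqref{eq:frac_div_Lip_b_riesz} by a direct integration by parts on the annuli $\set*{R>|y-x|>\eps}$: the outer boundary term is $O(R^{-\alpha})$, the bulk term converges as $R\to+\infty$ by dominated convergence since $\phi\in L^\infty$, and the inner boundary term is $O(\eps^{1-\alpha})$ after subtracting $\phi(x)$ by symmetry; matching the resulting principal values with~\eqref{eq:frac_div_Lip_b_def_eps} gives the identity. You instead truncate, $\phi_k=\eta_k\phi\in\Lip_c$, invoke the already-established identity $\div^\alpha\phi_k=I_{1-\alpha}\div\phi_k$ from \cref{prop:frac_div_repr}, and pass to the limit: the tail error $O(k^{-\alpha})$ on the non-local side (via the second representation in~\eqref{eq:frac_div_Lip_b_def_eps}), dominated convergence for $I_{1-\alpha}(\eta_k\div\phi)$ using exactly the hypothesis $I_{1-\alpha}|\div\phi|\in L^1_{\loc}(\R^n)$, and the commutator estimate $O(k^{-\alpha})$ for $I_{1-\alpha}(\phi\cdot\nabla\eta_k)$ coming from $\|\nabla\eta_k\|_{L^\infty}\lesssim 1/k$ against the volume $\sim k^n$ of the annulus and the kernel bound $(k/2)^{-(n+\alpha-1)}$. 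Your argument buys modularity — it reuses the compactly supported case rather than redoing the integration by parts — at the price of the quantitative cutoff bookkeeping; the paper's argument is self-contained and avoids the Leibniz-rule decomposition of $\div(\eta_k\phi)$ (valid a.e.\ by Rademacher, as you implicitly use). Both are sound.
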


\begin{proof}
We split the proof in two steps.

\smallskip

\textit{Step~1: proof of~\eqref{eq:def_frac_div_Lip_b}, \eqref{eq:frac_div_Lip_b_bounded} and~\eqref{eq:frac_div_Lip_b_def_eps}}.
Given $x\in\R^n$ and $r>0$, we can estimate
\begin{equation*}
\int_{\set*{|y-x|\le r}}\abs*{\frac{(y-x)\cdot(\phi(y)-\phi(x))}{|y-x|^{n+\alpha+1}}}\,dy
\le n\omega_n\!\Lip(\phi)\int_0^r\rho^{-\alpha}\,d\rho
\end{equation*}
and
\begin{equation*}
\int_{\set*{|y-x|>r}}\abs*{\frac{(y-x)\cdot(\phi(y)-\phi(x))}{|y-x|^{n+\alpha+1}}}\,dy
\le2n\omega_n\|\phi\|_{L^\infty(\R^n;\,\R^n)}\int_r^{+\infty}\rho^{-(1+\alpha)}\,d\rho.
\end{equation*}
Hence the function in~\eqref{eq:def_frac_div_Lip_b} is well defined for all $x\in\R^n$ and
\begin{equation*}
\|\div^\alpha\phi\|_{L^\infty(\R^n)}	
\le n\omega_n\left(\frac{\Lip(\phi)}{1-\alpha}\,r^{1-\alpha}+\frac{2\|\phi\|_{L^\infty(\R^n;\,\R^n)}}{\alpha}\,r^{-\alpha}\right),
\end{equation*}
so that~\eqref{eq:frac_div_Lip_b_bounded} follows by optimising the right-hand side in~$r>0$.
Moreover, since
\begin{align*}
&\abs*{\frac{(y-x)\cdot(\phi(y)-\phi(x))}{|y-x|^{n+\alpha+1}}\,\chi_{(\eps,+\infty)}(|y-x|)}\\
&\qquad\le\Lip(\phi)\,\frac{\chi_{(0,1)}(|y-x|)}{|y-x|^{n+\alpha-1}}
+2\|\phi\|_{L^\infty(\R^n;\,\R^n)}\,\frac{\chi_{[1,+\infty)}(|y-x|)}{|y-x|^{n+\alpha}}
\in L^1_{x,y}(\R^n)
\end{align*}
and 
\begin{equation*}
\int_{\set*{|z|>\eps}}\frac{z}{|z|^{n+\alpha+1}}\,dy=0
\end{equation*}
for all $\eps>0$, by Lebesgue's Dominated Convergence Theorem we immediately get the two equalities in~\eqref{eq:frac_div_Lip_b_def_eps} for all $x\in\R^n$. 

\smallskip

\textit{Step~2: proof of~\eqref{eq:frac_div_Lip_b_riesz}}. Assume that $I_{1-\alpha}|\div\phi|\in L^1_{\loc}(\R^n)$. Then 
\begin{equation}\label{eq:a_e_L1_div_riesz}
\frac{|\div\phi(y)|}{|y-x|^{n+\alpha-1}}\in L^1_y(\R^n)
\end{equation}  
for a.e.~$x\in\R^n$. Hence, by Lebesgue's Dominated Convergence Theorem, we can write 
\begin{equation*}
I_{1-\alpha}\div\phi(x)
=\mu_{n,\alpha}\lim_{\eps\to0^+}\int_{\set*{|y-x|>\eps}}\frac{\div\phi(y)}{|y-x|^{n+\alpha-1}}\,dy
\end{equation*}
for a.e.~$x\in\R^n$. Now let $\eps>0$ be fixed and let $R>0$. Again by~\eqref{eq:a_e_L1_div_riesz} and Lebesgue's Dominated Convergence Theorem, we have
\begin{equation*}
\lim_{R\to+\infty}\int_{\set*{R>|y-x|>\eps}}\frac{\div\phi(y)}{|y-x|^{n+\alpha-1}}\,dy=\int_{\set*{|y-x|>\eps}}\frac{\div\phi(y)}{|y-x|^{n+\alpha-1}}\,dy
\end{equation*}
for a.e.~$x\in\R^n$. Moreover, integrating by parts, we get
\begin{align*}
\int_{\set*{R>|y-x|>\eps}}&\frac{\div\phi(y)}{|y-x|^{n+\alpha-1}}\,dy
=\int_{\set*{R>|y|>\eps}}\frac{\div_y\phi(y+x)}{|y|^{n+\alpha-1}}\,dy\\
&=\int_{\set*{|y|=R}}\frac{y}{|y|}\frac{\phi(y+x)}{|y|^{n+\alpha-1}}\,d\Haus{n-1}(y)
-\int_{\set*{|y|=\eps}}\frac{y}{|y|}\frac{\phi(y+x)}{|y|^{n+\alpha-1}}\,d\Haus{n-1}(y)\\
&\quad+\int_{\set*{R>|y|>\eps}}\frac{y\cdot\phi(y+x)}{|y|^{n+\alpha+1}}\,dy
\end{align*}
for all $R>0$ and for a.e.~$x\in\R^n$. Since $\phi\in L^\infty(\R^n;\R^n)$, by Lebesgue's Dominated Convergence Theorem we have
\begin{equation*}
\lim_{R\to+\infty}\int_{\set*{R>|y|>\eps}}\frac{y\cdot\phi(y+x)}{|y|^{n+\alpha+1}}\,dy
=\int_{\set*{|y|>\eps}}\frac{y\cdot\phi(y+x)}{|y|^{n+\alpha+1}}\,dy
\end{equation*}
for all $\eps>0$ and all $x\in\R^n$. We can also estimate
\begin{equation*}
\abs*{\int_{\set*{|y|=R}}\frac{y}{|y|}\frac{\phi(y+x)}{|y|^{n+\alpha-1}}\,d\Haus{n-1}(y)}
\le n\omega_n\|\phi\|_{L^\infty(\R^n;\,\R^n)}R^{-\alpha}
\end{equation*}
for all $R>0$ and all $x\in\R^n$. We thus have that
\begin{align*}
\int_{\set*{|y-x|>\eps}}\frac{\div\phi(y)}{|y-x|^{n+\alpha-1}}\,dy
=\int_{\set*{|y|>\eps}}\frac{y\cdot\phi(y+x)}{|y|^{n+\alpha+1}}\,dy
-\int_{\set*{|y|=\eps}}\frac{y}{|y|}\frac{\phi(y+x)}{|y|^{n+\alpha-1}}\,d\Haus{n-1}(y)
\end{align*}
for all $\eps>0$ and a.e.~$x\in\R^n$. Since also
\begin{align*}
\abs*{\int_{\set*{|y|=\eps}}\frac{y}{|y|}\frac{\phi(y+x)}{|y|^{n+\alpha-1}}\,d\Haus{n-1}(y)}
&=\abs*{\int_{\set*{|y|=\eps}}\frac{y}{|y|}\frac{\phi(y+x)-\phi(x)}{|y|^{n+\alpha-1}}\,d\Haus{n-1}(y)}\\
&\le n\omega_n\Lip(\phi)\,\eps^{1-\alpha}
\end{align*}
for all $\eps>0$ and $x\in\R^n$, we conclude that
\begin{align*}
\lim_{\eps\to0^+}\int_{\set*{|y-x|>\eps}}\frac{\div\phi(y)}{|y-x|^{n+\alpha-1}}\,dy
=\lim_{\eps\to0^+}\int_{\set*{|y-x|>\eps}}\frac{(y-x)\cdot\phi(y)}{|y-x|^{n+\alpha+1}}\,dy
\end{align*}
for a.e.~$x\in\R^n$, proving~\eqref{eq:frac_div_Lip_b_riesz}.
\end{proof}

We can also extend the fractional $\alpha$-gradient to~$\Lip_b$-regular functions. The proof is very similar to the one of \cref{res:frac_div_Lip_b} and is left to the reader.

\begin{lemma}[Extension of~$\nabla^\alpha$ to~$\Lip_b$] \label{res:frac_nabla_Lip_b}
Let $\alpha\in(0,1)$. The operator 
\begin{equation*}
\nabla^\alpha\colon\Lip_b(\R^n)\to L^\infty(\R^n;\R^n)
\end{equation*}
given by
\begin{equation*}
\nabla^\alpha f(x):=\mu_{n,\alpha}\int_{\R^n}\frac{(y-x)\cdot(f(y)-f(x))}{|y-x|^{n+\alpha+1}}\,dy,
\quad x\in\R^n,
\end{equation*}
for all $f\in\Lip_b(\R^n)$, is well defined, with
\begin{equation*}
\|\nabla^\alpha f\|_{L^\infty(\R^n;\,\R^n)}	
\le\frac{2^{1-\alpha}n\omega_n\mu_{n,\alpha}}{\alpha(1-\alpha)}\Lip(f)^\alpha\|f\|_{L^\infty(\R^n)}^{1-\alpha},
\end{equation*}
and satisfies
\begin{equation*}
\begin{split}
\nabla^\alpha f(x)
&=\mu_{n,\alpha}\lim_{\eps\to0^+}\int_{\set*{|y-x|>\eps}}\frac{(y-x)\cdot(f(y)-f(x))}{|y-x|^{n+\alpha+1}}\,dy\\
&=\mu_{n,\alpha}\lim_{\eps\to0^+}\int_{\set*{|y-x|>\eps}}\frac{(y-x)\cdot f(y)}{|y-x|^{n+\alpha+1}}\,dy
\end{split}
\end{equation*}
for all $x\in\R^n$. Moreover, if in addition $I_{1-\alpha}|\nabla f|\in L^1_{\loc}(\R^n)$, then 
\begin{equation*}
\nabla^\alpha f(x)=I_{1-\alpha}\nabla f(x)
\end{equation*}
for a.e.~$x\in\R^n$.
\end{lemma}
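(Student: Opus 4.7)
The plan is to follow the two-step structure of the proof of \cref{res:frac_div_Lip_b}, swapping the roles of the input and output: now $f$ is scalar and the output is $\R^n$-valued, while the kernel $(y-x)/|y-x|^{n+\alpha+1}$ and all integrability considerations are identical to those used there.

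\smallskip

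\emph{Step~1: well-definedness, $L^\infty$ bound, and the $\eps\to 0^+$ identities.} Fix $x\in\R^n$ and $r>0$ and split the integrand of $\nabla^\alpha f(x)$ according to whether $|y-x|\le r$ or $|y-x|>r$. On the near piece I would use $|f(y)-f(x)|\le\Lip(f)\,|y-x|$ to dominate the kernel by $\Lip(f)\,|y-x|^{-(n+\alpha)}$ and integrate in polar coordinates, producing a bound of the order $\Lip(f)\,r^{1-\alpha}/(1-\alpha)$. On the far piece I would use $|f(y)-f(x)|\le 2\|f\|_{L^\infty(\R^n)}$, which gives a bound of the order $\|f\|_{L^\infty(\R^n)}\,r^{-\alpha}/\alpha$. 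Optimizing the sum of the two contributions in $r>0$ yields the stated estimate for $\|\nabla^\alpha f\|_{L^\infty(\R^n;\,\R^n)}$. The two $\eps\to 0^+$ representations then follow from Lebesgue's Dominated Convergence Theorem applied to the majorant
\begin{equation*}
\Lip(f)\,\frac{\chi_{(0,1)}(|y-x|)}{|y-x|^{n+\alpha-1}}+2\|f\|_{L^\infty(\R^n)}\,\frac{\chi_{[1,+\infty)}(|y-x|)}{|y-x|^{n+\alpha}}\in L^1_y(\R^n),
\end{equation*}
together with the oddness identity $\int_{\{|z|>\eps\}}z\,|z|^{-(n+\alpha+1)}\,dz=0$, exactly as in Step~1 of the proof of \cref{res:frac_div_Lip_b}.

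\smallskip

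\emph{Step~2: Riesz potential representation.} Assuming $I_{1-\alpha}|\nabla f|\in L^1_{\loc}(\R^n)$, the hypothesis gives that $|\nabla f(y)|/|y-x|^{n+\alpha-1}$ is integrable in $y$ for a.e.~$x\in\R^n$. I would then pass to a principal-value formulation of $I_{1-\alpha}\nabla f(x)$ via dominated convergence, truncate to the annulus $\{R>|y|>\eps\}$, and integrate by parts component by component, mimicking the scalar computation performed for $\div^\alpha$ in the proof of \cref{res:frac_div_Lip_b}. This produces the interior term with kernel $y/|y|^{n+\alpha+1}$ together with two boundary pieces on $\{|y|=R\}$ and $\{|y|=\eps\}$. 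The outer boundary piece is controlled by $n\omega_n\|f\|_{L^\infty(\R^n)}R^{-\alpha}$ and vanishes as $R\to+\infty$; the inner boundary piece is dealt with by first subtracting the constant $f(x)$ from $f(y+x)$ (whose contribution is zero by the oddness of $y/|y|$) and then bounding the remainder by $n\omega_n\Lip(f)\,\eps^{1-\alpha}$, which vanishes as $\eps\to 0^+$. Comparing with the expression for $\nabla^\alpha f(x)$ obtained in Step~1 yields the identity $\nabla^\alpha f=I_{1-\alpha}\nabla f$ pointwise a.e.

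\smallskip

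The only point that requires a little care is the vectorial bookkeeping in Step~2: the integration by parts of \cref{res:frac_div_Lip_b} was scalar (a single divergence produced a scalar result), whereas here one must perform it component by component on the partials $\partial_i f$ and reassemble the $n$ identities into a single vector identity for $\nabla^\alpha f$. No new analytic idea is needed beyond those already exploited for $\div^\alpha$.
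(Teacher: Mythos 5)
Your proposal is correct and is exactly the argument the paper intends: the proof of \cref{res:frac_nabla_Lip_b} is explicitly left to the reader as a component-wise transposition of the proof of \cref{res:frac_div_Lip_b}, which is precisely what you carry out (splitting at radius $r$ and optimising for the $L^\infty$ bound, dominated convergence plus the oddness of the kernel for the principal-value identities, and the annulus integration by parts with the two boundary terms for the Riesz-potential representation). The only cosmetic slip is in Step~1: after the Lipschitz bound the near-field integrand is dominated by $\Lip(f)\,|y-x|^{-(n+\alpha-1)}$, not $\Lip(f)\,|y-x|^{-(n+\alpha)}$, consistently with the $\Lip(f)\,r^{1-\alpha}/(1-\alpha)$ contribution you then correctly state.
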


\subsection{Extended Leibniz's rules for~\texorpdfstring{$\nabla^\alpha$}{nablaˆalpha} and~\texorpdfstring{$\div^\alpha$}{divˆalpha}}

The following two results extend the validity of Leibniz's rules proved in~\cite{CS18}*{Lemmas~2.6 and~2.7} to $\Lip_b$-regular functions and $\Lip_b$-regular vector fields. The proofs are very similar to the ones given in~\cite{CS18} and to that of \cref{res:frac_div_Lip_b}, and thus are left to the reader.

\begin{lemma}[Extended Leibniz's rule for $\nabla^\alpha$]
\label{res:leibniz_frac_nabla_Lip_b}
Let $\alpha\in(0,1)$. If $f\in\Lip_b(\R^n)$ and $\eta\in\Lip_c(\R^n)$, then
\begin{equation*}
\nabla^\alpha(\eta f)
=\eta\,\nabla^\alpha f+ f\,\nabla^\alpha\eta+\nabla^\alpha_{\rm NL}(\eta,f),
\end{equation*}
where
\begin{equation*}
\nabla^\alpha_{\rm NL}(\eta,f)(x)
=\mu_{n,\alpha}\int_{\R^n}\frac{(y-x)\cdot(f(y)-f(x))(\eta(y)-\eta(x))}{|y-x|^{n+\alpha+1}}\,dy
\end{equation*}
for all $x\in\R^n$, with
\begin{equation*}
\|\nabla^\alpha_{\rm NL}(\eta,f)\|_{L^\infty(\R^n;\,\R^n)}
\le\frac{2^{2-\alpha}n\omega_n\mu_{n,\alpha}\|f\|_{L^\infty(\R^n)}}{\alpha(1-\alpha)}\Lip(\eta)^\alpha\|\eta\|_{L^\infty(\R^n)}^{1-\alpha}
\end{equation*}
and
\begin{equation*}
\|\nabla^\alpha_{\rm NL}(\eta,f)\|_{L^1(\R^n;\,\R^n)}
\le
2\mu_{n,\alpha}\|f\|_{L^\infty(\R^n)}[\eta]_{W^{\alpha,1}(\R^n)}.
\end{equation*}
\end{lemma}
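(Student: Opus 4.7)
The plan is to follow the proof of the standard Leibniz rule for $\nabla^\alpha$ given in \cite{CS18}*{Lemma~2.6}, adapted to the setting in which $\nabla^\alpha f$ for $f\in\Lip_b(\R^n)$ is given only as a principal value, as in \cref{res:frac_nabla_Lip_b}. Since $\eta\in\Lip_c(\R^n)$ and $f\in\Lip_b(\R^n)$, the product $\eta f$ belongs to $\Lip_c(\R^n)$, so $\nabla^\alpha(\eta f)$ and $\nabla^\alpha\eta$ are covered by \cref{prop:frac_div_repr}, while $\nabla^\alpha f$ is provided by \cref{res:frac_nabla_Lip_b}.

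First I would establish that the integral defining $\nabla^\alpha_{\rm NL}(\eta,f)$ is absolutely convergent at every $x\in\R^n$, together with the two claimed bounds. For the $L^\infty$ bound, I would split the domain of integration as $\{|y-x|\le r\}\cup\{|y-x|>r\}$: on the inner ball use the Lipschitz bound $|\eta(y)-\eta(x)|\le\Lip(\eta)|y-x|$ together with $|f(y)-f(x)|\le 2\|f\|_{L^\infty(\R^n)}$ to get an integrand of order $|y-x|^{1-n-\alpha}$, and on the exterior use $|\eta(y)-\eta(x)|\le 2\|\eta\|_{L^\infty(\R^n)}$ and $|f(y)-f(x)|\le 2\|f\|_{L^\infty(\R^n)}$ to get decay of order $|y-x|^{-n-\alpha}$. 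Optimising in $r$, with the critical choice $r\sim\|\eta\|_{L^\infty(\R^n)}/\Lip(\eta)$, produces exactly the stated constant, in complete analogy with the derivation of~\eqref{eq:frac_div_Lip_b_bounded} in \cref{res:frac_div_Lip_b}. The $L^1$ bound follows at once by Fubini together with the pointwise inequality $|f(y)-f(x)|\le 2\|f\|_{L^\infty(\R^n)}$.

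To establish the pointwise Leibniz identity, I would start from the $\eps$-truncated principal-value formula of \cref{res:frac_nabla_Lip_b} applied to $\eta f\in\Lip_c(\R^n)\subset\Lip_b(\R^n)$. Using that $\int_{\{|y-x|>\eps\}}(y-x)|y-x|^{-n-\alpha-1}\,dy=0$, the constant $\eta(x)f(x)$ can be inserted freely, after which the algebraic identity $\eta(y)f(y)-\eta(x)f(x) = \eta(x)(f(y)-f(x)) + f(x)(\eta(y)-\eta(x)) + (\eta(y)-\eta(x))(f(y)-f(x))$ splits the truncated integral into three pieces. The first two are precisely the $\eps$-truncated versions of $\eta(x)\nabla^\alpha f(x)$ and $f(x)\nabla^\alpha\eta(x)$, whose limits as $\eps\to 0^+$ are identified by \cref{res:frac_nabla_Lip_b} and \cref{prop:frac_div_repr} respectively; the third has an absolutely integrable integrand (by the inner/outer estimate just obtained) and hence converges to $\mu_{n,\alpha}^{-1}\,\nabla^\alpha_{\rm NL}(\eta,f)(x)$ by Lebesgue's dominated convergence theorem.

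The main obstacle is the bookkeeping of the principal values: none of $\nabla^\alpha(\eta f)$, $\eta\,\nabla^\alpha f$ or $f\,\nabla^\alpha\eta$ is individually given by an absolutely convergent integral in the $\Lip_b$ setting, so the algebraic decomposition must be performed \emph{before} passing to the limit $\eps\to 0^+$. The whole point of the argument is that the ``mixed'' remainder $\nabla^\alpha_{\rm NL}(\eta,f)$ enjoys the extra factor of $|y-x|$ of Lipschitz cancellation needed for absolute integrability, and this same gain is what yields the improved $L^\infty$ bound involving $\Lip(\eta)^\alpha\|\eta\|_{L^\infty(\R^n)}^{1-\alpha}$ after optimising in the splitting radius $r$.
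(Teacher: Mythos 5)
Your approach is the intended one --- the paper leaves this proof to the reader, pointing to \cite{CS18}*{Lemma~2.6} and to the proofs of \cref{res:frac_div_Lip_b} and \cref{res:frac_nabla_Lip_b} --- and your inner/outer splitting at radius $r$ with the optimal choice $r=2\|\eta\|_{L^\infty(\R^n)}/\Lip(\eta)$ reproduces the stated $L^\infty$ constant exactly. One conceptual correction: the ``main obstacle'' you describe is not actually there. For any $g\in\Lip_b(\R^n)$ the integral $\int_{\R^n}\frac{(y-x)(g(y)-g(x))}{|y-x|^{n+\alpha+1}}\,dy$ is \emph{absolutely} convergent --- this is precisely Step~1 of the proof of \cref{res:frac_div_Lip_b} (Lipschitz cancellation near $x$, boundedness at infinity) --- so each of $\nabla^\alpha(\eta f)$, $\eta\,\nabla^\alpha f$, $f\,\nabla^\alpha\eta$ and $\nabla^\alpha_{\rm NL}(\eta,f)$ is an absolutely convergent integral in its ``difference'' form, and the identity follows from your pointwise algebraic decomposition together with linearity of the Lebesgue integral, with no $\eps$-truncation needed. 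The principal value is only required for the representation $\int\frac{(y-x)g(y)}{|y-x|^{n+\alpha+1}}\,dy$ without the subtracted constant. Your truncated route is still correct, merely unnecessarily cautious.

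The one genuine discrepancy is in the $L^1$ estimate: bounding $|f(y)-f(x)|\le 2\|f\|_{L^\infty(\R^n)}$ and applying Tonelli gives $\|\nabla^\alpha_{\rm NL}(\eta,f)\|_{L^1(\R^n;\,\R^n)}\le 2\mu_{n,\alpha}\|f\|_{L^\infty(\R^n)}[\eta]_{W^{\alpha,1}(\R^n)}$, which is \emph{twice} the constant in the statement, and there is no cancellation available to remove the factor $2$ for general $f\in\Lip_b(\R^n)$. This looks like a slip in the stated constant rather than in your argument --- note that when the companion estimate for $\div^\alpha_{\rm NL}$ is actually invoked in the proof of \cref{res:Lip_b_test}, the factor $2$ does appear --- but as written your proof establishes the bound with $2\mu_{n,\alpha}$, not $\mu_{n,\alpha}$, and you should say so explicitly.
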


\begin{lemma}[Extended Leibniz's rule for $\div^\alpha$]
\label{res:leibniz_frac_div_Lip_b}
Let $\alpha\in(0,1)$. If $\phi\in\Lip_b(\R^n;\R^n)$ and $\eta\in\Lip_c(\R^n)$, then
\begin{equation*}
\div^\alpha(\eta\phi)
=\eta\,\div^\alpha\phi+\phi\cdot\nabla^\alpha\eta+\div^\alpha_{\rm NL}(\eta,\phi),
\end{equation*}
where
\begin{equation*}
\div^\alpha_{\rm NL}(\eta,\phi)(x)
=\mu_{n,\alpha}\int_{\R^n}\frac{(y-x)\cdot(\phi(y)-\phi(x))(\eta(y)-\eta(x))}{|y-x|^{n+\alpha+1}}\,dy
\end{equation*}
for all $x\in\R^n$, with
\begin{equation*}
\|\div^\alpha_{\rm NL}(\eta,\phi)\|_{L^\infty(\R^n)}
\le\frac{2^{2-\alpha}n\omega_n\mu_{n,\alpha}\|\phi\|_{L^\infty(\R^n;\,\R^n)}}{\alpha(1-\alpha)}\Lip(\eta)^\alpha\|\eta\|_{L^\infty(\R^n)}^{1-\alpha}
\end{equation*}
and
\begin{equation*}
\|\div^\alpha_{\rm NL}(\eta,\phi)\|_{L^1(\R^n)}
\le
2\mu_{n,\alpha}\|\phi\|_{L^\infty(\R^n;\,\R^n)}[\eta]_{W^{\alpha,1}(\R^n)}.
\end{equation*}
\end{lemma}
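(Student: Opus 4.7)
The plan is to mimic the proofs of the extended single-operator lemmas (\cref{res:frac_div_Lip_b} and \cref{res:frac_nabla_Lip_b}) and the compactly-supported Leibniz rule from \cite{CS18}*{Lemma~2.7}, transferring the Lipschitz regularity from $\phi$ to $\eta$ via the interpolation trick already used in \cref{res:frac_div_Lip_b}. First, I start from the pointwise algebraic identity
\begin{equation*}
\eta(y)\phi(y)-\eta(x)\phi(x)
=\eta(x)(\phi(y)-\phi(x))+(\eta(y)-\eta(x))\phi(x)+(\eta(y)-\eta(x))(\phi(y)-\phi(x)),
\end{equation*}
valid for all $x,y\in\R^n$. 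Since $\eta\in\Lip_c(\R^n)$ and $\phi\in\Lip_b(\R^n;\R^n)$, the product $\eta\phi$ belongs to $\Lip_c(\R^n;\R^n)$, so by \cref{prop:frac_div_repr} the fractional divergence $\div^\alpha(\eta\phi)$ is well defined as the integral of $\frac{(y-x)\cdot(\eta(y)\phi(y)-\eta(x)\phi(x))}{|y-x|^{n+\alpha+1}}$. Plugging the identity into this integral formally yields the three terms $\eta\,\div^\alpha\phi$, $\phi\cdot\nabla^\alpha\eta$, and $\div^\alpha_{\rm NL}(\eta,\phi)$, where the first is well defined by \cref{res:frac_div_Lip_b} and the second by the ordinary definition of $\nabla^\alpha$ on $\Lip_c$. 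The splitting of the integral is legitimate once each piece is shown to be absolutely convergent; the first two are handled by the principal-value arguments of the preceding lemmas, and the third reduces to the $L^\infty$ bound below.

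For the $L^\infty$ estimate of $\div^\alpha_{\rm NL}(\eta,\phi)$, I bound
\begin{equation*}
\abs*{\frac{(y-x)\cdot(\phi(y)-\phi(x))(\eta(y)-\eta(x))}{|y-x|^{n+\alpha+1}}}
\le\frac{|\phi(y)-\phi(x)|\,|\eta(y)-\eta(x)|}{|y-x|^{n+\alpha}},
\end{equation*}
and split the domain of integration over $\set*{|y-x|\le r}$ and $\set*{|y-x|>r}$. On the inner region I use $|\phi(y)-\phi(x)|\le 2\|\phi\|_{L^\infty}$ together with $|\eta(y)-\eta(x)|\le\Lip(\eta)|y-x|$, obtaining a bound of order $\|\phi\|_{L^\infty}\Lip(\eta)\,r^{1-\alpha}/(1-\alpha)$. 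On the outer region I use $|\phi(y)-\phi(x)|\le 2\|\phi\|_{L^\infty}$ and $|\eta(y)-\eta(x)|\le 2\|\eta\|_{L^\infty}$, getting a bound of order $\|\phi\|_{L^\infty}\|\eta\|_{L^\infty}\,r^{-\alpha}/\alpha$. Optimising over $r>0$ by choosing $r\sim\|\eta\|_{L^\infty}/\Lip(\eta)$ produces the asserted estimate with the factor $\Lip(\eta)^\alpha\|\eta\|_{L^\infty}^{1-\alpha}$ and the combinatorial constant $\frac{2^{2-\alpha}n\omega_n\mu_{n,\alpha}}{\alpha(1-\alpha)}$. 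This bound, being pointwise, also justifies the splitting of the integral in the first paragraph.

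For the $L^1$ estimate, I bound the integrand by
\begin{equation*}
\mu_{n,\alpha}\,\frac{|\phi(y)-\phi(x)|\,|\eta(y)-\eta(x)|}{|y-x|^{n+\alpha}}
\le\mu_{n,\alpha}\|\phi\|_{L^\infty(\R^n;\,\R^n)}\,\frac{|\eta(y)-\eta(x)|}{|y-x|^{n+\alpha}},
\end{equation*}
integrate in $x$, apply Fubini--Tonelli, and recognise $[\eta]_{W^{\alpha,1}(\R^n)}$. The proof of the Leibniz rule for $\nabla^\alpha$ in \cref{res:leibniz_frac_nabla_Lip_b} is identical, with $\phi$ replaced by a scalar $f\in\Lip_b(\R^n)$. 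The only genuinely technical point is the absolute convergence argument needed to distribute the original $\div^\alpha(\eta\phi)$ integral into its three summands, but this is immediate once the pointwise control from the $L^\infty$ estimate is in hand; the remaining work is routine bookkeeping analogous to \cref{res:frac_div_Lip_b} and \cite{CS18}*{Lemma~2.7}.
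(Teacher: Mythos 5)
Your decomposition is exactly the intended one: the paper omits the proof precisely because it is the algebraic identity $\eta(y)\phi(y)-\eta(x)\phi(x)=\eta(x)(\phi(y)-\phi(x))+\phi(x)(\eta(y)-\eta(x))+(\phi(y)-\phi(x))(\eta(y)-\eta(x))$ contracted with $(y-x)|y-x|^{-n-\alpha-1}$, with each of the three resulting integrals absolutely convergent (the first by \cref{res:frac_div_Lip_b}, the second because $\eta\in\Lip_c$, the third by the two-regime bound), exactly as in \cite{CS18}*{Lemma~2.7}. Your $L^\infty$ estimate is correct, and the constants do work out: the optimum is $r=2\|\eta\|_{L^\infty(\R^n)}/\Lip(\eta)$ and the extra factor $2$ coming from $|\phi(y)-\phi(x)|\le 2\|\phi\|_{L^\infty(\R^n;\,\R^n)}$ turns $2^{1-\alpha}$ of \cref{res:frac_div_Lip_b} into the stated $2^{2-\alpha}$.

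The one step that does not hold as written is the pointwise inequality in your $L^1$ argument,
\begin{equation*}
\frac{|\phi(y)-\phi(x)|\,|\eta(y)-\eta(x)|}{|y-x|^{n+\alpha}}
\le\|\phi\|_{L^\infty(\R^n;\,\R^n)}\,\frac{|\eta(y)-\eta(x)|}{|y-x|^{n+\alpha}},
\end{equation*}
since in general one only has $|\phi(y)-\phi(x)|\le 2\|\phi\|_{L^\infty(\R^n;\,\R^n)}$. Carried out honestly, your argument (and every elementary variant of it, e.g.\ splitting $\phi(y)-\phi(x)$ and using symmetry in $x,y$) yields
\begin{equation*}
\|\div^\alpha_{\rm NL}(\eta,\phi)\|_{L^1(\R^n)}\le 2\,\mu_{n,\alpha}\,\|\phi\|_{L^\infty(\R^n;\,\R^n)}\,[\eta]_{W^{\alpha,1}(\R^n)},
\end{equation*}
i.e.\ the stated bound with an extra factor $2$. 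This is consistent with how the lemma is actually used: in the proof of \cref{res:Lip_b_test} the paper invokes precisely the factor-$2$ pointwise bound on $\div^\alpha_{\rm NL}(\eta_R,\phi)$. So the missing $2$ in the statement is best read as a typographical slip rather than as a signal that a sharper argument is needed; nevertheless, you should either correct the displayed inequality to $|\phi(y)-\phi(x)|\le2\|\phi\|_{L^\infty(\R^n;\,\R^n)}$ and record the constant $2\mu_{n,\alpha}$, or explain how to remove the factor $2$ (which I do not see how to do). Everything else is fine.
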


\subsection{Extended integration-by-part formulas}

We now recall the definition of the space of functions with bounded fractional $\alpha$-variation. Given $\alpha \in (0, 1)$, we let
\begin{equation*}\label{intro_eq:BV_alpha}
BV^\alpha(\R^n):=\set*{f\in L^1(\R^n) : |D^\alpha f|(\R^n)<+\infty},
\end{equation*}
where
\begin{equation*}\label{intro_eq:def_frac_variation}
|D^\alpha f|(\R^n)=\sup\set*{\int_{\R^n}f\,\div^\alpha\phi\,dx : \phi\in C^\infty_c(\R^n;\R^n),\ \|\phi\|_{L^\infty(\R^n;\R^n)}\le1}
\end{equation*}
is the \emph{fractional $\alpha$-variation} of $f\in L^1(\R^n)$. We refer the reader to~\cite{CS18}*{Section~3} for the basic properties of this function space. Here we just recall the following result, see~\cite{CS18}*{Theorem 3.2 and Proposition 3.6} for the proof.

\begin{theorem}[Structure theorem for $BV^\alpha$ functions]
Let $\alpha\in(0,1)$. If $f \in L^{1}(\R^{n})$, then $f \in BV^{\alpha}(\R^{n})$ if and only if there exists a finite vector-valued Radon measure $D^{\alpha} f \in \M(\R^{n}; \R^{n})$ such that
\begin{equation}\label{eq:BV_alpha_duality} 
\int_{\R^{n}} f\, \div^{\alpha} \varphi \, dx = - \int_{\R^n} \varphi \cdot d D^{\alpha} f 
\end{equation}
for all $\phi \in \Lip_{c}(\R^{n}; \R^{n})$.
\end{theorem}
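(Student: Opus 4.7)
The statement has two directions. The reverse direction is immediate: if such a measure $D^\alpha f$ exists, then restricting \eqref{eq:BV_alpha_duality} to $\phi \in C^\infty_c(\R^n;\R^n)$ with $\|\phi\|_\infty \le 1$ gives $\bigl|\int_{\R^n} f\,\div^\alpha\phi\,dx\bigr| \le |D^\alpha f|(\R^n)$, whence $f \in BV^\alpha(\R^n)$. The substance is in the forward direction, and then in upgrading the class of admissible test fields from $C^\infty_c$ to $\Lip_c$.

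For the forward direction, I would invoke the Riesz representation theorem. Given $f \in BV^\alpha(\R^n)$, define the linear functional $L\colon C^\infty_c(\R^n;\R^n) \to \R$ by $L(\phi) := -\int_{\R^n} f\,\div^\alpha\phi\,dx$. By the very definition of $|D^\alpha f|(\R^n)$, we have $|L(\phi)| \le |D^\alpha f|(\R^n)\,\|\phi\|_{L^\infty(\R^n;\R^n)}$ for every $\phi\in C^\infty_c(\R^n;\R^n)$. Since $C^\infty_c(\R^n;\R^n)$ is dense in $C^0_c(\R^n;\R^n)$ in the uniform norm (via standard mollification, noting that supports remain controlled), $L$ extends uniquely to a bounded linear functional on $C^0_c(\R^n;\R^n)$ with the same norm bound. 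The Riesz--Markov--Kakutani representation theorem then yields a unique finite $\R^n$-valued Radon measure $D^\alpha f \in \M(\R^n;\R^n)$ with $|D^\alpha f|(\R^n) = \|L\|$ such that
\begin{equation*}
-\int_{\R^n} f\,\div^\alpha\phi\,dx = \int_{\R^n} \phi \cdot dD^\alpha f \quad \text{for every } \phi \in C^\infty_c(\R^n;\R^n).
\end{equation*}

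It remains to extend \eqref{eq:BV_alpha_duality} to every $\phi \in \Lip_c(\R^n;\R^n)$. Fix such a $\phi$ with $\supp\phi \subset B_R$ and set $\phi_\eps := \phi * \rho_\eps \in C^\infty_c(\R^n;\R^n)$ where $\rho_\eps$ is a standard mollifier; for $\eps < 1$ we have $\supp\phi_\eps \subset B_{R+1}$, $\Lip(\phi_\eps) \le \Lip(\phi)$, $\|\phi_\eps\|_{L^\infty} \le \|\phi\|_{L^\infty}$ and $\phi_\eps \to \phi$ uniformly on $\R^n$. Applying \eqref{eq:frac_div_Lip_b_bounded} of \cref{res:frac_div_Lip_b} to the difference $\phi_\eps - \phi \in \Lip_b(\R^n;\R^n)$, which is compactly supported with $\Lip(\phi_\eps-\phi) \le 2\Lip(\phi)$, yields
\begin{equation*}
\|\div^\alpha\phi_\eps - \div^\alpha\phi\|_{L^\infty(\R^n)} \le \frac{2^{1-\alpha} n\omega_n \mu_{n,\alpha}}{\alpha(1-\alpha)} \bigl(2\Lip(\phi)\bigr)^\alpha \|\phi_\eps - \phi\|_{L^\infty(\R^n;\R^n)}^{1-\alpha} \xrightarrow[\eps\to 0^+]{} 0.
\end{equation*}
Since $f \in L^1(\R^n)$, this gives $\int_{\R^n} f\,\div^\alpha\phi_\eps\,dx \to \int_{\R^n} f\,\div^\alpha\phi\,dx$, while the uniform convergence $\phi_\eps \to \phi$ together with the finiteness of $D^\alpha f$ and the uniform compact support yields $\int_{\R^n}\phi_\eps \cdot dD^\alpha f \to \int_{\R^n}\phi\cdot dD^\alpha f$. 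Passing to the limit in the identity for $\phi_\eps$ establishes \eqref{eq:BV_alpha_duality} for all $\phi \in \Lip_c(\R^n;\R^n)$.

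The only non-routine step is the uniform convergence of $\div^\alpha\phi_\eps$, but this is precisely what \cref{res:frac_div_Lip_b} is tailored to deliver, so no genuine obstacle remains; the proof is in essence Riesz representation plus an interpolation-type bound applied along a mollification sequence.
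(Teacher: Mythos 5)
Your proof is correct and follows the standard route: the reverse implication is immediate, the forward one is Riesz--Markov--Kakutani representation applied to the functional $\phi\mapsto-\int_{\R^n}f\,\div^\alpha\phi\,dx$ bounded by $|D^\alpha f|(\R^n)\,\|\phi\|_{L^\infty}$, and the upgrade from $C^\infty_c$ to $\Lip_c$ test fields is obtained by mollification together with the interpolation bound \eqref{eq:frac_div_Lip_b_bounded}. This is essentially the argument of~\cite{CS18}*{Theorem 3.2 and Proposition 3.6}, which is all the present paper does for this statement (it only recalls the result), so nothing further is needed.
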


Thanks to \cref{res:leibniz_frac_div_Lip_b}, we can actually prove that a function in~$BV^\alpha(\R^n)$ can be tested against any $\Lip_b$-regular vector field.

\begin{proposition}[$\Lip_b$-regular test for $BV^\alpha$ functions]
\label{res:Lip_b_test}
Let $\alpha\in(0,1)$. If $f\in BV^\alpha(\R^n)$, then \eqref{eq:BV_alpha_duality} holds for all $\phi\in\Lip_b(\R^n;\R^n)$.	
\end{proposition}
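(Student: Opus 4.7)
The plan is to reduce the $\Lip_b$-regular case to the $\Lip_c$-regular case already covered by the structure theorem by means of a cutoff argument. Fix a function $\eta \in C^\infty_c(\R^n)$ with $0 \le \eta \le 1$, $\eta \equiv 1$ on $B_1$ and $\supp \eta \subset B_2$, and for $R > 0$ set $\eta_R(x) := \eta(x/R)$. Then $\eta_R \in \Lip_c(\R^n)$ with $\|\eta_R\|_{L^\infty} \le 1$ and $\Lip(\eta_R) \le \Lip(\eta)/R$. Given $\phi \in \Lip_b(\R^n; \R^n)$, the product $\eta_R \phi$ lies in $\Lip_c(\R^n; \R^n)$, so by the structure theorem we can write
\begin{equation*}
\int_{\R^n} f\, \div^\alpha(\eta_R \phi)\, dx = -\int_{\R^n} \eta_R \phi \cdot dD^\alpha f.
\end{equation*}
Applying the extended Leibniz rule \cref{res:leibniz_frac_div_Lip_b} to the left-hand side gives
\begin{equation*}
\int_{\R^n} f\, \eta_R \div^\alpha \phi\, dx + \int_{\R^n} f\, \phi \cdot \nabla^\alpha \eta_R\, dx + \int_{\R^n} f\, \div^\alpha_{\rm NL}(\eta_R, \phi)\, dx = -\int_{\R^n} \eta_R \phi \cdot dD^\alpha f.
\end{equation*}

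I then pass to the limit as $R \to +\infty$ term by term. For the first term on the left, \cref{res:frac_div_Lip_b} gives $\div^\alpha \phi \in L^\infty(\R^n)$ with a bound independent of $R$, hence $|f \eta_R \div^\alpha \phi| \le |f|\, \|\div^\alpha \phi\|_{L^\infty} \in L^1(\R^n)$ and $\eta_R \to 1$ pointwise, so dominated convergence yields $\int f\, \eta_R \div^\alpha \phi\, dx \to \int f\, \div^\alpha \phi\, dx$. Similarly, for the right-hand side, $\phi \in L^\infty(\R^n;\R^n) \subset L^1(\R^n;\R^n; |D^\alpha f|)$ since $|D^\alpha f|$ is finite, so dominated convergence against the Radon measure $D^\alpha f$ gives $\int \eta_R \phi \cdot dD^\alpha f \to \int \phi \cdot dD^\alpha f$.

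The heart of the matter is showing that the remaining two terms vanish in the limit, and this is where the main obstacle lies: the $L^1$ estimates from \cref{res:leibniz_frac_nabla_Lip_b} and \cref{res:leibniz_frac_div_Lip_b} involve $[\eta_R]_{W^{\alpha,1}(\R^n)}$, which scales like $R^{n-\alpha}$ and therefore blows up. The remedy is to use the $L^\infty$ bounds instead, combined with the fact that $f \in L^1(\R^n)$. From \cref{res:frac_nabla_Lip_b} applied to $\eta_R$,
\begin{equation*}
\|\nabla^\alpha \eta_R\|_{L^\infty(\R^n;\,\R^n)} \le \frac{2^{1-\alpha} n \omega_n \mu_{n,\alpha}}{\alpha(1-\alpha)} \Lip(\eta_R)^\alpha \|\eta_R\|_{L^\infty(\R^n)}^{1-\alpha} \le \frac{C_{n,\alpha,\eta}}{R^\alpha},
\end{equation*}
and analogously from \cref{res:leibniz_frac_div_Lip_b},
\begin{equation*}
\|\div^\alpha_{\rm NL}(\eta_R, \phi)\|_{L^\infty(\R^n)} \le \frac{2^{2-\alpha} n \omega_n \mu_{n,\alpha}}{\alpha(1-\alpha)} \|\phi\|_{L^\infty(\R^n;\,\R^n)} \Lip(\eta_R)^\alpha \|\eta_R\|_{L^\infty(\R^n)}^{1-\alpha} \le \frac{C'_{n,\alpha,\eta,\phi}}{R^\alpha}.
\end{equation*}
Therefore
\begin{equation*}
\left| \int_{\R^n} f\, \phi \cdot \nabla^\alpha \eta_R\, dx \right| \le \|f\|_{L^1(\R^n)} \|\phi\|_{L^\infty(\R^n;\,\R^n)} \|\nabla^\alpha \eta_R\|_{L^\infty(\R^n;\,\R^n)} \longrightarrow 0
\end{equation*}
and
\begin{equation*}
\left| \int_{\R^n} f\, \div^\alpha_{\rm NL}(\eta_R, \phi)\, dx \right| \le \|f\|_{L^1(\R^n)} \|\div^\alpha_{\rm NL}(\eta_R, \phi)\|_{L^\infty(\R^n)} \longrightarrow 0
\end{equation*}
as $R \to +\infty$. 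Combining these limits with the two convergences established above yields \eqref{eq:BV_alpha_duality} for all $\phi \in \Lip_b(\R^n; \R^n)$, completing the proof.
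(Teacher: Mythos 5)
Your proof is correct and follows essentially the same route as the paper's: cut off with $\eta_R$, apply the extended Leibniz rule for $\div^\alpha$ to $\eta_R\phi\in\Lip_c(\R^n;\R^n)$, invoke the structure theorem, and pass to the limit by dominated convergence in the two main terms. The only (harmless) difference is in the two remainder terms, where you exploit the explicit decay $\|\nabla^\alpha\eta_R\|_{L^\infty}\lesssim R^{-\alpha}$ (and likewise for $\div^\alpha_{\rm NL}(\eta_R,\phi)$) together with $f\in L^1(\R^n)$, whereas the paper bounds them by $\|\phi\|_{L^\infty(\R^n;\R^n)}\int_{\R^n}|f(x)|\int_{\R^n}|\eta_R(y)-\eta_R(x)|\,|y-x|^{-n-\alpha}\,dy\,dx$ and applies dominated convergence; both arguments are valid.
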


\begin{proof}
We argue as in the proof of~\cite{CS18}*{Theorem~3.8}. Fix $\phi\in\Lip_b(\R^n;\R^n)$ and let $(\eta_R)_{R>0}\subset C^\infty_c(\R^n)$ be a family of cut-off functions as in~\cite{CS18}*{Section~3.3}. On the one hand, since
\begin{equation*}
\abs*{\int_{\R^n}f\eta_R\,\div^\alpha\phi\,dx-\int_{\R^n}f\,\div^\alpha\phi\,dx}
\le\|\div^\alpha\phi\|_{L^\infty(\R^n)}	\int_{\R^n}|f|\,(1-\eta_R)\,dx
\end{equation*}
for all $R>0$, by Lebesgue's Dominated Convergence Theorem we have
\begin{equation*}
\lim_{R\to+\infty}\int_{\R^n}f\eta_R\,\div^\alpha\phi\,dx
=\int_{\R^n}f\,\div^\alpha\phi\,dx.
\end{equation*}
On the other hand, by \cref{res:leibniz_frac_div_Lip_b} we can write
\begin{align*}
\int_{\R^n}f\eta_R\,\div^\alpha\phi\,dx
=\int_{\R^n}f\,\div^\alpha(\eta_R\phi)\,dx
-\int_{\R^n}f\,\phi\cdot\nabla^\alpha\eta_R\,dx
-\int_{\R^n}f\,\div^\alpha_{\rm NL}(\eta_R,\phi)\,dx
\end{align*} 
for all $R>0$. By~\cite{CS18}*{Proposition~3.6}, we have
\begin{equation*}
\int_{\R^n}f\,\div^\alpha(\eta_R\phi)\,dx
=-\int_{\R^n}\eta_R\phi\cdot dD^\alpha f
\end{equation*}
for all $R>0$. Since
\begin{equation*}
\abs*{\int_{\R^n}\eta_R\phi\cdot dD^\alpha f-\int_{\R^n}\phi\cdot dD^\alpha f}
\le\|\phi\|_{L^\infty(\R^n;\R^n)}\int_{\R^n}(1-\eta_R)\,d|D^\alpha f|
\end{equation*}
for all $R>0$, by Lebesgue's Dominated Convergence Theorem (with respect to the finite measure~$|D^\alpha f|$) we have
\begin{equation*}
\lim_{R\to+\infty}\int_{\R^n}\eta_R\phi\cdot dD^\alpha f
=\int_{\R^n}\phi\cdot dD^\alpha f.
\end{equation*}
Finally, we can estimate
\begin{align*}
\abs*{\int_{\R^n}f\,\phi\cdot\nabla^\alpha\eta_R\,dx}
\le\mu_{n,\alpha}\|\phi\|_{L^\infty(\R^n;\R^n)}
\int_{\R^n}|f(x)|\int_{\R^n}\frac{|\eta_R(y)-\eta_R(x)|}{|y-x|^{n+\alpha}}\,dy\,dx
\end{align*}
and, similarly,
\begin{equation*}
\abs*{\int_{\R^n}f\,\div^\alpha_{\rm NL}(\eta_R,\phi)\,dx}
\le2\mu_{n,\alpha}\|\phi\|_{L^\infty(\R^n;\R^n)}
\int_{\R^n}|f(x)|\int_{\R^n}\frac{|\eta_R(y)-\eta_R(x)|}{|y-x|^{n+\alpha}}\,dy\,dx.
\end{equation*}
By Lebesgue's Dominated Convergence Theorem, we thus get that
\begin{equation*}
\lim_{R\to+\infty}\left(\int_{\R^n}f\,\phi\cdot\nabla^\alpha\eta_R\,dx
+\int_{\R^n}f\,\div^\alpha_{\rm NL}(\eta_R,\phi)\,dx\right)=0
\end{equation*} 
and the conclusion follows.
\end{proof}

Thanks to \cref{res:leibniz_frac_nabla_Lip_b}, we can prove that a function in~$\Lip_b(\R^n)$ can be tested against any $\Lip_c$-regular vector field. The proof is very similar to the one of \cref{res:Lip_b_test} and is thus left to the reader.

\begin{proposition}[Integration by parts for $\Lip_b$-regular functions]
\label{res:Lip_b_int_by_parts}
Let $\alpha\in(0,1)$. If $f\in\Lip_b(\R^n)$, then
\begin{equation*}
\int_{\R^n}f\,\div^\alpha\phi\,dx=-\int_{\R^n}\phi\cdot\nabla^\alpha f\,dx
\end{equation*}	
for all $\phi\in\Lip_c(\R^n;\R^n)$.
\end{proposition}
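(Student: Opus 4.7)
The plan is to mirror the argument of \cref{res:Lip_b_test}, swapping the roles of $f$ and $\phi$: one cuts off the $\Lip_b$-regular scalar $f$ by a family of cut-off functions $(\eta_R)_{R>0}\subset C^\infty_c(\R^n)$ as in \cite{CS18}*{Section~3.3}, so that $\eta_R f\in\Lip_c(\R^n)$ and the base integration-by-parts formula applies to the pair $(\eta_R f,\phi)$.

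First I would invoke the identity
\begin{equation*}
\int_{\R^n} (\eta_R f)\,\div^\alpha\phi\,dx = -\int_{\R^n} \phi\cdot\nabla^\alpha(\eta_R f)\,dx,
\end{equation*}
which extends from the $C^\infty_c$ case of \cite{CS18}*{Lemma~2.5} to $\Lip_c\times\Lip_c$ via a standard mollification argument together with the $L^1\cap L^\infty$ bounds of \cref{prop:frac_div_repr}. I would then pass to the limit $R\to+\infty$ on both sides. On the left, $\div^\alpha\phi\in L^1(\R^n)\cap L^\infty(\R^n)$ by \cref{prop:frac_div_repr}, so $\|f\|_{L^\infty(\R^n)}|\div^\alpha\phi|\in L^1(\R^n)$ dominates the integrand, and Lebesgue's Dominated Convergence Theorem yields convergence to $\int_{\R^n} f\,\div^\alpha\phi\,dx$.

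For the right-hand side, I would apply the extended Leibniz rule (\cref{res:leibniz_frac_nabla_Lip_b}) to split
\begin{equation*}
\nabla^\alpha(\eta_R f) = \eta_R\,\nabla^\alpha f + f\,\nabla^\alpha\eta_R + \nabla^\alpha_{\rm NL}(\eta_R,f).
\end{equation*}
The principal term $-\int_{\R^n}\phi\cdot\eta_R\,\nabla^\alpha f\,dx$ converges to the desired $-\int_{\R^n}\phi\cdot\nabla^\alpha f\,dx$ by dominated convergence, since $\nabla^\alpha f\in L^\infty(\R^n;\R^n)$ by \cref{res:frac_nabla_Lip_b} while $\phi\in L^1(\R^n;\R^n)$ has compact support.

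The main obstacle, as in the proof of \cref{res:Lip_b_test}, is showing that the two \emph{error} contributions $\int_{\R^n}\phi\cdot f\,\nabla^\alpha\eta_R\,dx$ and $\int_{\R^n}\phi\cdot\nabla^\alpha_{\rm NL}(\eta_R,f)\,dx$ both vanish as $R\to+\infty$. Exploiting the compact support of $\phi$, both are bounded by a constant multiple of $\|\phi\|_{L^\infty(\R^n;\R^n)}\|f\|_{L^\infty(\R^n)}\int_{\supp\phi}\int_{\R^n}\frac{|\eta_R(y)-\eta_R(x)|}{|y-x|^{n+\alpha}}\,dy\,dx$. Choosing the standard scaling $\eta_R(x)=\eta(x/R)$ with $\eta\in C^\infty_c(\R^n)$ identically $1$ on $B_1$, for $R$ large enough that $\supp\phi\subset B_{R/2}$ the contribution from $\{|y-x|<R/2\}$ vanishes identically (both $\eta_R(x)$ and $\eta_R(y)$ equal $1$), while the tail over $\{|y-x|\ge R/2\}$ is uniformly $O(R^{-\alpha})$ on $\supp\phi$. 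This forces both error integrals to zero and completes the passage to the limit.
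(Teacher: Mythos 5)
Your proof is correct and is precisely the argument the paper intends: it mirrors the proof of \cref{res:Lip_b_test}, with the extended Leibniz rule of \cref{res:leibniz_frac_nabla_Lip_b} replacing that of \cref{res:leibniz_frac_div_Lip_b}, and with the compact support of $\phi$ (rather than the integrability of $f$) making the two error terms vanish. Your explicit uniform $O(R^{-\alpha})$ bound on $\sup_{x\in\supp\phi}\int_{\R^n}|\eta_R(y)-\eta_R(x)|\,|y-x|^{-n-\alpha}\,dy$ is exactly the property of the cut-off family from \cite{CS18}*{Section~3.3} needed to close the argument.
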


\subsection{Comparison between \texorpdfstring{$W^{\alpha,1}$}{Wˆ{alpha,1}} and \texorpdfstring{$BV^{\alpha}$}{BVˆalpha} seminorms}\label{sec:comparison_seminorms}

In this section, we completely answer a question left open in~\cite{CS18}*{Section 1.4}. Given $\alpha \in (0, 1)$ and an open set $\Omega\subset\R^n$, we want to study the equality cases in the inequalities
\begin{equation*}
\|\nabla^{\alpha} f\|_{L^{1}(\R^{n};\, \R^{n})} 
\le \mu_{n, \alpha} [f]_{W^{\alpha, 1}(\R^{n})},
\qquad
|D^\alpha\chi_E|(\Omega)
\le \mu_{n,\alpha} P_\alpha(E;\Omega),
\end{equation*}
as long as $f\in W^{\alpha,1}(\R^n)$ and $P_\alpha(E;\Omega)<+\infty$. The key idea to the solution of this problem lies in the following simple result.

\begin{lemma}\label{res:easy_norm_ineq}
Let $A\subset\R^n$ be a measurable set with $\Leb{n}(A)>0$. If $F \in L^{1}(A; \R^m)$, then 
\begin{equation*}
\bigg | \int_A F(x) \, dx \, \bigg | 
\le \int_A | F(x) | \, dx,
\end{equation*}
with equality if and only if $F = f \nu$ a.e.\ in~$A$ for some constant direction $\nu \in \mathbb{S}^{m - 1}$ and some scalar function $f \in L^{1}(A)$ with $f \ge 0$ a.e.\ in~$A$.
\end{lemma}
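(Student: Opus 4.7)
The inequality itself is the standard triangle inequality for vector-valued integrals. The plan is to reduce it to the scalar inequality $\nu \cdot v \le |v|$ for any unit vector $\nu \in \Sph^{m-1}$ and any $v \in \R^m$, and then read off the equality case from the chain of inequalities that this reduction produces.

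More precisely, I first dispose of the trivial case $\int_A F\,dx = 0$: in that case the inequality is automatic, and equality forces $\int_A |F|\,dx = 0$, hence $F = 0$ a.e.\ in~$A$, so one can take $f \equiv 0$ and any $\nu \in \Sph^{m-1}$. Assuming instead $\int_A F\,dx \neq 0$, I set
\begin{equation*}
\nu := \frac{\int_A F\,dx}{\left|\int_A F\,dx\right|} \in \Sph^{m-1},
\end{equation*}
and compute
\begin{equation*}
\left|\int_A F(x)\,dx\right|
= \nu \cdot \int_A F(x)\,dx
= \int_A \nu \cdot F(x)\,dx
\le \int_A |\nu \cdot F(x)|\,dx
\le \int_A |F(x)|\,dx,
\end{equation*}
where the last step uses Cauchy--Schwarz pointwise. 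This proves the inequality in full generality.

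For the equality case, I trace back through the above chain. Equality in the first estimate forces $\nu \cdot F(x) \ge 0$ for a.e.\ $x \in A$, so that $\nu \cdot F = |\nu \cdot F|$ a.e. Equality in the second estimate forces $|\nu \cdot F(x)| = |F(x)|$ for a.e.\ $x \in A$, which is exactly the equality case of Cauchy--Schwarz and thus forces $F(x)$ to be collinear with $\nu$ at a.e.\ such~$x$. Combining the two conclusions, $F(x) = (\nu \cdot F(x))\,\nu = f(x)\nu$ a.e.\ in $A$, where $f := \nu \cdot F \in L^1(A)$ satisfies $f \ge 0$ a.e. The converse implication is immediate: if $F = f\nu$ a.e.\ with $f \ge 0$ and $|\nu|=1$, then $|F| = f$ a.e.\ and $\int_A F\,dx = \nu \int_A f\,dx$, so both sides of the inequality equal $\int_A f\,dx$.

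There is no real obstacle here; the only point requiring any care is to treat separately the degenerate case $\int_A F\,dx = 0$, since only then is the unit vector $\nu$ not canonically determined by $F$ and must be chosen arbitrarily.
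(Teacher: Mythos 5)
Your proof is correct and follows essentially the same route as the paper: both normalise $\int_A F\,dx$ to obtain the direction $\nu$ and then exploit the pointwise Cauchy--Schwarz inequality $F(x)\cdot\nu\le|F(x)|$, whose a.e.\ saturation yields $F=f\nu$ with $f=|F|=\nu\cdot F\ge0$. The only cosmetic difference is that you split the chain into two intermediate inequalities, while the paper packages it as the vanishing of the single nonnegative integrand $|F(x)|-F(x)\cdot\nu$.
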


\begin{proof}
The inequality is well known and it is obvious that it is an equality if $F = f \nu$ a.e.\ in~$A$ for some constant direction $\nu \in \mathbb{S}^{m-1}$ and some scalar function $f \in L^{1}(A)$ with $f \ge 0$ a.e.\ in~$A$. So let us assume that 
\begin{equation*}
\bigg | \int_A F(x) \, dx \, \bigg | 
= \int_A | F(x) | \, dx.
\end{equation*}
If $\int_A F(x) \, dx=0$, then also $\int_A | F(x) | \, dx=0$. Thus $F=0$ a.e.\ in~$A$ and there is nothing to prove. If $\int_A F(x) \, dx\ne0$ instead, then we can write
\begin{equation*}
\int_A |F(x)| - F(x) \cdot \nu \, dx = 0,
\end{equation*}
with
\begin{equation*}
\nu = \frac{\int_A F(x) \, dx}{| \int_A F(x) \, dx \,|}\in\mathbb{S}^{m-1}.
\end{equation*}
Therefore, we obtain $|F(x)| = F(x) \cdot \nu$ for a.e.~$x \in A$, so that $\frac{F(x)}{|F(x)|} \cdot \nu = 1$ for a.e.~$x \in A$ such that $|F(x)|\ne0$. This implies that $F = f \nu$ a.e.\ in~$A$ with $f = |F|\in L^{1}(A)$ and the conclusion follows.
\end{proof}

As an immediate consequence of \cref{res:easy_norm_ineq}, we have the following result.

\begin{corollary}
\label{lem:strict_ineq_W_norm}
Let $\alpha \in (0, 1)$. If $f \in W^{\alpha, 1}(\R^{n})$, then
\begin{equation}\label{eq:old_ineq}
\|\nabla^{\alpha} f\|_{L^{1}(\R^{n};\, \R^{n})} \le \mu_{n, \alpha} [f]_{W^{\alpha, 1}(\R^{n})},
\end{equation}
with equality if and only if $f=0$ a.e.\ in~$\R^n$.
\end{corollary}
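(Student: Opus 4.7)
The inequality \eqref{eq:old_ineq} itself is immediate: since
\[
\nabla^{\alpha} f(x) = \mu_{n,\alpha}\int_{\R^n}\frac{(y-x)(f(y)-f(x))}{|y-x|^{n+\alpha+1}}\,dy
\]
for a.e.~$x$ (by Fubini, using $[f]_{W^{\alpha,1}}<+\infty$), bounding $|\nabla^{\alpha} f(x)|$ by the integral of the norm of the integrand and integrating in $x$ yields exactly $\mu_{n,\alpha}[f]_{W^{\alpha,1}(\R^n)}$. The crux is the equality case, and the natural tool is \cref{res:easy_norm_ineq} applied slicewise in $x$.

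Assuming equality in \eqref{eq:old_ineq}, Fubini and the pointwise inequality force, for $\Leb{n}$-a.e.~$x\in\R^n$,
\[
\Big| \int_{\R^n} F_x(y)\,dy\Big| = \int_{\R^n}|F_x(y)|\,dy,
\qquad
F_x(y) := \frac{(y-x)(f(y)-f(x))}{|y-x|^{n+\alpha+1}}.
\]
By \cref{res:easy_norm_ineq}, for such $x$ either $F_x\equiv 0$ a.e.~in~$y$, or there exists $\nu(x)\in\Sph^{n-1}$ and a nonnegative $g_x\in L^1(\R^n)$ with $F_x(y) = g_x(y)\,\nu(x)$ for a.e.~$y$. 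In either subcase, whenever $f(y)\neq f(x)$ the vector $y-x$ must be a (nonnegative) scalar multiple of $\nu(x)$.

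For $n\ge 2$, the set $\{y\in\R^n : y-x \parallel \nu(x)\}$ is a line through $x$ and hence $\Leb{n}$-null, so $f(y)=f(x)$ for $\Leb{n}$-a.e.~$y$. By Fubini this forces $f$ to be $\Leb{n}$-a.e.~constant, and since $f\in L^1(\R^n)$ that constant must be $0$. For $n=1$, instead, $\nu(x)\in\{-1,+1\}$ and every $y-x$ is automatically aligned with $\nu(x)$; the sign condition $(y-x)(f(y)-f(x))\nu(x)\ge 0$ then says that $f$ is non-decreasing (resp.~non-increasing) across $x$ for all $y\in\R$, depending on the sign of $\nu(x)$. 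Splitting $\R$ into $A=\{\nu=+1\}$ and $B=\{\nu=-1\}$: if one of $A,B$ is $\Leb{1}$-null, then $f$ is globally monotone and, being in $L^1(\R)$, must vanish a.e.; if both $A$ and $B$ have positive measure, choosing $x_1\in A$ and $x_2\in B$ and cross-comparing the monotonicity constraints at $x_1$, $x_2$ (and at any $x$ between them) shows $f$ is a.e.~constant on $\R$, hence again $f=0$ a.e.

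I expect the only delicate point to be the one-dimensional patching argument, which requires a small case analysis to rule out orientation changes of $\nu(x)$; in every other respect the proof is an essentially one-line application of \cref{res:easy_norm_ineq}.
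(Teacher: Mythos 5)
Your proposal is correct and follows essentially the same route as the paper: the inequality comes from the pointwise bound $|\nabla^\alpha f(x)|\le\mu_{n,\alpha}\int_{\R^n}|f(y)-f(x)|\,|y-x|^{-n-\alpha}\,dy$ (the content of \cite{CS18}*{Theorem~3.18}), and the equality case is analysed by applying \cref{res:easy_norm_ineq} to the slice fields $F_x$ for a.e.~$x$, with the collinearity obstruction for $n\ge2$ and a monotonicity/integrability obstruction for $n=1$. The only cosmetic difference is in dimension one, where the paper fixes the sign of $(f(y)-f(x))(y-x)$ once and for all (WLOG) to conclude that $f$ would be a non-constant monotone function, hence not in $L^1(\R)$, whereas you split $\R$ according to the orientation $\nu(x)$ and cross-compare; both arguments are sound.
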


\begin{proof}
Inequality~\eqref{eq:old_ineq} was proved in~\cite{CS18}*{Theorem~3.18}. Note that, given $f \in L^{1}(\R^{n})$, $[f]_{W^{\alpha,1}(\R^n)}=0$ if and only if $f=0$ a.e.\ and thus, in this case, \eqref{eq:old_ineq} is trivially an equality.
If~\eqref{eq:old_ineq} holds as an equality and $f$ is not equivalent to the zero function, then
\begin{equation*}
\int_{\R^n}\bigg(|\nabla^\alpha f (x)|-\mu_{n,\alpha}\int_{\R^n}\frac{|f(y)-f(x)|}{|y-x|^{n+\alpha}}\,dy\bigg)\,dx=0
\end{equation*} 
and thus
\begin{equation}\label{eq:old_equality_start}
\bigg|\int_{\R^n}\frac{(f(y)-f(x))\cdot(y-x)}{|y-x|^{n+\alpha+1}}\,dy\,\bigg|
=\int_{\R^n}\frac{|f(y)-f(x)|}{|y-x|^{n+\alpha}}\,dy
\end{equation}
for all $x\in U$, for some measurable set $U\subset\R^n$ such that $\Leb{n}(\R^n\setminus U)=0$. 
Now let $x\in U$ be fixed. By \cref{res:easy_norm_ineq} (applied with $A=\R^n$), \eqref{eq:old_equality_start} implies that the (non-identically zero) vector field
\begin{equation*}
y\mapsto (f(y) - f(x))\,(y - x),
\quad
y\in\R^n,
\end{equation*}
has constant direction for all $y\in V_x$, for some measurable set $V_x\subset\R^n$ such that \mbox{$\Leb{n}(\R^n\setminus V_x)=0$}. Thus, given $y,y'\in V_x$, the two vectors $y-x$ and $y'-x$ are linearly dependent, so that the three points~$x$, $y$ and~$y'$ are collinear. If $n\ge2$, then this immediately gives $\Leb{n}(V_x)=0$, a contradiction, so that~\eqref{eq:old_ineq} must be strict. If instead $n=1$, then we know that
\begin{equation}\label{eq:constant_sign}
x\in U
\implies
y\mapsto(f(y) - f(x))\,(y - x)\
\text{has constant sign for all $y\in V_x$}.
\end{equation}
We claim that~\eqref{eq:constant_sign} implies that the function~$f$ is (equivalent to) a (non-constant) monotone function. If so, then $f\notin L^1(\R)$, in contrast with the fact that $f\in W^{\alpha,1}(\R)$, so that~\eqref{eq:old_ineq} must be strict and the proof is concluded.
To prove the claim, we argue as follows. Fix $x\in U$ and assume that
\begin{equation}\label{eq:mon_tot}
(f(y) - f(x))\,(y - x)>0
\end{equation}
for all $y\in V_x$ without loss of generality. Now pick $x'\in U\cap V_x$ such that $x'>x$. Then, choosing $y=x'$ in~\eqref{eq:mon_tot}, we get $(f(x') - f(x))\,(x' - x)>0$ and thus $f(x')>f(x)$. Similarly, if $x'\in U\cap V_x$ is such that $x'<x$, then $f(x')<f(x)$. Hence
\begin{equation*}
\esssup_{z<x} f(z) \le f(x) \le \essinf_{z>x} f(z)
\end{equation*}
for all $x\in U$ (where $\esssup$ and $\essinf$ refer to the \emph{essential supremum} and the \emph{essential infimum} respectively) and thus~$f$ must be equivalent to a (non-constant) non-decreasing function.
\end{proof}

Given an open set $\Omega\subset\R^n$ and a measurable set $E\subset\R^n$, we define
\begin{equation*}
\tilde P_{\alpha}(E; \Omega) 
:= \int_{\Omega} \int_{\Omega} \frac{|\chi_{E}(y) - \chi_{E}(x)|}{|y - x|^{n + \alpha}} \, dx \, dy 
+ \int_{\R^{n} \setminus \Omega} \int_{\Omega} \frac{|\chi_{E}(y) - \chi_{E}(x)|}{|y - x|^{n + \alpha}} \, dx \, dy.
\end{equation*}
It is obvious to see that
\begin{equation*}
\tilde P_\alpha(E;\Omega)
\le P_\alpha(E;\Omega)
\le 2\tilde P_\alpha(E;\Omega),
\end{equation*} 
where $P_\alpha$ is the fractional perimeter introduced in~\eqref{intro_eq:def_frac_relative_perim}. Arguing as in the proof of~\cite{CS18}*{Proposition~4.8}, it is immediate to see that 
\begin{equation}\label{eq:ineq_perim_tilde}
\|\nabla^{\alpha} \chi_{E}\|_{L^{1}(\Omega;\, \R^{n})} 
\le \mu_{n, \alpha}\tilde P_{\alpha}(E;\Omega),
\end{equation}
an inequality stronger than that in~\eqref{intro_eq:bound_D_alpha_P}. In analogy with \cref{lem:strict_ineq_W_norm}, we have the following result.

\begin{corollary}\label{res:strict_per_tilde}
Let $\alpha\in(0,1)$, $\Omega\subset\R^n$ be an open set and $E\subset\R^n$ be a measurable set such that $\tilde P_{\alpha}(E;\Omega)<+\infty$.
\begin{enumerate}[(i)]

\item\label{item:strict_ineq_big2} 
If $n\ge2$, $\Leb{n}(E)>0$ and $\Leb{n}(\R^n\setminus E)>0$, then inequality~\eqref{eq:ineq_perim_tilde} is strict.

\item\label{item:strict_ineq_1}
If $n=1$, then~\eqref{eq:ineq_perim_tilde} is an equality if and only if the following hold:
\begin{enumerate}[(a)]

\item for a.e.\ $x\in\Omega\cap E$, $\Leb{1}((-\infty,x)\setminus E)=0$ vel $\Leb{1}((x,+\infty)\setminus E)=0$;

\item for a.e.\ $x\in\Omega\setminus E$, $\Leb{1}((-\infty,x) \cap E)=0$ vel $\Leb{1}((x,+\infty) \cap E)=0$.

\end{enumerate}
\end{enumerate}
\end{corollary}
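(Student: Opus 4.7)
The strategy mirrors that of Corollary \ref{lem:strict_ineq_W_norm}. First, I would recall that inequality \eqref{eq:ineq_perim_tilde} is obtained by applying $|\,\cdot\,|$ inside the integral defining $\nabla^\alpha\chi_E(x)$ and using Fubini to identify $\tilde P_\alpha(E;\Omega)$. Consequently, the identity $\|\nabla^\alpha\chi_E\|_{L^1(\Omega;\R^n)}=\mu_{n,\alpha}\tilde P_\alpha(E;\Omega)$ forces
\begin{equation*}
\bigg|\int_{\R^n}\frac{(y-x)(\chi_E(y)-\chi_E(x))}{|y-x|^{n+\alpha+1}}\,dy\bigg|=\int_{\R^n}\frac{|\chi_E(y)-\chi_E(x)|}{|y-x|^{n+\alpha}}\,dy
\end{equation*}
for $\Leb{n}$-a.e.\ $x\in\Omega$. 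By \cref{res:easy_norm_ineq} applied with $A=\R^n$ and $F(y)=(y-x)(\chi_E(y)-\chi_E(x))/|y-x|^{n+\alpha+1}$, this is equivalent to requiring that, for a.e.\ $x\in\Omega$, the vector field $y\mapsto(y-x)(\chi_E(y)-\chi_E(x))$ have constant direction on the set where it does not vanish. Since $\chi_E$ takes only the values $0$ and $1$, this field reduces to $-(y-x)\chi_{E^c}(y)$ if $x\in E$ and to $(y-x)\chi_{E}(y)$ if $x\notin E$; its support is (up to null sets) $\R^n\setminus E$ in the first case and $E$ in the second.

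For part \eqref{item:strict_ineq_big2}, I would argue by contradiction. Suppose $n\ge2$, $|E|>0$, $|E^c|>0$ and equality holds in \eqref{eq:ineq_perim_tilde}. If $|\Omega\cap E|>0$, then for some $x\in\Omega\cap E$ the map $y\mapsto y-x$ has constant direction on the positive-measure set $\R^n\setminus E$, forcing $\R^n\setminus E$ to be essentially contained in a ray emanating from $x$; since rays have vanishing $n$-dimensional measure for $n\ge2$, this contradicts $|E^c|>0$. If instead $|\Omega\cap E|=0$, then for a.e.\ $x\in\Omega$ the same reasoning applied to $E$ yields $|E|=0$, again a contradiction. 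Hence \eqref{eq:ineq_perim_tilde} must be strict.

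For part \eqref{item:strict_ineq_1}, I would exploit that in dimension $n=1$ the \emph{constant direction} condition degenerates to \emph{constant sign}. If equality holds, then for a.e.\ $x\in\Omega\cap E$ the map $y\mapsto-(y-x)$ must have constant sign on $\R\setminus E$, which translates exactly into: either $\Leb{1}((-\infty,x)\setminus E)=0$ or $\Leb{1}((x,+\infty)\setminus E)=0$, i.e.\ condition (a); likewise, the analysis at a.e.\ $x\in\Omega\setminus E$ yields (b). Conversely, if (a) and (b) hold then for a.e.\ $x\in\Omega$ the integrand already has constant sign in $y$, so the pointwise inequality is an equality by the easy direction of \cref{res:easy_norm_ineq}; integrating in $x\in\Omega$ recovers equality in \eqref{eq:ineq_perim_tilde}.

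The main subtlety I expect is bookkeeping the null sets in $x$ when passing from the integrated inequality to the pointwise one, and, in \eqref{item:strict_ineq_big2}, ensuring that the ``constant direction on a positive measure set'' conclusion of \cref{res:easy_norm_ineq} genuinely confines the relevant set to a ray from $x$, rather than just a line. The rest of the argument is a direct transcription of the reasoning used for \cref{lem:strict_ineq_W_norm}, with the set $E^c$ or $E$ (depending on whether $x\in E$ or not) playing the role of the support on which one tests constancy of direction.
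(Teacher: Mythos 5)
Your proposal is correct and follows essentially the same route as the paper: reduce the integrated equality to the pointwise identity for a.e.\ $x\in\Omega$, apply \cref{res:easy_norm_ineq} to the field $y\mapsto(y-x)(\chi_E(y)-\chi_E(x))|y-x|^{-n-\alpha-1}$, and observe that constant direction confines $E$ (or $E^c$) to a ray from $x$, which is impossible for $n\ge2$, while for $n=1$ it degenerates to the one-sided conditions (a) and (b). The only cosmetic difference is that for $n=1$ the paper rederives the constant-sign criterion by hand, splitting the integral at $x$ and squaring, rather than citing the lemma directly; the content is identical.
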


\begin{proof}
We prove the two statements separately.

\smallskip

\textit{Proof of~\eqref{item:strict_ineq_big2}}.
Assume $n\ge2$. Since $\Leb{n}(E)>0$, for a given $x\in \Omega\setminus E$ the map
\begin{equation*}
y\mapsto(y-x),
\quad
\text{for } y\in E,
\end{equation*}
does not have constant orientation. Similarly, since $\Leb{n}(\R^n\setminus E)>0$, for a given $x\in\Omega\cap E$ also the map
\begin{equation*}
y\mapsto(y-x),
\quad
\text{for } y\in\R^n\setminus E,
\end{equation*}
does not have constant orientation. Hence, by \cref{res:easy_norm_ineq}, we must have
\begin{equation*}
\bigg|\int_E\frac{y-x}{|y-x|^{n+\alpha+1}}\,dy\,\bigg|
<\int_E\frac{dy}{|y-x|^{n+\alpha}},
\quad
\text{for } x\in \Omega\setminus E,
\end{equation*}
and, similarly,
\begin{equation*}
\bigg|\int_{\R^n\setminus E}\frac{y-x}{|y-x|^{n+\alpha+1}}\,dy\,\bigg|
<\int_{\R^n\setminus E}\frac{dy}{|y-x|^{n+\alpha}},
\quad
\text{for } x\in \Omega\cap E.
\end{equation*}
We thus get
\begin{align*}
\|\nabla^{\alpha} \chi_{E}\|_{L^{1}(\Omega;\, \R^{n})}
&=\mu_{n,\alpha}\int_{\Omega}\bigg|\int_{\R^n}\frac{(\chi_E(y)-\chi_E(x))\cdot(y-x)}{|y-x|^{n+\alpha+1}}\,dy\,\bigg|\,dx\\
&=\mu_{n,\alpha}\int_{\Omega\setminus E}\bigg|\int_E\frac{y-x}{|y-x|^{n+\alpha}}\,dy\,\bigg|\,dx+
\mu_{n,\alpha}\int_{\Omega\cap E}\bigg|\int_{\R^n\setminus E}\frac{y-x}{|y-x|^{n+\alpha}}\,dy\,\bigg|\,dx\\
&<\mu_{n,\alpha}\int_{\Omega\setminus E}\int_E\frac{dy\,dx}{|y-x|^{n+\alpha}}+
\mu_{n,\alpha}\int_{\Omega\cap E}\int_{\R^n\setminus E}\frac{dy\,dx}{|y-x|^{n+\alpha}}
=\mu_{n,\alpha}\tilde P_{\alpha}(E;\Omega),
\end{align*}
proving~\eqref{item:strict_ineq_big2}. 

\smallskip

\textit{Proof of~\eqref{item:strict_ineq_1}}. Assume $n=1$. We argue as in the proof of~\cite{CS18}*{Proposition~4.12}. Let
\begin{equation*}
f_E(y,x):=\frac{\chi_E(y)-\chi_E(x)}{|y-x|^{1+\alpha}},
\qquad
\text{for } x,y\in\R,\ y\ne x.
\end{equation*}
Then we can write
\begin{align*}
\tilde P_\alpha(E;\Omega)
&=\int_\Omega\int_\R|f_E(y,x)|\,dy\,dx\\
&=\int_\Omega\bigg(\int_{-\infty}^x|f_E(y,x)|\,dy+\int_x^{+\infty}|f_E(y,x)|\,dy\bigg)\,dx
\end{align*}
and
\begin{align*}
\|\nabla^\alpha\chi_E\|_{L^1(\Omega;\,\R)}
&=\mu_{1,\alpha}\int_\Omega\bigg|\int_\R f_E(y,x)\,\sgn(y-x)\,dy\,\bigg|\,dx\\
&=\mu_{1,\alpha}\int_\Omega\bigg|\int_{-\infty}^x f_E(y,x)\,dy-\int_x^{+\infty} f_E(y,x)\,dy\,\bigg|\,dx.
\end{align*}
Hence~\eqref{eq:ineq_perim_tilde} is an equality if and only if
\begin{equation}\label{eq:eq_case_1}
\bigg|\int_{-\infty}^x f_E(y,x)\,dy-\int_x^{+\infty} f_E(y,x)\,dy\,\bigg|
=
\int_{-\infty}^x|f_E(y,x)|\,dy+\int_x^{+\infty}|f_E(y,x)|\,dy
\end{equation}
for a.e.~$x\in\Omega$. Observing that
\begin{align*}
\bigg|\int_{-\infty}^x f_E(y,x)\,dy-\int_x^{+\infty} f_E(y,x)\,dy\,\bigg|
&\le
\bigg|\int_{-\infty}^x f_E(y,x)\,dy\,\bigg|
+\bigg|\int_x^{+\infty} f_E(y,x)\,dy\,\bigg|\\
&\le
\int_{-\infty}^x|f_E(y,x)|\,dy+\int_x^{+\infty}|f_E(y,x)|\,dy
\end{align*}
for a.e.~$x\in\Omega$, we deduce that~\eqref{eq:ineq_perim_tilde} is an equality if and only if 
\begin{align}
\bigg|\int_{-\infty}^x f_E(y,x)\,dy-\int_x^{+\infty} f_E(y,x)\,dy\,\bigg|
&=
\bigg|\int_{-\infty}^x f_E(y,x)\,dy\,\bigg|
+\bigg|\int_x^{+\infty} f_E(y,x)\,dy\,\bigg|
\label{eq:eq_case_1_bis_A}\\
&=
\int_{-\infty}^x|f_E(y,x)|\,dy+\int_x^{+\infty}|f_E(y,x)|\,dy
\label{eq:eq_case_1_bis_B}
\end{align}
for a.e.~$x\in\Omega$. Now, on the one hand, squaring both sides of~\eqref{eq:eq_case_1_bis_A} and simplifying, we get that~\eqref{eq:ineq_perim_tilde} is an equality if and only if
\begin{equation}\label{eq:eq_case_1_product}
\bigg(\int_{-\infty}^x f_E(y,x) \,dy\bigg)\bigg(\int_x^{+\infty}f_E(y,x)\,dy\bigg)=0
\end{equation}
for a.e.~$x\in\Omega$. On the other hand, we can rewrite~\eqref{eq:eq_case_1_bis_B} as
\begin{align*}
0\le
\int_{-\infty}^x|f_E(y,x)|\,dy
-\bigg|\int_{-\infty}^x f_E(y,x)\,dy\,\bigg|
=
\bigg|\int_x^{+\infty} f_E(y,x)\,dy\,\bigg|
-\int_x^{+\infty}|f_E(y,x)|\,dy\le0
\end{align*}
for a.e.~$x\in\Omega$, so that we must have
\begin{equation*}
\bigg|\int_{-\infty}^x f_E(y,x)\,dy\,\bigg|
=
\int_{-\infty}^x|f_E(y,x)|\,dy
\end{equation*}
and
\begin{equation*}
\bigg|\int_x^{+\infty} f_E(y,x)\,dy\,\bigg|
=\int_x^{+\infty}|f_E(y,x)|\,dy
\end{equation*}
for a.e.~$x\in\Omega$. Hence~\eqref{eq:eq_case_1_product} can be equivalently rewritten as
\begin{equation}\label{eq:eq_case_1_product_bis}
\bigg(\int_{-\infty}^x |f_E(y,x)| \,dy\bigg)\bigg(\int_x^{+\infty}|f_E(y,x)|\,dy\bigg)=0
\end{equation}
for a.e.~$x\in\Omega$. Thus~\eqref{eq:ineq_perim_tilde} is an equality if and only if at least one of the two integrals in the left-hand side of~\eqref{eq:eq_case_1_product_bis} is zero, and the reader can check that~\eqref{item:strict_ineq_1} readily follows.
\end{proof}

\begin{remark}[Half-lines in \cref{res:strict_per_tilde}\eqref{item:strict_ineq_1}]
In the case $n=1$, it is worth to stress that~\eqref{eq:ineq_perim_tilde} is always an equality when the set $E\subset\R$ is (equivalent to) an half-line, i.e.,
\begin{equation*}
\|\nabla^{\alpha} \chi_{(a,+\infty)}\|_{L^{1}(\Omega;\,\R)} 
=
\mu_{1, \alpha}\tilde P_{\alpha}((a,+\infty);\Omega)
\end{equation*}
for any $\alpha\in(0,1)$, any $a\in\R$ and any open set $\Omega\subset\R$ such that \mbox{$\tilde P_{\alpha}((a,+\infty);\Omega)<+\infty$}.  However, the equality cases in~\eqref{eq:ineq_perim_tilde} are considerably richer. Indeed, on the one side,
\begin{equation*}
\|\nabla^{\alpha} \chi_{(-5,-4)\cup(-1,+\infty)}\|_{L^{1}((0,1);\,\R)} 
=
\mu_{1, \alpha}\tilde P_{\alpha}((-5,-4)\cup(-1,+\infty);(0,1))
\end{equation*}
and, on the other side, 
\begin{equation*}
\|\nabla^{\alpha} \chi_{(-5,-4)\cup(0,+\infty)}\|_{L^{1}((-1,1);\,\R)} 
<
\mu_{1, \alpha}\tilde P_{\alpha}((-5,-4)\cup(0,+\infty);(-1,1))
\end{equation*}
for any $\alpha\in(0,1)$. We leave the simple computations to the interested reader.
\end{remark}

\section{Estimates and representation formulas for the fractional \texorpdfstring{$\alpha$}{alpha}-gradient}
\label{sec:estimates_representations}

\subsection{Integrability properties of the fractional \texorpdfstring{$\alpha$}{alpha}-gradient}

We begin with the following technical local estimate on the $W^{\alpha,1}$-seminorm of a function in $BV_{\loc}$.

\begin{lemma}\label{result:simple_est_Sobolev_seminorm_BV_loc}
Let $\alpha\in(0,1)$ and let $f\in BV_{\loc}(\R^n)$. Then $f\in W^{\alpha,1}_{\loc}(\R^n)$ with
\begin{equation}\label{eq:simple_est_Sobolev_seminorm_BV_loc}
[f]_{W^{\alpha,1}(B_R)}
\le\frac{n\omega_n(2R)^{1-\alpha}}{1-\alpha}\,|Df|(B_{3R})
\end{equation}
for all $R>0$.
\end{lemma}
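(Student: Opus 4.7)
The plan is to bound $[f]_{W^{\alpha,1}(B_R)}$ via a translation estimate against a radial kernel, and to reduce to the smooth case by mollification. Changing variables $z=y-x$ and applying Fubini, one can write
\[
[f]_{W^{\alpha,1}(B_R)} \le \int_{B_{2R}} \frac{1}{|z|^{n+\alpha}}\int_{B_R}|f(x+z)-f(x)|\,dx\,dz,
\]
by enlarging the inner integration domain from $\{x\in B_R:x+z\in B_R\}$ to $B_R$. The factor $n\omega_n(2R)^{1-\alpha}/(1-\alpha)$ then appears automatically as $\int_{B_{2R}}|z|^{1-n-\alpha}\,dz$, provided one can prove the translation estimate
\[
\int_{B_R}|f(x+z)-f(x)|\,dx \le |z|\,|Df|(B_{3R}),\qquad |z|\le 2R.
\]

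For $f\in C^\infty$, this translation estimate follows from the pointwise bound $|f(x+z)-f(x)|\le|z|\int_0^1|\nabla f(x+tz)|\,dt$, Fubini, and the change of variables $u=x+tz$: for $t\in[0,1]$ and $|z|\le 2R$ the translate $B_R+tz$ is contained in $B_{R+|z|}\subseteq B_{3R}$, yielding $\int_{B_R}|f(x+z)-f(x)|\,dx \le |z|\,\|\nabla f\|_{L^1(B_{3R})}$. For a generic $f\in BV_{\loc}(\R^n)$ I would mollify: the smoothings $f_\eps:=\rho_\eps*f$ satisfy the standard estimate $\|\nabla f_\eps\|_{L^1(B_{3R})}\le|Df|(B_{3R+\eps})$. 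Plugging this into the smooth case applied to $f_\eps$ and integrating in $z$ gives
\[
[f_\eps]_{W^{\alpha,1}(B_R)} \le \frac{n\omega_n(2R)^{1-\alpha}}{1-\alpha}\,|Df|(B_{3R+\eps}),
\]
and since $f_\eps\to f$ in $L^1_{\loc}$, Fatou's lemma on the double integral defining $[\,\cdot\,]_{W^{\alpha,1}(B_R)}$ passes the bound to $f$ on the left.

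To finish, one needs to trade the closed ball $\overline{B_{3R}}$ that naturally arises from $|Df|(B_{3R+\eps})\to|Df|(\overline{B_{3R}})$ as $\eps\to 0^+$ for the open ball $B_{3R}$ in the statement. The standard device is to apply the whole argument with radius $R'<R$, observe that $\overline{B_{3R'}}\subset B_{3R}$ so that $|Df|(\overline{B_{3R'}})\le|Df|(B_{3R})$, and then let $R'\nearrow R$ via monotone convergence on the double integral defining $[f]_{W^{\alpha,1}(B_{R'})}$. The computation itself is essentially routine; the only delicate point is precisely this open-versus-closed ball subtlety stemming from the outer continuity of $|Df|$ under mollification.
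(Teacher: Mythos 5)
Your argument is correct, and the core of it --- the change of variables $z=y-x$, the translation estimate $\int_{B_R}|f(x+z)-f(x)|\,dx\le|z|\,\|\nabla f\|_{L^1(B_{R+|z|})}$ for smooth $f$, and the integration of the kernel $|z|^{1-n-\alpha}$ over $B_{2R}$ --- is exactly the computation in the paper. The only divergence is in how you reduce to the smooth case. The paper invokes the strict approximation theorem for $BV$ functions (\cite{EG15}*{Theorem~5.3}) on the open ball $B_{3R}$ itself, producing $f_k\in BV(B_{3R})\cap C^\infty(B_{3R})$ with $|Df_k|(B_{3R})\to|Df|(B_{3R})$; since the convergence of total variations is on the open set, Fatou's Lemma closes the argument immediately and the closed ball $\overline{B_{3R}}$ never appears. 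You instead use plain mollification, which only gives $\|\nabla f_\eps\|_{L^1(B_{3R})}\le|Df|(B_{3R+\eps})$ and hence, after $\eps\to0^+$, the constant $|Df|(\overline{B_{3R}})$; you correctly identify this leakage onto the sphere and repair it with the $R'\nearrow R$ monotone-convergence trick. Both routes are valid: yours is more self-contained (it avoids quoting the strict approximation theorem, at the cost of one extra limiting step), while the paper's choice of approximating sequence makes the open-versus-closed issue disappear at the source.
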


\begin{proof}
Fix $R>0$ and let $f\in BV_{\loc}(\R^n)$ be such that $f\in C^1(B_{3R})$. We can estimate
\begin{align*}
[f]_{W^{\alpha,1}(B_R)}
&=\int_{B_R}\int_{B_R}\frac{|f(y)-f(x)|}{|y-x|^{n+\alpha}}\,dy\,dx\\
&=\int_{B_R}\int_{B_R\cap\set*{|y-x|<2R}}\frac{|f(y)-f(x)|}{|y-x|^{n+\alpha}}\,dy\,dx\\
&\le\int_{\set*{|h|<2R}}\frac{1}{|h|^{n+\alpha}}\int_{B_R}|f(x+h)-f(x)|\,dx\,dh.
\end{align*}
Since
\begin{align*}
\int_{B_R}|f(x+h)-f(x)|\,dx
&\le\int_{B_R}\int_0^1|\nabla f(x+th)\cdot h|\,dt\,dx\\
&\le|h|\int_0^1\int_{B_R}|\nabla f(x+th)|\,dx\,dt\\
&\le|h|\int_{B_{R+|h|}}|\nabla f(z)|\,dz
\end{align*}
for all $h\in\R^n$, we have
\begin{align*}
[f]_{W^{\alpha,1}(B_R)}
&\le\int_{\set*{|h|<2R}}\frac{1}{|h|^{n+\alpha-1}}\int_{B_{R+|h|}}|\nabla f(z)|\,dz\,dh\\
&\le\int_{\set*{|h|<2R}}\frac{|Df|(B_{3R})}{|h|^{n+\alpha-1}}\,dh\\
&=\frac{n\omega_n(2R)^{1-\alpha}}{1-\alpha}\,|Df|(B_{3R})
\end{align*}
proving~\eqref{eq:simple_est_Sobolev_seminorm_BV_loc} for all $f\in BV_{\loc}(\R^n)\cap C^1(B_{3R})$. Now fix $R>0$ and let $f\in BV_{\loc}(\R^n)$. 
By~\cite{EG15}*{Theorem~5.3}, there exists $(f_k)_{k\in\N}\subset BV(B_{3R})\cap C^\infty(B_{3R})$ such that $|Df_k|(B_{3R})\to|Df|(B_{3R})$ and $f_k\to f$ a.e.\ in~$B_{3R}$ as $k\to+\infty$. The conclusion thus follows by a simple application of Fatou's Lemma.
\end{proof}

In the following result, we collect several local integrability estimates involving the fractional $\alpha$-gradient of a function satisfying various regularity assumptions.

\begin{proposition}\label{lem:nabla_alpha_f_L1_and_L1_loc}
The following statements hold.
\begin{enumerate}[(i)]
\item\label{item:lem:nabla_alpha_f_loc_1} If $f\in BV(\R^n)$, then $f\in BV^\alpha(\R^n)$ for all $\alpha\in(0,1)$ with $D^\alpha f=\nabla^\alpha f\Leb{n}$ and
\begin{equation}\label{eq:weak_frac_grad_repr}
\nabla^{\alpha} f = I_{1-\alpha}D f
\quad
\text{a.e.\ in $\R^{n}$.}
\end{equation}
In addition, for any bounded open set $U\subset\R^n$, we have
\begin{equation} \label{eq:weak_frac_grad_loc_estimate} 
\|\nabla^{\alpha} f\|_{L^{1}(U;\, \R^{n})} 
\le C_{n, \alpha, U} \, |D f|(\R^{n})
\end{equation}
for all $\alpha\in(0,1)$, where $C_{n, \alpha, U}$ is as in~\eqref{eq:constant_estimate}. Finally, given an open set $A\subset\R^n$, we have
\begin{equation}\label{eq:Davila_estimate}
\|\nabla^\alpha f\|_{L^1(A;\,\R^n)}
\le\frac{n\omega_n\,\mu_{n,\alpha}}{n+\alpha-1}\left(\frac{|Df|(\closure[-.5]{A_r})}{1-\alpha}\,r^{1-\alpha}+\frac{n+2\alpha-1}{\alpha}\,\|f\|_{L^1(\R^n)}\,r^{-\alpha}\right)
\end{equation} 
for all $r>0$ and $\alpha\in(0,1)$, where $A_r:=\set*{x\in\R^n : \dist(x,A)<r}$.
In particular, we have
\begin{equation} \label{eq:Davila_estimate_opt}
\|\nabla^\alpha f\|_{L^1(\R^n;\,\R^n)}
\le\frac{n\omega_n\,\mu_{n,\alpha} (n + 2 \alpha - 1)^{1 - \alpha}}{\alpha (1 - \alpha)(n+\alpha-1)}\,
\|f\|_{L^{1}(\R^{n})}^{1 - \alpha}\,
[f]_{BV(\R^n)}^{\alpha}.
\end{equation}

\item\label{item:lem:nabla_alpha_f_loc_2} If $f \in L^{\infty}(\R^{n})\cap W^{\alpha, 1}_{\loc}(\R^{n})$, then the weak fractional $\alpha$-gradient $D^{\alpha} f \in \M_{\loc}(\R^n; \R^n)$ exists and satisfies $D^\alpha f=\nabla^\alpha f\Leb{n}$ with $\nabla^{\alpha} f \in L^{1}_{\loc}(\R^{n}; \R^{n})$ and
\begin{equation}\label{eq:nabla_alpha_f_L1_loc}
\begin{split}
\| \nabla^{\alpha} f \|_{L^{1}(B_{R};\, \R^{n})} 
&\le \mu_{n, \alpha} \int_{B_{R}} \int_{\R^{n}} \frac{|f(x) - f(y)|}{|x - y|^{n + \alpha}} \, dx \, dy\\
&\le \mu_{n, \alpha} \left ( [f]_{W^{\alpha, 1}(B_{R})} 
+ P_{\alpha}(B_{R})\, \|f\|_{L^{\infty}(\R^{n})} \right )
\end{split}
\end{equation}
for all $R>0$ and $\alpha\in(0,1)$.

\item\label{item:lem:nabla_alpha_f_loc_3} If $f \in L^{\infty}(\R^{n})\cap BV_{\loc}(\R^{n})$, then the weak fractional $\alpha$-gradient $D^{\alpha} f \in \M_{\rm loc}(\R^n; \R^n)$ exists and satisfies $D^\alpha f=\nabla^\alpha f\Leb{n}$ with $\nabla^{\alpha} f \in L^{1}_{\loc}(\R^{n}; \R^{n})$ and
\begin{equation}\label{eq:nabla_alpha_f_L1_loc_BV_loc}
\| \nabla^{\alpha} f \|_{L^{1}(B_{R};\, \R^{n})} 
\le\mu_{n, \alpha}\left(\frac{n\omega_n(2R)^{1-\alpha}}{1-\alpha} \, |Df|(B_{3R}) 
+ \frac{2^{\alpha+1}(n\omega_n)^2R^{n-\alpha}}{\alpha\,\Gamma(1-\alpha)^{-1}}\, \|f\|_{L^{\infty}(\R^{n})}\right). 
\end{equation} 
for all $R>0$ and $\alpha\in(0,1)$.
\end{enumerate}
\end{proposition}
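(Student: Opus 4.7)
My plan is to treat the three parts as a chain, deducing (iii) from (ii) via the preceding lemma, and proving (i) and (ii) by separate but parallel approximation arguments that reduce everything to the representation $\nabla^\alpha = I_{1-\alpha} \nabla$ from Proposition~\ref{prop:frac_div_repr}.

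For part~(i), I would first establish the measure-valued Riesz identity $\nabla^\alpha f = I_{1-\alpha} Df$ by mollification. Let $f_k := f \ast \rho_{1/k}$; then $f_k \in W^{1,1}(\R^n) \cap C^\infty(\R^n)$, $f_k \to f$ in $L^1$, and $\nabla f_k = (Df) \ast \rho_{1/k}$ with $\|\nabla f_k\|_{L^1} \le |Df|(\R^n)$. Applying Proposition~\ref{prop:frac_div_repr} (extended from $\Lip_c$ to $W^{1,1}$ by a further density step using the estimates therein) gives $\nabla^\alpha f_k = I_{1-\alpha} \nabla f_k$, and Fubini yields $\int_U I_{1-\alpha}|Df| \, dx \le |Df|(\R^n) \sup_y \int_U |x-y|^{\alpha-1-n} dx$, whose supremum is exactly bounded by the $C_{n,\alpha,U}/\mu_{n,\alpha}$ computation underlying~\eqref{eq:constant_estimate} (separate the integrand into a small ball around $y$ and the bounded exterior). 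This produces~\eqref{eq:weak_frac_grad_loc_estimate} and, via passage to the limit in the distributional identity, shows $D^\alpha f = \nabla^\alpha f \Leb n$. For~\eqref{eq:Davila_estimate}, I would split the defining singular integral of $\nabla^\alpha f(x)$ at $|y-x|=r$: on the inner region, write $|f(y)-f(x)| \le \int_0^1 |\nabla f(x+t(y-x))|\,|y-x|\,dt$ (valid for $f \in W^{1,1}$, then extended to $BV$ via mollification and Fatou), apply Fubini over $A \times \{|h|<r\}$ and shift variables to pick up $|Df|(\closure{A_r})$ together with the factor $r^{1-\alpha}/(1-\alpha)$; on the outer region $|y-x|\ge r$, estimate $|f(y)-f(x)| \le |f(x)|+|f(y)|$ and integrate in $y$ first against $|y-x|^{-n-\alpha}$ to get the $r^{-\alpha}/\alpha$ term times $\|f\|_{L^1}$. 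The sharpened inequality~\eqref{eq:Davila_estimate_opt} then follows by taking $A = \R^n$ and optimising in $r$, the optimiser being essentially $r = \|f\|_{L^1}/|Df|(\R^n)$ up to a dimensional factor.

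For part~(ii), the pointwise $\nabla^\alpha f(x)$ is defined a.e. as the (absolutely convergent) integral of $(y-x)(f(y)-f(x))|y-x|^{-n-\alpha-1}$, once we observe that both bounds displayed in~\eqref{eq:nabla_alpha_f_L1_loc} come from the elementary splitting
\begin{equation*}
\int_{B_R}\!\int_{\R^n}\!\frac{|f(x)-f(y)|}{|x-y|^{n+\alpha}}\,dy\,dx
= \int_{B_R}\!\int_{B_R}\!\frac{|f(x)-f(y)|}{|x-y|^{n+\alpha}}\,dy\,dx
+\int_{B_R}\!\int_{\R^n\setminus B_R}\!\frac{|f(x)-f(y)|}{|x-y|^{n+\alpha}}\,dy\,dx,
\end{equation*}
with the first summand being $[f]_{W^{\alpha,1}(B_R)}$ and the second bounded by $2\|f\|_{L^\infty}\int_{B_R}\int_{\R^n\setminus B_R}|x-y|^{-n-\alpha}\,dy\,dx = \|f\|_{L^\infty} P_\alpha(B_R)$ by the very definition~\eqref{intro_eq:def_frac_relative_perim} with $\Omega=\R^n$. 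To obtain the weak identification $D^\alpha f = \nabla^\alpha f \Leb n$ locally, I would fix $\phi \in C^\infty_c(\R^n;\R^n)$ with $\supp\phi \subset B_R$ and apply Fubini to $\int f\,\div^\alpha\phi\,dx$, using the pointwise formula~\eqref{eq:def_frac_div_Lip_b}; the double integral is absolutely convergent by the same splitting just described, paired now with the $W^{\alpha,1}$ and $L^\infty$ bounds for $\phi$. Swapping the roles of $x$ and $y$ via the antisymmetry of $(y-x)|y-x|^{-n-\alpha-1}$ yields $-\int \phi\cdot\nabla^\alpha f\,dx$, and the identification follows.

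For part~(iii), Lemma~\ref{result:simple_est_Sobolev_seminorm_BV_loc} gives $f \in W^{\alpha,1}_{\loc}(\R^n)$ with $[f]_{W^{\alpha,1}(B_R)} \le n\omega_n (2R)^{1-\alpha}(1-\alpha)^{-1}|Df|(B_{3R})$, so~(ii) applies and provides $D^\alpha f = \nabla^\alpha f\Leb n$ locally. The bound~\eqref{eq:nabla_alpha_f_L1_loc_BV_loc} follows by substituting~\eqref{eq:simple_est_Sobolev_seminorm_BV_loc} into~\eqref{eq:nabla_alpha_f_L1_loc} and using an explicit closed-form estimate of $P_\alpha(B_R)$ obtained by polar coordinates (which produces the $\Gamma(1-\alpha)$ factor). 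The main obstacle I expect is a bookkeeping one, namely justifying the identity $\nabla^\alpha f = I_{1-\alpha} Df$ as an equation between an $L^1_{\loc}$ function and the Riesz potential of a finite vector measure, and in particular interchanging limits and the $I_{1-\alpha}$ operator in the mollification step; everything else is a sequence of splittings and Fubini applications whose convergence is controlled by the integrability estimates already built into Proposition~\ref{prop:frac_div_repr} and Lemma~\ref{result:simple_est_Sobolev_seminorm_BV_loc}.
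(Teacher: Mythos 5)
Your handling of parts (ii) and (iii), of the identification $D^\alpha f=\nabla^\alpha f\,\Leb{n}$, and of the local estimate \eqref{eq:weak_frac_grad_loc_estimate} is essentially the paper's argument (the paper simply cites its predecessor for the opening claims of (i), but your mollification substitute, using $I_{1-\alpha}(\rho_{1/k}*Df)=\rho_{1/k}*(I_{1-\alpha}Df)$, is legitimate). The genuine gap is in your proof of \eqref{eq:Davila_estimate}. You split the singular integral $\mu_{n,\alpha}\int(y-x)(f(y)-f(x))|y-x|^{-n-\alpha-1}\,dy$ at $|y-x|=r$ and, on the inner region, bound $|f(y)-f(x)|$ by the line integral of $|\nabla f|$. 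Taking absolute values there yields at best $\mu_{n,\alpha}\,n\omega_n\,\tfrac{r^{1-\alpha}}{1-\alpha}\,|Df|(\closure{A_r})$, whereas \eqref{eq:Davila_estimate} claims this quantity \emph{divided by} $n+\alpha-1$; likewise your outer-region bound $|f(y)-f(x)|\le|f(x)|+|f(y)|$ produces $\tfrac{2}{\alpha}$ where the statement has $\tfrac{n+2\alpha-1}{\alpha(n+\alpha-1)}$. For $n\ge2$ both of your constants are strictly larger, so the stated inequality (and hence \eqref{eq:Davila_estimate_opt}) is not proved. This is not cosmetic: the whole point of \eqref{eq:Davila_estimate} is that $\tfrac{n\omega_n\mu_{n,\alpha}}{(n+\alpha-1)(1-\alpha)}\to1$ as $\alpha\to1^-$, which is exactly what is used later to get $\limsup_{\alpha\to1^-}|D^\alpha f|(\R^n)\le|Df|(\R^n)$; your version would only give $\limsup\le n\,|Df|(\R^n)$.

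The fix is to split the \emph{Riesz-potential} representation rather than the singular integral: write $\nabla^\alpha f=I_{1-\alpha}\nabla f=\tfrac{\mu_{n,\alpha}}{n+\alpha-1}\int\nabla f(x+h)\,|h|^{1-n-\alpha}\,dh$ and cut at $|h|=r$. The inner region then carries the prefactor $\tfrac{\mu_{n,\alpha}}{n+\alpha-1}$ for free, and on the outer region one integrates by parts to move the derivative onto the kernel, obtaining $(n+\alpha-1)\int_{\{|h|>r\}}h\,f(x+h)\,|h|^{-n-\alpha-1}\,dh$ plus a boundary term on $\{|h|=r\}$; estimating these in absolute value gives precisely $\tfrac{n+2\alpha-1}{\alpha}\,n\omega_n\|f\|_{L^1}r^{-\alpha}$. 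One further point: in the passage from smooth functions to general $f\in BV(\R^n)$, ``mollification and Fatou'' is not quite enough to land on $|Df|(\closure{A_r})$ (mollification only controls $|Df|$ on an enlarged neighbourhood). The clean route is to take $(f_k)\subset C^\infty_c(\R^n)$ with $\|f_k-f\|_{L^1}\to0$ and $|Df_k|(\R^n)\to|Df|(\R^n)$, use the weak-$*$ convergence $(\nabla^\alpha f_k)\Leb{n}\weakto(\nabla^\alpha f)\Leb{n}$ and lower semicontinuity on the left-hand side, and the upper semicontinuity $\limsup_k|Df_k|(\closure{A_r})\le|Df|(\closure{A_r})$ (which requires the convergence of the total masses) on the right-hand side.
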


\begin{proof}
We prove the three statements separately.

\smallskip

\textit{Proof of~(\ref{item:lem:nabla_alpha_f_loc_1})}. Thanks to~\cite{CS18}*{Theorem~3.18}, we just need to prove~\eqref{eq:weak_frac_grad_loc_estimate} and~\eqref{eq:Davila_estimate}. 

We prove~\eqref{eq:weak_frac_grad_loc_estimate}. By~\eqref{eq:weak_frac_grad_repr}, by Tonelli's Theorem and by~\cite{CS18}*{Lemma~2.4}, we get
\begin{align*} 
\int_{U} |\nabla^{\alpha} f|\,dx 
\le\int_{U} I_{1-\alpha}|Df|\,dx
\le C_{n,\alpha,U}\,|D f|(\R^{n}), 
\end{align*}
where $C_{n, \alpha, U}$ is defined as in~\eqref{eq:constant_estimate}.

We now prove~\eqref{eq:Davila_estimate} in two steps.

\smallskip

\textit{Proof of~\eqref{eq:Davila_estimate}, Step~1}. Assume $f\in C^\infty_c(\R^n)$ and fix $r>0$. We have
\begin{align*}
\int_A|\nabla^\alpha f|\,dx
&=\int_A|I_{1-\alpha}\nabla f|\,dx\\
&\le\frac{\mu_{n,\alpha}}{n+\alpha-1}\left(\int_A\int_{\{|h|\le r\}}\frac{|\nabla f(x+h)|}{|h|^{n+\alpha-1}}\,dh\,dx
+\int_A\abs*{\int_{\{|h|>r\}}\frac{\nabla f(x+h)}{|h|^{n+\alpha-1}}\,dh\,}\,dx\right).
\end{align*}
We estimate the two double integrals appearing in the right-hand side separately. By Tonelli's Theorem, we have
\begin{align*}
\int_A\int_{\{|h|\le r\}}\frac{|\nabla f(x+h)|}{|h|^{n+\alpha-1}}\,dh\,dx
&=\int_{\{|h|\le r\}}\int_A|\nabla f(x+h)|\,dx\,\frac{dh}{|h|^{n+\alpha-1}}\\
&\le\|\nabla f\|_{L^1(\closure[-.5]{A_r};\,\R^n)}\int_{\{|h|\le r\}}\frac{dh}{|h|^{n+\alpha-1}}\\
&=n\omega_n\,\frac{r^{1-\alpha}}{1-\alpha}\,\|\nabla f\|_{L^1(\closure[-.5]{A_r};\,\R^n)}.
\end{align*}
Concerning the second double integral, integrating by parts we get
\begin{align*}
\int_{\{|h|>r\}}\frac{\nabla f(x+h)}{|h|^{n+\alpha-1}}\,dh
&=(n+\alpha-1)\int_{\{|h|>r\}}\frac{h f(x+h)}{|h|^{n+\alpha+1}}\,dh\\
&\quad -\int_{\{|h|=r\}}\frac{h}{|h|}\frac{f(x+h)}{|h|^{n+\alpha-1}}\,d\Haus{n-1}(h)
\end{align*}
for all $x\in A$. Hence, we can estimate
\begin{align*}
\int_A\abs*{\int_{\{|h|>r\}}\frac{\nabla f(x+h)}{|h|^{n+\alpha-1}}\,dh\,}\,dx
&\le(n+\alpha-1)\int_A\int_{\{|h|>r\}}\frac{|f(x+h)|}{|h|^{n+\alpha}}\,dh\,dx\\
&\quad+\int_A\int_{\{|h|=r\}}\frac{|f(x+h)|}{|h|^{n+\alpha-1}}\,d\Haus{n-1}(h)\,dx\\
&\le n\omega_n\|f\|_{L^1(\R^n)}\,r^{-\alpha}\left(\frac{n+\alpha-1}{\alpha}
+1\right)\\
&=n\omega_n\left(\frac{n+2\alpha-1}{\alpha}\right)\|f\|_{L^1(\R^n)}\,r^{-\alpha}.
\end{align*}
Thus~\eqref{eq:Davila_estimate} follows for all $f\in C^\infty_c(\R^n)$ and~$r>0$. 

\smallskip

\textit{Proof of~\eqref{eq:Davila_estimate}, Step~2}. Let $f\in BV(\R^n)$ and fix $r>0$. Combining~\cite{EG15}*{Theorem~5.3} with a standard cut-off approximation argument, we find $(f_k)_{k\in\N}\subset C^\infty_c(\R^n)$ such that $f_k\to f$ in~$L^1(\R^n)$ and $|Df_k|(\R^n)\to|Df|(\R^n)$ as $k\to+\infty$. By Step~1, we have that
\begin{equation}\label{eq:Davila_estimate_smooth}
\|\nabla^\alpha f_k\|_{L^1(A;\,\R^n)}
\le\frac{n\omega_n\,\mu_{n,\alpha}}{n+\alpha-1}\left(\frac{|Df_k|(\closure[-.5]{A_r})}{1-\alpha}\,r^{1-\alpha}+\frac{n+2\alpha-1}{\alpha}\,\|f_k\|_{L^1(\R^n)}\,r^{-\alpha}\right)
\end{equation}
for all $k\in\N$. We claim that
\begin{equation}\label{eq:claim_weak_conv_BV}
(\nabla^\alpha f_k)\,\Leb{n}\weakto(\nabla^\alpha f)\,\Leb{n}
\quad
\text{as $k\to+\infty$}.
\end{equation}
Indeed, if $\phi\in\Lip_c(\R^n;\R^n)$, then $\div^\alpha\phi\in L^\infty(\R^n)$ by~\eqref{eq:frac_div_repr_Lip_estimate} and thus
\begin{align*}
\abs*{\int_{\R^n}\phi\cdot\nabla^\alpha f_k\,dx-\int_{\R^n}\phi\cdot\nabla^\alpha f\,dx}
&=\abs*{\int_{\R^n}f_k\,\div^\alpha\phi\,dx-\int_{\R^n}f\,\div^\alpha\phi\,dx}\\
&\le\|\div^\alpha\phi\|_{L^\infty(\R^n;\,\R^n)}\,\|f_k-f\|_{L^1(\R^n)}
\end{align*}
for all $k\in\N$, so that
\begin{equation*}
\lim_{k\to+\infty}\int_{\R^n}\phi\cdot\nabla^\alpha f_k\,dx
=\int_{\R^n}\phi\cdot\nabla^\alpha f\,dx.
\end{equation*}
Now fix $\phi\in C_c^0(\R^n;\R^n)$. Let $U\subset\R^n$ be a bounded open set such that $\supp\phi\subset U$. For each $\eps>0$ sufficiently small, pick $\psi_\eps\in\Lip_c(\R^n;\R^n)$ such that $\|\phi-\psi_\eps\|_{L^\infty(\R^n;\,\R^n)}<\eps$ and $\supp\psi_\eps\subset U$. Then
\begin{align*}
\bigg|\int_{\R^n}\phi\cdot\nabla^\alpha f_k\,dx&-\int_{\R^n}\phi\cdot\nabla^\alpha f\,dx\bigg|
\le\abs*{\int_{\R^n}\psi_\eps\cdot\nabla^\alpha f_k\,dx-\int_{\R^n}\psi_\eps\cdot\nabla^\alpha f\,dx}\\
&\quad+\|\psi_\eps-\phi\|_{L^\infty(\R^n;\,\R^n)}
\left(
\|\nabla^\alpha f_k\|_{L^1(U;\,\R^n)}
+\|\nabla^\alpha f\|_{L^1(U;\,\R^n)}
\right)\\
&\le\abs*{\int_{\R^n}\psi_\eps\cdot\nabla^\alpha f_k\,dx-\int_{\R^n}\psi_\eps\cdot\nabla^\alpha f\,dx}\\
&\quad+\eps\,C_{n,\alpha,U}
\big(|Df_k|(\R^n)+|Df|(\R^n)\big),
\end{align*}
so that
\begin{equation*}
\lim_{k\to+\infty}\abs*{\int_{\R^n}\phi\cdot\nabla^\alpha f_k\,dx-\int_{\R^n}\phi\cdot\nabla^\alpha f\,dx}
\le 2\eps\, C_{n,\alpha,U}|Df|(\R^n).
\end{equation*}
Thus,~\eqref{eq:claim_weak_conv_BV} follows passing to the limit as $\eps\to0^+$.
Thanks to~\eqref{eq:claim_weak_conv_BV}, by~\cite{M12}*{Proposition~4.29} we get that 
\begin{equation*}
\|\nabla^\alpha f\|_{L^1(A;\,\R^n)}
\le\liminf_{k\to+\infty}\|\nabla^\alpha f_k\|_{L^1(A;\,\R^n)}.
\end{equation*}
Since
\begin{equation*}
|Df|(U)\le\liminf_{k\to+\infty}|Df_k|(U)
\end{equation*}
for any open set $U\subset\R^n$ by~\cite{EG15}*{Theorem~5.2}, we can estimate
\begin{align*}
\limsup_{k\to+\infty}|Df_k|(\closure{A_r})
&\le\lim_{k\to+\infty}|Df_k|(\R^n)
-\liminf_{k\to+\infty}|Df_k|(\R^n\setminus A_r)\\
&\le|Df|(\R^n)-|Df|(\R^n\setminus A_r)\\
&=|Df|(\closure{A_r}).
\end{align*}
Thus,~\eqref{eq:Davila_estimate} follows taking limits as $k\to+\infty$ in~\eqref{eq:Davila_estimate_smooth}. Finally, \eqref{eq:Davila_estimate_opt} is easily deduced by optimising the right-hand side of~\eqref{eq:Davila_estimate} in the case $A=\R^n$ with respect to $r > 0$.

\smallskip

\textit{Proof of~(\ref{item:lem:nabla_alpha_f_loc_2})}. Assume $f \in L^{\infty}(\R^{n})\cap W^{\alpha, 1}_{\loc}(\R^{n})$. Given $R>0$,  we can estimate
\begin{align*}
\int_{B_{R}} |\nabla^{\alpha} f(x)| \, dx 
& \le \mu_{n, \alpha} \int_{B_{R}} \int_{\R^{n}} \frac{|f(x) - f(y)|}{|x - y|^{n + \alpha}} \, dx \, dy \\
& = \mu_{n, \alpha} \int_{B_{R}} \int_{B_{R}} \frac{|f(x) - f(y)|}{|x - y|^{n + \alpha}} \, dx \, dy + \mu_{n, \alpha} \int_{B_{R}} \int_{\R^{n} \setminus B_{R}} \frac{|f(x) - f(y)|}{|x - y|^{n + \alpha}} \, dx \, dy \\
& \le \mu_{n, \alpha} [f]_{W^{\alpha, 1}(B_{R})} + 2 \mu_{n, \alpha} \| f\|_{L^{\infty}(\R^{n})} \int_{B_{R}} \int_{\R^{n} \setminus B_{R}} \frac{1}{|x - y|^{n + \alpha}} \, dx \, dy\\
&=\mu_{n, \alpha} [f]_{W^{\alpha, 1}(B_{R})} + \mu_{n, \alpha} \| f\|_{L^{\infty}(\R^{n})}P_\alpha(B_R)
\end{align*}
and~\eqref{eq:nabla_alpha_f_L1_loc} follows. To prove that $D^\alpha f=\nabla^\alpha f\Leb{n}$, we argue as in the proof of~\cite{CS18}*{Proposition~4.8}. Let $\phi\in\Lip_c(\R^n;\R^n)$. Since $f\in L^\infty(\R^n)$, we have 
\begin{equation*}
x\mapsto|f(x)|\int_{\R^n}\frac{|\phi(y)-\phi(x)|}{|y-x|^{n+\alpha}}\,dy\in L^1(\R^n).
\end{equation*}
Hence, by the definition of~$\div^\alpha$ on $\Lip_c$-regular vector fields (see~\cite{CS18}*{Section~2.2}) and by Lebesgue's Dominated Convergence Theorem, we have
\begin{equation*}
\int_{\R^n}f\,\div^\alpha\phi\,dx
=\lim_{\eps\to0^+}\int_{\R^n}f(x)\int_{\set*{|y-x|>\eps}}\frac{(y-x)\cdot\phi(y)}{|y-x|^{n+\alpha+1}}\,dy\,dx.
\end{equation*}
Since
\begin{equation*}
\begin{split}
\int_{\R^n}\int_{\set*{|y-x|>\eps}}\frac{|f(x)|\,|\phi(y)|}{|y-x|^{n+\alpha}}\,dy\,dx
&\le\|f\|_{L^\infty(\R^n)}\int_{\R^n}|\phi(y)|\int_{\set*{|y-x|>\eps}}|y-x|^{-n-\alpha}\,dx\,dy\\
&\le\frac{n\omega_n}{\alpha\eps^\alpha}\|f\|_{L^\infty(\R^n)}\|\phi\|_{L^1(\R^n;\,\R^n)}
\end{split}
\end{equation*}
for all $\eps>0$, by Fubini's Theorem we can compute
\begin{equation*}
\begin{split}
\int_{\R^n}f(x)\int_{\set*{|y-x|>\eps}}\frac{(y-x)\cdot\phi(y)}{|y-x|^{n+\alpha+1}}\,dy\,dx
&=-\int_{\R^n}\phi(y)\int_{\set*{|x-y|>\eps}}\frac{(x-y)\,f(x)}{|x-y|^{n+\alpha+1}}\,dx\,dy\\
&=-\int_{\R^n}\phi(y)\int_{\set*{|x-y|>\eps}}\frac{(x-y)\,(f(x)-f(y))}{|x-y|^{n+\alpha+1}}\,dx\,dy.
\end{split}
\end{equation*}
Since 
\begin{align*}
|\phi(y)|\,\abs*{\int_{\set*{|x-y|>\eps}}\frac{(x-y)\,(f(x)-f(y))}{|x-y|^{n+\alpha+1}}\,dx}
\le|\phi(y)|\int_{\R^n}\frac{|f(x)-f(y)|}{|x-y|^{n+\alpha}}\,dx
\end{align*}
for all $y\in\R^n$ and $\eps>0$, and 
\begin{equation*}
y\mapsto\int_{\R^n}\frac{|f(x)-f(y)|}{|x-y|^{n+\alpha}}\,dx\in L^1_{\loc}(\R^n)
\end{equation*}
by~\eqref{eq:nabla_alpha_f_L1_loc}, again by Lebesgue's Dominated Convergence Theorem we conclude that
\begin{equation*}
\begin{split}
\int_{\R^n}f(x)\,\div^\alpha\phi(x) \,dx
&=-\lim_{\eps\to0}\int_{\R^n}\phi(y)\int_{\set*{|x-y|>\eps}}\frac{(x-y)\,(f(x)-f(y))}{|x-y|^{n+\alpha+1}}\,dx\,dy\\
&=-\int_{\R^n}\phi(y)\lim_{\eps\to0}\int_{\set*{|x-y|>\eps}}\frac{(x-y)\,(f(x)-f(y))}{|x-y|^{n+\alpha+1}}\,dx\,dy\\
&=-\int_{\R^n}\phi(y) \cdot\nabla^\alpha f(y)\,dy
\end{split}
\end{equation*}
for all $\phi\in\Lip_c(\R^n;\R^n)$. Thus $D^\alpha f\in\mathscr{M}_{\loc}(\R^n;\R^n)$ is well defined and $D^\alpha f=\nabla^\alpha f\Leb{n}$.

\smallskip

\textit{Proof of~(\ref{item:lem:nabla_alpha_f_loc_3})}. Assume $f\in L^{\infty}(\R^{n})\cap BV_{\loc}(\R^{n})$. By \cref{result:simple_est_Sobolev_seminorm_BV_loc}, we know that $f \in L^{\infty}(\R^{n})\cap W^{\alpha, 1}_{\loc}(\R^{n})$ for all $\alpha\in(0,1)$, so that $D^{\alpha} f \in \M_{\rm loc}(\R^{n}; \R^{n})$ exists by~\eqref{item:lem:nabla_alpha_f_loc_2}. Hence, inserting~\eqref{eq:simple_est_Sobolev_seminorm_BV_loc} in~\eqref{eq:nabla_alpha_f_L1_loc}, we find
\begin{equation*}
\| \nabla^{\alpha} f \|_{L^{1}(B_{R};\, \R^{n})} 
\le\mu_{n, \alpha}\left( \frac{n\omega_n(2R)^{1-\alpha}}{1-\alpha} \, |Df|(B_{3R}) 
+ P_{\alpha}(B_1)\,R^{n-\alpha}\, \|f\|_{L^{\infty}(\R^{n})}\right). 
\end{equation*} 
Since for all $x\in B_1$ we have
\begin{equation*}
\int_{\R^n\setminus B_1}\frac{dy}{|y-x|^{n+\alpha}}
=\int_{\R^n\setminus B_1(-x)}\frac{dz}{|z|^{n+\alpha}}
\le\int_{\R^n\setminus B_{1-|x|}}\frac{dz}{|z|^{n+\alpha}}
=\frac{n\omega_n}{\alpha(1-|x|)^\alpha},
\end{equation*}
being~$\Gamma$ log-convex on $(0,+\infty)$ (see~\cite{A64}), we can estimate
\begin{equation*}
\begin{split}
P_\alpha(B_1)
&=2\int_{B_1}\int_{\R^n\setminus B_1}\frac{dy\,dx}{|y-x|^{n+\alpha}}
\le\frac{2n\omega_n}{\alpha}\int_{B_1}\frac{dx}{(1-|x|)^\alpha}\\
&=\frac{2(n\omega_n)^2}{\alpha}\int_0^1\frac{t^{n-1}}{(1-t)^\alpha}\,dt
=\frac{2(n\omega_n)^2}{\alpha}\,\frac{\Gamma(n)\,\Gamma(1-\alpha)}{\Gamma(n+1-\alpha)}\\
&\le\frac{2(n\omega_n)^2}{\alpha}\,\frac{(n+1)^\alpha}{n}\Gamma(1-\alpha) \\
&\le \frac{2^{\alpha+1}(n\omega_n)^2}{\alpha}\,\Gamma(1-\alpha),
\end{split}
\end{equation*}
so that
\begin{equation*}
\| \nabla^{\alpha} f \|_{L^{1}(B_{R};\, \R^{n})} 
\le\mu_{n, \alpha}\left( \frac{n\omega_n(2R)^{1-\alpha}}{1-\alpha} \, |Df|_{BV(B_{3R})} 
+ \frac{2^{\alpha+1}(n\omega_n)^2R^{n-\alpha}}{\alpha\,\Gamma(1-\alpha)^{-1}}\, \|f\|_{L^{\infty}(\R^{n})}\right), 
\end{equation*} 
proving~\eqref{eq:nabla_alpha_f_L1_loc_BV_loc}.
\end{proof}

Note that \cref{lem:nabla_alpha_f_L1_and_L1_loc}\eqref{item:lem:nabla_alpha_f_loc_1}, in particular,  applies to any $f\in W^{1,1}(\R^n)$. In the following result, we prove that a similar result holds also for any $f\in W^{1,p}(\R^n)$ with $p\in(1,+\infty)$.

\begin{proposition}[$W^{1,p}(\R^n)\subset S^{\alpha,p}(\R^n)$ for $p\in(1,+\infty)$]\label{res:nabla_alpha_Lp}
Let $\alpha\in(0,1)$ and $p\in(1,+\infty)$. If $f\in W^{1,p}(\R^n)$, then $f\in S^{\alpha,p}(\R^n)$ with
\begin{equation}\label{eq:Davila_estimate_p}
\|\nabla^\alpha_w f\|_{L^p(A;\,\R^n)}
\le\frac{n\omega_n\mu_{n,\alpha}}{n+\alpha-1}\left(\frac{\|\nabla_w f\|_{L^p(\closure[-.5]{A_r};\,\R^n)}}{1-\alpha}\,r^{1-\alpha}+\frac{n+2\alpha-1}{\alpha}\,\|f\|_{L^p(\R^n)}\,r^{-\alpha}\right)
\end{equation} 
for any $r>0$ and any open set $A\subset\R^n$, where $A_r:=\set*{x\in\R^n : \dist(x,A)<r}$. In particular, we have
\begin{equation}\label{eq:W_1_p_inclusion_S_alpha_p}
\|\nabla^\alpha_w f\|_{L^p(\R^n;\,\R^n)}
\le\frac{(n+2\alpha-1)^{1-\alpha}}{n+\alpha-1}\frac{n\omega_n\mu_{n,\alpha}}{\alpha(1-\alpha)}\,\|\nabla_w f\|_{L^p(\R^n;\,\R^n)}^\alpha\| f\|_{L^p(\R^n)}^{1-\alpha}.
\end{equation}
In addition, if $p\in\big(1,\frac{n}{1-\alpha}\big)$ and $q=\frac{np}{n-(1-\alpha)p}$, then
\begin{equation}\label{eq:nabla_alpha_Lp_repres}
\nabla^\alpha_w f = I_{1-\alpha}\nabla_w f
\quad
\text{a.e.\ in~$\R^n$}
\end{equation}
and $\nabla^\alpha_w f\in L^q(\R^n;\R^n)$.
\end{proposition}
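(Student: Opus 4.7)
The plan is to mimic the strategy used for the $BV$ case in \cref{lem:nabla_alpha_f_L1_and_L1_loc}\eqref{item:lem:nabla_alpha_f_loc_1} (namely the proof of \eqref{eq:Davila_estimate}), first establishing the bound \eqref{eq:Davila_estimate_p} for $f\in C^\infty_c(\R^n)$ and then passing to $W^{1,p}(\R^n)$ by density. Concretely, for $f\in C^\infty_c(\R^n)$ I would start from the identity $\nabla^\alpha f=I_{1-\alpha}\nabla f$ of \cref{prop:frac_div_repr} written as
\begin{equation*}
\nabla^\alpha f(x)=\frac{\mu_{n,\alpha}}{n+\alpha-1}\int_{\R^n}\frac{\nabla f(x+h)}{|h|^{n+\alpha-1}}\,dh,
\end{equation*}
split the integral into $\{|h|\le r\}$ and $\{|h|>r\}$, and integrate by parts on the outer piece exactly as in the proof of \eqref{eq:Davila_estimate} to move the derivative off~$f$ (producing a boundary term on $\{|h|=r\}$ and a volume term with $f$ weighted by $|h|^{-n-\alpha}$). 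Then, instead of taking $L^1$-norms in~$x$ of each piece, I would apply Minkowski's integral inequality to move the $L^p(A)$-norm inside the $h$-integrals. The $\{|h|\le r\}$ contribution becomes $\|\nabla f\|_{L^p(\closure{A_r};\,\R^n)}\int_{\{|h|\le r\}}|h|^{-(n+\alpha-1)}\,dh$, while the $\{|h|>r\}$ contribution, after integration by parts, becomes a sum of two terms, each controlled by $\|f\|_{L^p(\R^n)}$ times a dimensional constant multiplied by $r^{-\alpha}$; combining yields \eqref{eq:Davila_estimate_p} for smooth compactly supported~$f$.

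To extend to general $f\in W^{1,p}(\R^n)$, pick $(f_k)_{k\in\N}\subset C^\infty_c(\R^n)$ with $f_k\to f$ in $W^{1,p}(\R^n)$. Applied to $f_k-f_m$ (with $A=\R^n$ and any fixed $r>0$), the estimate just proved shows that $(\nabla^\alpha f_k)_{k\in\N}$ is Cauchy in $L^p(\R^n;\R^n)$; denote its limit by $g\in L^p(\R^n;\R^n)$. For any $\phi\in C^\infty_c(\R^n;\R^n)$ the duality \eqref{intro_eq:duality} gives $\int_{\R^n}f_k\,\div^\alpha\phi\,dx=-\int_{\R^n}\phi\cdot\nabla^\alpha f_k\,dx$; since $\div^\alpha\phi\in L^\infty(\R^n)\cap L^1(\R^n)\subset L^{p'}(\R^n)$ by \cref{prop:frac_div_repr}, both sides pass to the limit and yield $\int_{\R^n}f\,\div^\alpha\phi\,dx=-\int_{\R^n}\phi\cdot g\,dx$, so that $g=\nabla^\alpha_w f\in L^p(\R^n;\R^n)$ and $f\in S^{\alpha,p}(\R^n)$. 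Taking $A$-limits in the estimate applied to~$f_k$ (with, if needed, lower semicontinuity on the left and dominated convergence on the right using $\|\nabla f_k\|_{L^p(\closure{A_r})}\to\|\nabla_w f\|_{L^p(\closure{A_r})}$) transfers \eqref{eq:Davila_estimate_p} to~$f$.

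The global estimate \eqref{eq:W_1_p_inclusion_S_alpha_p} then follows by choosing~$A=\R^n$ and optimising the right-hand side of \eqref{eq:Davila_estimate_p} in~$r>0$: elementary calculus on $r\mapsto a\,r^{1-\alpha}+b\,r^{-\alpha}$, minimised at $r^*=\tfrac{\alpha b}{(1-\alpha)a}$, gives a minimal value equal to $a^{\alpha}b^{1-\alpha}\bigl(\tfrac{1}{\alpha}+\tfrac{1}{1-\alpha}\bigr)$; plugging in $a=\|\nabla_w f\|_{L^p(\R^n;\,\R^n)}/(1-\alpha)$ and $b=(n+2\alpha-1)\|f\|_{L^p(\R^n)}/\alpha$ produces precisely the constant $(n+2\alpha-1)^{1-\alpha}/[\alpha(1-\alpha)]$ in \eqref{eq:W_1_p_inclusion_S_alpha_p}.

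For the final statement, assume $p\in\bigl(1,\tfrac{n}{1-\alpha}\bigr)$ and $q=\tfrac{np}{n-(1-\alpha)p}$. By the Hardy--Littlewood--Sobolev inequality \eqref{eq:Riesz_potential_boundedness} applied to $\nabla_w f\in L^p(\R^n;\R^n)$, the function $I_{1-\alpha}\nabla_w f$ is well defined and belongs to $L^q(\R^n;\R^n)$. For the approximating sequence $(f_k)\subset C^\infty_c(\R^n)$ above, \cref{prop:frac_div_repr} gives $\nabla^\alpha f_k=I_{1-\alpha}\nabla f_k$; since $\nabla f_k\to\nabla_w f$ in $L^p$, the continuity of $I_{1-\alpha}:L^p\to L^q$ yields $I_{1-\alpha}\nabla f_k\to I_{1-\alpha}\nabla_w f$ in $L^q(\R^n;\R^n)$, while on the other side $\nabla^\alpha f_k\to\nabla^\alpha_w f$ in $L^p(\R^n;\R^n)$. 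Extracting a common subsequence converging a.e.\ in both~$L^p$ and~$L^q$ forces $\nabla^\alpha_w f=I_{1-\alpha}\nabla_w f$ a.e., which simultaneously proves \eqref{eq:nabla_alpha_Lp_repres} and the membership $\nabla^\alpha_w f\in L^q(\R^n;\R^n)$. The main technical delicacy in the whole argument is the justification of Minkowski's integral inequality (and the vector-valued integration by parts) uniformly in~$r$, together with ensuring that the distributional identification of $g$ with $\nabla^\alpha_w f$ survives the limit; once $\div^\alpha\phi\in L^{p'}(\R^n)$ is invoked via \cref{prop:frac_div_repr}, this step is routine.
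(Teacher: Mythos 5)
Your proof of \eqref{eq:Davila_estimate_p} for $f\in C^\infty_c(\R^n)$ — the representation $\nabla^\alpha f=I_{1-\alpha}\nabla f$, the split at $|h|=r$, the integration by parts on the outer region, and Minkowski's integral inequality — is exactly the paper's Step~1. Where you genuinely diverge is in the two limit passages. For the density step, the paper keeps only the uniform $L^p$ bound on $(\nabla^\alpha f_k)$, extracts a weakly convergent subsequence, identifies the weak limit with $\nabla^\alpha_w f$ by duality, and then transfers \eqref{eq:Davila_estimate_p} via weak lower semicontinuity (testing against $\phi\in C^\infty_c(A;\R^n)$); you instead exploit the linearity of the estimate to show $(\nabla^\alpha f_k)$ is Cauchy in $L^p(\R^n;\R^n)$. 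Your route is slightly stronger and cleaner: strong convergence lets you pass to the limit on both sides of the local estimate directly, with no lower semicontinuity needed. For \eqref{eq:nabla_alpha_Lp_repres}, the paper argues by duality — it shows $I_{1-\alpha}\phi\in\Lip_b(\R^n;\R^n)$, integrates by parts, and uses Fubini to move $I_{1-\alpha}$ onto the test field — whereas you combine the continuity of $I_{1-\alpha}\colon L^p\to L^q$ with the strong $L^p$ convergence of $\nabla^\alpha f_k$ and match a.e.\ limits along a common subsequence. This again works and is arguably more elementary, but it is only available because you established strong (not merely weak) convergence in the previous step; the paper's duality argument is independent of that.

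One arithmetic slip: the minimum of $r\mapsto a\,r^{1-\alpha}+b\,r^{-\alpha}$ at $r^*=\alpha b/((1-\alpha)a)$ is $a^{\alpha}b^{1-\alpha}\,\alpha^{-\alpha}(1-\alpha)^{\alpha-1}$, not $a^{\alpha}b^{1-\alpha}\bigl(\tfrac1\alpha+\tfrac1{1-\alpha}\bigr)$; with your formula the substitution $a=\|\nabla_w f\|_{L^p}/(1-\alpha)$, $b=(n+2\alpha-1)\|f\|_{L^p}/\alpha$ would \emph{not} reproduce the constant in \eqref{eq:W_1_p_inclusion_S_alpha_p}, while the correct formula does, exactly. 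This is a computational error, not a gap in the method.
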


\begin{proof}
We argue as in the proof of \cref{lem:nabla_alpha_f_L1_and_L1_loc}\eqref{item:lem:nabla_alpha_f_loc_1}. 

\smallskip

\textit{Proof of~\eqref{eq:Davila_estimate_p}}.
The proof of~\eqref{eq:Davila_estimate_p} for all $f\in C^\infty_c(\R^n)$ is very similar to that of~\eqref{eq:Davila_estimate} and is thus left to the reader. 
Now let $f\in W^{1,p}(\R^n)$ and fix an open set $A\subset\R^n$ and~$r>0$. Combining~\cite{EG15}*{Theorem~4.2} with a standard cut-off approximation argument, we find $(f_k)_{k\in\N}\subset C^\infty_c(\R^n)$ such that $f_k\to f$ in~$W^{1,p}(\R^n)$ as $k\to+\infty$. 
We thus have that
\begin{equation}\label{eq:Davila_estimate_smooth_p}
\|\nabla^\alpha f_k\|_{L^p(A;\,\R^n)}
\le\frac{n\omega_n\,\mu_{n,\alpha}}{n+\alpha-1}\left(\frac{\|\nabla f_k\|_{L^p(\closure[-.5]{A_r};\,\R^n)}}{1-\alpha}\,r^{1-\alpha}+\frac{n+2\alpha-1}{\alpha}\,\|f_k\|_{L^p(\R^n)}\,r^{-\alpha}\right)
\end{equation}
for all $k\in\N$. Hence, choosing $A=\R^n$, we get that the sequence $(\nabla^\alpha f_k)_{k\in\N}$ is uniformly bounded in~$L^p(\R^n;\R^n)$. Up to pass to a subsequence (which we do not relabel for simplicity), there exists $g\in L^p(\R^n;\R^n)$ such that $\nabla^\alpha f_k\weakto g$ in $L^p(\R^n;\R^n)$ as $k\to+\infty$. Given $\phi\in C^\infty_c(\R^n;\R^n)$, we have
\begin{equation*}
\int_{\R^n} f_k\,\div^\alpha\phi\,dx
=-\int_{\R^n}\phi\cdot\nabla^\alpha f_k\,dx
\end{equation*}
for all $k\in\N$. Passing to the limit as $k\to+\infty$, by~\cref{prop:frac_div_repr} we get that
\begin{equation*}
\int_{\R^n} f\,\div^\alpha\phi\,dx
=-\int_{\R^n}\phi\cdot g\,dx
\end{equation*}
for any $\phi\in C^\infty_c(\R^n;\R^n)$, so that $g=\nabla^\alpha_w f$ and hence $f\in S^{\alpha,p}(\R^n)$ according to~\cite{CS18}*{Definition~3.19}. We thus have that
\begin{equation*}
\|\nabla^\alpha_w f\|_{L^p(A;\,\R^n)}\le\liminf_{k\to+\infty}\|\nabla^\alpha f_k\|_{L^p(A;\,\R^n)}
\end{equation*} 
for any open set $A\subset\R^n$, since
\begin{equation*}
\int_{\R^n}\phi\cdot\nabla^\alpha_w f\,dx
=\lim_{k\to+\infty}\int_{\R^n}\phi\cdot\nabla^\alpha f_k\,dx
\le\|\phi\|_{L^{\frac{p}{p-1}}(A;\,\R^n)}\liminf_{k\to+\infty}\|\nabla^\alpha f_k\|_{L^p(A;\,\R^n)}
\end{equation*}
for all $\phi\in C^\infty_c(A;\R^n)$. Therefore, \eqref{eq:Davila_estimate_p} follows by taking limits as~$k\to+\infty$ in~\eqref{eq:Davila_estimate_smooth_p}.

\smallskip

\textit{Proof of~\eqref{eq:W_1_p_inclusion_S_alpha_p}}. Inequality~\eqref{eq:W_1_p_inclusion_S_alpha_p} follows by applying~\eqref{eq:Davila_estimate_p} with $A=\R^n$ and minimising the right-hand side with respect to~$r>0$.

\smallskip

\textit{Proof of~\eqref{eq:nabla_alpha_Lp_repres}}.
Now assume $p\in\big(1,\frac{n}{1-\alpha}\big)$ and let $q=\frac{np}{n-(1-\alpha)p}$. Let $\phi\in C^\infty_c(\R^n;\R^n)$ be fixed. Recalling inequality~\eqref{eq:Riesz_potential_boundedness}, since $\phi\in L^{\frac{q}{q-1}}(\R^n;\R^n)$ we have that
\begin{equation*}
|\phi|\,I_{1-\alpha}|f|\in L^1(\R^n),
\quad
|\phi|\,I_{1-\alpha}|\nabla_w f|\in L^1(\R^n).
\end{equation*}
In particular, Fubini's Theorem implies that
\begin{equation*}
f\,I_{1-\alpha}\phi\in L^1(\R^n; \R^n),
\quad
I_{1-\alpha}\phi\cdot\nabla_w f\in L^1(\R^n).
\end{equation*}
Since $\div^{\alpha} \phi \in L^{\frac{p}{p-1}}(\R^n)$ by \cref{prop:frac_div_repr}, we also get that 
\begin{equation*}
f\,\div I_{1-\alpha}\phi=f\,\div^\alpha\phi\in L^1(\R^n).
\end{equation*}
Therefore, observing that $I_{1-\alpha}\phi\in\Lip_b(\R^n;\R^n)$ because $\nabla I_{1 - \alpha} \phi = \nabla^{\alpha} \phi\in L^{\infty}(\R^{n}; \R^{n^2})$ again by \cref{prop:frac_div_repr} and performing a standard cut-off approximation argument, we can integrate by parts and obtain
\begin{equation*}
\int_{\R^n}\phi\cdot I_{1-\alpha}\nabla_w f\,dx
=\int_{\R^n}I_{1-\alpha}\phi\cdot\nabla_w f\,dx
=-\int_{\R^n}f\,\div I_{1-\alpha}\phi\,dx
=-\int_{\R^n}f\,\div^\alpha\phi\,dx.
\end{equation*}
Therefore
\begin{equation*}
\int_{\R^n}\phi\cdot I_{1-\alpha}\nabla_w f\,dx
=-\int_{\R^n}f\,\div^\alpha\phi\,dx
\end{equation*}
for all $\phi\in C^\infty_c(\R^n;\R^n)$, proving~\eqref{eq:nabla_alpha_Lp_repres}. In particular, notice that $\nabla^\alpha_w f\in L^q(\R^n;\R^n)$ by inequality~\eqref{eq:Riesz_potential_boundedness}. The proof is complete.
\end{proof}

For the case $p=+\infty$, we have the following immediate consequence of \cref{res:leibniz_frac_nabla_Lip_b} and \cref{res:Lip_b_int_by_parts}.

\begin{corollary}[$W^{1,\infty}(\R^n)\subset S^{\alpha,\infty}(\R^n)$]
Let $\alpha\in(0,1)$. If $f\in W^{1,\infty}(\R^n)$, then $f\in S^{\alpha,\infty}(\R^n)$ with
\begin{equation}\label{eq:W_1_infty_inclusion_S_alpha_infty}
\|\nabla^\alpha f\|_{L^\infty(\R^n;\,\R^n)}	
\le2^{1-\alpha}\,\frac{n\omega_n\mu_{n,\alpha}}{\alpha(1-\alpha)}\,\|\nabla_w f\|_{L^\infty(\R^n;\,\R^n)}^\alpha\|f\|_{L^\infty(\R^n)}^{1-\alpha}.
\end{equation}
\end{corollary}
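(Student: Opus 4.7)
The plan is to exploit the identification $W^{1,\infty}(\R^n) = \Lip_b(\R^n)$ modulo the choice of representative, so that $f$ is a bounded Lipschitz function with $\Lip(f) = \|\nabla_w f\|_{L^\infty(\R^n;\,\R^n)}$. This puts $f$ in the setting where both \cref{res:frac_nabla_Lip_b} and \cref{res:Lip_b_int_by_parts} apply, and the result is essentially a direct combination of the two.

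First, I would apply \cref{res:frac_nabla_Lip_b} to $f\in\Lip_b(\R^n)$: this immediately gives that the fractional $\alpha$-gradient $\nabla^\alpha f$ is well defined pointwise and belongs to $L^\infty(\R^n;\R^n)$, with
\begin{equation*}
\|\nabla^\alpha f\|_{L^\infty(\R^n;\,\R^n)}
\le \frac{2^{1-\alpha}n\omega_n\mu_{n,\alpha}}{\alpha(1-\alpha)}\,\Lip(f)^\alpha\,\|f\|_{L^\infty(\R^n)}^{1-\alpha},
\end{equation*}
which after substituting $\Lip(f)=\|\nabla_w f\|_{L^\infty(\R^n;\,\R^n)}$ is exactly the bound~\eqref{eq:W_1_infty_inclusion_S_alpha_infty}.

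Second, to upgrade this pointwise object to the weak fractional $\alpha$-gradient of~$f$ in the sense of the definition of $S^{\alpha,\infty}(\R^n)$, I would invoke \cref{res:Lip_b_int_by_parts}: for every test vector field $\phi\in\Lip_c(\R^n;\R^n)\supset C^\infty_c(\R^n;\R^n)$ one has
\begin{equation*}
\int_{\R^n}f\,\div^\alpha\phi\,dx=-\int_{\R^n}\phi\cdot\nabla^\alpha f\,dx.
\end{equation*}
By the very definition of the weak fractional $\alpha$-gradient this forces $\nabla^\alpha_w f=\nabla^\alpha f$ a.e.\ in~$\R^n$, so in particular $\nabla^\alpha_w f\in L^\infty(\R^n;\R^n)$. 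Combined with $f\in L^\infty(\R^n)$, this yields $f\in S^{\alpha,\infty}(\R^n)$ and the desired inequality.

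There is no real obstacle here: the two lemmas were crafted precisely to make this a two-line matter, and the only minor point is the (standard) identification of a Sobolev $W^{1,\infty}$ function with its Lipschitz representative. I would not expect any difficulty beyond remarking that this identification is used so that $\Lip(f)$ can be replaced by the essential supremum of $|\nabla_w f|$.
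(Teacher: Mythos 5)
Your proof is correct and follows exactly the route the paper intends: the corollary is stated there as an immediate consequence of the extension of $\nabla^\alpha$ to $\Lip_b(\R^n)$ and of the integration-by-parts formula for $\Lip_b$ functions, and you have simply spelled out that combination, including the standard identification of a $W^{1,\infty}$ function with its bounded Lipschitz representative so that $\Lip(f)=\|\nabla_w f\|_{L^\infty(\R^n;\,\R^n)}$. Nothing is missing.
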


\subsection{Two representation formulas for the \texorpdfstring{$\alpha$}{alpha}-variation}

In this section, we prove two useful representation formulas for the $\alpha$-variation.

We begin with the following weak representation formula for the fractional $\alpha$-variation of functions in~$BV_{\loc}(\R^n) \cap L^\infty(\R^n)$. Here and in the following, we denote by~$f^\star$ the \emph{precise representative} of $f\in L^1_{\loc}(\R^n)$, see~\eqref{eq:def_precise_repres} for the definition.  

\begin{proposition}
\label{prop:general_repr_nabla_alpha}
Let $\alpha \in (0, 1)$ and $f \in BV_{\loc}(\R^n) \cap L^\infty(\R^n)$. Then $\nabla^\alpha f \in L^1_{\loc}(\R^n;\R^n)$ and
\begin{equation} \label{eq:general_repr_formula}
\int_{\R^{n}} \varphi \cdot \nabla^{\alpha} f \, dx 
= \lim_{R \to + \infty} \int_{\R^{n}} \varphi \cdot I_{1 - \alpha} ( \chi_{B_{R}}^\star D f) \, dx
\end{equation}
for all $\phi \in \Lip_{c}(\R^{n}; \R^{n})$.
\end{proposition}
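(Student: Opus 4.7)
The plan is to truncate $f$ to $f_R := f\chi_{B_R}$, exploit the fact that $f_R \in BV(\R^n)$ to invoke the explicit representation $\nabla^\alpha f_R = I_{1-\alpha}Df_R$, and then let $R\to+\infty$, showing that the contribution coming from the boundary $\de B_R$ vanishes in the limit.

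First, \cref{lem:nabla_alpha_f_L1_and_L1_loc}(\ref{item:lem:nabla_alpha_f_loc_3}) ensures $\nabla^\alpha f \in L^1_{\loc}(\R^n;\R^n)$ together with the duality
\[
\int_{\R^n} f\,\div^\alpha\phi\,dx = -\int_{\R^n} \phi\cdot\nabla^\alpha f\,dx
\qquad\forall\,\phi\in\Lip_c(\R^n;\R^n).
\]
For each $R>0$, the set $B_R$ has finite perimeter with $D\chi_{B_R} = -\nu_{B_R}\,\Haus{n-1}\res\de B_R$, so since $f\in L^\infty(\R^n)\cap BV_{\loc}(\R^n)$ the standard BV Leibniz rule yields $f_R\in BV(\R^n)\cap L^\infty(\R^n)$ with
\[
Df_R = \chi_{B_R}^\star\,Df + f^\star\,D\chi_{B_R},
\]
and both summands on the right-hand side are finite $\R^n$-valued Radon measures.

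Applying \cref{lem:nabla_alpha_f_L1_and_L1_loc}(\ref{item:lem:nabla_alpha_f_loc_1}) to $f_R$ gives $\nabla^\alpha f_R = I_{1-\alpha}Df_R$ a.e., whence
\[
\int_{\R^n}\phi\cdot\nabla^\alpha f_R\,dx
= \int_{\R^n}\phi\cdot I_{1-\alpha}(\chi_{B_R}^\star Df)\,dx
+ \int_{\R^n}\phi\cdot I_{1-\alpha}(f^\star D\chi_{B_R})\,dx.
\]
On the other hand, the duality applied to $f_R$ rewrites the left-hand side as $-\int_{\R^n} f\chi_{B_R}\,\div^\alpha\phi\,dx$, which converges to $-\int_{\R^n} f\,\div^\alpha\phi\,dx = \int_{\R^n}\phi\cdot\nabla^\alpha f\,dx$ as $R\to+\infty$ by Lebesgue's Dominated Convergence Theorem, using $\div^\alpha\phi\in L^1(\R^n)$ from \cref{prop:frac_div_repr} and $f\in L^\infty(\R^n)$.

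It remains to prove that the boundary term vanishes. Fubini's theorem (justified by the uniform lower bound $|x-y|\ge R/2$ for $x\in\supp\phi$ and $y\in\de B_R$ once $R$ is sufficiently large) yields
\[
\int_{\R^n}\phi\cdot I_{1-\alpha}(f^\star D\chi_{B_R})\,dx
= -\int_{\de B_R} f^\star\,\nu_{B_R}\cdot I_{1-\alpha}\phi\,d\Haus{n-1}.
\]
Choosing $M>0$ with $\supp\phi\subset B_M$, for every $R>2M$ and $y\in\de B_R$ the direct estimate
\[
|I_{1-\alpha}\phi(y)|
\le \frac{\mu_{n,\alpha}\,\|\phi\|_{L^1(\R^n;\,\R^n)}}{(n+\alpha-1)(R/2)^{n+\alpha-1}}
\]
holds, so that the boundary integral is controlled by $C(n,\alpha)\|f\|_{L^\infty(\R^n)}\|\phi\|_{L^1(\R^n;\,\R^n)}\,R^{-\alpha}\to 0$ as $R\to+\infty$. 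The main obstacle is verifying the BV Leibniz rule for $f\chi_{B_R}$, which crucially requires the $L^\infty$ assumption on $f$; once this identity is established, the rest is a routine limiting procedure based on the decay of $I_{1-\alpha}\phi$ at infinity.
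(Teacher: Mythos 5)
Your proof is correct and follows essentially the same route as the paper: truncate to $f\chi_{B_R}$, use the $BV\cap L^\infty$ Leibniz rule for $D(f\chi_{B_R})$, apply the representation $\nabla^\alpha(f\chi_{B_R})=I_{1-\alpha}D(f\chi_{B_R})$, pass to the limit on the left via dominated convergence using $\div^\alpha\phi\in L^1(\R^n)$, and show the boundary contribution is $O(R^{-\alpha})$. The only (inessential) difference is that you transfer $I_{1-\alpha}$ onto $\phi$ by Fubini and bound $|I_{1-\alpha}\phi|$ on $\de B_R$, whereas the paper bounds $|I_{1-\alpha}(f^\star D\chi_{B_R})|$ on $\supp\phi\subset B_{R/2}$ directly; both yield the same decay.
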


\begin{proof}
By \cref{lem:nabla_alpha_f_L1_and_L1_loc}\eqref{item:lem:nabla_alpha_f_loc_3}, we know that $\nabla^{\alpha} f \in L^{1}_{\rm loc}(\R^{n}; \R^{n})$ for all $\alpha\in(0,1)$. By \cref{res:BV_cut_ball}, we also know that $f\chi_{B_R}\in BV(\R^n)\cap L^\infty(\R^n)$ with $D(\chi_{B_{R}} f) = \chi_{B_{R}}^\star D f + f^\star D \chi_{B_{R}}$ for all $R>0$. Now fix $\phi \in \Lip_c(\R^{n}; \R^{n})$ and take $R > 0$ such that $\supp\phi\subset B_{R/2}$. By~\cite{CS18}*{Theorem 3.18}, we have that 
\begin{align*} 
\int_{\R^{n}} \chi_{B_{R}} f \, \div^{\alpha} \varphi \, dx 
= - \int_{\R^{n}} \varphi \cdot \nabla^{\alpha} (\chi_{B_{R}} f) \, dx 
=-\int_{\R^n}\phi\cdot I_{1-\alpha}D(\chi_{B_R}f)\,dx.
\end{align*}
Moreover, we can split the last integral as
\begin{equation}\label{eq:decomposition}
\int_{\R^n}\phi\cdot I_{1-\alpha}D(\chi_{B_R}f)\,dx
=\int_{\R^n}\phi\cdot I_{1-\alpha}(\chi_{B_R}^\star Df)\,dx
+\int_{\R^n}\phi\cdot I_{1-\alpha}(f^\star D\chi_{B_R})\,dx.
\end{equation}
For all $x\in B_{R/2}$, we can estimate 
\begin{align*}
\abs*{I_{1-\alpha}(f^\star  D\chi_{B_R})(x)}
&=\left | \int_{\partial B_{R}} \frac{f^\star(y)}{|x - y|^{n + \alpha - 1}} \frac{y}{|y|} \, d \Haus{n - 1}(y) \right | \\
& = \frac{1}{R^\alpha} \left | \int_{\partial B_{1}} \frac{f^\star(R y)}{\left |y - \frac{x}{R} \right |^{n + \alpha - 1}} \frac{y}{|y|} \, d \Haus{n - 1}(y) \right | \\
& \le \frac{n \omega_{n}}{R^{\alpha} \left ( 1 - \frac{|x|}{R} \right )^{n + \alpha - 1}} \, \|f\|_{L^{\infty}(\R^{n})}\\
&\le\frac{2^{n+\alpha-1}n\omega_n}{R^\alpha}\, \|f\|_{L^{\infty}(\R^{n})}
\end{align*}
and so, since $\supp\phi\subset B_{R/2}$, we get that
\begin{equation}\label{eq:est_trace}
\abs*{\int_{\R^n}\phi\cdot I_{1-\alpha}(f^\star  D\chi_{B_R})\,dx\,} 
\le\frac{2^{n+\alpha-1}n\omega_n}{R^\alpha}\, \|\phi\|_{L^1(\R^n;\R^n)}\,\|f\|_{L^{\infty}(\R^{n})}.
\end{equation}
Therefore, by~\eqref{eq:frac_div_repr_Lip_L1_estimate}, Lebesgue's Dominated Convergence Theorem, \eqref{eq:decomposition} and~\eqref{eq:est_trace}, we get that
\begin{equation*}
\int_{\R^{n}} f \, \div^{\alpha} \varphi \, dx 
=\lim_{R \to + \infty} \int_{\R^{n}} \chi_{B_{R}} f \, \div^{\alpha} \varphi \, dx
=\lim_{R \to + \infty}\int_{\R^n}\phi\cdot I_{1-\alpha}(\chi_{B_R}^\star Df)\,dx
\end{equation*}
and the conclusion follows.
\end{proof}

In the following result, we show that for all functions in $bv(\R^n)\cap L^\infty(\R^n)$ one can actually pass to the limit as $R\to+\infty$ inside the integral in the right-hand side of~\eqref{eq:general_repr_formula}.

\begin{corollary}
\label{result:repres_formula_nabla_alpha_bv_bounded}
If either $f\in BV(\R^n)$ or $f\in bv(\R^n)\cap L^\infty(\R^n)$, then 
\begin{equation}\label{eq:repres_formula_nabla_alpha_bv_bounded}
\nabla^\alpha f=I_{1-\alpha}Df
\quad
\text{a.e.\ in~$\R^n$}.
\end{equation} 
\end{corollary}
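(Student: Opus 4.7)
The case $f \in BV(\R^n)$ is already contained in \cref{lem:nabla_alpha_f_L1_and_L1_loc}\eqref{item:lem:nabla_alpha_f_loc_1}, precisely in formula \eqref{eq:weak_frac_grad_repr}, so the only new content is the case $f \in bv(\R^n) \cap L^\infty(\R^n)$. For this case, the strategy is to start from the weak representation \eqref{eq:general_repr_formula} of \cref{prop:general_repr_nabla_alpha} and pass to the limit $R \to +\infty$ under the integral sign on the right-hand side, identifying the limit with $I_{1-\alpha}Df$. The crucial structural feature that was missing in \cref{prop:general_repr_nabla_alpha} and that is available here is the finiteness $|Df|(\R^n) < +\infty$ built into the definition of $bv(\R^n)$; this is precisely what will make $I_{1-\alpha}Df$ globally well defined as an $L^1_{\loc}$ vector field and supply the dominating function for the exchange of limit and integral.

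The key steps are as follows. Fix $\phi \in \Lip_c(\R^n;\R^n)$ and let $U$ be a bounded open set containing $\supp \phi$. First, for a.e.\ $x \in \R^n$ the scalar dominated convergence theorem (against the finite measure $|Df|$) gives the pointwise convergence
\begin{equation*}
I_{1-\alpha}(\chi_{B_R}^{\star} Df)(x) = \frac{\mu_{n,\alpha}}{n+\alpha-1}\int_{B_R}\frac{dDf(y)}{|x-y|^{n+\alpha-1}} \longrightarrow I_{1-\alpha}Df(x)
\quad\text{as } R \to +\infty,
\end{equation*}
valid at every $x$ with $I_{1-\alpha}|Df|(x) < +\infty$. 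Second, one has the uniform bound $|I_{1-\alpha}(\chi_{B_R}^{\star} Df)(x)| \le I_{1-\alpha}|Df|(x)$ for all $R>0$ and all $x$, and \cite{CS18}*{Lemma~2.4} yields $\int_U I_{1-\alpha}|Df|\,dx \le C_{n,\alpha,U}|Df|(\R^n) < +\infty$. Hence $|\phi|\,I_{1-\alpha}|Df|\,\chi_U$ is integrable and the vector-valued dominated convergence theorem gives
\begin{equation*}
\lim_{R \to +\infty}\int_{\R^n}\phi\cdot I_{1-\alpha}(\chi_{B_R}^{\star}Df)\,dx = \int_{\R^n}\phi\cdot I_{1-\alpha}Df\,dx.
\end{equation*}
Combining this with \eqref{eq:general_repr_formula} yields $\int_{\R^n}\phi\cdot\nabla^\alpha f\,dx = \int_{\R^n}\phi\cdot I_{1-\alpha}Df\,dx$ for every $\phi \in \Lip_c(\R^n;\R^n)$, and since both $\nabla^\alpha f$ and $I_{1-\alpha}Df$ belong to $L^1_{\loc}(\R^n;\R^n)$ (the former by \cref{lem:nabla_alpha_f_L1_and_L1_loc}\eqref{item:lem:nabla_alpha_f_loc_3}, the latter by the same Riesz-potential estimate used above), a routine density/localisation argument delivers \eqref{eq:repres_formula_nabla_alpha_bv_bounded}.

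There is essentially no deep obstacle: the entire proof rests on the single quantitative input that $|Df|(\R^n)$ is finite, which upgrades the merely local information in \cref{prop:general_repr_nabla_alpha} to the global identification $\nabla^\alpha f = I_{1-\alpha}Df$. The only place requiring a small amount of care is checking that $I_{1-\alpha}Df$ is well defined for a.e.\ $x$ and locally integrable, but this is exactly the content of \cite{CS18}*{Lemma~2.4} already invoked in the proof of \cref{lem:nabla_alpha_f_L1_and_L1_loc}\eqref{item:lem:nabla_alpha_f_loc_1}. This explains why the result is stated as a corollary of \cref{prop:general_repr_nabla_alpha}: once that proposition is in place, the assumption $f \in bv(\R^n)$ allows the truncation on $B_R$ to be removed simply by passing to the limit.
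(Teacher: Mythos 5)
Your proposal is correct and follows essentially the same route as the paper: reduce the $BV$ case to \eqref{eq:weak_frac_grad_repr}, and for $f\in bv(\R^n)\cap L^\infty(\R^n)$ apply dominated convergence twice — first against the finite measure $|Df|$ to get the pointwise convergence $I_{1-\alpha}(\chi_{B_R}^\star Df)\to I_{1-\alpha}Df$, then against $|\phi|\,I_{1-\alpha}|Df|\in L^1(\R^n)$ (via \cite{CS18}*{Lemma~2.4}) to pass the limit inside the integral in \eqref{eq:general_repr_formula}. No gaps.
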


\begin{proof}
If $f\in BV(\R^n)$, then~\eqref{eq:repres_formula_nabla_alpha_bv_bounded} coincides with~\eqref{eq:weak_frac_grad_repr} and there is nothing to prove. So let us assume that $f\in bv(\R^n)\cap L^\infty(\R^n)$. 
Writing $Df = \nu_{f} |D f|$ with $\nu_f\in\mathbb{S}^{n-1}$ $|Df|$-a.e.\ in~$\R^n$, for all $x\in\R^n$ we have
\begin{equation*}
\lim_{R\to+\infty}\chi_{B_R}^\star (y)\,\frac{\nu_f(y)}{|y-x|^{n+\alpha-1}}=\frac{\nu_f(y)}{|y-x|^{n+\alpha-1}}
\quad
\text{for $|Df|$-a.e.~$y \neq x$}.
\end{equation*}
Moreover, for all $x\in\R^n$, we have
\begin{equation*}
\abs*{\chi_{B_R}^\star (y)\,\frac{\nu_f(y)}{|y-x|^{n+\alpha-1}}}
\le\frac{1}{|y-x|^{n+\alpha-1}}\in L^1_y(\R^n,|Df|)
\quad
\forall R>0, 
\end{equation*}
because $I_{1-\alpha}|Df|\in L^1_{\loc}(\R^n)$ by~\cite{CS18}*{Lemma~2.4}. Therefore, by Lebesgue's Dominated Convergence Theorem (applied with respect to the finite measure~$|Df|$), we get that
\begin{equation*}
\lim_{R\to+\infty}I_{1 - \alpha} ( \chi_{B_{R}}^\star D f)(x)=(I_{1 - \alpha} D f)(x)
\quad
\text{for a.e.~$x\in\R^n$}. 
\end{equation*} 
Now let $\phi \in \Lip_{c}(\R^{n}; \R^{n})$. Since
\begin{equation*}
|\phi\cdot I_{1-\alpha}(\chi_{B_R}^\star Df)|\le|\phi|\,I_{1-\alpha}|Df|\in L^1(\R^n)
\quad
\forall R>0, 	
\end{equation*}
again by Lebesgue's Dominated Convergence Theorem we get that
\begin{equation}\label{eq:limit_double_DCT}
\lim_{R\to+\infty} \int_{\R^{n}} \phi \cdot I_{1 - \alpha} ( \chi_{B_{R}}^\star D f) \, dx
=\int_{\R^{n}} \phi \cdot I_{1 - \alpha} D f \, dx.
\end{equation}
The conclusion thus follows by combining~\eqref{eq:general_repr_formula} with~\eqref{eq:limit_double_DCT}. 
\end{proof}

\subsection{Relation between \texorpdfstring{$BV^\beta$}{BVˆbeta} and \texorpdfstring{$BV^{\alpha, p}$}{BVˆalpha,p} for \texorpdfstring{$\beta<\alpha$}{beta<alpha} and \texorpdfstring{$p > 1$}{p>1}}

Let us recall the following result, see~\cite{CS18}*{Lemma~3.28}.

\begin{lemma}\label{result:correspondence_BV_alpha_bv}
Let $\alpha\in(0,1)$. The following properties hold.
\begin{enumerate}[\indent (i)]
\item\label{item:BV_alpha_bv_1} If $f\in BV^\alpha(\R^n)$, then $u:=I_{1-\alpha}f\in bv(\R^n)$ with $Du=D^\alpha f$ in $\M(\R^{n}; \R^{n})$.
\item\label{item:BV_alpha_bv_2} If $u\in BV(\R^n)$, then $f:= (-\Delta)^{\frac{1-\alpha}{2}}u\in BV^\alpha(\R^n)$ with 
\begin{equation*}
\|f\|_{L^1(\R^n)}\le c_{n,\alpha}\|u\|_{BV(\R^n)}
\quad\text{and}\quad
D^\alpha f=D u
\quad\text{in $\M(\R^{n}; \R^{n})$}.
\end{equation*}
As a consequence, the operator $(-\Delta)^{\frac{1-\alpha}{2}}\colon BV(\R^n)\to BV^\alpha(\R^n)$ is continuous.
\end{enumerate}
\end{lemma}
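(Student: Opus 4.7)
The plan is to prove (i) and (ii) separately, in both cases exploiting the intertwining identity $\div^\alpha \phi = I_{1-\alpha} \div \phi$ from \eqref{eq:frac_div_repres} together with the formal inversion $(-\Delta)^{(1-\alpha)/2} I_{1-\alpha} = \mathrm{Id}$ on test functions (which corresponds to the semigroup property \eqref{eq:Riesz_potential_semigroup} with $\beta = \alpha$ after formally identifying $(-\Delta)^{-(1-\alpha)/2} = I_{1-\alpha}$).

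For (i), let $f \in BV^\alpha(\R^n)$. First, $u := I_{1-\alpha} f$ lies in $L^1_{\loc}(\R^n)$ by Tonelli's theorem, since the Riesz kernel $|\cdot|^{-(n+\alpha-1)}$ is locally integrable and $f \in L^1(\R^n)$ (one splits the convolution integral on $B_R$ into the parts $|x-y|<1$ and $|x-y|\ge 1$ to get an explicit bound in terms of $\|f\|_{L^1}$). Then for every $\phi \in C^\infty_c(\R^n; \R^n)$, Fubini's theorem and the self-adjointness of $I_{1-\alpha}$ give
\begin{equation*}
\int_{\R^n} u \, \div \phi \, dx = \int_{\R^n} f \, I_{1-\alpha} \div \phi \, dx = \int_{\R^n} f \, \div^\alpha \phi \, dx = -\int_{\R^n} \phi \cdot dD^\alpha f,
\end{equation*}
so the distributional gradient of $u$ is represented by the finite Radon measure $D^\alpha f \in \M(\R^n; \R^n)$, proving $u \in bv(\R^n)$ and $Du = D^\alpha f$.

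For (ii), given $u \in BV(\R^n)$, I would define $f := (-\Delta)^{(1-\alpha)/2} u$ through mollification. Let $u_\eps := u * \rho_\eps$ with a standard mollifier; each $u_\eps$ is smooth with $\|u_\eps\|_{BV(\R^n)} \le \|u\|_{BV(\R^n)}$, and $u_\eps \to u$ in $L^1(\R^n)$. On smooth functions the Fourier calculus yields $f_\eps := (-\Delta)^{(1-\alpha)/2} u_\eps = I_\alpha(R \cdot \nabla u_\eps)$, where $R$ is the vector Riesz transform. A refined kernel estimate for the composition $I_\alpha \circ R$ applied to $\nabla u_\eps \in L^1(\R^n;\R^n)$ shows $(f_\eps)_{\eps>0}$ is Cauchy in $L^1(\R^n)$ with $\|f_\eps\|_{L^1(\R^n)} \le c_{n,\alpha} \|u_\eps\|_{BV(\R^n)}$, and passing to the limit defines $f \in L^1(\R^n)$ with the claimed norm bound. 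The identity $D^\alpha f = Du$ then follows by running the computation of (i) in reverse: for $\phi \in C^\infty_c(\R^n;\R^n)$,
\begin{equation*}
\int_{\R^n} f \, \div^\alpha \phi \, dx = \int_{\R^n} u \, (-\Delta)^{(1-\alpha)/2} I_{1-\alpha} \div \phi \, dx = \int_{\R^n} u \, \div \phi \, dx = -\int_{\R^n} \phi \cdot dDu,
\end{equation*}
where the adjoint manipulation and $(-\Delta)^{(1-\alpha)/2} I_{1-\alpha} \div \phi = \div \phi$ are first justified on $u_\eps$ and then propagated to the limit via the $L^1$-convergence $f_\eps \to f$ and the $L^\infty$-bounds on $\div^\alpha \phi$ from \eqref{eq:frac_div_repr_Lip_estimate}.

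The main obstacle is the $L^1$-bound $\|f\|_{L^1(\R^n)} \le c_{n,\alpha} \|u\|_{BV(\R^n)}$ in part (ii): the Riesz transforms are not bounded from $\M(\R^n)$ into $L^1(\R^n)$ (only into weak $L^1$) and $I_\alpha$ does not improve $L^1$-integrability to $L^1$ either, so neither $R \cdot \nabla u$ nor $I_\alpha(R \cdot \nabla u)$ lies in $L^1$ by standard operator bounds; the estimate must exploit cancellation intrinsic to the composition $I_\alpha R$, equivalently a direct singular-integral kernel for $(-\Delta)^{(1-\alpha)/2}$ on $BV$ inputs obtained after an integration by parts against $Du$. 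Once this bound is secured, the final continuity statement is immediate: combining $|D^\alpha f|(\R^n) = |Du|(\R^n)$ from the identification $D^\alpha f = Du$ with $\|f\|_{L^1(\R^n)} \le c_{n,\alpha}\|u\|_{BV(\R^n)}$ gives $\|f\|_{BV^\alpha(\R^n)} \le (c_{n,\alpha}+1)\|u\|_{BV(\R^n)}$, proving continuity of $(-\Delta)^{(1-\alpha)/2} \colon BV(\R^n) \to BV^\alpha(\R^n)$.
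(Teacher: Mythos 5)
This lemma is only recalled in the paper (it is quoted from~\cite{CS18}*{Lemma~3.28} without proof), so your proposal can only be measured against the tools the paper makes available. Your part~(i) is correct and is the standard argument: local integrability of $u=I_{1-\alpha}f$ from the local integrability of the Riesz kernel, then Fubini plus the identity $\div^\alpha\phi=I_{1-\alpha}\div\phi$ from \cref{prop:frac_div_repr} to transfer the duality pairing. No complaints there.

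Part~(ii), however, has a genuine gap exactly where you flag "the main obstacle": the estimate $\|f\|_{L^1(\R^n)}\le c_{n,\alpha}\|u\|_{BV(\R^n)}$ is asserted, not proved. You correctly observe that neither $R\colon\M\to L^1$ nor $I_\alpha\colon L^1\to L^1$ is bounded, and then appeal to an unspecified "refined kernel estimate exploiting cancellation in $I_\alpha\circ R$". That cancellation is precisely the content of the claim, so nothing has been established; moreover the route through $I_\alpha(R\cdot\nabla u)$ is not the one that closes easily. The estimate that actually works avoids the Riesz transform entirely: for $s=1-\alpha\in(0,1)$ one uses the hypersingular-integral representation
\begin{equation*}
(-\Delta)^{\frac{s}{2}}u(x)=c_{n,s}\int_{\R^n}\frac{u(x)-u(y)}{|x-y|^{n+s}}\,dy,
\end{equation*}
so that $\|(-\Delta)^{\frac{s}{2}}u\|_{L^1(\R^n)}\le c_{n,s}\,[u]_{W^{s,1}(\R^n)}$, and then the embedding $BV(\R^n)\subset W^{s,1}(\R^n)$ with $[u]_{W^{s,1}(\R^n)}\le c_{n,s}'\,\|u\|_{BV(\R^n)}$, which follows by splitting $\int_{\R^n}|u(x+h)-u(x)|\,dx\le\min\{|h|\,|Du|(\R^n),\,2\|u\|_{L^1(\R^n)}\}$ over $|h|<1$ and $|h|\ge1$ — this is exactly the computation of \cref{result:simple_est_Sobolev_seminorm_BV_loc} made global. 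With that in hand one can also run the mollification more cleanly ($f_\eps\to f$ in $L^1$ because $u_\eps\to u$ in $W^{s,1}$), and your duality computation identifying $D^\alpha f=Du$ then goes through; the formal step $(-\Delta)^{\frac{1-\alpha}{2}}I_{1-\alpha}\div\phi=\div\phi$ also deserves a word of justification since $I_{1-\alpha}\div\phi$ is not compactly supported, but that is a minor point compared with the missing $L^1$ bound.
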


We can thus relate functions with bounded $\alpha$-variation and functions with bounded variation via Riesz potential and the fractional Laplacian. We would like to prove a similar result between functions with bounded $\alpha$-variation and functions with bounded $\beta$-variation, for any couple of exponents $0<\beta<\alpha < 1$.

However, although the standard variation of a function $f\in L^1_{\loc}(\R^n)$ is well defined, it is not clear whether the functional
\begin{equation}\label{eq:functional}
\phi\mapsto\int_{\R^{n}} f \, \div^{\alpha} \phi \, dx
\end{equation}
is well posed for all $\phi \in C^{\infty}_{c}(\R^{n}; \R^{n})$, since $\div^{\alpha} \phi$ does not have compact support. Nevertheless, thanks to \cref{prop:frac_div_repr}, the functional in~\eqref{eq:functional} is well defined as soon as $f\in L^p(\R^n)$ for some $p\in[1,+\infty]$.
Hence, it seems natural to define the space
\begin{equation}\label{eq:def_BV_alpha_p}
BV^{\alpha, p}(\R^{n}) := 
\set*{ f \in L^{p}(\R^{n}) : |D^{\alpha} f|(\R^{n}) < \infty }
\end{equation}
for any $\alpha \in (0, 1)$ and $p \in [1, + \infty]$. In particular,  $BV^{\alpha, 1}(\R^{n}) = BV^{\alpha}(\R^{n})$. Similarly, we let
\begin{equation*}
BV^{1,p}(\R^n):=
\set*{f\in L^p(\R^n) : |Df|(\R^n)<+\infty}
\end{equation*}
for all $p \in [1, + \infty]$. In particular, $BV^{1, 1}(\R^{n}) = BV(\R^{n})$.

A further justification for the definition of these new spaces comes from the following fractional version of the Gagliardo--Nirenberg--Sobolev embedding: if $n \ge 2$ and $\alpha \in (0, 1)$, then $BV^{\alpha}(\R^{n})$ is continuously embedded in $L^{p}(\R^{n})$ for all $p \in \left [1, \frac{n}{n - \alpha} \right ]$, see~\cite{CS18}*{Theorem~3.9}. Hence, thanks to~\eqref{eq:def_BV_alpha_p}, we can equivalently write
\begin{equation*}
BV^{\alpha}(\R^{n}) \subset BV^{\alpha, p}(\R^{n})
\end{equation*}
with continuous embedding for all $n \ge 2$, $\alpha \in (0, 1)$ and $p \in \left [1, \frac{n}{n - \alpha} \right]$.

Incidentally, we remark that the continuous embedding $BV^\alpha(\R^n)\subset L^{\frac n{n-\alpha}}(\R^n)$ for~\mbox{$n\ge2$} and $\alpha\in(0,1)$ can be improved using the main result of the recent work~\cite{Spector18} (see also~\cite{Spector19}). Indeed, if $n\ge2$, $\alpha\in(0,1)$ and $f\in C^\infty_c(\R^n)$, then, by taking $F=\nabla^\alpha f$ in~\cite{Spector18}*{Theorem~1.1}, we have that  
\begin{equation*}
\|f\|_{L^{\frac n{n-\alpha},1}(\R^n)}
\le c_{n,\alpha}\|I_\alpha\nabla^\alpha f\|_{L^{\frac n{n-\alpha},1}(\R^n;\,\R^n)}
\le c_{n,\alpha}'\|\nabla^\alpha f\|_{L^1(\R^n;\,\R^n)}
\end{equation*}
thanks to the boundedness of the Riesz transform $R\colon L^{\frac n{n-\alpha},1}(\R^n)\to L^{\frac n{n-\alpha},1}(\R^n;\R^n)$, where $c_{n,\alpha},c_{n,\alpha}'>0$ are two constants depending only on~$n$ and~$\alpha$, and $L^{\frac n{n-\alpha},1}(\R^n)$ is the Lorentz space of exponents~$\frac n{n-\alpha},1$ (we refer to~\cites{G14-C,G14} for an account on Lorentz spaces and on the properties of Riesz transform). Thus, recalling~\cite{CS18}*{Theorem~3.8}, we readily deduce the continuous embedding $BV^\alpha(\R^n)\subset L^{\frac n{n-\alpha},1}(\R^n)$ for $n\ge2$ and $\alpha\in(0,1)$ by~\cite{G14-C}*{Exercise~1.1.1(b)} and Fatou's Lemma. This suggests that the spaces defined in~\eqref{eq:def_BV_alpha_p} may be further enlarged by considering functions belonging to some Lorentz space, but we do not need this level of generality here.

In the case $n=1$, the space $BV^\alpha(\R)$ does not embed in $L^{\frac1{1-\alpha}}(\R)$ with continuity, see~\cite{CS18}*{Remark~3.10}. However, somehow completing the picture provided by~\cite{Spector18}, we can prove that the space $BV^\alpha(\R)$ continuously embeds in the Lorentz space $L^{\frac{1}{1 - \alpha}, \infty}(\R)$. Although this result is truly interesting only for $n=1$, we prove it below in all dimensions for the sake of completeness.

\begin{theorem}[Weak Gagliardo--Nirenberg--Sobolev inequality]
Let $\alpha\in(0,1)$. There exists a constant $c_{n,\alpha}>0$ such that
\begin{equation}\label{eq:weak_GNS}
\|f\|_{L^{\frac{n}{n-\alpha},\infty}(\R^n)}
\le c_{n,\alpha}|D^\alpha f|(\R^n)
\end{equation}
for all $f\in BV^\alpha(\R^n)$. As a consequence, $BV^{\alpha}(\R^{n})$ is continuously embedded in $L^q(\R^n)$ for any $q\in[1,\frac{n}{n-\alpha})$.
\end{theorem}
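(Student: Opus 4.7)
The proof will rest on the representation
\begin{equation*}
f = R\cdot I_{\alpha}\,D^{\alpha} f \qquad \text{a.e.\ in }\R^{n},
\end{equation*}
where $R=(R_1,\dots,R_n)$ denotes the vector Riesz transform, together with two classical harmonic-analytic inequalities. I first derive this representation for $f\in C^\infty_c(\R^n)$ by a Fourier-multiplier computation: the symbols of $\nabla^{\alpha}=\nabla I_{1-\alpha}$, $I_{\alpha}$ and $R_j$ are respectively $i\xi|\xi|^{\alpha-1}$, $|\xi|^{-\alpha}$ and $-i\xi_j/|\xi|$, whose composition equals the identity away from the origin. Equivalently, one may combine \cref{prop:frac_div_repr} with the identity $(-\Delta)^{\alpha/2}=R\cdot\nabla^{\alpha}$ (read off the symbols) and the Riesz-potential inversion $f=I_{\alpha}(-\Delta)^{\alpha/2}f$.

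I then extend the formula to arbitrary $f\in BV^{\alpha}(\R^{n})$ by mollification. With $(\rho_\eps)_{\eps>0}$ a standard family of mollifiers, $f_\eps:=f\ast\rho_\eps$ belongs to $C^\infty(\R^n)\cap L^1(\R^n)$ and satisfies $\nabla^\alpha f_\eps=(D^\alpha f)\ast\rho_\eps$, a smooth $L^1$-field with $\|\nabla^\alpha f_\eps\|_{L^1(\R^n;\,\R^n)}\le|D^\alpha f|(\R^n)$, so the representation transfers to each $f_\eps$. Since $f_\eps\to f$ a.e.\ along a subsequence and the mollifications of the locally integrable function $R\cdot I_\alpha D^\alpha f$ converge to it at its Lebesgue points, sending $\eps\to 0^+$ yields the desired representation for~$f$.

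With the formula in hand, I invoke: (a) the weak-type endpoint of the Riesz potential on finite measures,
\begin{equation*}
\|I_\alpha\mu\|_{L^{n/(n-\alpha),\infty}(\R^n;\,\R^n)}\le c_{n,\alpha}|\mu|(\R^n),
\end{equation*}
itself a consequence of the classical weak-type $(1,n/(n-\alpha))$ inequality for $I_\alpha$ combined with an approximation argument; and (b) the boundedness of the Riesz transform on Lorentz spaces, $\|R\cdot g\|_{L^{p,\infty}(\R^n)}\le c_{n,p}\|g\|_{L^{p,\infty}(\R^n;\,\R^n)}$ for $p\in(1,\infty)$. Applying (a) with $\mu=D^\alpha f$ and then (b) with $p=n/(n-\alpha)>1$ to $g=I_\alpha D^\alpha f$ yields \eqref{eq:weak_GNS}. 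For the embedding $BV^\alpha(\R^n)\subset L^q(\R^n)$ with $q\in[1,n/(n-\alpha))$, a routine layer-cake computation using $f\in L^1(\R^n)$ and $|\{|f|>t\}|\le C\,t^{-n/(n-\alpha)}$ (with $C$ controlled by $|D^\alpha f|(\R^n)$) closes the argument, since $\int_1^{+\infty} t^{q-1-n/(n-\alpha)}\,dt$ converges precisely when $q<n/(n-\alpha)$.

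The chief technical obstacle is the rigorous justification of the representation formula outside the Schwartz class: $I_\alpha$ and $R$ are non-local operators whose multipliers are singular at the origin and which a priori act on distinct function spaces. The mollification strategy above avoids these pitfalls by operating at the level of smooth $L^1$ functions with $L^1$-integrable fractional gradients, where all composed operations are unambiguously defined; the ancillary facts it relies on---commutation of $\nabla^\alpha$ with convolution (from the duality with $\div^\alpha$) and a.e.\ convergence of mollifications of $L^{p,\infty}$ functions at their Lebesgue points---are both routine.
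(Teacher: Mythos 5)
Your strategy is sound and both harmonic-analytic ingredients you invoke are correct, but your route differs from the paper's. The paper starts from \v{S}ilhav\'y's pointwise inversion formula $f=-\div^{-\alpha}\nabla^{\alpha}f$, which for $f\in C^\infty_c(\R^n)$ immediately gives the pointwise domination $|f|\le c_{n,\alpha}\,I_{\alpha}|\nabla^{\alpha}f|$; the weak-type $(1,\tfrac{n}{n-\alpha})$ bound for $I_\alpha$ then yields \eqref{eq:weak_GNS} for test functions without ever touching the Riesz transform, and the general case follows from the approximation theorem \cite{CS18}*{Theorem~3.8} (smooth compactly supported $f_k$ with $f_k\to f$ a.e.\ and $|D^\alpha f_k|(\R^n)\to|D^\alpha f|(\R^n)$) together with Fatou's lemma in Lorentz spaces. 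Your identity $f=R\cdot I_\alpha D^\alpha f$ is the same mechanism in disguise --- it is essentially the route the paper sketches, after \cite{Spector18}, for the stronger $L^{\frac{n}{n-\alpha},1}$ embedding --- but it costs you an extra ingredient, the boundedness of $R$ on $L^{\frac{n}{n-\alpha},\infty}$, which the paper's pointwise inequality renders unnecessary.

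The one genuine soft spot is your extension of the representation formula beyond $C^\infty_c$. The mollification $f_\eps=f*\rho_\eps$ is smooth and in $L^1$ but not compactly supported, and $I_\alpha\nabla^\alpha f_\eps$ lives only in $L^{\frac{n}{n-\alpha},\infty}$, so the multiplier computation must be rerun in the tempered-distribution sense; asserting that ``the representation transfers to each $f_\eps$'' hides exactly the step your own plan identifies as the chief obstacle. Worse, the identity $(R\cdot I_\alpha D^\alpha f)*\rho_\eps=R\cdot I_\alpha\nabla^\alpha f_\eps=f_\eps$ that your limiting argument relies on already presupposes the representation for $f_\eps$, so mollification does not by itself close the circle. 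The cleanest repair is to drop the representation for general $f$ altogether: prove \eqref{eq:weak_GNS} for $f\in C^\infty_c(\R^n)$, where your Fourier computation is unimpeachable, and pass to the limit via \cite{CS18}*{Theorem~3.8} and Fatou in Lorentz spaces, exactly as the paper does. Your layer-cake argument for the embedding into $L^q(\R^n)$, $q\in[1,\tfrac{n}{n-\alpha})$, is fine (the paper instead cites \cite{G14-C}*{Proposition~1.1.14}, which encodes the same interpolation between $L^1$ and $L^{\frac{n}{n-\alpha},\infty}$).
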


\begin{proof}
Let $f\in C^\infty_c(\R^n)$. By~\cite{S19}*{Theorem~3.5} (see also~\cite{CS18}*{Section~3.6}), we have
\begin{align*}
f(x)
=-\div^{-\alpha}\nabla^\alpha f(x)
=-\mu_{n,-\alpha}\int_{\R^n}\frac{(y-x)\cdot\nabla^\alpha f(y)}{|y-x|^{n+1-\alpha}}\,dy,
\quad
x\in\R^n,
\end{align*}
so that
\begin{align*}
|f(x)|
\le\mu_{n,-\alpha}\int_{\R^n}\frac{|\nabla^\alpha f(y)|}{|y-x|^{n-\alpha}}\,dy
=\frac{\mu_{n,-\alpha}}{\mu_{n,1-\alpha}}\,(n-\alpha)\, I_\alpha|\nabla^\alpha f|(x),
\quad
x\in\R^n.
\end{align*}
Since $I_\alpha\colon L^1(\R^n)\to L^{\frac n{n-\alpha},\infty}(\R^n)$ is a continuous operator by Hardy--Littlewood--Sobolev inequality (see~\cite{S70}*{Theorem~1, Chapter~V} or \cite{G14-C}*{Theorem 1.2.3}), we can estimate
\begin{align*}
\|f\|_{L^{\frac{n}{n-\alpha},\infty}(\R^n)}
&\le\frac{n\,\mu_{n,-\alpha}}{\mu_{n,1-\alpha}}\|I_\alpha|\nabla^\alpha f|\|_{L^{\frac{n}{n-\alpha},\infty}(\R^n)}
\le c_{n,\alpha}\||\nabla^\alpha f|\|_{L^1(\R^n)}
=c_{n,\alpha}\,|D^\alpha f|(\R^n),
\end{align*}
where $c_{n,\alpha}>0$ is a constant depending only on~$n$ and~$\alpha$. Thus, inequality~\eqref{eq:weak_GNS} follows for all $f\in C^\infty_c(\R^n)$. Now let $f\in BV^\alpha(\R^n)$. By~\cite{CS18}*{Theorem~3.8}, there exists $(f_k)_{k\in\N}\subset C^\infty_c(\R^n)$ such that $f_k\to f$ a.e.\ in~$\R^n$ and $|D^\alpha f_k|(\R^n)\to |D^\alpha f|(\R^n)$ as $k\to+\infty$. By~\cite{G14-C}*{Exercise~1.1.1(b)} and Fatou's Lemma, we thus get
\begin{align*}
\|f\|_{L^{\frac{n}{n-\alpha},\infty}(\R^n)}
\le\liminf_{k\to+\infty}\|f_k\|_{L^{\frac{n}{n-\alpha},\infty}(\R^n)}
\le c_{n,\alpha}\lim_{k\to+\infty}|D^\alpha f_k|(\R^n)
=c_{n,\alpha}|D^\alpha f|(\R^n)
\end{align*} 
and so~\eqref{eq:weak_GNS} readily follows. Finally, thanks to~\cite{G14-C}*{Proposition~1.1.14}, we obtain the continuous embedding of $BV^{\alpha}(\R^{n})$ in $L^{q}(\R^{n})$ for all $q \in [1, \frac{n}{n - \alpha} )$.
\end{proof}

\begin{remark}[The embedding $BV^{\alpha}(\R)\subset L^{\frac{1}{1 - \alpha}, \infty}(\R)$ is sharp] 
Let $\alpha\in(0,1)$. The continuous embedding $BV^{\alpha}(\R)\subset L^{\frac{1}{1 - \alpha}, \infty}(\R)$ is sharp at the level of Lorentz spaces, in the sense that $BV^\alpha(\R^n)\setminus L^{\frac{1}{1 - \alpha}, q}(\R)\ne\varnothing$ for any $q \in [1, + \infty)$. Indeed, if we let
\begin{equation*}
f_{\alpha}(x) = |x - 1|^{\alpha - 1} \sgn(x - 1) - |x|^{\alpha - 1} \sgn(x),
\qquad
x\in\R\setminus\set{0,1},
\end{equation*}
then $f_{\alpha} \in BV^{\alpha}(\R)$ by~\cite{CS18}*{Theorem 3.26}, and it is not difficult to prove that $f_{\alpha} \in L^{\frac{1}{1 - \alpha}, \infty}(\R)$. However, we can find a constant $c_{\alpha} > 0$ such that
\begin{equation*}
|f_{\alpha}(x)| \ge c_{\alpha} |x|^{\alpha - 1} \chi_{\left ( - \frac{1}{4}, \frac{1}{4} \right)}(x) =: g_{\alpha}(x),
\qquad
x\in\R\setminus\set{0,1},
\end{equation*}
so that $d_{f_{\alpha}} \ge d_{g_{\alpha}}$, where $d_{f_{\alpha}}$ and $d_{g_{\alpha}}$ are the \emph{distribution functions} of $f_{\alpha}$ and $g_{\alpha}$.
A simple calculation shows that
\begin{equation*}
d_{g_{\alpha}}(s) = 
\begin{cases} 
\dfrac{1}{2} & \text{if} \ 0 < s \le c_{\alpha} 4^{1 - \alpha}\\[4mm]
2 \left ( \dfrac{c_{\alpha}}{s} \right )^{\frac{1}{1 - \alpha}} & \text{if} \ \ s> c_{\alpha} 4^{1 - \alpha}, 
\end{cases}
\end{equation*}
so that, by~\cite{G14-C}*{Proposition 1.4.9}, we obtain
\begin{align*}
\|f_{\alpha}\|_{L^{\frac{1}{1 - \alpha}, q}(\R)}^{q} 
& \ge \|g_{\alpha}\|_{L^{\frac{1}{1 - \alpha}, q}(\R)}^{q} 
= \frac{1}{1 - \alpha} \int_{0}^{+\infty} \left [ d_{g_{\alpha}}(s) \right ]^{q(1 - \alpha)} s^{q - 1} \, ds\\
& \ge \frac{2^{q(1 - \alpha)}}{1 - \alpha} \int_{c_{\alpha} 4^{1 - \alpha}}^{+\infty} s^{-q} s^{q - 1} \, ds 
= + \infty
\end{align*}
and thus $f_{\alpha} \notin L^{\frac1{1-\alpha},q}(\R)$ for any $q\in[1,+\infty)$.
\end{remark}

We collect the above continuous embeddings in the following statement.

\begin{corollary}[The embedding $BV^{\alpha}\subset BV^{\alpha, p}$]
\label{cor:BV_alpha_p_emb}
Let $\alpha \in (0, 1)$ and $p \in\left[1, \frac{n}{n - \alpha}\right)$. We have $BV^{\alpha}(\R^{n}) \subset BV^{\alpha, p}(\R^{n})$ with continuous embedding. In addition, if $n \ge 2$, then also $BV^{\alpha}(\R^{n}) \subset BV^{\alpha, \frac{n}{n - \alpha}}(\R^{n})$ with continuous embedding.
\end{corollary}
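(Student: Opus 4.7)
The plan is to observe that the statement merely packages together embeddings already at our disposal, and therefore requires no new analysis. Fix $f \in BV^\alpha(\R^n)$; by the very definition of the $BV^\alpha$-norm we have $|D^\alpha f|(\R^n) \le \|f\|_{BV^\alpha(\R^n)}$, so the only thing left to check is that, in each of the two asserted ranges of $p$, there is a continuous control of the form $\|f\|_{L^p(\R^n)} \le C_{n,\alpha,p}\|f\|_{BV^\alpha(\R^n)}$. Once this is in hand, the natural norm $\|f\|_{BV^{\alpha,p}(\R^n)} := \|f\|_{L^p(\R^n)} + |D^\alpha f|(\R^n)$ is automatically bounded by a multiple of $\|f\|_{BV^\alpha(\R^n)}$, which is exactly the desired continuity.

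First I would handle the subcritical range $p \in [1, \tfrac{n}{n-\alpha})$, valid in every dimension $n\ge 1$. Here the required continuous inclusion $BV^\alpha(\R^n) \subset L^p(\R^n)$ has just been deduced in the paragraph following \eqref{eq:weak_GNS}: one combines the weak Gagliardo--Nirenberg--Sobolev inequality \eqref{eq:weak_GNS}, which places $f$ in $L^{n/(n-\alpha),\infty}(\R^n)$, with the trivial inclusion $BV^\alpha(\R^n) \subset L^1(\R^n)$ and the standard Lorentz-space embedding $L^1(\R^n)\cap L^{n/(n-\alpha),\infty}(\R^n) \subset L^p(\R^n)$ for $p<n/(n-\alpha)$ coming from \cite{G14-C}*{Proposition~1.1.14}.

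For the endpoint $p = \tfrac{n}{n-\alpha}$ in dimension $n\ge 2$, I would instead invoke the strong fractional Gagliardo--Nirenberg--Sobolev embedding $BV^\alpha(\R^n) \subset L^{n/(n-\alpha)}(\R^n)$ already proved in \cite{CS18}*{Theorem~3.9}; the weak inequality \eqref{eq:weak_GNS} is too coarse to reach the endpoint, so the dimensional restriction $n\ge 2$ appears here for exactly the same reason it does in \cite{CS18}. In either case, chaining the resulting $L^p$-bound with $|D^\alpha f|(\R^n) \le \|f\|_{BV^\alpha(\R^n)}$ yields the continuous embedding $BV^\alpha(\R^n) \subset BV^{\alpha,p}(\R^n)$. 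There is no genuine obstacle: all the analytic content has been discharged by the weak GNS proved immediately above and by the strong GNS imported from \cite{CS18}, and the corollary is essentially a bookkeeping statement consolidating these two results.
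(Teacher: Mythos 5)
Your proposal is correct and follows essentially the same route as the paper: the subcritical range $p\in[1,\tfrac{n}{n-\alpha})$ is handled by the weak Gagliardo--Nirenberg--Sobolev inequality~\eqref{eq:weak_GNS} together with the $L^1$-membership and the Lorentz-space interpolation of \cite{G14-C}*{Proposition~1.1.14}, while the endpoint $p=\tfrac{n}{n-\alpha}$ for $n\ge2$ is imported from the strong embedding of \cite{CS18}*{Theorem~3.9}. The corollary is indeed just a consolidation of these two facts, exactly as the paper presents it.
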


With \cref{cor:BV_alpha_p_emb} at hands, we are finally ready to investigate the relation between $\alpha$-variation and $\beta$-variation for $0 < \beta < \alpha < 1$.

\begin{lemma}\label{res:BV_beta_bv_alpha}
Let $0<\beta<\alpha<1$. The following hold.
\begin{enumerate}[(i)]
\item\label{item:res:BV_beta_bv_alpha_1} 
If $f\in BV^\beta(\R^n)$, then $u:=I_{\alpha-\beta}f\in BV^{\alpha, p}(\R^n)$ for any $p \in \left ( \frac{n}{n - \alpha + \beta}, \frac{n}{n - \alpha} \right)$ (including $p = \frac{n}{n - \alpha}$ if $n \ge 2$), with $D^\alpha u=D^\beta f$ in $\mathscr{M}(\R^n;\R^n)$.
\item\label{item:res:BV_beta_bv_alpha_2} 
If $u\in BV^\alpha(\R^n)$, then $f:=(-\Delta)^{\frac{\alpha-\beta}{2}}u\in BV^\beta(\R^n)$ with
\begin{equation*}
\|f\|_{L^1(\R^n)}\le c_{n,\alpha,\beta}\,\|u\|_{BV^\alpha(\R^n)}
\quad\text{and}\quad
D^\beta f=D^\alpha u
\quad\text{in}\ \mathscr{M}(\R^n;\R^n).
\end{equation*}
\end{enumerate}
As a consequence, the operator $(-\Delta)^{\frac{\alpha-\beta}{2}}\colon BV^\alpha(\R^n)\to BV^\beta(\R^n)$ is continuous.
\end{lemma}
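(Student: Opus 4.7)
The lemma is a one-parameter extension of \cref{result:correspondence_BV_alpha_bv}, which corresponds to the endpoint case where $\alpha=1$ and $BV$ plays the role of $BV^1$. I would attack the two parts directly, exploiting the operator identities linking $\div^\alpha$, $\div^\beta$ and the Riesz potential $I_{\alpha-\beta}$.

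For part~(i), set $u:=I_{\alpha-\beta}f$. The integrability $u\in L^p$ follows by composing the embedding $BV^\beta(\R^n)\subset L^q(\R^n)$ of \cref{cor:BV_alpha_p_emb} for $q\in[1,\tfrac{n}{n-\beta})$ (or $q=\tfrac{n}{n-\beta}$ if $n\ge2$) with the Hardy--Littlewood--Sobolev inequality~\eqref{eq:Riesz_potential_boundedness}, which imposes $\tfrac1p=\tfrac1q-\tfrac{\alpha-\beta}{n}$; as $q$ ranges in the admissible interval, $p$ ranges exactly over the prescribed interval. The lower endpoint $p>\tfrac{n}{n-\alpha+\beta}$ in the statement is precisely the HLS constraint $q>1$. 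To verify $D^\alpha u=D^\beta f$, I would test against $\phi\in C^\infty_c(\R^n;\R^n)$: Fubini's theorem (legitimate because $\div^\alpha\phi\in L^1(\R^n)\cap L^\infty(\R^n)$ by \cref{prop:frac_div_repr}, and $f$ lies in any Lebesgue class dictated by the embedding above) together with the self-adjointness of $I_{\alpha-\beta}$ yields
\begin{equation*}
\int_{\R^n}u\,\div^\alpha\phi\,dx=\int_{\R^n}f\,I_{\alpha-\beta}(\div^\alpha\phi)\,dx,
\end{equation*}
and then the semigroup identity~\eqref{eq:Riesz_potential_semigroup}, combined with $\div^\alpha\phi=I_{1-\alpha}\div\phi$ from~\eqref{eq:frac_div_repres}, reduces $I_{\alpha-\beta}(\div^\alpha\phi)$ to $I_{1-\beta}\div\phi=\div^\beta\phi$, so that the right-hand side equals $-\int_{\R^n}\phi\cdot dD^\beta f$.

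For part~(ii), define $f:=(-\Delta)^{(\alpha-\beta)/2}u$ by smooth approximation: choose $(u_k)\subset C^\infty_c(\R^n)$ with $u_k\to u$ in $L^1$ and $|D^\alpha u_k|(\R^n)\to|D^\alpha u|(\R^n)$ (available by~\cite{CS18}*{Theorem~3.8}), set $f_k:=(-\Delta)^{(\alpha-\beta)/2}u_k$, and pass to the limit. The identity $D^\beta f=D^\alpha u$ is then the formal dual of part~(i): for $\phi\in C^\infty_c(\R^n;\R^n)$, the companion semigroup identity $(-\Delta)^{(\alpha-\beta)/2}\div^\beta\phi=\div^\alpha\phi$ (obtained from $(-\Delta)^{(\alpha-\beta)/2}I_{1-\beta}=I_{1-\alpha}$ applied to $\div\phi$) yields
\begin{equation*}
\int_{\R^n}f_k\,\div^\beta\phi\,dx=\int_{\R^n}u_k\,\div^\alpha\phi\,dx\longrightarrow -\int_{\R^n}\phi\cdot dD^\alpha u
\end{equation*}
as $k\to+\infty$, and the continuity of $(-\Delta)^{(\alpha-\beta)/2}\colon BV^\alpha(\R^n)\to BV^\beta(\R^n)$ is built into the $L^1$ bound.

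The main obstacle is the a priori estimate $\|f_k\|_{L^1(\R^n)}\le c_{n,\alpha,\beta}\|u_k\|_{BV^\alpha(\R^n)}$, which provides both the $L^1$ integrability of $f$ and the $L^1$-Cauchyness of $(f_k)$, and which is the fractional analogue of the bound in part~(ii) of \cref{result:correspondence_BV_alpha_bv}. Writing $(-\Delta)^{(\alpha-\beta)/2}u_k=-R\cdot I_\beta\nabla^\alpha u_k$, with $R$ the vector Riesz transform (via the identities $\nabla^\alpha=R\,(-\Delta)^{\alpha/2}$ and $R\cdot R=-I$), reduces the problem to bounding $\|R\cdot I_\beta\nabla^\alpha u_k\|_{L^1}$ by $\|\nabla^\alpha u_k\|_{L^1}$. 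A straightforward combination of Hardy--Littlewood--Sobolev with the $L^p$-boundedness of $R$ yields only a Lorentz-space bound strictly weaker than $L^1$, so the proof must exploit the additional cancellation carried by the curl-free $\alpha$-gradient $\nabla^\alpha u_k$, adapting the Hardy-space-type strategy of~\cite{CS18}*{Lemma~3.28}.
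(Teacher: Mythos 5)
Part~(i) of your proposal is correct and is essentially the paper's own argument: the $L^p$ membership of $u=I_{\alpha-\beta}f$ comes from \cref{cor:BV_alpha_p_emb} composed with~\eqref{eq:Riesz_potential_boundedness}, and the identity $D^\alpha u=D^\beta f$ comes from Fubini plus $I_{\alpha-\beta}\div^\alpha\phi=I_{\alpha-\beta}I_{1-\alpha}\div\phi=I_{1-\beta}\div\phi=\div^\beta\phi$. (The Fubini justification is most cleanly phrased as: $f\in L^1(\R^n)$ and $I_{\alpha-\beta}|\div^\alpha\phi|\in L^\infty(\R^n)$, the latter because $I_{\alpha-\beta}|\div^\alpha\phi|\le I_{1-\beta}|\div\phi|$ and \cite{CS18}*{Lemma~2.4} applies; but your version is recoverable from $\div^\alpha\phi\in L^1\cap L^\infty$.)

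Part~(ii) has a genuine gap, and you have correctly located it: the a priori bound $\|f\|_{L^1(\R^n)}\le c_{n,\alpha,\beta}\|u\|_{BV^\alpha(\R^n)}$ is asserted as "the main obstacle" and then deferred to an unspecified adaptation of a Hardy-space cancellation argument for $R\cdot I_\beta\nabla^\alpha u$. As written, this step is not proved, and the route you sketch (HLS plus $L^p$-boundedness of the Riesz transform, upgraded by exploiting curl-freeness) is substantially harder than what is needed. The estimate is in fact elementary once you invoke the continuous embedding $BV^\alpha(\R^n)\subset W^{\alpha-\beta,1}(\R^n)$ (\cite{CS18}*{Theorem~3.32}, recalled in \cref{subsec:inclusion_BV_alpha_Sobolev_beta}): the fractional Laplacian of order $\alpha-\beta\in(0,1)$ has the pointwise singular-integral representation
\begin{equation*}
(-\Delta)^{\frac{\alpha-\beta}{2}}u(x)=c_{n,\alpha-\beta}\int_{\R^n}\frac{u(x)-u(y)}{|x-y|^{n+\alpha-\beta}}\,dy,
\end{equation*}
whence $\|f\|_{L^1(\R^n)}\le c_{n,\alpha-\beta}[u]_{W^{\alpha-\beta,1}(\R^n)}\le c_{n,\alpha,\beta}\|u\|_{BV^\alpha(\R^n)}$ directly. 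Once $f\in L^1(\R^n)$ is secured, no smooth approximation is needed either: since $I_{\alpha-\beta}f=u$ in $L^1(\R^n)$ (see \cite{CS18}*{Section~3.10}), the very same Fubini computation as in part~(i) gives $\int_{\R^n}f\,\div^\beta\phi\,dx=\int_{\R^n}u\,\div^\alpha\phi\,dx=-\int_{\R^n}\phi\cdot dD^\alpha u$ for all $\phi\in C^\infty_c(\R^n;\R^n)$, which is both the identity $D^\beta f=D^\alpha u$ and, together with the $L^1$ bound, the continuity of $(-\Delta)^{\frac{\alpha-\beta}{2}}\colon BV^\alpha(\R^n)\to BV^\beta(\R^n)$.
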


\begin{proof}
We begin with the following observation. Let $\phi\in C^\infty_c(\R^n;\R^n)$ and let $U\subset\R^n$ be a bounded open set such that $\supp\phi\subset U$. By \cref{prop:frac_div_repr} and the \emph{semigroup property}~\eqref{eq:Riesz_potential_semigroup} of the Riesz potential, we can write
\begin{equation*}
\div^\beta\phi
=I_{1-\beta}\div\phi
=I_{\alpha-\beta}I_{1-\alpha}\div\phi
=I_{\alpha-\beta}\div^\alpha\phi.
\end{equation*} 
Similarly, we also have
\begin{equation*}
I_{\alpha-\beta}|\div^\alpha\phi|
=I_{\alpha-\beta}|I_{1-\alpha}\div\phi|
\le I_{\alpha-\beta}I_{1-\alpha}|\div\phi|
=I_{1-\beta}|\div\phi|,
\end{equation*}
so that $I_{\alpha-\beta}|\div^\alpha\phi|\in L^\infty(\R^n)$ with
\begin{equation*}
\|I_{\alpha-\beta}|\div^\alpha\phi|\|_{L^\infty(\R^n)}
\le\|I_{1-\beta}|\div\phi|\|_{L^\infty(\R^n)} 
\le C_{n,\beta,U}\|\div\phi\|_{L^\infty(\R^n)} 
\end{equation*}
by~\cite{CS18}*{Lemma~2.4}. We now prove the two statements separately.

\smallskip

\textit{Proof of~(\ref{item:res:BV_beta_bv_alpha_1})}.
Let $f\in BV^\beta(\R^n)$ and $\phi\in C^\infty_c(\R^n;\R^n)$. Thanks to \cref{cor:BV_alpha_p_emb}, if $n \ge 2$, then $f \in BV^{\beta, q}(\R^{n})$ for any $q \in [1, \frac{n}{n - \beta}]$ and so $I_{\alpha - \beta} f \in L^{p}(\R^{n})$ for any $p \in \left ( \frac{n}{n - \alpha + \beta}, \frac{n}{n - \alpha} \right ]$ by~\eqref{eq:Riesz_potential_boundedness}. If instead $n = 1$, then $f \in BV^{\beta, q}(\R)$ for any $q \in [1, \frac{1}{1 - \beta})$ and so $I_{\alpha - \beta} f \in L^{p}(\R)$ for any $p \in \left ( \frac{1}{1 - \alpha + \beta}, \frac{1}{1 - \alpha} \right )$. Since $f\in L^1(\R^n)$ and $I_{\alpha-\beta}|\div^\alpha\phi|\in L^\infty(\R^n)$, by Fubini's Theorem we have
\begin{equation}\label{eq:BV_beta_bv_alpha_int_by_parts}
\int_{\R^n}f\,\div^\beta\phi\,dx
=\int_{\R^n}f\,I_{\alpha-\beta}\div^\alpha\phi\,dx
=\int_{\R^n}u\,\div^\alpha\phi\,dx,
\end{equation}  
proving that $u:=I_{\alpha-\beta}f\in BV^{\alpha, p}(\R^n)$ for any $p \in \left ( \frac{n}{n - \alpha + \beta}, \frac{n}{n - \alpha} \right )$ (including $p = \frac{n}{n - \alpha}$ if $n \ge 2$), with $D^\alpha u=D^\beta f$ in~$\mathscr{M}(\R^n;\R^n)$.

\smallskip

\textit{Proof of~(\ref{item:res:BV_beta_bv_alpha_2})}.
Let $u\in BV^\alpha(\R^n)$. By~\cite{CS18}*{Theorem~3.32}, we know that $u\in W^{\alpha-\beta,1}(\R^n)$, so that $f:=(-\Delta)^{\frac{\alpha-\beta}{2}}u\in L^1(\R^n)$ with $\|f\|_{L^1(\R^n)}\le c_{n,\alpha,\beta}\,\|u\|_{BV^\alpha(\R^n)}$, see~\cite{CS18}*{Section~3.10}. Then, arguing as before, for any $\phi\in C^\infty_c(\R^n;\R^n)$ we get~\eqref{eq:BV_beta_bv_alpha_int_by_parts}, since we have $I_{\alpha-\beta}f =u$ in~$L^1(\R^n)$ (see \cite{CS18}*{Section 3.10}). The proof is complete.
\end{proof}

\subsection{The inclusion \texorpdfstring{$BV^\alpha\subset W^{\beta,1}$}{BVˆalpha in Wˆbeta,1} for \texorpdfstring{$\beta<\alpha$}{beta<alpha}: a representation formula}
\label{subsec:inclusion_BV_alpha_Sobolev_beta}

In~\cite{CS18}*{Theorem~3.32}, we proved that the inclusion $BV^\alpha\subset W^{\beta,1}$ is continuous for $\beta<\alpha$. In the following result we prove a useful representation formula for the fractional $\beta$-gradient of any $f\in BV^\alpha(\R^n)$, extending the formula obtained in \cref{result:repres_formula_nabla_alpha_bv_bounded}. 

\begin{proposition}\label{res:repres_formula_BV_alpha_beta}
Let $\alpha\in(0,1)$. If $f\in BV^\alpha(\R^n)$, then $f\in W^{\beta,1}(\R^n)$ for all $\beta\in(0,\alpha)$ with
\begin{equation}\label{eq:repres_formula_BV_alpha_beta}
\nabla^\beta f=I_{\alpha-\beta}D^\alpha f
\quad
\text{a.e.\ in $\R^n$}.
\end{equation}
In addition, for any bounded open set $U\subset\R^n$, we have
\begin{equation}\label{eq:weak_frac_grad_loc_estimate_alpha_beta}
\|\nabla^\beta f\|_{L^1(U;\,\R^n)}
\le C_{n,(1 - \alpha+\beta),U}\,|D^\alpha f|(\R^n)
\end{equation}
for all $\beta\in(0,\alpha)$, where~$C_{n,\alpha,U}$ is as in~\eqref{eq:constant_estimate}. 
Finally, given an open set $A\subset\R^n$, we have
\begin{equation}\label{eq:Davila_estimate_alpha_beta}
\|\nabla^\beta f\|_{L^1(A;\,\R^n)}
\le\frac{\mu_{n,1 + \beta - \alpha}}{n+\beta-\alpha}\left(\frac{\omega_{n, 1}|D^\alpha f|(\closure[-.5]{A_r})}{\alpha-\beta}\,r^{\alpha-\beta}+\frac{\omega_{n,\alpha}(n + 2 \beta -\alpha)}{\beta}\,\|f\|_{L^1(\R^n)}\,r^{-\beta}\right)
\end{equation} 
for all $r>0$ and all $\beta\in(0,\alpha)$, where $\omega_{n,\alpha}:=\|\nabla^\alpha\chi_{B_1}\|_{L^{1}(\R^n; \R^{n})}$, $\omega_{n, 1} := |D \chi_{B_{1}}|(\R^{n}) = n \omega_{n}$, and, as above, $A_r:=\set*{x\in\R^n : \dist(x,A)<r}$. In particular, we have
\begin{equation} \label{eq:Davila_estimate_alpha_beta_interp}
\|\nabla^\beta f\|_{L^1(\R^{n};\,\R^n)} 
\le 
\frac{\alpha \mu_{n, 1 + \beta - \alpha} \omega_{n, 1}^{\frac{\beta}{\alpha}} \omega_{n, \alpha}^{1 - \frac{\beta}{\alpha}} (n + 2 \beta - \alpha)^{1 - \frac{\beta}{\alpha}}}
{\beta(n + \beta - \alpha)(\alpha - \beta)} \,
\|f\|_{L^{1}(\R^{n})}^{1 - \frac{\beta}{\alpha}}\, 
|D^{\alpha}f|(\R^{n})^{\frac{\beta}{\alpha}}.
\end{equation}
\end{proposition}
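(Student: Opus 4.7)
The representation formula \eqref{eq:repres_formula_BV_alpha_beta} will be established by a duality and Fubini argument, using the semigroup property of the Riesz potential together with the $\Lip_b$-extension of the integration-by-parts formula (\cref{res:Lip_b_test}). Recall that $BV^\alpha(\R^n) \subset W^{\beta,1}(\R^n)$ by \cite{CS18}*{Theorem~3.32}, so $f \in S^{\beta,1}(\R^n)$ and its pointwise and weak fractional $\beta$-gradients coincide a.e. Given $\phi \in C^\infty_c(\R^n;\R^n)$, the Riesz potential $\psi := I_{\alpha-\beta}\phi$ belongs to $\Lip_b(\R^n;\R^n)$ by direct convolution estimates together with the commutation $\nabla I_{\alpha-\beta}\phi = I_{\alpha-\beta}\nabla\phi$. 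Combining \eqref{eq:Riesz_potential_semigroup} with \cref{prop:frac_div_repr} yields $\div^\alpha\psi = I_{1-\alpha} I_{\alpha-\beta}\div\phi = I_{1-\beta}\div\phi = \div^\beta\phi$. Applying \cref{res:Lip_b_test} with test field $\psi$, and then Fubini (justified since $I_{\alpha-\beta}|D^\alpha f| \in L^1_{\loc}(\R^n)$ by \cite{CS18}*{Lemma~2.4}),
\[
-\int_{\R^n} f\,\div^\beta\phi\,dx
= \int_{\R^n}\psi\cdot dD^\alpha f
= \int_{\R^n}\phi\cdot \bigl(I_{\alpha-\beta}D^\alpha f\bigr)\,dx,
\]
which, by the weak formulation, identifies $\nabla^\beta f = I_{\alpha-\beta}D^\alpha f$ a.e.

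From the pointwise bound $|\nabla^\beta f| \le I_{\alpha-\beta}|D^\alpha f|$, estimate \eqref{eq:weak_frac_grad_loc_estimate_alpha_beta} follows immediately by integrating over the bounded open set $U$ and applying the Riesz potential $L^1$-bound \cite{CS18}*{Lemma~2.4} to the finite measure $|D^\alpha f|$, which produces the constant $C_{n,1-\alpha+\beta,U}$. Estimate \eqref{eq:Davila_estimate_alpha_beta} is proved by mimicking the proof of \eqref{eq:Davila_estimate}, decomposing the Riesz representation of $\nabla^\beta f$ at the scale $|y-x|=r$. The near-field piece $\{|y-x|\le r\}$ is bounded by Tonelli: for fixed $y$, the inner integral $\int_{A \cap B_r(y)}|y-x|^{-(n-\alpha+\beta)}\,dx$ is at most $\omega_{n,1}r^{\alpha-\beta}/(\alpha-\beta)$ and is nonzero only for $y \in \closure[-.5]{A_r}$, which yields the first summand of \eqref{eq:Davila_estimate_alpha_beta}. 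The far-field piece $\{|y-x|>r\}$ cannot be dominated by the positive measure $|D^\alpha f|$ since the kernel is not integrable at infinity, so the vector cancellation of $D^\alpha f$ must be exploited by integration by parts: the discontinuous cutoff $\chi_{\{|y-x|>r\}}|y-x|^{-(n-\alpha+\beta)}$ is approximated by $\Lip_b$-regular vector fields, \cref{res:Lip_b_test} and the Leibniz rule \cref{res:leibniz_frac_div_Lip_b} are applied component-wise, and one passes to the limit via dominated convergence. A scaling argument identifies the resulting boundary/transition contribution with $\nabla^\alpha\chi_{B_1}$, producing the constant $\omega_{n,\alpha}$ and the scaling $r^{-\beta}\|f\|_{L^1(\R^n)}$, which gives the second summand of \eqref{eq:Davila_estimate_alpha_beta}. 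Finally, \eqref{eq:Davila_estimate_alpha_beta_interp} follows from \eqref{eq:Davila_estimate_alpha_beta} with $A=\R^n$ by minimizing the right-hand side in $r>0$; balancing $r^{\alpha-\beta}|D^\alpha f|(\R^n)$ against $r^{-\beta}\|f\|_{L^1(\R^n)}$ produces the interpolation exponent $\beta/\alpha$.

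The main obstacle is the far-field integration by parts: the natural cutoff kernel is discontinuous across $\partial B_r(x)$ and is borderline non-integrable at infinity, so extracting the bound in terms of $\|f\|_{L^1(\R^n)}$ and the constant $\omega_{n,\alpha}$ requires a careful smooth $\Lip_b$-approximation together with the scaling identity for the indicator of the unit ball via the fractional Leibniz rule, and the attendant dominated convergence arguments for the vector measure $D^\alpha f$.
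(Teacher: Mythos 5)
Your treatment of \eqref{eq:repres_formula_BV_alpha_beta} and \eqref{eq:weak_frac_grad_loc_estimate_alpha_beta} coincides with the paper's: the identity $\div^\alpha I_{\alpha-\beta}\phi=\div^\beta\phi$ via the semigroup property and \cref{res:frac_div_Lip_b}, the $\Lip_b$ test of \cref{res:Lip_b_test}, Fubini, and then integration of the pointwise bound $|\nabla^\beta f|\le I_{\alpha-\beta}|D^\alpha f|$ over $U$ with \cite{CS18}*{Lemma~2.4}. The near-field part of \eqref{eq:Davila_estimate_alpha_beta} and the optimisation in $r$ giving \eqref{eq:Davila_estimate_alpha_beta_interp} are also as in the paper.

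The divergence is in the far-field term, and there your plan is underspecified in a way that matters. The paper does \emph{not} work directly with the measure $D^\alpha f$: it first proves \eqref{eq:Davila_estimate_alpha_beta} for $f\in C^\infty_c(\R^n)$, where the far-field integral is rewritten by the radial decomposition of \cite{AH96}*{Lemma~3.1.1(c)} as $(n+\beta-\alpha)\int_r^{+\infty}\rho^{-(n+\beta-\alpha+1)}D^\alpha f(B_\rho(x))\,d\rho-r^{-(n+\beta-\alpha)}D^\alpha f(B_r(x))$, and the crucial cancellation enters through the identity $D^\alpha f(B_\rho(x))=-\rho^{n-\alpha}\int_{\R^n} f(x+\rho y)\,\nabla^\alpha\chi_{B_1}(y)\,dy$, i.e.\ an integration by parts between $f$ and $\chi_{B_\rho}$ combined with the $\alpha$-homogeneous scaling of $\nabla^\alpha$; this is exactly what produces the constant $\omega_{n,\alpha}$ and the factor $r^{-\beta}\|f\|_{L^1(\R^n)}$. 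The general case then follows by strict approximation ($f_k\to f$ in $L^1(\R^n)$ with $|D^\alpha f_k|(\R^n)\to|D^\alpha f|(\R^n)$), weak-* convergence of $\nabla^\beta f_k\,\Leb{n}$ and lower semicontinuity, exactly as in the proof of \eqref{eq:Davila_estimate}. Your proposal instead approximates the truncated kernel by $\Lip_b$ fields and invokes the Leibniz rule \cref{res:leibniz_frac_div_Lip_b}; the Leibniz rule is not the relevant tool here (no product of a scalar and a vector field is being differentiated), and if you test the measure $D^\alpha f$ directly against $\chi_{B_\rho(x)}$ you must confront the fact that this is legitimate only when $|D^\alpha f|(\partial B_\rho(x))=0$, which holds merely for a.e.\ $\rho$. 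Adopting the smooth-first, approximate-last structure removes both difficulties and is the route you should take to complete the argument.
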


\begin{proof}
Fix $\beta\in(0,\alpha)$. By~\cite{CS18}*{Theorem~3.32} we already know that $f\in W^{\beta,1}(\R^n)$, with $D^\beta f=\nabla^\beta f\Leb{n}$ according to~\cite{CS18}*{Theorem~3.18}. We thus just need to prove~\eqref{eq:repres_formula_BV_alpha_beta}, \eqref{eq:weak_frac_grad_loc_estimate_alpha_beta} and~\eqref{eq:Davila_estimate_alpha_beta}.

We prove~\eqref{eq:repres_formula_BV_alpha_beta}. 
Let $\phi\in C^\infty_c(\R^n;\R^n)$. Note that $I_{\alpha-\beta}\phi\in\Lip_b(\R^n;\R^n)$ is such that $\div I_{\alpha-\beta}\phi=I_{\alpha-\beta}\div\phi$, so that 
\begin{equation*}
I_{1-\alpha}\div I_{\alpha-\beta}\phi
=I_{1-\alpha}I_{\alpha-\beta}\div\phi
=I_{1-\beta}\div\phi
=\div^\beta\phi
\end{equation*}
by the \emph{semigroup property}~\eqref{eq:Riesz_potential_semigroup} of the Riesz potential. Moreover, in a similar way, we have
\begin{equation*}
I_{1-\alpha}|\div I_{\alpha-\beta}\phi|
=I_{1-\alpha}|I_{\alpha-\beta}\div\phi|
\le I_{1-\alpha}I_{\alpha-\beta}|\div\phi|
=I_{1-\beta}|\div\phi|
\in L^1_{\loc}(\R^n).
\end{equation*}
By \cref{res:frac_div_Lip_b}, we thus have that $\div^\alpha I_{\alpha-\beta}\phi=\div^\beta\phi$. Consequently, by \cref{res:Lip_b_test}, we get
\begin{equation*}
\int_{\R^n}f\,\div^\beta\phi\,dx
=\int_{\R^n}f\,\div^\alpha I_{\alpha-\beta}\phi\,dx
=-\int_{\R^n}I_{\alpha-\beta}\phi\cdot dD^\alpha f.
\end{equation*}
Since $|D^\alpha f|(\R^n)<+\infty$, we have $I_{\alpha-\beta}|D^\alpha f|\in L^1_{\loc}(\R^n)$ and thus, by Fubini's Theorem, we get that
\begin{equation*}
\int_{\R^n}I_{\alpha-\beta}\phi\cdot dD^\alpha f
=\int_{\R^n}\phi\cdot I_{\alpha-\beta}D^\alpha f\,dx.
\end{equation*}
We conclude that
\begin{equation*}
\int_{\R^n}f\,\div^\beta\phi\,dx
=-\int_{\R^n}\phi\cdot I_{\alpha-\beta}D^\alpha f\,dx
\end{equation*}
for any $\phi\in C^\infty_c(\R^n;\R^n)$, proving~\eqref{eq:repres_formula_BV_alpha_beta}. 

We prove~\eqref{eq:weak_frac_grad_loc_estimate_alpha_beta}. By~\eqref{eq:repres_formula_BV_alpha_beta}, by Tonelli's Theorem and by~\cite{CS18}*{Lemma~2.4}, we get
\begin{align*} 
\int_U |\nabla^{\beta} f|\,dx
\le\int_U I_{\alpha-\beta}|D^\alpha f|\,dx
\le C_{n,(1-\alpha+\beta),U}|D^\alpha f|(\R^n)
\end{align*}
where $C_{n,\alpha, U}$ is as in~\eqref{eq:constant_estimate}.

We now prove~\eqref{eq:Davila_estimate_alpha_beta} in two steps. We argue as in the proof of~\eqref{eq:Davila_estimate}.

\smallskip

\textit{Proof of~\eqref{eq:Davila_estimate_alpha_beta}, Step~1}. Assume $f\in C^\infty_c(\R^n)$ and fix $r>0$. We have
\begin{align*}
\int_A|\nabla^\beta f| dx
&=\int_A|I_{\alpha-\beta}\nabla^\alpha f|\,dx\\
&\le\frac{\mu_{n,1+\beta-\alpha}}{n+\beta-\alpha}\left(\int_A\int_{\{|h|<r\}}\frac{|\nabla^\alpha f(x+h)|}{|h|^{n+\beta-\alpha}} dh dx
+\int_A\abs*{\int_{\{|h|\ge r\}}\frac{\nabla^\alpha f(x+h)}{|h|^{n+\beta-\alpha}}\,dh } dx\right).
\end{align*}
We estimate the two double integrals appearing in the right-hand side separately. By Tonelli's Theorem, we have
\begin{align*}
\int_A\int_{\{|h|<r\}}\frac{|\nabla^\alpha f(x+h)|}{|h|^{n+\beta-\alpha}}\,dh\,dx
&=\int_{\{|h|<r\}}\int_A|\nabla^\alpha f(x+h)|\,dx\,\frac{dh}{|h|^{n+\beta-\alpha}}\\
&\le|D^\alpha f|(A_r)\int_{\{|h|<r\}}\frac{dh}{|h|^{n+\beta-\alpha}}\\
&=\frac{n\omega_n\,|D^\alpha f|(A_r)}{\alpha-\beta}\,r^{\alpha-\beta}.
\end{align*}
Concerning the second double integral, we apply~\cite{AH96}*{Lemma~3.1.1(c)} to each component of the measure $D^\alpha f\in\mathscr{M}(\R^n;\R^n)$ and get
\begin{align*}
\int_{\{|h|\ge r\}}\frac{\nabla^\alpha f(x+h)}{|h|^{n+\beta-\alpha}}\,dh
&=(n+\beta-\alpha)\int_r^{+\infty}\frac{D^\alpha f(B_\rho(x))}{\rho^{n+\beta-\alpha+1}}\,d\rho
-\frac{D^\alpha f(B_r(x))}{r^{n+\beta-\alpha}}
\end{align*}
for all $x\in A$. Since
\begin{align*}
D^\alpha f(B_\rho(x))
&=\int_{\R^n}\chi_{B_\rho}(y)\,\nabla^\alpha f(x+y)\,dy\\
&=-\int_{\R^n} f(x+y)\,\nabla^\alpha\chi_{B_\rho}(y)\,dy\\
&=-\rho^{n-\alpha}\int_{\R^n}f(x+\rho y)\,\nabla^\alpha\chi_{B_1}(y)\,dy,
\end{align*}
we can compute
\begin{align*}
(n+\beta-\alpha)&\int_r^{+\infty}\frac{D^\alpha f(B_\rho(x))}{\rho^{n+\beta-\alpha+1}}\,d\rho
-\frac{D^\alpha f(B_r(x))}{r^{n+\beta-\alpha}}\\
&= - (n+\beta-\alpha)\int_r^{+\infty}\frac{1}{\rho^{\beta+1}} \int_{\R^n}f(x+\rho y)\,\nabla^\alpha\chi_{B_1}(y)\,dy \,d\rho \\
& \quad + \frac{1}{r^{\beta}} \int_{\R^n}f(x+r y)\,\nabla^\alpha\chi_{B_1}(y)\,dy \\
&=\int_{\R^{n}} \left ( \frac{f(x + r y)}{r^{\beta}} - (n+\beta-\alpha) \int_{r}^{+ \infty}  \frac{f(x + \rho y)}{\rho^{\beta+1}} \, d \rho \right ) \nabla^{\alpha} \chi_{B_{1}}(y) \, dy
\end{align*}
for all $x\in A$. Hence, we have
\begin{align*}
\int_{A}\bigg|\int_{\{|h|>r\}}\frac{\nabla^{\alpha} f(x+h)}{|h|^{n+\beta-\alpha}} &\, dh\, \bigg| \,dx
\le \int_{\R^{n}}\bigg|\int_{\{|h|>r\}}\frac{\nabla^{\alpha} f(x+h)}{|h|^{n+\beta-\alpha}}\,dh\,\bigg|\,dx \\
& \le \int_{\R^{n}} \int_{\R^{n}} \frac{|f(x + r y)|}{r^{\beta}} \, |\nabla^{\alpha} \chi_{B_{1}}(y)| \, dx \, dy \\
& \quad + (n+\beta-\alpha) \int_{\R^{n}} \int_{r}^{+ \infty} \int_{\R^{n}} \frac{|f(x + \rho y)|}{\rho^{\beta + 1}}\, |\nabla^{\alpha} \chi_{B_{1}}(y)| \, dx \, d \rho \, dy\\
&=\frac{\omega_{n,\alpha}(n + 2 \beta -\alpha)}{\beta}\,\|f\|_{L^1(\R^n)}\,r^{-\beta}.
\end{align*}
Thus~\eqref{eq:Davila_estimate} follows for all $f\in C^\infty_c(\R^n)$ and~$r>0$. 

\smallskip

\textit{Proof of~\eqref{eq:Davila_estimate}, Step~2}. Let $f\in BV^\alpha(\R^n)$ and fix $r>0$. By~\cite{CS18}*{Theorem~3.8}, we find $(f_k)_{k\in\N}\subset C^\infty_c(\R^n)$ such that $f_k\to f$ in~$L^1(\R^n)$ and $|D^\alpha f_k|(\R^n)\to|D^\alpha f|(\R^n)$ as $k\to+\infty$. By Step~1, we have that
\begin{equation}\label{eq:Davila_estimate_smooth_alpha_beta}
\|\nabla^\beta f_k\|_{L^1(A;\,\R^n)}
\le\frac{\mu_{n,1+\beta-\alpha}}{n+\beta-\alpha}\left(\frac{n\omega_n|D^\alpha f_k|(\closure[-.5]{A_r})}{\alpha-\beta}\,r^{\alpha-\beta}+\frac{\omega_{n,\alpha}(n + 2 \beta -\alpha)}{\beta}\,\|f_k\|_{L^1(\R^n)}\,r^{-\beta}\right)
\end{equation}
for all $k\in\N$. We have that
\begin{equation}\label{eq:claim_weak_conv_BV_alpha_beta}
(\nabla^\beta f_k)\,\Leb{n}\weakto(\nabla^\beta f)\,\Leb{n}
\quad
\text{as $k\to+\infty$}.
\end{equation}
This can be proved arguing as in the proof of~\eqref{eq:claim_weak_conv_BV} using~\eqref{eq:weak_frac_grad_loc_estimate_alpha_beta}. At this point the proof goes like that of \cref{lem:nabla_alpha_f_L1_and_L1_loc}\eqref{item:lem:nabla_alpha_f_loc_1} and
we thus leave the details to the reader.
\end{proof}

\section{Asymptotic behavior of fractional \texorpdfstring{$\alpha$}{alpha}-variation as \texorpdfstring{$\alpha\to1^-$}{alpha tends to 1-}}
\label{sec:asymptotic_alpha_to_1}

\subsection{Convergence of \texorpdfstring{$\nabla^\alpha$}{nablaˆalpha} and \texorpdfstring{$\div^\alpha$}{divˆalpha} as \texorpdfstring{$\alpha\to1^-$}{alpha tends to 1-}}

We begin with the following simple result about the asymptotic behavior of the constant $\mu_{n,\alpha}$ as $\alpha\to1^-$.

\begin{lemma}\label{lem:mu_alpha_bound} 
Let $n\in\N$. We have
\begin{equation}\label{eq:mu_alpha_above_est}
\frac{\mu_{n,\alpha}}{1-\alpha}
\le\pi^{- \frac{n}{2}} \sqrt{6} \, \frac{\Gamma \left ( \frac{n}{2} + 1\right )}{\Gamma \left ( \frac{3}{2} \right )}
=:C_n
\qquad
\forall\alpha\in(0,1)
\end{equation}
and
\begin{equation}\label{eq:def_mu_alpha_asym} 
\lim_{\alpha\to1^-}\frac{\mu_{n,\alpha}}{1-\alpha}=\omega_n^{-1}.
\end{equation}
\end{lemma}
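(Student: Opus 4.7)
My first move is to apply the functional equation $z\Gamma(z)=\Gamma(z+1)$ with $z=\tfrac{1-\alpha}{2}$, namely $(1-\alpha)\Gamma\bigl(\tfrac{1-\alpha}{2}\bigr)=2\Gamma\bigl(\tfrac{3-\alpha}{2}\bigr)$, to absorb the factor $1-\alpha$ into the Gamma function in the denominator of $\mu_{n,\alpha}$, obtaining
\begin{equation*}
\frac{\mu_{n,\alpha}}{1-\alpha}=\frac{2^{\alpha-1}}{\pi^{n/2}}\,\frac{\Gamma\bigl(\tfrac{n+\alpha+1}{2}\bigr)}{\Gamma\bigl(\tfrac{3-\alpha}{2}\bigr)}.
\end{equation*}
The right-hand side is a continuous function of $\alpha$ on all of $[0,1]$, so the asymptotic identity~\eqref{eq:def_mu_alpha_asym} follows immediately: as $\alpha\to 1^-$ it tends to $\pi^{-n/2}\Gamma(n/2+1)/\Gamma(1)=\omega_n^{-1}$.

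For the pointwise bound~\eqref{eq:mu_alpha_above_est} I would introduce the normalised quantity
\begin{equation*}
F_n(\alpha):=\omega_n\,\frac{\mu_{n,\alpha}}{1-\alpha}=\frac{2^{\alpha-1}\,\Gamma\bigl(\tfrac{n+\alpha+1}{2}\bigr)}{\Gamma(n/2+1)\,\Gamma\bigl(\tfrac{3-\alpha}{2}\bigr)}
\end{equation*}
and show that $F_n$ is strictly increasing on $(0,1)$, so that $F_n(\alpha)<\lim_{\alpha\to 1^-}F_n(\alpha)=1$ for every $\alpha\in(0,1)$. Computing the logarithmic derivative gives
\begin{equation*}
\frac{F_n'(\alpha)}{F_n(\alpha)}=\log 2+\tfrac12\psi\!\bigl(\tfrac{n+\alpha+1}{2}\bigr)+\tfrac12\psi\!\bigl(\tfrac{3-\alpha}{2}\bigr),
\end{equation*}
where $\psi=\Gamma'/\Gamma$ is the digamma function. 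Since $\psi'(x)=\sum_{k\ge 0}(x+k)^{-2}>0$ on $(0,+\infty)$, $\psi$ is strictly increasing there, and for $n\ge 1$ and $\alpha\in(0,1)$ both $\tfrac{n+\alpha+1}{2}$ and $\tfrac{3-\alpha}{2}$ lie in $(1,+\infty)$; hence each $\psi$ term is at least $\psi(1)=-\gamma$, forcing $F_n'/F_n\ge\log 2-\gamma>0$. Dividing the resulting inequality $F_n(\alpha)<1$ by $\omega_n$ produces $\mu_{n,\alpha}/(1-\alpha)<\omega_n^{-1}\le C_n$, the last step being equivalent to $\Gamma(3/2)=\tfrac{\sqrt{\pi}}{2}\le\sqrt{3/2}$ (i.e.\ $\pi\le 6$).

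The only substantive step is the initial Gamma-recursion rewriting, which removes the apparent singularity at $\alpha=1$; once it is made, the limit is a continuity statement and the uniform bound reduces to the elementary numerical fact $\log 2>\gamma$. I do not anticipate any further obstacle.
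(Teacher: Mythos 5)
Your proof is correct. The treatment of the limit \eqref{eq:def_mu_alpha_asym} is essentially the same as the paper's (the paper invokes $\Gamma(x)\sim x^{-1}$ as $x\to0^+$, which is exactly your rewriting $(1-\alpha)\Gamma\bigl(\tfrac{1-\alpha}{2}\bigr)=2\Gamma\bigl(\tfrac{3-\alpha}{2}\bigr)$ in asymptotic form). For the uniform bound \eqref{eq:mu_alpha_above_est}, however, you take a genuinely different route: the paper applies the log-convexity inequality $\Gamma(x+a)\le x^a\Gamma(x)$ twice, once to bound $\Gamma\bigl(\tfrac{n+\alpha+1}{2}\bigr)$ by $\Gamma\bigl(\tfrac n2+1\bigr)$ (with a separate ad hoc check for $n=1$) and once to bound $\Gamma\bigl(\tfrac32\bigr)$ from above by $\sqrt{3/2}\,\tfrac{1-\alpha}{2}\Gamma\bigl(\tfrac{1-\alpha}{2}\bigr)$, which is where the factor $\sqrt{3/2}/\Gamma(3/2)$ in $C_n$ originates. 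You instead prove that $\alpha\mapsto\mu_{n,\alpha}/(1-\alpha)$ is strictly increasing on $(0,1)$ via the digamma function, using that both arguments $\tfrac{n+\alpha+1}{2}$ and $\tfrac{3-\alpha}{2}$ exceed $1$ and that $\log 2>\gamma$; this yields the strictly sharper conclusion $\mu_{n,\alpha}/(1-\alpha)<\omega_n^{-1}\le C_n$, treats all $n\ge1$ uniformly without a special case, and reduces the numerical input to the two elementary facts $\log 2>\gamma$ and $\pi\le 6$. The trade-off is that you invoke slightly heavier machinery (monotonicity of $\psi$), whereas the paper stays entirely within log-convexity of $\Gamma$; both arguments are sound, and since everything downstream in the paper only uses the stated constant $C_n$, your stronger bound is harmless.
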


\begin{proof}
Since $\Gamma(1) = 1$ and $\Gamma(1 + x) = x \, \Gamma(x)$ for $x > 0$ (see~\cite{A64}), we have $\Gamma(x) \sim x^{-1}$ as $x \to 0^+$. Thus as $\alpha\to1^-$ we find
\begin{equation*} 
\mu_{n, \alpha} 
= 2^{\alpha} \pi^{- \frac{n}{2}} \frac{\Gamma\left ( \frac{n + \alpha + 1}{2} \right )}{\Gamma\left ( \frac{1 - \alpha}{2} \right )} 
\sim \pi^{- \frac{n}{2}}\,(1 - \alpha)\,\Gamma\left ( \frac{n}{2} + 1 \right ) 
= \omega_{n}^{-1} (1 - \alpha)
\end{equation*}
and~\eqref{eq:def_mu_alpha_asym} follows. 
Since~$\Gamma$ is log-convex on~$(0,+\infty)$ (see~\cite{A64}), for all $x > 0$ and $a \in (0, 1)$ we have 
\begin{equation*} 
\Gamma(x + a) 
= \Gamma((1 - a)x + a(x + 1)) 
\le \Gamma(x)^{1 - a}\, \Gamma(x + 1)^{a} 
= x^{a}\, \Gamma(x). 
\end{equation*}
For $x = \frac{n}{2}$ and $a = \frac{\alpha + 1}{2}$, we can estimate
\begin{equation*} 
\Gamma\left ( \frac{n + \alpha + 1}{2} \right ) 
\le \left ( \frac{n}{2} \right )^{\frac{\alpha + 1}{2}} \Gamma \left ( \frac{n}{2} \right ) 
\le \max\left \{\frac{n}{2}, 1 \right \} \Gamma \left ( \frac{n}{2} \right ) \le n \Gamma \left ( \frac{n}{2} \right ) = 2 \Gamma \left ( \frac{n}{2} + 1 \right )
\end{equation*}
for all $n\ge1$. For $x = 1 + \frac{1 - \alpha}{2}$ and $a = \frac{\alpha}{2}$, we can estimate
\begin{equation*} 
\Gamma \left ( \frac{3}{2} \right ) 
\le \left (1 +  \frac{1 - \alpha}{2} \right )^{\frac{\alpha}{2}} \Gamma\left ( 1+  \frac{1 - \alpha}{2} \right ) 
\le \sqrt{ \frac{3}{2}}\, \frac{1 - \alpha}{2}\, \Gamma\left (\frac{1 - \alpha}{2} \right ). 
\end{equation*}
We thus get
\begin{equation*} 
\mu_{n, \alpha} (1 - \alpha)^{-1} 
=  2^{\alpha - 1} \, \pi^{- \frac{n}{2}}\, \frac{\Gamma\left ( \frac{n + \alpha + 1}{2} \right )}{\Gamma\left ( \frac{1 - \alpha}{2} + 1 \right )} 
\le \pi^{- \frac{n}{2}} \, \sqrt{ \frac{3}{2}} \, \frac{2 \Gamma \left ( \frac{n}{2} + 1 \right )}{\Gamma \left ( \frac{3}{2} \right )}
\end{equation*}
and~\eqref{eq:mu_alpha_above_est} follows. 
\end{proof}

In the following technical result, we show that the constant $C_{n, \alpha, U}$ defined in~\eqref{eq:constant_estimate} is uniformly bounded as $\alpha \to 1^{-}$ in terms of the volume and the diameter of the bounded open set~$U\subset\R^n$.

\begin{lemma}[Uniform upper bound on~$C_{n, \alpha, U}$ as $\alpha\to1^-$]\label{lem:unif_bound_C_n_alpha_U} 
Let $n \in \N$ and $\alpha \in (\frac{1}{2}, 1)$. Let $U\subset\R^n$ be bounded open set. If $C_{n, \alpha, U}$ is as in~\eqref{eq:constant_estimate}, then
\begin{equation}\label{eq:uniform_estimate_C_n_alpha_U} 
C_{n, \alpha, U} 
\le \frac{n\omega_n C_{n}}{\left ( n - \frac{1}{2} \right )} \left ( \frac{n}{\left ( n - \frac{1}{2} \right )} \, \max\set*{1,\frac{|U|}{\omega_{n}}}^{\frac{1}{n}} +\max\set*{1,\sqrt{\diam(U)}} \right) 
=: \kappa_{n, U}, 
\end{equation}
where $C_{n}$ is as in~\eqref{eq:mu_alpha_above_est}.
\end{lemma}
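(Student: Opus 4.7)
The strategy is to bound each factor in the definition
\[
C_{n, \alpha, U} = \frac{n \mu_{n, \alpha}}{(1 - \alpha)(n + \alpha - 1)} \left( \omega_n\diam(U)^{1 - \alpha} +\left( \frac{n \omega_{n}}{n+\alpha-1} \right)^{\!\!\frac{n + \alpha - 1}{n}}\!\!|U|^{\frac{1 - \alpha}{n}} \right)
\]
by a quantity independent of $\alpha \in (\frac12, 1)$, using \cref{lem:mu_alpha_bound} for the prefactor and elementary monotonicity arguments for the terms in parentheses.

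First, the prefactor. By \eqref{eq:mu_alpha_above_est} one has $\mu_{n,\alpha}/(1-\alpha) \le C_n$ for every $\alpha\in(0,1)$, and for $\alpha \in (\frac12, 1)$ we have $n + \alpha - 1 > n - \frac12$, so
\[
\frac{n \mu_{n, \alpha}}{(1 - \alpha)(n + \alpha - 1)} \le \frac{n C_n}{n - \tfrac{1}{2}}.
\]

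Second, the bracketed terms. Since $1 - \alpha \in (0, \tfrac12)$, the bound $t^{1-\alpha} \le \max\{1, t^{1/2}\}$ for $t>0$ gives
\[
\omega_n \diam(U)^{1-\alpha} \le \omega_n \max\{1, \sqrt{\diam(U)}\}.
\]
For the second summand, I would rewrite
\[
\left(\frac{n\omega_n}{n+\alpha-1}\right)^{\!\!\frac{n+\alpha-1}{n}}\!\!|U|^{\frac{1-\alpha}{n}} = \left(\frac{n}{n+\alpha-1}\right)^{\!\!\frac{n+\alpha-1}{n}} \omega_n \left(\frac{|U|}{\omega_n}\right)^{\!\!\frac{1-\alpha}{n}}.
\]
Since $(n+\alpha-1)/n \in (0, 1)$ and $n/(n+\alpha-1) > 1$, the inequality $s^{b} \le s$ (valid for $s \ge 1$ and $b \in (0, 1]$) together with $n/(n+\alpha-1) < n/(n-\tfrac12)$ yields
\[
\left(\frac{n}{n+\alpha-1}\right)^{\!\!\frac{n+\alpha-1}{n}} \le \frac{n}{n - \tfrac{1}{2}}.
\]
Finally, since $(1-\alpha)/n \in (0, 1/n)$, applying again $s^b \le \max\{1, s\}^{1/n}$ (considering separately the cases $|U|/\omega_n \le 1$ and $|U|/\omega_n \ge 1$) gives
\[
\left(\frac{|U|}{\omega_n}\right)^{\!\!\frac{1-\alpha}{n}} \le \max\set*{1, \frac{|U|}{\omega_n}}^{\!\frac{1}{n}}.
\]
Combining these three observations, the second summand is bounded by $\frac{n\omega_n}{n-1/2}\max\{1, |U|/\omega_n\}^{1/n}$.

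Assembling the estimates, we obtain precisely
\[
C_{n,\alpha,U} \le \frac{n C_n}{n - \tfrac{1}{2}}\left[\omega_n \max\{1, \sqrt{\diam(U)}\} + \frac{n \omega_n}{n - \tfrac{1}{2}} \max\set*{1, \frac{|U|}{\omega_n}}^{\!\frac{1}{n}}\right] = \kappa_{n, U},
\]
which is \eqref{eq:uniform_estimate_C_n_alpha_U}. There is no real obstacle here — the only mildly delicate point is keeping track of which base-exponent inequality to apply (base $\ge 1$ with exponent $\le 1$ versus base $\le 1$ with positive exponent), which is why the two \emph{maxima} with $1$ appear naturally in the final bound.
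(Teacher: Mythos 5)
Your proof is correct and follows essentially the same route as the paper: bound the prefactor via \eqref{eq:mu_alpha_above_est} and $n+\alpha-1>n-\tfrac12$, then control the two bracketed terms with the elementary inequalities $t^{1-\alpha}\le\max\{1,\sqrt t\}$ and $s^{(1-\alpha)/n}\le\max\{1,s\}^{1/n}$. The only difference is a cosmetic algebraic rearrangement of the second summand (the paper writes it as $\tfrac{n\omega_n}{n+\alpha-1}\big(\tfrac{|U|(n+\alpha-1)}{n\omega_n}\big)^{(1-\alpha)/n}$), which leads to the same bound.
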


\begin{proof} 
By~\eqref{eq:mu_alpha_above_est}, for all $\alpha\in(\frac{1}{2},1)$ we have
\begin{equation*} 
\frac{ n \, \mu_{n, \alpha}}{(n + \alpha - 1) (1 - \alpha)} 
\le \frac{n \, C_{n}}{n + \alpha - 1} 
\le \frac{n \, C_{n}}{n - \frac{1}{2}}.
\end{equation*}
Since $t^{1 - \alpha} \le \max\set*{1, \sqrt{t}}$ for any $t\ge0$ and $\alpha\in(\frac{1}{2},1)$, we have 
\begin{equation*}
\omega_n(\diam(U))^{1 - \alpha}\le\omega_n\max\set*{1,\sqrt{\diam(U)}}
\end{equation*}
and
\begin{align*} 
\left(\frac{n \omega_{n}}{n+\alpha-1}\right)^\frac{n + \alpha - 1}{n}|U|^\frac{1 - \alpha}{n} 
&=\frac{n\omega_n}{n+\alpha-1}\left(\frac{|U|(n+\alpha-1)}{n\omega_n}\right)^{\frac{1-\alpha}{n}}
\le \frac{n\omega_n}{\left ( n - \frac{1}{2} \right )} \max\set*{1,\frac{|U|}{\omega_{n}}}^{\frac{1}{n}}.
\end{align*}
Combining these inequalities, we get the conclusion.
\end{proof}

As consequence of \cref{prop:frac_div_repr} and \cref{lem:unif_bound_C_n_alpha_U}, we prove that $\nabla^\alpha$ and $\div^\alpha$ converge pointwise to~$\nabla$ and~$\div$ respectively as $\alpha \to 1^{-}$.

\begin{proposition}\label{res:pointwise_converg}
If $f\in C^1_c(\R^n)$, then for all $x\in\R^n$ we have
\begin{equation}\label{eq:pointwise_converg_riesz}
\lim_{\alpha \to 0^-}I_{\alpha}f(x)=f(x).
\end{equation}
As a consequence, if $f \in C^{2}_{c}(\R^{n})$ and $\phi \in C^{2}_c(\R^{n}; \R^{n})$, then for all $x\in\R^n$ we have
\begin{equation}\label{eq:pointwise_converg_nabla_alpha_div_alpha}
\lim_{\alpha \to 1^{-}}\nabla^{\alpha} f(x)=\nabla f(x),
\qquad
\lim_{\alpha \to 1^{-}}\div^{\alpha} \phi(x)=\div \phi(x).
\end{equation}
\end{proposition}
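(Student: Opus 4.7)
The plan is to first establish (4.9), namely $I_\alpha f(x)\to f(x)$ as $\alpha\to 0^+$ (interpreting the $0^-$ in the statement as the natural one-sided limit, since $I_\alpha$ is defined only for positive $\alpha$), and then deduce (4.10) from it via the representation formulas $\nabla^\alpha f = I_{1-\alpha}\nabla f$ and $\div^\alpha\phi = I_{1-\alpha}\div\phi$ provided by \cref{prop:frac_div_repr}, together with the trivial substitution $\beta:=1-\alpha$.

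To prove (4.9), fix $x\in\R^n$ and pass to polar coordinates around~$x$ to write
\begin{equation*}
I_\alpha f(x) = \frac{\mu_{n,1-\alpha}}{n-\alpha}\int_0^{\infty} r^{\alpha-1}\,\bar f_x(r)\,dr,
\qquad
\bar f_x(r):=\int_{\Sph^{n-1}} f(x+r\theta)\,d\Haus{n-1}(\theta).
\end{equation*}
Since $f\in C^1_c(\R^n)$, one has $\bar f_x(0)=n\omega_n f(x)$, the bound $|\bar f_x(r)-\bar f_x(0)|\le n\omega_n\|\nabla f\|_{L^\infty(\R^n;\,\R^n)}\,r$ for all $r\ge 0$, and $\bar f_x(r)=0$ for $r\ge R$, where $R\ge 1$ is any radius with $\supp(f)\subset B_R(x)$. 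The key algebraic step is the decomposition
\begin{equation*}
\int_0^{\infty} r^{\alpha-1}\,\bar f_x(r)\,dr
= \bar f_x(0)\,\frac{R^{\alpha}}{\alpha}
+ \int_0^R r^{\alpha-1}\bigl[\bar f_x(r)-\bar f_x(0)\bigr]\,dr,
\end{equation*}
in which the remainder integral is uniformly bounded in $\alpha\in(0,1)$ by $n\omega_n\|\nabla f\|_{L^\infty(\R^n;\,\R^n)}\,R^{\alpha+1}/(\alpha+1)\le n\omega_n\|\nabla f\|_{L^\infty(\R^n;\,\R^n)}\,R^2$. Now use \cref{lem:mu_alpha_bound}, which after setting $\beta:=1-\alpha$ reads $\mu_{n,1-\alpha}/\alpha\to\omega_n^{-1}$ as $\alpha\to 0^+$. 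Multiplying both sides of the decomposition by $\mu_{n,1-\alpha}/(n-\alpha)$, the leading term becomes
\begin{equation*}
\frac{\mu_{n,1-\alpha}}{\alpha}\cdot\frac{n\omega_n}{n-\alpha}\cdot f(x)\,R^{\alpha}
\longrightarrow \omega_n^{-1}\cdot \omega_n\cdot f(x)\cdot 1 = f(x),
\end{equation*}
while the remainder vanishes because $\mu_{n,1-\alpha}/(n-\alpha)\to 0$, yielding (4.9).

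To deduce (4.10), note that if $f\in C^2_c(\R^n)$ then $\partial_i f\in C^1_c(\R^n)$ for $i=1,\dots,n$, and \cref{prop:frac_div_repr} gives $\nabla^\alpha f(x) = I_{1-\alpha}\nabla f(x)$ for every $x\in\R^n$. Applying (4.9) componentwise with $\beta=1-\alpha\to 0^+$ as $\alpha\to 1^-$ gives the first limit in (4.10). Likewise, for $\phi\in C^2_c(\R^n;\R^n)$ one has $\div\phi\in C^1_c(\R^n)$ and $\div^\alpha\phi = I_{1-\alpha}\div\phi$ by the same proposition, so (4.9) yields the second limit. The main (and essentially only) obstacle is the spherical-average argument: everything hinges on observing that the apparently divergent behaviour of $r^{\alpha-1}$ at the origin is exactly compensated by the vanishing of $\mu_{n,1-\alpha}$ encoded in \cref{lem:mu_alpha_bound}, with the first-order Taylor bound on $\bar f_x$ guaranteeing that the remainder integral stays uniformly bounded.
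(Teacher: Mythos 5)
Your proof is correct and follows essentially the same route as the paper: spherical coordinates around~$x$, isolation of the $1/\alpha$ singularity so that it cancels against the asymptotics $\mu_{n,1-\alpha}\sim\alpha\,\omega_n^{-1}$ from \cref{lem:mu_alpha_bound}, and then the representation formulas of \cref{prop:frac_div_repr} to pass from \eqref{eq:pointwise_converg_riesz} to \eqref{eq:pointwise_converg_nabla_alpha_div_alpha}. The only (harmless) difference is that the paper extracts the $1/\alpha$ factor by integrating by parts in the radial variable, whereas you do it by subtracting the spherical average at $r=0$ and computing $\int_0^R r^{\alpha-1}\,dr$ explicitly; both devices work equally well.
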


\begin{proof} 
Let $f \in C^{1}_{c}(\R^{n})$ and fix $x\in\R^n$. Writing~\eqref{eq:frac_nabla_repres} in spherical coordinates, we find
\begin{equation*}
I_{\alpha}f(x) = \frac{\mu_{n, 1-\alpha}}{n - \alpha} \lim_{\delta \to 0} \int_{\partial B_1} \int_{\delta}^{+ \infty}  \rho^{-1+ \alpha} f(x + \rho v)\,d \rho\,  d \Haus{n - 1}(v).
\end{equation*}
Since $f\in C^{1}_{c}(\R^{n})$, for each fixed $v\in\de B_1$ we can integrate by parts in the variable~$\rho$ and get
\begin{align*} 
\int_{\delta}^{+ \infty} \rho^{-1+ \alpha}f(x + \rho v)\, d \rho
&= \left[\frac{\rho^{\alpha}}{\alpha}\, f(x + \rho v)\right]_{\rho=\delta}^{\rho\to+\infty}
- \frac{1}{\alpha} \int_{\delta}^{+\infty} \rho^{\alpha}\, \de_\rho(f(x + \rho v))\, d \rho\\
& = -\frac{\delta^{\alpha}}{\alpha} \, f(x + \delta v)
- \frac{1}{\alpha} \int_{\delta}^{+\infty} \rho^{\alpha}\, \de_\rho(f(x + \rho v))\, d \rho.
\end{align*}
Clearly, we have 
\begin{equation*} 
\lim_{\delta\to0^+}\delta^{\alpha} \int_{\partial B_1} f(x + \delta v) \, d \Haus{n - 1}(v) =0.
\end{equation*}
Thus, by Fubini's Theorem, we conclude that
\begin{equation}\label{eq:C2_frac_grad_repr}
I_{\alpha} f(x) 
= - \frac{\mu_{n, 1-\alpha}}{\alpha(n - \alpha)} \int_{0}^{\infty} \int_{\de B_1} \rho^{\alpha}\, \de_\rho(f(x + \rho v))\, d \Haus{n - 1}(v) \, d \rho.
\end{equation}
Since~$f$ has compact support and recalling~\eqref{eq:def_mu_alpha_asym}, we can pass to the limit in~\eqref{eq:C2_frac_grad_repr} and get
\begin{equation*} 
\lim_{\alpha\to0^+}I_{\alpha} f(x) 
= - \frac{1}{n \omega_{n}} \int_{\de B_1} \int_{0}^{\infty} \de_\rho( f(x + \rho v) ) \, d \rho \, d \Haus{n - 1}(v)
= f(x),
\end{equation*}
proving~\eqref{eq:pointwise_converg_riesz}. The pointwise limits in~\eqref{eq:pointwise_converg_nabla_alpha_div_alpha} immediately follows by \cref{prop:frac_div_repr}.
\end{proof}

In the following crucial result, we improve the pointwise convergence obtained in \cref{res:pointwise_converg} to strong convergence in~$L^p(\R^n)$ for all $p\in[1,+\infty]$.

\begin{proposition}\label{res:L1_converg}
Let $p\in[1,+\infty]$. If $f \in C^{2}_{c}(\R^{n})$ and $\phi \in C^{2}_c(\R^{n}; \R^{n})$, then
\begin{equation*}
\lim_{\alpha \to 1^{-}}\|\nabla^{\alpha} f-\nabla f\|_{L^p(\R^n;\R^n)}=0,
\qquad
\lim_{\alpha \to 1^{-}}\|\div^{\alpha} \phi-\div \phi\|_{L^p(\R^n)}=0.
\end{equation*}
\end{proposition}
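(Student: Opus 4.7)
The strategy is to upgrade the pointwise convergence of \cref{res:pointwise_converg} to $L^p$-convergence for every $p\in[1,+\infty]$ by splitting $\R^n = B_{2M}\cup(\R^n\setminus B_{2M})$, where $M>0$ is chosen so that $\supp f\subset B_M$, and by producing on each piece a dominating function that is uniform for $\alpha$ close to~$1$. The inner region supplies an $\alpha$-independent $L^\infty$-bound (together with uniform convergence), while the outer region supplies an integrable tail coming from the fast decay of $\nabla^\alpha f$. Assembling the two pieces yields a single dominating function in $L^1(\R^n)\cap L^\infty(\R^n)$, so that Lebesgue's theorem gives the $L^p$-convergence for every $p<+\infty$, while the uniform convergence already handles the case $p=+\infty$.

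For the inner region I would combine the integration-by-parts representation~\eqref{eq:C2_frac_grad_repr} applied componentwise to $\nabla f\in C^1_c(\R^n;\R^n)$ with the elementary identity
\[
\nabla f(x) = -\frac{1}{n\omega_n}\int_{\partial B_1}\int_0^{+\infty}\partial_\rho\nabla f(x+\rho v)\,d\rho\,d\Haus{n-1}(v)
\]
to write, for $x\in B_{2M}$,
\[
\nabla^\alpha f(x)-\nabla f(x) = \int_{\partial B_1}\int_0^{3M}\!\left(\frac{1}{n\omega_n}-\frac{\mu_{n,\alpha}\,\rho^{1-\alpha}}{(1-\alpha)(n+\alpha-1)}\right)\partial_\rho\nabla f(x+\rho v)\,d\rho\,d\Haus{n-1}(v),
\]
the $\rho$-integral being truncated at $3M$ because $\supp\nabla f\subset B_M$. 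The factor in parentheses is uniformly bounded on $(0,3M)$ for $\alpha\in(\tfrac12,1)$ by~\eqref{eq:mu_alpha_above_est} and vanishes pointwise as $\alpha\to 1^-$ by~\eqref{eq:def_mu_alpha_asym}; dominated convergence in $\rho$ then gives uniform vanishing of the difference on~$B_{2M}$. Moreover, \cref{prop:frac_div_repr} together with \cref{lem:unif_bound_C_n_alpha_U} delivers an $\alpha$-independent bound $\|\nabla^\alpha f\|_{L^\infty(\R^n;\R^n)}\le \kappa_{n,U}\|\nabla f\|_{L^\infty(\R^n;\R^n)}$ for any bounded open $U\supset\supp f$.

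For the outer region, since $f(x)=0$ for $|x|\ge 2M$, a direct bound on~\eqref{intro_eq:def_nabla_alpha} gives
\[
|\nabla^\alpha f(x)-\nabla f(x)| = |\nabla^\alpha f(x)| \le \frac{2^{n+1}\mu_{n,\alpha}\|f\|_{L^1(\R^n)}}{|x|^{n+\alpha}},
\]
which by~\eqref{eq:mu_alpha_above_est} is $\le C(1+|x|)^{-n-1/2}$ uniformly in $\alpha\in(\tfrac12,1)$ and tends to $0$ as $\alpha\to 1^-$ because $\mu_{n,\alpha}\to 0$. Combining the two regimes, the function $G(x) := (\kappa_{n,U}+1)\|\nabla f\|_{L^\infty(\R^n;\R^n)}\chi_{B_{2M}}(x) + C(1+|x|)^{-n-1/2}$ belongs to $L^p(\R^n)$ for every $p\in[1,+\infty]$ and dominates $|\nabla^\alpha f - \nabla f|$ uniformly in $\alpha\in(\tfrac12,1)$; together with the pointwise convergence on all of $\R^n$, this proves the statement for $\nabla^\alpha f\to \nabla f$. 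The proof for $\div^\alpha\phi\to\div\phi$ is entirely analogous, replacing $\nabla$ by $\div$ and $\nabla f$ by $\div\phi\in C^1_c(\R^n)$ throughout. The main subtlety is to keep every constant uniform as $\alpha\to 1^-$, which is precisely the purpose of \cref{lem:mu_alpha_bound} and \cref{lem:unif_bound_C_n_alpha_U}.
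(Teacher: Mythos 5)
Your argument is correct, and it organizes the proof differently from the paper. The paper compares $\nabla^\alpha f$ with the \emph{renormalized} gradient $\tfrac{n\omega_n\mu_{n,\alpha}}{(1-\alpha)(n+\alpha-1)}\,\nabla f$ and splits the \emph{kernel} of $I_{1-\alpha}$ into the regions $|y|<1$ and $|y|>1$: the near part is handled by the same spherical integration by parts you use (cf.~\eqref{eq:splitting}), while the far part is integrated by parts back onto $f$ itself (cf.~\eqref{eq:splitting2}) and estimated via Minkowski's integral inequality, with the cases $p<+\infty$ and $p=+\infty$ treated separately and the renormalizing constant removed only at the end using~\eqref{eq:def_mu_alpha_asym}. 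You instead split \emph{physical space} into $B_{2M}$ and its complement, compare $\nabla^\alpha f$ with $\nabla f$ directly through the single scalar coefficient $\tfrac{1}{n\omega_n}-\tfrac{\mu_{n,\alpha}\rho^{1-\alpha}}{(1-\alpha)(n+\alpha-1)}$, and obtain uniform convergence on $B_{2M}$ together with a uniform $L^1\cap L^\infty$ tail outside, so that one dominating function settles every $p\in[1,+\infty]$ at once. What your route buys is economy: no Minkowski inequality, no case distinction in~$p$, and the $p=+\infty$ statement comes for free from the uniform estimates. What it gives up is flexibility: the truncation of the $\rho$-integral at $3M$ and the $|x|^{-n-\alpha}$ decay of the tail both use compact support in an essential way, whereas the paper's kernel splitting extends with little change to Schwartz functions, as noted in the remark following the proposition. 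Two points worth making explicit when writing this up: the bound $|\de_\rho\nabla f(x+\rho v)|\le\|\nabla^2 f\|_{L^\infty(\R^n;\,\R^{2n})}$ is precisely where the $C^2$ regularity enters, and in the outer estimate you should take $M\ge1$ (or absorb the discrepancy into the constant) so that $|x|^{-n-\alpha}\le|x|^{-n-\frac12}$ holds for $|x|\ge2M$ and $\alpha\in(\tfrac12,1)$.
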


\begin{proof}
Let $f \in C^{2}_{c}(\R^{n})$. Since
\begin{equation*}
\int_{B_1}\frac{dy}{|y|^{n+\alpha-1}}=n\omega_n\int_0^1\frac{d\rho}{\rho^\alpha}=\frac{n\omega_n}{1-\alpha},
\end{equation*}
for all $x\in\R^n$ we can write
\begin{equation*}
\frac{n\omega_n\mu_{n,\alpha}}{(1-\alpha)(n+\alpha-1)}\,\nabla f(x)
=\frac{\mu_{n,\alpha}}{n+\alpha-1}\int_{B_1}\frac{\nabla f(x)}{|y|^{n+\alpha-1}}\,dy.
\end{equation*}
Therefore, by~\eqref{eq:frac_nabla_repres}, we have
\begin{align*}
\nabla^\alpha f(x)&-\frac{n\omega_n\mu_{n,\alpha}}{(1-\alpha)(n+\alpha-1)}\,\nabla f(x)\\
&=\frac{\mu_{n,\alpha}}{n+\alpha-1}\left(\int_{B_1}\frac{\nabla f(x+y)-\nabla f(x)}{|y|^{n+\alpha-1}}\,dy
+\int_{\R^n\setminus B_1}\frac{\nabla f(x+y)}{|y|^{n+\alpha-1}}\,dy\right)
\end{align*}
for all $x\in\R^n$. We now distinguish two cases. 

\smallskip

\textit{Case~1: $p\in[1,+\infty)$}. Using the elementary inequality $|v+w|^p\le2^{p-1}(|v|^p+|w|^p)$ valid for all $v,w\in\R^n$, we have
\begin{align*}
\int_{\R^n}\bigg|\,\nabla^\alpha f(x)-&\frac{n\omega_n\mu_{n,\alpha}}{(1-\alpha)(n+\alpha-1)}\,\nabla f(x)\,\bigg|^p dx\\
&\le\frac{2^{p-1}\mu_{n,\alpha}}{n+\alpha-1}\int_{\R^n}\bigg|\int_{B_1}\frac{\nabla f(x+y)-\nabla f(x)}{|y|^{n+\alpha-1}}\,dy\,\bigg|^p dx\\
&\quad+\frac{2^{p-1}\mu_{n,\alpha}}{n+\alpha-1}\int_{\R^n}\bigg|\int_{\R^n\setminus B_1}\frac{\nabla f(x+y)}{|y|^{n+\alpha-1}}\,dy\,\bigg|^p dx.
\end{align*}
We now estimate the two double integrals appearing in the right-hand side separately.

For the first double integral, as in the proof of \cref{res:pointwise_converg}, we pass in spherical coordinates to get
\begin{equation}\label{eq:splitting}
\begin{split}
\int_{B_1}\frac{\nabla f(x+y)-\nabla f(x)}{|y|^{n+\alpha-1}}\,dy
&=\int_{\de B_1}\int_0^1\rho^{-\alpha}\left(\nabla f(x+\rho v)-\nabla f(x)\right)\,d\rho\,d\Haus{n-1}(v)\\
&=\frac{1}{1-\alpha}\int_{\de B_1}\left(\nabla f(x+v)-\nabla f(x)\right)\,d\Haus{n-1}(v)\\
&\quad-\int_{\de B_1}\int_0^1\frac{\rho^{1-\alpha}}{1-\alpha}\,\de_\rho(\nabla f(x+\rho v))\,d\rho\,d\Haus{n-1}(v)	
\end{split}	
\end{equation}
for all $x\in\R^n$. Hence, by~\eqref{eq:def_mu_alpha_asym}, we find
\begin{align*}
\lim_{\alpha\to1^-}\frac{\mu_{n,\alpha}}{(1-\alpha)(n+\alpha-1)}&\int_{\de B_1}\left(\nabla f(x+v)-\nabla f(x)\right)\,d\Haus{n-1}(v)\\
&=\frac{1}{n\omega_n}\int_{\de B_1}\left(\nabla f(x+v)-\nabla f(x)\right)\,d\Haus{n-1}(v)
\end{align*}
and
\begin{align*}
\lim_{\alpha\to1^-}\frac{\mu_{n,\alpha}}{(1-\alpha)(n+\alpha-1)}&\int_{\de B_1}\int_0^1\rho^{1-\alpha}\,\de_\rho(\nabla f(x+\rho v))\,d\rho\,d\Haus{n-1}(v)\\
&=\frac{1}{n\omega_n}\int_{\de B_1}\int_0^1\de_\rho(\nabla f(x+\rho v))\,d\rho\,d\Haus{n-1}(v)\\
&=\frac{1}{n\omega_n}\int_{\de B_1}\left(\nabla f(x+v)-\nabla f(x)\right)\,d\Haus{n-1}(v)
\end{align*}
for all $x\in\R^n$. Therefore, we get
\begin{equation*}
\lim_{\alpha\to1^-}\frac{\mu_{n,\alpha}}{n+\alpha-1}\int_{B_1}\frac{\nabla f(x+y)-\nabla f(x)}{|y|^{n+\alpha-1}}\,dy=0
\end{equation*}
for all $x\in\R^n$. Recalling~\eqref{eq:mu_alpha_above_est}, we also observe that 
\begin{equation*}
\frac{\mu_{n,\alpha}}{n+\alpha-1}\frac{|\nabla f(x+y)-\nabla f(x)|}{|y|^{n+\alpha-1}}\le C_n\frac{\abs*{\nabla f(x+y)-\nabla f(x)}}{|y|^n}
\end{equation*}
for all $\alpha\in(0,1)$, $x\in\R^n$ and $y\in B_1$. Moreover, letting $R>0$ be such that $\supp f\subset B_R$, we can estimate
\begin{equation*}
\int_{B_1}\frac{\abs*{\nabla f(x+y)-\nabla f(x)}}{|y|^n}\,dy
\le n\omega_n \,\|\nabla^2 f\|_{L^\infty\left(\R^n;\,\R^{n^2}\right)}\,\chi_{B_{R+1}}(x)
\end{equation*} 
for all $x\in\R^n$, so that
\begin{equation*}
x\mapsto\left(\int_{B_1}\frac{\abs*{\nabla f(x+y)-\nabla f(x)}}{|y|^n}\,dy\right)^p\in L^1(\R^n).
\end{equation*}
In conclusion, applying Lebesgue's Dominated Convergence Theorem, we find
\begin{equation*}
\lim_{\alpha\to1^-}\frac{\mu_{n,\alpha}}{n+\alpha-1}\int_{\R^n}\abs*{\int_{B_1}\frac{\nabla f(x+y)-\nabla f(x)}{|y|^{n+\alpha-1}}\,dy\,}^p dx=0.
\end{equation*}
For the second double integral, note that
\begin{equation*}
\int_{\R^n\setminus B_1}\frac{\nabla f(x+y)}{|y|^{n+\alpha-1}}\,dy
=\int_{\R^n\setminus B_1}\frac{\nabla(f(x+y)-f(x))}{|y|^{n+\alpha-1}}\,dy
\end{equation*}
for all $x\in\R^n$. Now let $R>0$. Integrating by parts, we have that
\begin{align*}
\int_{B_R\setminus B_1}\frac{\nabla(f(x+y)-f(x))}{|y|^{n+\alpha-1}}\,dy
&=(n+\alpha-1)\int_{B_R\setminus B_1}\frac{y\,(f(x+y)-f(x))}{|y|^{n+\alpha+1}}\,dy\\
&\quad+\frac{1}{R^{n+\alpha-1}}\int_{\de B_R}(f(x+y)-f(x))\,d\Haus{n-1}(y)\\
&\quad-\int_{\de B_1}(f(x+y)-f(x))\,d\Haus{n-1}(y)
\end{align*}
for all $x\in\R^n$. Since
\begin{align*}
\int_{\R^n\setminus B_R}\frac{|f(x+y)-f(x)|}{|y|^{n+\alpha}}\,dy
\le\frac{2n\omega_n}{\alpha R^\alpha}\|f\|_{L^\infty(\R^n)}
\end{align*}
and
\begin{align*}
\frac{1}{R^{n+\alpha-1}}\int_{\de B_R}|f(x+y)-f(x)|\,d\Haus{n-1}(y)
\le\frac{2n\omega_n}{R^\alpha}\|f\|_{L^\infty(\R^n)}
\end{align*}
for all $R>0$, we conclude that
\begin{equation}\label{eq:splitting2}
\begin{split}
\int_{\R^n\setminus B_1}\frac{\nabla f(x+y)}{|y|^{n+\alpha-1}}\,dy
&=\lim_{R\to+\infty}\int_{B_R\setminus B_1}\frac{\nabla f(x+y)}{|y|^{n+\alpha-1}}\,dy\\
&=(n+\alpha-1)\int_{\R^n\setminus B_1}\frac{y\,(f(x+y)-f(x))}{|y|^{n+\alpha+1}}\,dy\\
&\quad-\int_{\de B_1}(f(x+y)-f(x))\,d\Haus{n-1}(y)
\end{split}	
\end{equation}
for all $x\in\R^n$. Hence, by Minkowski's Integral Inequality (see~\cite{S70}*{Section~A.1}, for example), we can estimate
\begin{align*}
\bigg\|\int_{\R^n\setminus B_1}\frac{\nabla f(\cdot+y)}{|y|^{n+\alpha-1}}\,dy\,\bigg\|_{L^p(\R^n;\,\R^n)} 
&\le(n+\alpha-1)\bigg\|\int_{\R^n\setminus B_1}\frac{|f(\cdot+y)-f(\cdot)|}{|y|^{n+\alpha}}\,dy\,\bigg\|_{L^p(\R^n)}\\
&\quad+\bigg\|\int_{\de B_1}|f(\cdot+y)-f(\cdot)|\,d\Haus{n-1}(y)\,\bigg\|_{L^p(\R^n)}\\
&\le\frac{n+2\alpha-1}{\alpha}\,2n\omega_n\|f\|_{L^p(\R^n)}.
\end{align*}
Thus, by~\eqref{eq:def_mu_alpha_asym}, we get that
\begin{equation*}
\lim_{\alpha\to1^-}\frac{\mu_{n,\alpha}}{n+\alpha-1}\int_{\R^n}\abs*{\int_{\R^n\setminus B_1}\frac{\nabla f(x+y)}{|y|^{n+\alpha-1}}\,dy\,}^p\,dx=0.
\end{equation*}

\smallskip

\textit{Case~2: $p=+\infty$}. 
We have
\begin{align*}
\sup_{x\in\R^n}&\bigg|\nabla^\alpha f(x)-\frac{n\omega_n\mu_{n,\alpha}}{(1-\alpha)(n+\alpha-1)}\,\nabla f(x)\bigg|\\
&\le\frac{\mu_{n,\alpha}}{n+\alpha-1}\left(\sup_{x\in\R^n}\abs*{\int_{B_1}\frac{\nabla f(x+y)-\nabla f(x)}{|y|^{n+\alpha-1}}\,dy\,}
+\sup_{x\in\R^n}\abs*{\int_{\R^n\setminus B_1}\frac{\nabla f(x+y)}{|y|^{n+\alpha-1}}\,dy\,}\right).
\end{align*}
Again we estimate the two integrals appearing in the right-hand side separately. We note that
\begin{align*}
\int_{\de B_1}\left(\nabla f(x+v)-\nabla f(x)\right)&\,d\Haus{n-1}(v)
-\int_{\de B_1}\int_0^1\rho^{1-\alpha}\,\de_\rho(\nabla f(x+\rho v))\,d\rho\,d\Haus{n-1}(v)\\
&=\int_{\de B_1}\int_0^1(1-\rho^{1-\alpha})\,\de_\rho(\nabla f(x+\rho v))\,d\rho\,d\Haus{n-1}(v),
\end{align*}
so that we can rewrite~\eqref{eq:splitting} as
\begin{align*}
\int_{B_1}\frac{\nabla f(x+y)-\nabla f(x)}{|y|^{n+\alpha-1}}\,dy
=\frac{1}{1-\alpha}\int_{\de B_1}\int_0^1(1-\rho^{1-\alpha})\,\de_\rho(\nabla f(x+\rho v))\,d\rho\,d\Haus{n-1}(v).
\end{align*}
Hence, we can estimate
\begin{align*}
\sup_{x\in\R^n}&\bigg|\int_{B_1}\frac{\nabla f(x+y)-\nabla f(x)}{|y|^{n+\alpha-1}}\,dy\,\bigg|\\
&\le\frac{1}{1-\alpha}\int_{\de B_1}\int_0^1(1-\rho^{1-\alpha})\sup_{x\in\R^n}|\de_\rho(\nabla f(x+\rho v))|\,d\rho\,d\Haus{n-1}(v)\\
&\le\frac{1}{2-\alpha}\,n\omega_n\,\|\nabla^2 f\|_{L^\infty\left(\R^n;\,\R^{n^2}\right)},
\end{align*}
so that
\begin{align*}
\lim_{\alpha\to1^-}\frac{\mu_{n,\alpha}}{n+\alpha-1}\sup_{x\in\R^n}\abs*{\int_{B_1}\frac{\nabla f(x+y)-\nabla f(x)}{|y|^{n+\alpha-1}}\,dy\,}=0.
\end{align*}
For the second integral, by~\eqref{eq:splitting2} we can estimate
\begin{align*}
&\sup_{x\in\R^n}\bigg|\int_{\R^n\setminus B_1}\frac{\nabla f(x+y)}{|y|^{n+\alpha-1}}\,dy\,\bigg|\,dx \\
&\qquad\le(n+\alpha-1)\sup_{x\in\R^n}\bigg|\int_{\R^n\setminus B_1}\frac{|f(x+y)-f(x)|}{|y|^{n+\alpha}}\,dy\,\bigg|\\
&\qquad\quad+\sup_{x\in\R^n}\bigg|\int_{\de B_1}|f(x+y)-f(x)|\,d\Haus{n-1}(y)\,\bigg|\\
&\qquad\le\frac{n+2\alpha-1}{\alpha}\,2n\omega_n\|f\|_{L^\infty(\R^n)}.
\end{align*}
Thus, by~\eqref{eq:def_mu_alpha_asym}, we get that
\begin{align*}
\lim_{\alpha\to1^-}\frac{\mu_{n,\alpha}}{n+\alpha-1}\sup_{x\in\R^n}\abs*{\int_{\R^n\setminus B_1}\frac{\nabla f(x+y)}{|y|^{n+\alpha-1}}\,dy\,}=0.
\end{align*}

\smallskip

We can now conclude the proof. Again recalling~\eqref{eq:def_mu_alpha_asym}, we thus find that
\begin{align*}
&\lim_{\alpha\to1^-}\|\nabla^{\alpha} f-\nabla f\|_{L^p(\R^n;\R^n)}\\
&\le\lim_{\alpha\to1^-}\left\|\nabla^{\alpha} f-\frac{n\omega_n\mu_{n,\alpha}}{(1-\alpha)(n+\alpha-1)}\,\nabla f\,\right\|_{L^p(\R^n;\R^n)}\\
&\quad+\|\nabla f\|_{L^p(\R^n;\R^n)}\lim_{\alpha\to1^-} \left | \frac{n\omega_n\mu_{n,\alpha}}{(1-\alpha)(n+\alpha-1)}-1\right|=0
\end{align*}
for all $p\in[1,+\infty]$ and the conclusion follows. The $L^p$-convergence of~$\div^\alpha\phi$ to $\div\phi$ as~$\alpha\to1^-$ for all $p\in[1,+\infty]$ follows by a similar argument and is left to the reader.
\end{proof}

\begin{remark}
Note that the conclusion of \cref{res:L1_converg} still holds if instead one assumes that $f\in\mathscr{S}(\R^n)$ and $\phi\in\mathscr{S}(\R^n;\R^n)$, where $\mathscr{S}(\R^n;\R^m)$ is the space of $m$-vector-valued Schwartz functions. We leave the proof of this assertion to the reader.
\end{remark}

\subsection{Weak convergence of \texorpdfstring{$\alpha$}{alpha}-variation as \texorpdfstring{$\alpha\to1^-$}{alpha tends to 1-}}

In \cref{res:weak_conv_nabla_alpha_to_1} below, we prove that the fractional $\alpha$-variation weakly converges to the standard variation as $\alpha\to1^-$ for functions either in~$BV(\R^{n})$ or in~$BV_{\loc}(\R^{n})\cap L^{\infty}(\R^{n})$. In the proof of \cref{res:weak_conv_nabla_alpha_to_1}, we are going to use the following technical result.

\begin{lemma}\label{res:nabla_alpha_f_L1_loc_uniform}
There exists a dimensional constant $c_n>0$ with the following property. If $f \in L^{\infty}(\R^{n})\cap BV_{\loc}(\R^{n})$, then
\begin{equation}\label{eq:nabla_alpha_f_L1_loc_uniform}
\| \nabla^{\alpha} f \|_{L^{1}(B_{R};\, \R^{n})} 
\le c_n\left(R^{1-\alpha}|Df|(B_{3R}) 
+ R^{n-\alpha}\,\|f\|_{L^{\infty}(\R^{n})}\right) 
\end{equation} 
for all $R>0$ and $\alpha\in(\frac{1}{2},1)$.
\end{lemma}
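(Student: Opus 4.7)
The strategy is to take the explicit estimate \eqref{eq:nabla_alpha_f_L1_loc_BV_loc} from \cref{lem:nabla_alpha_f_L1_and_L1_loc}\eqref{item:lem:nabla_alpha_f_loc_3} and absorb all the $\alpha$-dependent constants into a single dimensional constant $c_n$, exploiting the restriction $\alpha\in(\frac{1}{2},1)$. Concretely, one needs to show that the two prefactors
\begin{equation*}
\frac{\mu_{n,\alpha}\,(2R)^{1-\alpha}}{1-\alpha}
\qquad\text{and}\qquad
\frac{\mu_{n,\alpha}\,\Gamma(1-\alpha)}{\alpha}
\end{equation*}
can be bounded, for $\alpha\in(\frac{1}{2},1)$, by $c_n\,R^{1-\alpha}$ and $c_n$ respectively.

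The first bound is immediate from \cref{lem:mu_alpha_bound}: the estimate $\mu_{n,\alpha}\le C_n(1-\alpha)$ in \eqref{eq:mu_alpha_above_est} gives $\mu_{n,\alpha}(1-\alpha)^{-1}\le C_n$, and the factor $2^{1-\alpha}\le\sqrt{2}$ is harmless since $\alpha>\frac{1}{2}$. For the second bound, the key tool is Legendre's duplication formula
\begin{equation*}
\Gamma\!\left(\tfrac{1-\alpha}{2}\right)\Gamma\!\left(\tfrac{2-\alpha}{2}\right)
=2^{\alpha}\sqrt{\pi}\,\Gamma(1-\alpha),
\end{equation*}
which, inserted into the definition \eqref{intro_eq:def_mu_n_alpha} of $\mu_{n,\alpha}$, yields the identity
\begin{equation*}
\mu_{n,\alpha}\,\Gamma(1-\alpha)
=\pi^{-\frac{n+1}{2}}\,\Gamma\!\left(\tfrac{n+\alpha+1}{2}\right)\Gamma\!\left(\tfrac{2-\alpha}{2}\right).
\end{equation*}
On the interval $\alpha\in(\frac{1}{2},1)$ the two Gamma factors on the right-hand side are continuous in $\alpha$ and bounded purely in terms of $n$, and the factor $1/\alpha\le 2$ is harmless as well. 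Hence the prefactor of $\|f\|_{L^\infty(\R^n)}$ in \eqref{eq:nabla_alpha_f_L1_loc_BV_loc} is controlled by a dimensional multiple of $R^{n-\alpha}$.

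Combining the two bounds with \eqref{eq:nabla_alpha_f_L1_loc_BV_loc} gives \eqref{eq:nabla_alpha_f_L1_loc_uniform}. No step is a serious obstacle; the only subtle point is the identification of $\mu_{n,\alpha}\,\Gamma(1-\alpha)$ as a quantity that remains bounded as $\alpha\to1^-$, where both factors separately are unbounded (since $\mu_{n,\alpha}\to 0$ linearly and $\Gamma(1-\alpha)\sim(1-\alpha)^{-1}$), so that the cancellation provided by the duplication formula is essential.
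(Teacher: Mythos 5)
Your proof is correct and follows essentially the same route as the paper, which likewise deduces \eqref{eq:nabla_alpha_f_L1_loc_uniform} by combining \eqref{eq:nabla_alpha_f_L1_loc_BV_loc} with \cref{lem:mu_alpha_bound}. The only cosmetic difference is in controlling $\mu_{n,\alpha}\,\Gamma(1-\alpha)$: the paper uses $\Gamma(x)\sim x^{-1}$ as $x\to0^+$ (equivalently $(1-\alpha)\Gamma(1-\alpha)=\Gamma(2-\alpha)$) together with \eqref{eq:mu_alpha_above_est}, whereas you obtain the same boundedness via Legendre's duplication formula, which is a valid and slightly more explicit alternative.
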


\begin{proof}
Since $\Gamma(x)\sim x^{-1}$ as $x\to0^+$ (see~\cite{A64}), inequality~\eqref{eq:nabla_alpha_f_L1_loc_uniform} follows immediately  combining~\eqref{eq:nabla_alpha_f_L1_loc_BV_loc} with \cref{lem:mu_alpha_bound}.
\end{proof}

\begin{theorem}\label{res:weak_conv_nabla_alpha_to_1}
If either $f \in BV(\R^{n})$ or $f \in BV_{\loc}(\R^{n})\cap L^{\infty}(\R^{n})$, then 
\begin{equation*}
D^\alpha f\weakto Df
\quad\text{as $\alpha\to1^-$}. 
\end{equation*}
\end{theorem}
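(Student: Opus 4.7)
The plan is to reduce the claim, via the duality identities available in each regime, to showing that $\div^{\alpha}\phi\to\div\phi$ strongly in $L^{p}(\R^{n})$ as $\alpha\to 1^-$ for test fields $\phi\in C^\infty_c(\R^n;\R^n)$, and then to extend from smooth to continuous compactly supported test fields by a density argument based on uniform bounds on $|D^\alpha f|$ near $\alpha=1$.

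Fix $\phi\in C^\infty_c(\R^n;\R^n)$. If $f\in BV(\R^n)$, then $f\in BV^\alpha(\R^n)$ for every $\alpha\in(0,1)$ by \cref{lem:nabla_alpha_f_L1_and_L1_loc}\eqref{item:lem:nabla_alpha_f_loc_1}, and the structure theorem for $BV^\alpha$ yields
\[
\int_{\R^n}\phi\cdot dD^\alpha f=-\int_{\R^n}f\,\div^\alpha\phi\,dx.
\]
In the second regime, the same identity is established inside the proof of \cref{lem:nabla_alpha_f_L1_and_L1_loc}\eqref{item:lem:nabla_alpha_f_loc_3}. Combined with the standard identity $\int_{\R^n}\phi\cdot dDf=-\int_{\R^n}f\,\div\phi\,dx$ (which holds in both regimes because $f\in BV_{\loc}(\R^n)$ and $\phi$ has compact support), the weak convergence reduces to
\[
\lim_{\alpha\to 1^-}\int_{\R^n}f\,(\div^\alpha\phi-\div\phi)\,dx=0.
\]
By \cref{res:L1_converg}, $\|\div^\alpha\phi-\div\phi\|_{L^p(\R^n)}\to 0$ for every $p\in[1,+\infty]$. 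In the first regime the choice $p=+\infty$, together with $f\in L^1(\R^n)$, closes the argument via H\"older's inequality; in the second regime the choice $p=1$, together with $f\in L^\infty(\R^n)$, does the same.

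To pass from smooth to continuous $\phi\in C^0_c(\R^n;\R^n)$, one needs uniform bounds on $|D^\alpha f|$ on compact sets as $\alpha\to 1^-$. In the first case, \eqref{eq:Davila_estimate_opt} combined with \cref{lem:mu_alpha_bound} yields $\sup_{\alpha\in(1/2,1)}|D^\alpha f|(\R^n)<+\infty$; in the second case, \cref{res:nabla_alpha_f_L1_loc_uniform} provides $\sup_{\alpha\in(1/2,1)}|D^\alpha f|(K)<+\infty$ for every compact $K\subset\R^n$. Given $\phi\in C^0_c(\R^n;\R^n)$ with $\supp\phi\subset K$ and $\eps>0$, a standard mollification produces $\phi_\eps\in C^\infty_c(\R^n;\R^n)$ with $\supp\phi_\eps\subset K$ and $\|\phi-\phi_\eps\|_{L^\infty(\R^n;\R^n)}<\eps$; the three-term splitting
\[
\left|\int_{\R^n}\phi\cdot d(D^\alpha f-Df)\right|\le\eps\bigl(|D^\alpha f|(K)+|Df|(K)\bigr)+\left|\int_{\R^n}\phi_\eps\cdot d(D^\alpha f-Df)\right|
\]
and letting first $\alpha\to 1^-$ and then $\eps\to 0^+$ concludes the argument.

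The main delicate point is the uniform control of the constants appearing in the bounds as $\alpha\to 1^-$: both $C_{n,\alpha,U}$ in \eqref{eq:constant_estimate} and the multiplier in \eqref{eq:Davila_estimate_opt} nominally contain a factor $1/(1-\alpha)$, whose apparent singularity is compensated precisely by the asymptotic $\mu_{n,\alpha}\sim(1-\alpha)/\omega_n$ supplied by \cref{lem:mu_alpha_bound}. This cancellation is the essence of \cref{res:nabla_alpha_f_L1_loc_uniform} and is what makes the uniform local mass bounds on $D^\alpha f$ available to drive the density step.
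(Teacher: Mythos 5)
Your proposal is correct and follows essentially the same strategy as the paper's proof: reduce to the duality identity, use the strong $L^p$-convergence of $\div^\alpha\phi$ (with $p=+\infty$ against $f\in L^1$ and $p=1$ against $f\in L^\infty$), and then extend to $C^0_c$ test fields by density using uniform-in-$\alpha$ mass bounds. The only differences are cosmetic: the paper invokes the pointwise convergence of \cref{res:pointwise_converg} plus dominated convergence where you invoke \cref{res:L1_converg} directly, and it uses the local bound \eqref{eq:weak_frac_grad_loc_estimate} with \cref{lem:unif_bound_C_n_alpha_U} where you use \eqref{eq:Davila_estimate_opt}; both choices are equally valid.
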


\begin{proof}
We divide the proof in two steps.

\smallskip

\textit{Step~1}. Assume $f \in BV(\R^{n})$. By~\cite{CS18}*{Theorem~3.18}, we have
 \begin{equation*}
\int_{\R^n}\phi\cdot\nabla^\alpha f\,dx
=-\int_{\R^n} f\,\div^\alpha\phi\,dx	
\end{equation*}
for all $\phi\in\Lip_c(\R^n;\R^n)$. Thus, given $\phi\in C^2_c(\R^n;\R^n)$, recalling \cref{res:pointwise_converg} and the estimates~\eqref{eq:frac_div_repr_Lip_estimate} and~\eqref{eq:uniform_estimate_C_n_alpha_U}, by Lebesgue's Dominated Convergence Theorem we get that
\begin{equation*}
\lim_{\alpha\to1^-}\int_{\R^n}\phi\cdot\nabla^\alpha f\,dx
=-\lim_{\alpha\to1^-}\int_{\R^n} f\,\div^\alpha\phi\,dx
=-\int_{\R^n} f\,\div\phi\,dx
=\int_{\R^n} \phi\cdot dDf.
\end{equation*}
To achieve the same limit for any $\phi\in C_c^0(\R^n;\R^n)$, one just need  to exploit~\eqref{eq:weak_frac_grad_loc_estimate} and the uniform estimate~\eqref{eq:uniform_estimate_C_n_alpha_U} in \cref{lem:unif_bound_C_n_alpha_U}, and  argue as in Step~2 of the proof of~\eqref{eq:Davila_estimate}. 
We leave the details to the reader.

\smallskip

\textit{Step~2}. Assume $f \in BV_{\loc}(\R^{n})\cap L^{\infty}(\R^{n})$. By \cref{lem:nabla_alpha_f_L1_and_L1_loc}\eqref{item:lem:nabla_alpha_f_loc_3}, we know that $D^\alpha f=\nabla^\alpha f\Leb{n}$ with $\nabla^\alpha f\in L^1_{\loc}(\R^n;\R^n)$. By \cref{res:L1_converg}, we get that
\begin{equation*}
\lim_{\alpha\to1^-}\abs*{\int_{\R^n}\phi\cdot\nabla^\alpha f\,dx-\int_{\R^n}\phi\cdot dDf\,}
\le\|f\|_{L^\infty(\R^n)}\lim_{\alpha\to1^-}\|\div^\alpha\phi-\div\phi\|_{L^1(\R^n;\,\R^n)}=0
\end{equation*}
for all $\phi\in C^2_c(\R^n;\R^n)$. 
To achieve the same limit for any $\phi\in C_c^0(\R^n;\R^n)$, one just need to exploit~\eqref{eq:nabla_alpha_f_L1_loc_uniform} and argue as in Step~1.
We leave the details to the reader.
\end{proof}

We are now going to improve the weak convergence of the fractional $\alpha$-variation obtained in \cref{res:weak_conv_nabla_alpha_to_1} by establishing the weak convergence also of the total fractional $\alpha$-variation as~$\alpha\to1^-$, see \cref{res:weak_conv_tot_var} below. To do so, we need the following preliminary result.

\begin{lemma}\label{res:weak_conv_riesz}
Let $\mu\in\mathscr{M}(\R^n; \R^m)$. We have $(I_{\alpha}\mu)\Leb{n}\weakto\mu$ as $\alpha\to0^+$.
\end{lemma}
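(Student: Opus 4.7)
The plan is to test the weak$^*$-convergence against an arbitrary $\phi\in C_c^0(\R^n;\R^n)$ and to exploit that, by Fubini's Theorem and the symmetry of the Riesz kernel,
\begin{equation*}
\int_{\R^n}\phi\cdot(I_\alpha\mu)\,dx=\int_{\R^n}I_\alpha\phi\cdot d\mu,
\end{equation*}
which is legitimate since $I_\alpha|\mu|\in L^1_{\loc}(\R^n)$ by~\cite{CS18}*{Lemma~2.4} and $\phi$ is bounded and compactly supported. It therefore suffices to prove that $\int_{\R^n}I_\alpha\phi\cdot d\mu\to\int_{\R^n}\phi\cdot d\mu$ as $\alpha\to 0^+$ for all $\phi\in C_c^0(\R^n;\R^n)$, and this is the dual statement I would address.

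First I would treat the smooth case $\phi\in C_c^\infty(\R^n;\R^n)$, where the pointwise limit $I_\alpha\phi(y)\to\phi(y)$ for every $y\in\R^n$ is already recorded in~\eqref{eq:pointwise_converg_riesz} of \cref{res:pointwise_converg}. To pass to the limit under the finite measure $|\mu|$, I need a uniform (in $\alpha\in(0,\tfrac12)$) bound $|I_\alpha\phi(y)|\le g(y)$ with $g\in L^1(|\mu|)$. Using that $\supp\phi\subset B_R$, one obtains for $y\in B_{2R}$
\begin{equation*}
|I_\alpha\phi(y)|\le\frac{\mu_{n,1-\alpha}}{n-\alpha}\,\|\phi\|_{L^\infty(\R^n;\,\R^n)}\int_{B_{3R}}\frac{dz}{|z|^{n-\alpha}}=\frac{n\omega_n(3R)^\alpha\,\mu_{n,1-\alpha}}{\alpha(n-\alpha)}\,\|\phi\|_{L^\infty(\R^n;\,\R^n)},
\end{equation*}
while for $|y|\ge 2R$ the inequality $|x-y|\ge|y|/2$ valid on $\supp\phi$ gives
\begin{equation*}
|I_\alpha\phi(y)|\le\frac{\mu_{n,1-\alpha}}{n-\alpha}\,\|\phi\|_{L^\infty(\R^n;\,\R^n)}\,\frac{2^{n-\alpha}\omega_n R^n}{|y|^{n-\alpha}}.
\end{equation*}
The asymptotic $\mu_{n,1-\alpha}\sim\alpha\,\omega_n^{-1}$ as $\alpha\to 0^+$ (which follows from \cref{lem:mu_alpha_bound} applied with $1-\alpha$ in place of $\alpha$, since $\Gamma(x)\sim x^{-1}$ as $x\to 0^+$) exactly cancels the $\alpha^{-1}$ blow-up in the first estimate, so that both right-hand sides are uniformly bounded in $\alpha$ by a function $g$ which is bounded on $\R^n$ and therefore belongs to $L^1(|\mu|)$ by finiteness of $|\mu|$. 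Lebesgue's Dominated Convergence Theorem then yields the claim for all $\phi\in C_c^\infty(\R^n;\R^n)$.

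To extend to arbitrary $\phi\in C_c^0(\R^n;\R^n)$ with $\supp\phi\subset B_R$, I would approximate uniformly: for each $\eps>0$ choose $\psi_\eps\in C_c^\infty(\R^n;\R^n)$ with $\supp\psi_\eps\subset B_R$ and $\|\phi-\psi_\eps\|_{L^\infty(\R^n;\,\R^n)}<\eps$. The uniform bound from the previous step, applied to the difference $\phi-\psi_\eps$, gives
\begin{equation*}
\Bigl|\int_{\R^n}I_\alpha(\phi-\psi_\eps)\cdot d\mu\,\Bigr|\le C_R\,\eps\,|\mu|(\R^n)
\end{equation*}
with $C_R$ independent of $\alpha$ and $\eps$; combining with the already established convergence for $\psi_\eps$ and sending $\eps\to 0^+$ concludes the argument. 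The main obstacle is precisely producing the uniform-in-$\alpha$ pointwise dominating function $g$, since the naive bound on $I_\alpha\phi$ via the singular kernel $|x-y|^{-(n-\alpha)}$ on compact sets blows up like $\alpha^{-1}$; the decisive observation is that the prefactor $\mu_{n,1-\alpha}$ vanishes exactly linearly in $\alpha$ as $\alpha\to 0^+$, producing a finite balance.
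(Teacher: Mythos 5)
Your proof is correct and follows essentially the same route as the paper's: both pass the Riesz potential onto the test field via Fubini's Theorem, combine the pointwise limit \eqref{eq:pointwise_converg_riesz} with dominated convergence against the finite measure $|\mu|$, and then extend from smooth to merely continuous compactly supported fields by uniform approximation. The only differences are that you derive the uniform-in-$\alpha$ bound $\|I_\alpha\phi\|_{L^\infty(\R^n;\,\R^n)}\lesssim\|\phi\|_{L^\infty(\R^n;\,\R^n)}$ by hand (correctly exploiting the cancellation $\mu_{n,1-\alpha}\sim\alpha/\omega_n$ against the $\alpha^{-1}$ from the kernel), whereas the paper quotes it from \cite{CS18}*{Lemma~2.4} together with \cref{lem:unif_bound_C_n_alpha_U}, and that the paper first reduces to nonnegative measures, a step your direct domination with respect to $|\mu|$ makes unnecessary.
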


\begin{proof}
Since Riesz potential is a linear operator and thanks to Hahn--Banach Decomposition Theorem, without loss of generality we can assume that $\mu$ is a nonnegative finite Radon measure.

Let now $\phi\in C_c^1(\R^n)$ and let $U\subset\R^n$ be a bounded open set such that $\supp\phi\subset U$. We have that $\|I_\alpha|\phi|\|_{L^\infty(\R^n)}\le\kappa_{n,U}\|\phi\|_{L^\infty(\R^n)}$ for all $\alpha\in(0,\frac{1}{2})$ by~\cite{CS18}*{Lemma~2.4} and \cref{lem:unif_bound_C_n_alpha_U}. Thus, by~\eqref{eq:pointwise_converg_riesz}, Fubini's Theorem and Lebesgue's Dominated Convergence Theorem, we get that
\begin{equation*}
\lim_{\alpha\to0^+}\int_{\R^n} \phi\,I_\alpha\mu\,dx
=\lim_{\alpha\to0^+}\int_{\R^n} I_\alpha\phi\,d\mu
=\int_{\R^n}\phi\,d\mu.
\end{equation*}
To achieve the same limit for any $\phi\in C_c^0(\R^n)$, one just need to exploit~\cite{CS18}*{Lemma~2.4} and~\eqref{eq:uniform_estimate_C_n_alpha_U} and argue as in Step~2 of the proof of~\eqref{eq:Davila_estimate}.
We leave the details to the reader.
\end{proof}

\begin{theorem}\label{res:weak_conv_tot_var}
If either $f\in BV(\R^n)$ or $f\in bv(\R^n)\cap L^\infty(\R^n)$, then 
\begin{equation}\label{eq:weak_conv_tot_var}
|D^\alpha f|\weakto|Df|
\quad
\text{as $\alpha\to1^-$}.	
\end{equation}
Moreover, if $f\in BV(\R^n)$, then also
\begin{equation}\label{eq:conv_tot_mass}
\lim_{\alpha\to1^-}|D^\alpha f|(\R^n)
=|Df|(\R^n).
\end{equation}
\end{theorem}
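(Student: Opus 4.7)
The strategy is to settle the global mass convergence \eqref{eq:conv_tot_mass} first for $f\in BV(\R^n)$, then upgrade it to the weak* convergence \eqref{eq:weak_conv_tot_var} via a standard compactness argument, and finally localize to handle the case $f\in bv(\R^n)\cap L^\infty(\R^n)$. For the upper bound in \eqref{eq:conv_tot_mass}, I would feed the D\'avila-type estimate \eqref{eq:Davila_estimate} with $A=\R^n$ (so that $A_r=\R^n$) and any fixed $r>0$, obtaining
\begin{equation*}
|D^\alpha f|(\R^n)
=\|\nabla^\alpha f\|_{L^1(\R^n;\,\R^n)}
\le\frac{n\omega_n\mu_{n,\alpha}\,r^{1-\alpha}}{(n+\alpha-1)(1-\alpha)}\,|Df|(\R^n)
+\frac{n\omega_n\mu_{n,\alpha}(n+2\alpha-1)}{\alpha(n+\alpha-1)\,r^\alpha}\,\|f\|_{L^1(\R^n)}.
\end{equation*}
Thanks to the asymptotic $\mu_{n,\alpha}/(1-\alpha)\to1/\omega_n$ from \cref{lem:mu_alpha_bound}, the coefficient of $|Df|(\R^n)$ tends to $1$ while the second coefficient tends to $0$ (because $\mu_{n,\alpha}\to0$), giving $\limsup_{\alpha\to1^-}|D^\alpha f|(\R^n)\le|Df|(\R^n)$. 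The matching lower bound is immediate from \cref{res:weak_conv_nabla_alpha_to_1}: since $D^\alpha f\weakto Df$ in $\mathscr M(\R^n;\R^n)$, lower semicontinuity of the total variation with respect to weak* convergence yields $|Df|(\R^n)\le\liminf_{\alpha\to1^-}|D^\alpha f|(\R^n)$.

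From \eqref{eq:conv_tot_mass} and $D^\alpha f\weakto Df$ I would then deduce \eqref{eq:weak_conv_tot_var} for $f\in BV(\R^n)$ by a standard compactness argument. The family $\{|D^\alpha f|\}_\alpha$ of nonnegative Radon measures is equibounded, so any sequence $\alpha_k\to1^-$ admits a subsequence with $|D^{\alpha_{k_j}}f|\weakto\nu$ for some nonnegative finite Radon measure $\nu$. Lower semicontinuity on open sets gives $\nu\ge|Df|$, while $\nu(\R^n)=\lim_j|D^{\alpha_{k_j}}f|(\R^n)=|Df|(\R^n)$ forces $\nu=|Df|$. Uniqueness of the weak* limit then settles \eqref{eq:weak_conv_tot_var} in this case.

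For $f\in bv(\R^n)\cap L^\infty(\R^n)$, I would localize using a cutoff. Given $\phi\in C_c^0(\R^n)$ with $\supp\phi\subset B_r$, pick $\eta\in C_c^\infty(\R^n)$ with $0\le\eta\le1$, $\eta\equiv1$ on $B_{2r}$ and $\supp\eta\subset B_{3r}$, and set $g:=\eta f$ and $h:=(1-\eta)f$. Then $g\in BV(\R^n)\cap L^\infty(\R^n)$ and $Dg=Df$ on $B_{2r}\supset\supp\phi$, so $\int\phi\,d|Dg|=\int\phi\,d|Df|$. For $x\in B_r$ one has $h(x)=0$ and the integrand defining $\nabla^\alpha h(x)$ is supported in $\R^n\setminus B_{2r}$ where $|y-x|\ge r$, so
\begin{equation*}
|\nabla^\alpha h(x)|
\le\mu_{n,\alpha}\|f\|_{L^\infty(\R^n)}\int_{\{|y-x|\ge r\}}\frac{dy}{|y-x|^{n+\alpha}}
=\frac{n\omega_n\,\mu_{n,\alpha}}{\alpha\,r^\alpha}\,\|f\|_{L^\infty(\R^n)},
\end{equation*}
which tends to $0$ uniformly on $B_r$ as $\alpha\to1^-$ because $\mu_{n,\alpha}\to0$. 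Since $\nabla^\alpha f=\nabla^\alpha g+\nabla^\alpha h$ a.e.\ by linearity and all three fractional gradients represent the corresponding measures by \cref{lem:nabla_alpha_f_L1_and_L1_loc}, the reverse triangle inequality yields
\begin{equation*}
\abs*{\int\phi\,d|D^\alpha f|-\int\phi\,d|D^\alpha g|}
\le\|\phi\|_{L^\infty(\R^n)}\,|B_r|\,\|\nabla^\alpha h\|_{L^\infty(B_r)}\to0,
\end{equation*}
while the $BV$ case applied to $g$ gives $\int\phi\,d|D^\alpha g|\to\int\phi\,d|Dg|=\int\phi\,d|Df|$; combining the two settles \eqref{eq:weak_conv_tot_var}.

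The main technical point is the $\limsup$ estimate in the first paragraph: the prefactor $\mu_{n,\alpha}/(1-\alpha)$ in \eqref{eq:Davila_estimate} looks singular, but is sharp thanks to $\mu_{n,\alpha}\sim(1-\alpha)/\omega_n$, and this exact cancellation is precisely what produces the constant $1$ (rather than a larger value) in front of $|Df|(\R^n)$. All other steps---the compactness-based deduction of weak convergence of the variations and the cutoff localization---are routine once this asymptotic is secured.
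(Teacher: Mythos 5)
Your proof is correct, and for the weak*-convergence \eqref{eq:weak_conv_tot_var} it follows a genuinely different route from the paper's. The paper obtains the $\limsup$ inequality on compact sets directly from the representation formula $\nabla^\alpha f=I_{1-\alpha}Df$ of \cref{result:repres_formula_nabla_alpha_bv_bounded} together with \cref{res:weak_conv_riesz} (the weak* convergence $(I_{1-\alpha}|Df|)\Leb{n}\weakto|Df|$), which treats $BV(\R^n)$ and $bv(\R^n)\cap L^\infty(\R^n)$ uniformly; combined with the $\liminf$ inequality on open sets coming from \cref{res:weak_conv_nabla_alpha_to_1}, this yields \eqref{eq:weak_conv_tot_var} via the open/compact characterisation of weak* convergence. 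You instead bypass \cref{res:weak_conv_riesz} entirely: for $f\in BV(\R^n)$ you first prove \eqref{eq:conv_tot_mass} exactly as the paper does (estimate \eqref{eq:Davila_estimate} with $A=\R^n$, whose leading coefficient tends to $1$ by \cref{lem:mu_alpha_bound}) and then invoke the measure-theoretic fact that weak* convergence of the vector measures plus convergence of the total masses forces weak* convergence of the total variations; for $f\in bv(\R^n)\cap L^\infty(\R^n)$ you reduce to the previous case by a cutoff, the key point being that the far-field contribution $\nabla^\alpha\big((1-\eta)f\big)$ is $O(\mu_{n,\alpha}\,r^{-\alpha})$ uniformly on $B_r$ and hence vanishes because $\mu_{n,\alpha}\to0^+$. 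What your version buys is independence from the Riesz-potential machinery of \cref{res:weak_conv_riesz}, at the price of splitting the two cases; the paper's argument is shorter once that lemma is available. One small imprecision: in the compactness step you assert $\nu(\R^n)=\lim_j|D^{\alpha_{k_j}}f|(\R^n)$, but weak* convergence only gives $\nu(\R^n)\le\liminf_j|D^{\alpha_{k_j}}f|(\R^n)=|Df|(\R^n)$; this inequality, together with $\nu\ge|Df|$, is all you need to conclude $\nu=|Df|$, so the argument stands after this cosmetic fix.
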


\begin{proof}
We prove~\eqref{eq:weak_conv_tot_var} and~\eqref{eq:conv_tot_mass} separately.

\smallskip

\textit{Proof of~\eqref{eq:weak_conv_tot_var}}. 
By \cref{res:weak_conv_nabla_alpha_to_1}, we know that $D^\alpha f\weakto Df$ as $\alpha\to1^-$. By~\cite{M12}*{Proposition~4.29}, we thus have that
\begin{equation}\label{eq:always_liminf_open_sets}
|Df|(A)\le\liminf_{\alpha\to1^-}|D^\alpha f|(A)
\end{equation} 
for any open set $A\subset\R^n$. 
Now let $K\subset\R^n$ be a compact set.
By the representation formula~\eqref{eq:repres_formula_nabla_alpha_bv_bounded} in \cref{result:repres_formula_nabla_alpha_bv_bounded}, we can estimate
\begin{equation*}
|D^\alpha f|(K)
=\|\nabla^\alpha f\|_{L^1(K;\,\R^n)}
\le\|I_{1-\alpha}|Df|\|_{L^1(K)}
=(I_{1-\alpha}|Df|\,\Leb{n})(K).
\end{equation*}
Since $|Df|(\R^n)<+\infty$, by \cref{res:weak_conv_riesz} and \cite{M12}*{Proposition~4.26} we can conclude that
\begin{equation*}
\limsup_{\alpha \to 1^{-}} |D^{\alpha} f|(K) \le \limsup_{\alpha \to 1^{-}} (I_{1-\alpha}|Df|\,\Leb{n})(K) \le |Df|(K),
\end{equation*}
and so~\eqref{eq:weak_conv_tot_var} follows, thanks again to \cite{M12}*{Proposition~4.26}.

\smallskip

\textit{Proof of~\eqref{eq:conv_tot_mass}}.
Now assume $f\in BV(\R^n)$. By~\eqref{eq:Davila_estimate} applied with $A=\R^n$ and $r=1$, we have
\begin{equation*}
|D^\alpha f|(\R^n)
\le\frac{n\omega_n\,\mu_{n,\alpha}}{n+\alpha-1}\left(\frac{|Df|(\R^n)}{1-\alpha}+\frac{n+2\alpha-1}{\alpha}\,\|f\|_{L^1(\R^n)}\right).
\end{equation*} 
By~\eqref{eq:def_mu_alpha_asym}, we thus get that
\begin{equation}\label{eq:limsup_tot_mass}
\limsup_{\alpha\to1^-}|D^\alpha f|(\R^n)
\le|D f|(\R^n).
\end{equation}
Thus~\eqref{eq:conv_tot_mass} follows by combining~\eqref{eq:always_liminf_open_sets} for $A=\R^n$ with~\eqref{eq:limsup_tot_mass}.
\end{proof}

\begin{remark}
We notice that \cref{res:weak_conv_nabla_alpha_to_1} and \cref{res:weak_conv_tot_var}, in the case $f=\chi_E\in BV(\R^n)$ with $E\subset\R^n$ bounded, and \cref{res:weak_conv_alpha_p}, were already announced in~\cite{S16}*{Theorems~16 and~17}.
\end{remark}

Note that \cref{res:weak_conv_nabla_alpha_to_1} and \cref{res:weak_conv_tot_var} in particular apply to any $f\in W^{1,1}(\R^n)$. In the following result, by exploiting \cref{res:nabla_alpha_Lp}, we prove that a stronger property holds for any $f\in W^{1,p}(\R^n)$ with $p\in[1,+\infty)$.

\begin{theorem}\label{res:weak_conv_alpha_p}
Let $p\in[1,+\infty)$. If $f\in W^{1,p}(\R^n)$, then
\begin{equation}\label{eq:strong_conv_alpha_p}
\lim_{\alpha\to1^-}\|\nabla^\alpha_w f-\nabla_w f\|_{L^p(\R^n;\,\R^n)}=0.
\end{equation}
\end{theorem}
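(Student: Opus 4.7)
The plan is to reduce to the case of smooth compactly supported functions, where the strong $L^p$ convergence of the classical fractional gradient has already been established in \cref{res:L1_converg}, and then use the uniform bound coming from \cref{res:nabla_alpha_Lp} to handle the approximation error in a way that is controlled uniformly in $\alpha$ close to $1$.

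More precisely, fix $f \in W^{1,p}(\R^n)$ and pick an approximating sequence $(f_k)_{k\in\N}\subset C^\infty_c(\R^n)$ with $f_k \to f$ in $W^{1,p}(\R^n)$ as $k\to+\infty$ (a standard mollification plus cut-off argument, as in the proof of \cref{res:nabla_alpha_Lp}). For each fixed $k$, the function $f_k$ lies in $C^2_c(\R^n)$, so $\nabla^\alpha_w f_k = \nabla^\alpha f_k$ and, by \cref{res:L1_converg},
\begin{equation*}
\lim_{\alpha\to1^-}\|\nabla^\alpha f_k - \nabla f_k\|_{L^p(\R^n;\,\R^n)} = 0.
\end{equation*}
Writing, by linearity of the weak fractional gradient,
\begin{equation*}
\nabla^\alpha_w f - \nabla_w f
= \nabla^\alpha_w(f - f_k) + \bigl(\nabla^\alpha f_k - \nabla f_k\bigr) + \nabla_w(f_k - f),
\end{equation*}
the triangle inequality in $L^p$ then reduces the problem to obtaining a bound on $\|\nabla^\alpha_w(f - f_k)\|_{L^p}$ that is uniform in $\alpha$ and vanishes as $k\to+\infty$.

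This uniform bound is exactly what \eqref{eq:W_1_p_inclusion_S_alpha_p} provides: for all $\alpha\in(0,1)$,
\begin{equation*}
\|\nabla^\alpha_w(f - f_k)\|_{L^p(\R^n;\,\R^n)}
\le \frac{(n+2\alpha-1)^{1-\alpha}}{n+\alpha-1}\,
\frac{n\omega_n\,\mu_{n,\alpha}}{\alpha(1-\alpha)}\,
\|\nabla_w(f-f_k)\|_{L^p}^{\alpha}\,\|f-f_k\|_{L^p}^{1-\alpha}.
\end{equation*}
By \cref{lem:mu_alpha_bound}, $\mu_{n,\alpha}/(1-\alpha)\le C_n$ on $(0,1)$, so the prefactor stays bounded by some constant $K_n$ for $\alpha\in(\tfrac{1}{2},1)$. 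Using $a^\alpha b^{1-\alpha}\le a+b$ with $a=\|\nabla_w(f-f_k)\|_{L^p}$ and $b=\|f-f_k\|_{L^p}$ we obtain
\begin{equation*}
\sup_{\alpha\in(\frac12,1)}\|\nabla^\alpha_w(f - f_k)\|_{L^p(\R^n;\,\R^n)}
\le K_n\,\|f-f_k\|_{W^{1,p}(\R^n)}.
\end{equation*}

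Putting everything together: given $\eps>0$, choose $k$ so large that $\|f-f_k\|_{W^{1,p}(\R^n)}<\eps/(K_n+1)$, which makes the first and third terms in the triangle inequality simultaneously smaller than $\eps$ uniformly in $\alpha\in(\tfrac12,1)$. For this fixed $k$, \cref{res:L1_converg} makes the middle term smaller than $\eps$ for $\alpha$ sufficiently close to $1$. Letting $\eps\to 0^+$ yields~\eqref{eq:strong_conv_alpha_p}. The only delicate point of the argument is precisely the uniform control of the constant in \eqref{eq:W_1_p_inclusion_S_alpha_p} as $\alpha\to 1^-$, which is why \cref{lem:mu_alpha_bound} (the cancellation between $\mu_{n,\alpha}$ and the factor $1-\alpha$ in the denominator) is essential; without it, the interpolation estimate would blow up in the limit regime of interest and the triangle-inequality strategy would fail.
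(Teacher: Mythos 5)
Your proof is correct, but it follows a genuinely different route from the paper's. The paper argues in two steps: it first proves the convergence of norms $\|\nabla^\alpha_w f\|_{L^p(\R^n;\,\R^n)}\to\|\nabla_w f\|_{L^p(\R^n;\,\R^n)}$ (the $\liminf$ inequality by testing against $C^\infty_c$ fields and using \cref{res:L1_converg}, the $\limsup$ inequality from~\eqref{eq:Davila_estimate_p} with $A=\R^n$, $r=1$ and the asymptotics~\eqref{eq:def_mu_alpha_asym}) together with the weak convergence $\nabla^\alpha_w f\weakto\nabla_w f$ in $L^p$, and then invokes the uniform convexity of $L^p(\R^n;\R^n)$ (the Radon--Riesz property, \cite{B11}*{Proposition~3.32}) to upgrade weak convergence plus convergence of norms to strong convergence. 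You instead run the classical density scheme: strong convergence on the dense class $C^\infty_c(\R^n)$, which is exactly \cref{res:L1_converg}, combined with a bound on the operator norm of $\nabla^\alpha_w\colon W^{1,p}(\R^n)\to L^p(\R^n;\R^n)$ that is uniform for $\alpha$ near $1$ — correctly extracted from~\eqref{eq:W_1_p_inclusion_S_alpha_p}, \cref{lem:mu_alpha_bound} and Young's inequality $a^\alpha b^{1-\alpha}\le a+b$ — followed by a three-term triangle inequality. Both arguments rest on the same two ingredients (\cref{res:L1_converg} and \cref{res:nabla_alpha_Lp}), and your observation that the cancellation $\mu_{n,\alpha}/(1-\alpha)\le C_n$ is the crux is exactly right: the paper uses the same cancellation in its $\limsup$ step. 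What your route buys is elementarity: no appeal to uniform convexity, and the same scheme would in principle yield strong $L^1$ convergence for $f\in W^{1,1}(\R^n)$ via~\eqref{eq:Davila_estimate_opt}, whereas the Radon--Riesz argument is confined to $p\in(1,+\infty)$. What the paper's route buys is that the weak convergence~\eqref{eq:weak_conv_alpha_p} and the norm convergence~\eqref{eq:conv_tot_mass_p} are isolated as statements of independent interest, mirroring the structure of the $p=1$ and $p=+\infty$ cases where only weak-* convergence survives.
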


\begin{proof}
By \cref{res:nabla_alpha_Lp} we know that $f\in S^{\alpha,p}(\R^n)$ for any $\alpha \in (0, 1)$. We now assume $p\in(1,+\infty)$ and divide the proof in two steps.

\smallskip

\textit{Step~1}. We claim that 
\begin{equation}\label{eq:conv_tot_mass_p}
\lim_{\alpha\to1^-}\|\nabla^\alpha_w f\|_{L^p(\R^n;\,\R^n)}=\|\nabla_w f\|_{L^p(\R^n;\,\R^n)}.
\end{equation}
Indeed, on the one hand, by \cref{res:L1_converg}, we have
\begin{equation}\label{eq:int_by_parts_p_step_1}
\int_{\R^n}\phi\cdot\nabla_w f\,dx
=-\int_{\R^n}f\,\div\phi\,dx
=-\lim_{\alpha\to1^-}\int_{\R^n}f\,\div^\alpha\phi\,dx
=\lim_{\alpha\to1^-}\int_{\R^n}\phi\cdot\nabla_w^\alpha f\,dx
\end{equation}
for all $\phi\in C^\infty_c(\R^n;\R^n)$, so that
\begin{equation*}
\int_{\R^n}\phi\cdot\nabla_w f\,dx
\le\|\phi\|_{L^{\frac{p}{p-1}}(\R^n;\,\R^n)}\liminf_{\alpha\to1^-}\|\nabla_w^\alpha f\|_{L^p(\R^n;\,\R^n)}
\end{equation*}
for all $\phi\in C^\infty_c(\R^n;\R^n)$. We thus get that
\begin{equation}\label{eq:liminf_p}
\|\nabla_w f\|_{L^p(\R^n;\R^n)}
\le\liminf_{\alpha\to1^-}\|\nabla^\alpha_w f\|_{L^p(\R^n;\R^n)}.
\end{equation}
On the other hand, applying~\eqref{eq:Davila_estimate_p} with $A=\R^n$ and~$r=1$, we have
\begin{equation*}
\|\nabla^\alpha_w f\|_{L^p(\R^n;\,\R^n)}
\le\frac{n\omega_n\,\mu_{n,\alpha}}{n+\alpha-1}\left(\frac{\|\nabla_w f\|_{L^p(\R^n;\,\R^n)}}{1-\alpha}+\frac{n+2\alpha-1}{\alpha}\,\|f\|_{L^p(\R^n)}\right).
\end{equation*}
By~\eqref{eq:def_mu_alpha_asym}, we conclude that
\begin{equation}\label{eq:limsup_p}
\limsup_{\alpha\to1^-}\|\nabla^\alpha_w f\|_{L^p(\R^n;\,\R^n)}
\le\|\nabla_w f\|_{L^p(\R^n;\,\R^n)}.
\end{equation}
Thus,~\eqref{eq:conv_tot_mass_p} follows by combining~\eqref{eq:liminf_p} and~\eqref{eq:limsup_p}.

\smallskip

\textit{Step~2}. We now claim that
\begin{equation}\label{eq:weak_conv_alpha_p}
\nabla^\alpha_w f\weakto\nabla_w f
\quad\text{in~$L^p(\R^n;\R^n)$ as $\alpha\to1^-$.}	
\end{equation}
Indeed, let $\phi\in L^{\frac{p}{p-1}}(\R^n;\R^n)$. For each $\eps>0$, let $\psi_\eps\in C^\infty_c(\R^n;\R^n)$ be such that $\|\psi_\eps-\phi\|_{L^{\frac{p}{p-1}}(\R^n;\,\R^n)}<\eps$. By~\eqref{eq:int_by_parts_p_step_1} and~\eqref{eq:conv_tot_mass_p}, we can estimate
\begin{align*}
\limsup_{\alpha\to1^-}\bigg|\int_{\R^n}\phi\cdot\nabla^\alpha_w f\,dx&-\int_{\R^n}\phi\cdot\nabla_w f\,dx\,\bigg|
\le \limsup_{\alpha \to 1^{-}} \bigg|\int_{\R^n}\psi_{\eps}\cdot\nabla^\alpha_w f\,dx-\int_{\R^n}\psi_{\eps}\cdot\nabla_w f\,dx\,\bigg|  \\
& \quad + \int_{\R^n} |\phi - \psi_{\eps}| \, |\nabla^\alpha_w f| \,dx + \int_{\R^n}|\phi - \psi_{\eps}| \, |\nabla_w f|\,dx \\
&\le\eps\,\bigg(\lim_{\alpha\to1^-}\|\nabla^\alpha_w f\|_{L^p(\R^n;\,\R^n)}+\|\nabla_w f\|_{L^p(\R^n;\,\R^n)}\bigg)\\
&=2\eps\,\|\nabla_w f\|_{L^p(\R^n;\,\R^n)}
\end{align*}
so that~\eqref{eq:weak_conv_alpha_p} follows passing to the limit as~$\eps\to0^+$.

\smallskip

Since $L^p(\R^n;\R^n)$ is uniformly convex (see~\cite{Brezis11}*{Section~4.3} for example), the limit in~\eqref{eq:strong_conv_alpha_p} follows from~\eqref{eq:conv_tot_mass_p} and~\eqref{eq:weak_conv_alpha_p} by~\cite{Brezis11}*{Proposition~3.32}, and the proof in the case $p\in(1,+\infty)$ is complete.

\smallskip

For the case $p=1$, we argue as follows (we thank Mattia Calzi for this simple argument). Without loss of generality, it is enough to prove the limit in~\eqref{eq:conv_tot_mass_p} with $p=1$ for any given sequence $(\alpha_k)_{k\in\N}$ such that $\alpha_k\to 1^-$ as $k\to+\infty$. By \eqref{eq:conv_tot_mass}, the sequence $(\|\nabla^{\alpha_k} f\|_{L^1(\R^n;\,\R^n)})_{k\in\N}$ is bounded for any $f\in W^{1,1}(\R^n)$ and thus, by Banach--Steinhaus Theorem, the linear operators $\nabla^{\alpha_k}\colon W^{1,1}(\R^n)\to L^1(\R^n;\R^n)$, $k\in\N$, are uniformly bounded (in the operator norm). The conclusion hence follows by exploiting the density of $C^\infty_c(\R^n)$ in $W^{1,1}(\R^n)$ and \cref{res:L1_converg}.
\end{proof}

For the case $p=+\infty$, we have the following result.
The proof is very similar to the one of \cref{res:weak_conv_alpha_p} and is thus left to the reader.

\begin{theorem}\label{res:weak_conv_alpha_infty}
If $f\in W^{1,\infty}(\R^n)$, then
\begin{equation}\label{eq:weak_conv_alpha_infty}
\nabla^\alpha_w f\weakto\nabla_w f
\quad\text{in~$L^\infty(\R^n;\R^n)$ as $\alpha\to1^-$}	
\end{equation}
and
\begin{equation}\label{eq:liminf_tot_mass_infty}
\|\nabla_w f\|_{L^\infty(\R^n;\,\R^n)}\le\liminf_{\alpha\to1^-}\|\nabla^\alpha_w f\|_{L^\infty(\R^n;\,\R^n)}.
\end{equation}
\end{theorem}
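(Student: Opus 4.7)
The plan is to reduce everything to the $L^1$-convergence $\div^\alpha\phi\to\div\phi$ already established in \cref{res:L1_converg}, combined with a uniform $L^\infty$-bound on the family $\{\nabla_w^\alpha f\}_{\alpha\in(1/2,1)}$.

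First, the preceding corollary (the inclusion $W^{1,\infty}(\R^n)\subset S^{\alpha,\infty}(\R^n)$ together with~\eqref{eq:W_1_infty_inclusion_S_alpha_infty}) guarantees $f\in S^{\alpha,\infty}(\R^n)$ for every $\alpha\in(0,1)$. Combining~\eqref{eq:W_1_infty_inclusion_S_alpha_infty} with the asymptotic~\eqref{eq:def_mu_alpha_asym}, the factor $\mu_{n,\alpha}/(1-\alpha)$ stays bounded as $\alpha\to 1^-$, so
\begin{equation*}
\sup_{\alpha\in(1/2,1)}\|\nabla^\alpha_w f\|_{L^\infty(\R^n;\,\R^n)}=:M<+\infty.
\end{equation*}

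Next, to prove~\eqref{eq:weak_conv_alpha_infty}, I test against $\phi\in C^\infty_c(\R^n;\R^n)$ using the defining integration-by-parts formula of the weak fractional $\alpha$-gradient, writing
\begin{equation*}
\int_{\R^n}\phi\cdot\nabla_w^\alpha f\,dx
=-\int_{\R^n}f\,\div^\alpha\phi\,dx.
\end{equation*}
Since \cref{res:L1_converg} with $p=1$ yields $\div^\alpha\phi\to\div\phi$ in $L^1(\R^n)$ and $f\in L^\infty(\R^n)$, the right-hand side converges to $-\int_{\R^n}f\,\div\phi\,dx=\int_{\R^n}\phi\cdot\nabla_w f\,dx$. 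To upgrade this to arbitrary $\phi\in L^1(\R^n;\R^n)$, I approximate $\phi$ in $L^1$-norm by $\psi\in C^\infty_c(\R^n;\R^n)$ with $\|\phi-\psi\|_{L^1(\R^n;\,\R^n)}<\eps$ and estimate
\begin{equation*}
\abs*{\int_{\R^n}\phi\cdot(\nabla^\alpha_w f-\nabla_w f)\,dx}
\le\abs*{\int_{\R^n}\psi\cdot(\nabla^\alpha_w f-\nabla_w f)\,dx}+\eps\bigl(M+\|\nabla_w f\|_{L^\infty(\R^n;\,\R^n)}\bigr),
\end{equation*}
whence $\limsup_{\alpha\to1^-}|\int\phi\cdot(\nabla^\alpha_w f-\nabla_w f)\,dx|\le\eps(M+\|\nabla_w f\|_{L^\infty})$, and~\eqref{eq:weak_conv_alpha_infty} follows by letting $\eps\to0^+$.

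Finally, \eqref{eq:liminf_tot_mass_infty} is the standard weak*-lower-semicontinuity of the dual norm: for any $\phi\in L^1(\R^n;\R^n)$ with $\|\phi\|_{L^1(\R^n;\,\R^n)}\le1$, by~\eqref{eq:weak_conv_alpha_infty},
\begin{equation*}
\int_{\R^n}\phi\cdot\nabla_w f\,dx
=\lim_{\alpha\to1^-}\int_{\R^n}\phi\cdot\nabla_w^\alpha f\,dx
\le\liminf_{\alpha\to1^-}\|\nabla_w^\alpha f\|_{L^\infty(\R^n;\,\R^n)},
\end{equation*}
and taking the supremum over such $\phi$ gives~\eqref{eq:liminf_tot_mass_infty} via the duality $(L^1)^*=L^\infty$. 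The only delicate ingredient is the uniform $L^\infty$-bound $M$, which relies crucially on the compensation between the singular factor $1/(1-\alpha)$ in~\eqref{eq:W_1_infty_inclusion_S_alpha_infty} and the vanishing of $\mu_{n,\alpha}$ as $\alpha\to1^-$; everything else is a routine density plus duality argument.
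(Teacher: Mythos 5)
Your proposal is correct and follows essentially the same route as the paper: a uniform $L^\infty$-bound on $\nabla^\alpha_w f$ from~\eqref{eq:W_1_infty_inclusion_S_alpha_infty} together with the asymptotics of $\mu_{n,\alpha}$, convergence against $C^\infty_c$ test fields via the $L^1$-convergence $\div^\alpha\phi\to\div\phi$ of \cref{res:L1_converg}, a density argument in $L^1$ to get the weak*-convergence, and lower semicontinuity of the dual norm for~\eqref{eq:liminf_tot_mass_infty}. The only (immaterial) difference is the order: you deduce~\eqref{eq:liminf_tot_mass_infty} from the full weak*-convergence, whereas the paper reads it off directly from the convergence on smooth test functions before upgrading to general $\phi\in L^1$.
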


\subsection{\texorpdfstring{$\Gamma$}{Gamma}-convergence of \texorpdfstring{$\alpha$}{alpha}-variation as \texorpdfstring{$\alpha\to1^-$}{alpha tends to 1-}}\label{subsec:Gamma_conv}

In this section, we study the $\Gamma$-convergence of the fractional $\alpha$-variation to the standard variation as $\alpha\to1^-$.

We begin with the $\Gamma\mhypen\liminf$ inequality.

\begin{theorem}[$\Gamma\mhypen\liminf$ inequalities as $\alpha\to1^-$]\label{res:Gamma_liminf}
Let $\Omega\subset\R^n$ be an open set.
\begin{enumerate}[(i)]

\item\label{item:Gamma_liminf_L_infty} If $(f_\alpha)_{\alpha\in(0,1)}\subset L^1_{\loc}(\R^n)$ satisfies $\sup_{\alpha\in(0,1)}\|f_\alpha\|_{L^\infty(\R^n)}<+\infty$ and $f_\alpha\to f$ in $L^1_{\loc}(\R^n)$ as $\alpha\to1^-$, then 
\begin{equation}\label{eq:Gamma_liminf}
|Df|(\Omega)\le\liminf\limits_{\alpha\to1^-}|D^\alpha f_\alpha|(\Omega).
\end{equation}

\item\label{item:Gamma_liminf_L_1} If $(f_\alpha)_{\alpha\in(0,1)}\subset L^1(\R^n)$ satisfies $f_\alpha\to f$ in $L^1(\R^n)$ as $\alpha\to1^-$, then~\eqref{eq:Gamma_liminf} holds.
\end{enumerate}

\end{theorem}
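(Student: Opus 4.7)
The plan is to argue by duality, passing to the limit in the defining inequality of the fractional variation against a fixed smooth test field and then taking the supremum. Fix $\phi \in C^\infty_c(\Omega;\R^n)$ with $\|\phi\|_{L^\infty(\Omega;\,\R^n)} \le 1$. Since $\phi$ has compact support in $\Omega$, by the definition of the fractional $\alpha$-variation we have
\begin{equation*}
\int_{\R^n} f_\alpha \, \div^\alpha \phi \, dx \le |D^\alpha f_\alpha|(\Omega)
\end{equation*}
for all $\alpha \in (0,1)$ (applied also to $-\phi$). So if we can show
\begin{equation*}
\lim_{\alpha \to 1^-} \int_{\R^n} f_\alpha \, \div^\alpha \phi \, dx = \int_{\R^n} f \, \div \phi \, dx,
\end{equation*}
then taking the supremum over admissible $\phi$ yields~\eqref{eq:Gamma_liminf}.

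The limit will be obtained through the splitting
\begin{equation*}
\int_{\R^n} f_\alpha \, \div^\alpha \phi \, dx - \int_{\R^n} f \, \div \phi \, dx
= \int_{\R^n} f_\alpha \, (\div^\alpha \phi - \div \phi) \, dx + \int_{\R^n} (f_\alpha - f) \, \div \phi \, dx,
\end{equation*}
where the key tool for the first piece is \cref{res:L1_converg}, which guarantees $\div^\alpha \phi \to \div \phi$ in $L^p(\R^n)$ for every $p \in [1, +\infty]$ (noting that $\phi \in C^\infty_c \subset C^2_c$). For statement~\eqref{item:Gamma_liminf_L_infty}, I would pair $L^\infty$-boundedness of $(f_\alpha)$ with the $L^1$-convergence $\div^\alpha \phi \to \div \phi$ to kill the first term, while the second term vanishes because $\div \phi$ is compactly supported in $\Omega$ and $f_\alpha \to f$ in $L^1_{\loc}(\R^n)$. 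For statement~\eqref{item:Gamma_liminf_L_1}, I would instead use $L^1$-boundedness of $(f_\alpha)$ (which follows from $L^1$-convergence) together with the uniform convergence $\div^\alpha \phi \to \div \phi$ in $L^\infty(\R^n)$ on the first term, and the $L^1$-convergence of $f_\alpha$ against the bounded function $\div \phi$ on the second term.

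The only delicate point is that $\div^\alpha \phi$ is not compactly supported, so one cannot localise to~$\Omega$ in the duality pairing; this is precisely why the global $L^p$-convergence statements in \cref{res:L1_converg} are indispensable, and why the two different integrability hypotheses on $(f_\alpha)$ lead naturally to two dual choices of exponents ($L^\infty$-$L^1$ versus $L^1$-$L^\infty$) for the first term of the splitting. Once this is set up, the rest is a routine application of the definition of the fractional $\alpha$-variation.
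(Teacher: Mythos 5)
Your proposal is correct and follows essentially the same route as the paper: the same splitting of $\int f_\alpha\,\div^\alpha\phi\,dx-\int f\,\div\phi\,dx$ into two terms, the same use of \cref{res:L1_converg} (with the $L^1$ convergence of $\div^\alpha\phi$ against the uniform $L^\infty$ bound in case~(i), and the $L^\infty$ convergence against the $L^1$ bound in case~(ii)), and the same final passage to the supremum over test fields. No gaps.
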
  

\begin{proof}
We prove the two statements separately.

\smallskip

\textit{Proof of~(\ref{item:Gamma_liminf_L_infty})}.
Let $\phi\in C^\infty_c(\Omega;\R^n)$ be such that $\|\phi\|_{L^\infty(\Omega;\R^n)}\le1$. Since we can estimate
\begin{align*}
\bigg|\int_{\R^n}f_\alpha\,&\div^\alpha\phi\,dx-\int_{\R^n}f\,\div\phi\,dx\bigg|
\le\int_{\R^n}\abs*{f_\alpha-f}\,\abs*{\div\phi}\,dx
+\int_{\R^n}|f_\alpha|\,\abs*{\div^\alpha\phi-\div\phi}\,dx\\
&\le\|\div\phi\|_{L^\infty(\R^n;\,\R^n)}\int_{\supp\phi}\abs*{f_\alpha-f}\,dx
+\big(\sup_{\alpha\in(0,1)}\|f_\alpha\|_{L^\infty(\R^n)}\big)\,\|\div^\alpha\phi-\div\phi\|_{L^1(\R^n)},
\end{align*}
by \cref{res:L1_converg} we get that
\begin{align*}
\int_{\R^n}f\,\div\phi\,dx
=\lim_{\alpha\to1^-}\int_{\R^n}f_\alpha\,\div^\alpha\phi\,dx
\le\liminf_{\alpha\to1^-}|D^\alpha f|(\Omega)
\end{align*}
and the conclusion follows. 

\smallskip

\textit{Proof of~(\ref{item:Gamma_liminf_L_1})}.
Let $\phi\in C^\infty_c(\Omega;\R^n)$ be such that $\|\phi\|_{L^\infty(\Omega;\R^n)}\le1$. Since we can estimate
\begin{align*}
\bigg|\int_{\R^n} f_\alpha\,\div^\alpha\phi\,dx&-\int_{\R^n}f\,\div\phi\,dx\bigg|
\le\int_{\R^n}\abs*{f_\alpha-f}\,\abs*{\div\phi}\,dx
+\int_{\R^n}|f_\alpha|\,\abs*{\div^\alpha\phi-\div\phi}\,dx\\
&\le\|\div\phi\|_{L^\infty(\R^n)}\|f_\alpha-f\|_{L^1(\R^n)}+\|\div^\alpha\phi-\div\phi\|_{L^\infty(\R^n)}\|f_\alpha\|_{L^1(\R^n)},
\end{align*}
by \cref{res:L1_converg} we get that
\begin{align*}
\int_{\R^n}f\,\div\phi\,dx
=\lim_{\alpha\to1^-}\int_{\R^n}f_\alpha\,\div^\alpha\phi\,dx
\le\liminf_{\alpha\to1^-}|D^\alpha f_\alpha|(\Omega)
\end{align*}
and the conclusion follows. 
\end{proof}

We now pass to the $\Gamma\mhypen\limsup$ inequality.

\begin{theorem}[$\Gamma\mhypen\limsup$ inequalities as $\alpha\to1^-$]\label{res:Gamma_limsup}
Let $\Omega\subset\R^n$ be an open set.

\begin{enumerate}[(i)]

\item\label{item:Gamma_limsup_BV} If $f\in BV(\R^n)$ and either $\Omega$ is bounded or $\Omega=\R^n$, then
\begin{equation}\label{eq:limsup}
\limsup_{\alpha\to1^-}|D^\alpha f|(\Omega)\le |Df|(\overline{\Omega}).
\end{equation}

\item\label{item:Gamma_limsup_BV_loc} If $f\in BV_{\loc}(\R^n)$ and $\Omega$ is bounded, then
\begin{equation*}
\Gamma(L^1_{\loc})\mhypen\limsup_{\alpha\to1^-}|D^\alpha f|(\Omega)\le |Df|(\overline{\Omega}).
\end{equation*}

\end{enumerate}
In addition, if $f=\chi_E$, then the recovering sequences $(f_\alpha)_{\alpha\in(0,1)}$ in~(\ref{item:Gamma_limsup_BV}) and~(\ref{item:Gamma_limsup_BV_loc}) can be taken such that $f_\alpha=\chi_{E_\alpha}$ for some measurable sets $(E_\alpha)_{\alpha\in(0,1)}$.

\end{theorem}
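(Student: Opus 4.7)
Both statements will follow quite directly from the weak convergence results already established in \cref{res:weak_conv_tot_var} (and the truncation lemma \cref{res:BV_cut_ball} for part (ii)); no genuinely new computation is needed. The main point is to leverage the fact that the total variation measures $|D^\alpha f|$ weakly*-converge to $|Df|$ \emph{together with} convergence of total masses, which under standard properties of weak convergence of measures (see~\cite{M12}*{Proposition~4.26}) gives $\limsup$ bounds on closed sets.

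\textbf{Proof of (i).} When $\Omega=\R^n$ the inequality~\eqref{eq:limsup} is the content of~\eqref{eq:conv_tot_mass} in \cref{res:weak_conv_tot_var}. When $\Omega$ is bounded, observe that $\overline{\Omega}$ is compact and that $|D^\alpha f|(\Omega)\le|D^\alpha f|(\overline{\Omega})$. Since $|D^\alpha f|\weakto|Df|$ in $\mathscr{M}(\R^n)$ and $|D^\alpha f|(\R^n)\to|Df|(\R^n)<+\infty$ by \cref{res:weak_conv_tot_var}, the measures $|D^\alpha f|$ have uniformly bounded mass, and~\cite{M12}*{Proposition~4.26} gives
\begin{equation*}
\limsup_{\alpha\to1^-}|D^\alpha f|(\overline{\Omega})\le|Df|(\overline{\Omega}),
\end{equation*}
whence~\eqref{eq:limsup}. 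The constant recovery sequence $f_\alpha\equiv f$ is used, which is also characteristic when $f=\chi_E$.

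\textbf{Proof of (ii).} Since $f\in BV_{\loc}(\R^n)\subset L^1_{\loc}(\R^n)$, I will truncate $f$ by $\chi_{B_R}$ and reduce to part~(i). By \cref{res:BV_cut_ball}, for a.e.\ $R>0$ the function $f^R:=\chi_{B_R}f$ belongs to $BV(\R^n)$ with $D f^R=\chi_{B_R}^\star Df+f^\star D\chi_{B_R}$. Fix any such $R$ with $\overline{\Omega}\Subset B_R$: then $f^R=f$ on a neighbourhood of $\overline{\Omega}$ and $\de B_R\cap\overline{\Omega}=\varnothing$, so
\begin{equation*}
|Df^R|(\overline{\Omega})=|Df|(\overline{\Omega}).
\end{equation*}
Applying part~(i) to $f^R\in BV(\R^n)$ yields $\limsup_{\alpha\to1^-}|D^\alpha f^R|(\Omega)\le|Df|(\overline{\Omega})$.

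Now perform a standard diagonal extraction: pick an increasing sequence $R_k\to+\infty$ among the admissible radii, and for each $k$ choose $\alpha_k\in(0,1)$ with $\alpha_k\uparrow1$ so that
\begin{equation*}
|D^\alpha f^{R_k}|(\Omega)\le|Df|(\overline{\Omega})+\tfrac1k\quad\text{for all }\alpha\in(\alpha_k,1).
\end{equation*}
Define $f_\alpha:=f^{R_k}$ for $\alpha\in[\alpha_k,\alpha_{k+1})$. Since $f^{R_k}=f$ on any fixed bounded set once $R_k$ is large enough, we have $f_\alpha\to f$ in $L^1_{\loc}(\R^n)$, and by construction $\limsup_{\alpha\to1^-}|D^\alpha f_\alpha|(\Omega)\le|Df|(\overline{\Omega})$. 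When $f=\chi_E$, each $f^R=\chi_{E\cap B_R}$ is a characteristic function, so the recovery sequence has the desired form.

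\textbf{Main delicacy.} There is no serious obstacle, since the heavy lifting is done by \cref{res:weak_conv_tot_var}. The one point requiring a little care is that the $L^1_{\loc}$-$\Gamma$-$\limsup$ in (ii) cannot be obtained from the constant sequence $f$ (which need not be in $L^1(\R^n)$ nor in $BV(\R^n)$): the truncation by $\chi_{B_R}$ and the diagonal argument above are needed to produce admissible recovery sequences, and one must verify that the truncation does not create extra mass on $\overline{\Omega}$, which is guaranteed by choosing $R$ with $\overline{\Omega}\Subset B_R$.
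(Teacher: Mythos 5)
Your proposal is correct and follows essentially the same route as the paper: part~(i) is obtained exactly as in the paper from \cref{res:weak_conv_tot_var} together with the upper-semicontinuity of weakly*-convergent measures on compact sets, and part~(ii) uses the same truncation $f\chi_{B_{R_k}}$ via \cref{res:BV_cut_ball}, which leaves the variation on $\overline{\Omega}$ unchanged. The only cosmetic difference is that you build the recovery sequence by an explicit diagonal extraction, whereas the paper reaches the same conclusion by invoking the lower semicontinuity of the $\Gamma\mhypen\limsup$ along the approximating sequence $f_k\to f$ (\cite{B02}*{Proposition~1.28}); the two are interchangeable.
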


\begin{proof}
Assume $f\in BV(\R^n)$. By \cref{res:weak_conv_tot_var}, we know that $|D^\alpha f|\weakto|Df|$ as $\alpha\to1^-$. Thus, by~\cite{M12}*{Proposition~4.26}, we get that
\begin{equation}\label{eq:limsup_BV_Omega_bounded}
\limsup_{\alpha\to1^-}|D^\alpha f|(\Omega)
\le\limsup_{\alpha\to1^-}|D^\alpha f|(\closure[-.5]{\Omega})
\le|Df|(\closure[-.5]{\Omega})
\end{equation}
for any bounded open set $\Omega\subset\R^n$.
If $\Omega=\R^n$, then~\eqref{eq:limsup} follows immediately from~\eqref{eq:conv_tot_mass}. This concludes the proof of~\eqref{item:Gamma_limsup_BV}.

Now assume that $f\in BV_{\loc}(\R^n)$ and $\Omega$ is bounded. Let $(R_k)_{k\in\N}\subset(0,+\infty)$ be a sequence such that $R_k\to+\infty$ as $k\to+\infty$ and set $f_k:=f\chi_{B_{R_k}}$ for all $k\in\N$. By \cref{res:BV_cut_ball}, we can choose the sequence $(R_k)_{k\in\N}$ such that, in addition, $f_k\in BV(\R^n)$ with $Df_k=\chi_{B_{R_k}}^\star Df+f^\star D\chi_{B_{R_k}}$ for all $k\in\N$. Consequently, $f_k\to f$ in~$L^1_{\loc}(\R^n)$ as $k\to+\infty$ and, moreover, since~$\Omega$ is bounded, $|Df_k|(\Omega)=|Df|(\Omega)$ and $|Df_k|(\de\Omega)=|Df|(\de\Omega)$ for all $k\in\N$ sufficiently large. By~\eqref{eq:limsup_BV_Omega_bounded}, we have that
\begin{equation}\label{eq:Gamma_limsup_step_1_for_each_k}
\limsup_{\alpha\to1^-}|D^\alpha f_k|(\Omega)
\le|Df_k|(\overline{\Omega})
\end{equation}
for all $k\in\N$ sufficiently large. Hence, by~\cite{B02}*{Proposition~1.28}, by~\cite{D93}*{Proposition~8.1(c)} and by~\eqref{eq:Gamma_limsup_step_1_for_each_k}, we get that
\begin{align*}
\Gamma(L^1_{\loc})\mhypen\limsup_{\alpha\to1^-}|D^\alpha f|(\Omega)
&\le\liminf_{k\to+\infty}\big(\Gamma(L^1_{\loc})\mhypen\limsup_{\alpha\to1^-}|D^\alpha f_k|(\Omega)\big)\\
&\le\lim_{k\to+\infty}|Df_k|(\overline{\Omega})
=|Df|(\overline{\Omega}).
\end{align*}
This concludes the proof of~\eqref{item:Gamma_limsup_BV_loc}.

Finally, if $f=\chi_E$, then we can repeat the above argument \textit{verbatim} in the metric spaces $\set*{\chi_F\in L^1(\R^n): F\subset\R^n}$ for~\eqref{item:Gamma_limsup_BV} and $\set*{\chi_F\in L^1_{\loc}(\R^n): F\subset\R^n}$ for~\eqref{item:Gamma_limsup_BV_loc} endowed with their natural distances.
\end{proof}

\begin{remark}
Thanks to~\eqref{eq:limsup}, a \emph{recovery sequence} in \cref{res:Gamma_limsup}\eqref{item:Gamma_limsup_BV} is the constant sequence (also in the special case $f = \chi_{E}$).
\end{remark}

Combining \cref{res:Gamma_liminf}\eqref{item:Gamma_liminf_L_infty} and \cref{res:Gamma_limsup}\eqref{item:Gamma_limsup_BV_loc}, we can prove that the fractional Caccioppoli $\alpha$-perimeter $\Gamma$-converges to De Giorgi's perimeter as $\alpha\to1^-$ in~$L^1_{\loc}(\R^n)$. We refer to~\cite{ADPM11} for the same result on the classical fractional perimeter.

\begin{theorem}[$\Gamma(L^1_{\loc})\mhypen\lim$ of perimeters as $\alpha\to1^-$]
\label{res:gamma_lim_alpha_to_1}
Let $\Omega\subset\R^n$ be a bounded open set with Lipschitz boundary. For every measurable set $E\subset\R^n$, we have
\begin{equation*}
\Gamma(L^1_{\loc})\mhypen\lim_{\alpha\to1^-}|D^\alpha\chi_E|(\Omega)
=P(E;\Omega).
\end{equation*}	
\end{theorem}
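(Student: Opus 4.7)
The theorem is a $\Gamma$-convergence equality, which my plan splits into a $\Gamma\mhypen\liminf$ and a $\Gamma\mhypen\limsup$ inequality, proved separately.

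The $\Gamma\mhypen\liminf$ inequality is a direct consequence of \cref{res:Gamma_liminf}\eqref{item:Gamma_liminf_L_infty}. Any sequence of characteristic functions $(\chi_{E_\alpha})_{\alpha\in(0,1)}$ automatically satisfies $\|\chi_{E_\alpha}\|_{L^\infty(\R^n)}\le1$, so if $\chi_{E_\alpha}\to\chi_E$ in $L^1_{\loc}(\R^n)$, that theorem yields
\[
P(E;\Omega)=|D\chi_E|(\Omega)\le\liminf_{\alpha\to1^-}|D^\alpha\chi_{E_\alpha}|(\Omega),
\]
with both sides interpreted as $+\infty$ when $\chi_E\notin BV_{\loc}(\Omega)$.

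For the $\Gamma\mhypen\limsup$ inequality, the constant sequence $E_\alpha=E$ is a recovery sequence whenever $P(E;\Omega)=+\infty$, so we reduce to the case $P(E;\Omega)<+\infty$. A direct application of \cref{res:Gamma_limsup}\eqref{item:Gamma_limsup_BV_loc} to $F=E$ (available when $\chi_E\in BV_{\loc}(\R^n)$) would only give the bound $|D\chi_E|(\overline\Omega)$, which exceeds $P(E;\Omega)=|D\chi_E|(\Omega)$ by the possibly nonzero boundary mass $|D\chi_E|(\partial\Omega)$. My plan is instead to approximate $E$ by a sequence $(E_k)$ of sets of finite perimeter in $\R^n$ with $\chi_{E_k}\to\chi_E$ in $L^1_{\loc}(\R^n)$ and $|D\chi_{E_k}|(\overline\Omega)\to P(E;\Omega)$: once such $(E_k)$ is at hand, \cref{res:Gamma_limsup}\eqref{item:Gamma_limsup_BV_loc} applied to each $E_k$, together with the diagonal principle \cite{B02}*{Proposition~1.28} and the characteristic-function recovery clause at the end of \cref{res:Gamma_limsup}, produces the desired recovery sequence of sets for $E$ itself.

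The construction of $(E_k)$ is the main obstacle, and is where the Lipschitz regularity of $\partial\Omega$ enters essentially. When $\chi_E\in BV_{\loc}(\R^n)$, my plan is to modify $E$ only in a thin exterior tubular collar of $\partial\Omega$ by pushing the outer portion of $E$ slightly away from $\overline\Omega$ along the flow of a smooth outward-pointing vector field transverse to $\partial\Omega$. The resulting set $E_k$ agrees with $E$ inside $\Omega$, so $|D\chi_{E_k}|(\Omega)\to P(E;\Omega)$; its exterior trace on $\partial\Omega$ is rendered trivial by the push, matching the jump contribution of $\chi_E$ on $\partial\Omega$ from outside and killing $|D\chi_{E_k}|(\partial\Omega)$; and the convergence $\chi_{E_k}\to\chi_E$ in $L^1_{\loc}(\R^n)$ is automatic because the modifications occur on sets of vanishing Lebesgue measure. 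When $\chi_E\notin BV_{\loc}(\R^n)$, my plan is first to extend $\chi_E|_\Omega$ to some $u\in BV(\R^n)$ via the Lipschitz BV extension operator, then to pass to suitably chosen superlevel sets of (a perturbed) $u$ through the coarea formula, and finally to apply the tubular modification above. The delicate point, which is precisely where the Lipschitz hypothesis is indispensable, is to match interior and exterior traces across $\partial\Omega$ while keeping the approximants characteristic functions.
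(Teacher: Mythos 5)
Your overall architecture coincides with the paper's: the $\Gamma\mhypen\liminf$ inequality is exactly \cref{res:Gamma_liminf}\eqref{item:Gamma_liminf_L_infty} applied to characteristic functions, and the $\Gamma\mhypen\limsup$ inequality is reduced, via \cref{res:Gamma_limsup}\eqref{item:Gamma_limsup_BV_loc} and the diagonal principle of \cite{B02}*{Proposition~1.28}, to producing sets $E_k$ with $\chi_{E_k}\to\chi_E$ in $L^1_{\loc}(\R^n)$, $\chi_{E_k}\in BV_{\loc}(\R^n)$ and $|D\chi_{E_k}|(\overline\Omega)\to P(E;\Omega)$. This is precisely what the paper does, using \cref{res:approx_polyhedral} for the approximation.

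The gap is in your construction of the $E_k$. Pushing the \emph{outer} portion $E\setminus\overline\Omega$ away from $\partial\Omega$ along an outward flow makes the exterior trace of $E_k$ on $\partial\Omega$ equal to zero, but this does \emph{not} kill $|D\chi_{E_k}|(\partial\Omega)$: the perimeter charged to $\partial\Omega$ is governed by the \emph{mismatch} between the interior and exterior traces, so emptying the collar leaves (or creates) a full jump wherever the interior trace of $\chi_E$ equals $1$. The simplest counterexample is $E=\Omega$: there is nothing outside to push, $E_k=\Omega$, and $|D\chi_{E_k}|(\overline\Omega)=P(\Omega)>0=P(E;\Omega)$, so your bound from \cref{res:Gamma_limsup}\eqref{item:Gamma_limsup_BV_loc} does not close. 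The correct operation is the opposite one: you must \emph{fill} the exterior collar so that the exterior trace matches the interior trace of $\chi_{E\cap\Omega}$. This is what Step~2 of the proof of \cref{res:approx_polyhedral} accomplishes, by extending $\chi_{E\cap\Omega}\in BV(\Omega)$ to a compactly supported $BV(\R^n)$ function supported in a shrinking neighbourhood $A_k$ of $\Omega$ and selecting, via the coarea formula, superlevel sets $F_k^{t}$ with $P(F_k^{t};\partial\Omega)=0$; the union $E\cap B_{R_k}\cap C_k^c\cup F_k^{t_k}$ then has matching traces. (The paper still needs a polyhedral approximation and a small rotation afterwards, because generic approximants may have faces lying on flat portions of the Lipschitz boundary.) Your second case, $\chi_E\notin BV_{\loc}(\R^n)$, correctly identifies the extension-plus-coarea mechanism, but it feeds into the same flawed trace-killing step, and you must also take care that the modification near $\partial\Omega$ is glued to $E$ itself away from $\partial\Omega$ (as in Step~1 of \cref{res:approx_polyhedral}) so that the approximants still converge to $\chi_E$, and not to the extension, in $L^1_{\loc}(\R^n)$.
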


\begin{proof}
By \cref{res:Gamma_liminf}\eqref{item:Gamma_liminf_L_infty}, we already know that 
\begin{equation*}
\Gamma(L^1_{\loc})\mhypen\liminf_{\alpha\to1^-}|D^\alpha\chi_E|(\Omega)
\ge P(E;\Omega),
\end{equation*} 
so we just need to prove the $\Gamma(L^1_{\loc})\mhypen\limsup$ inequality. Without loss of generality, we can assume $P(E;\Omega)<+\infty$. Now let $(E_k)_{k\in\N}$ be given by \cref{res:approx_polyhedral}. Since $\chi_{E_k}\in BV_{\loc}(\R^n)$ and $P(E_k;\de\Omega)=0$ for all $k\in\N$, by \cref{res:Gamma_limsup}\eqref{item:Gamma_limsup_BV_loc} we know that
\begin{equation*}
\Gamma(L^1_{\loc})\mhypen\limsup_{\alpha\to1^-}|D^\alpha\chi_{E_k}|(\Omega)\le P(E_k;\Omega)
\end{equation*}
for all $k\in\N$. Since $\chi_{E_k}\to\chi_E$ in $L^1_{\loc}(\R^n)$ and $P(E_k;\Omega)\to P(E;\Omega)$ as $k\to+\infty$, by~\cite{B02}*{Proposition~1.28} we get that
\begin{align*}
\Gamma(L^1_{\loc})\mhypen\limsup_{\alpha\to1^-}|D^\alpha\chi_E|(\Omega)
&\le\liminf_{k\to+\infty}\big(\Gamma(L^1_{\loc})\mhypen\limsup_{\alpha\to1^-}|D^\alpha\chi_{E_k}|(\Omega)\big)\\
&\le\lim_{k\to+\infty} P(E_k;\Omega)
=P(E;\Omega)
\end{align*}
and the proof is complete.
\end{proof}

Finally, by combining \cref{res:Gamma_liminf}\eqref{item:Gamma_liminf_L_1} and \cref{res:Gamma_limsup}, we can prove that the fractional $\alpha$-variation $\Gamma$-converges to De Giorgi's variation as $\alpha\to1^-$ in $L^1(\R^n)$.

\begin{theorem}[$\Gamma(L^1)\mhypen\lim$ of variations as $\alpha\to1^-$]\label{res:gamma_lim_BV_alpha_to_1}
Let $\Omega\subset\R^n$ be an open set such that either $\Omega$ is bounded with Lipschitz boundary or $\Omega=\R^n$. For every $f\in BV(\R^n)$, we have
\begin{equation*}
\Gamma(L^1)\mhypen\lim_{\alpha\to1^-}|D^\alpha f|(\Omega)
=|Df|(\Omega).
\end{equation*}	
\end{theorem}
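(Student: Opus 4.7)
The plan is to handle the two cases of $\Omega$ separately. The $\Gamma(L^1)\mhypen\liminf$ inequality is immediate from \cref{res:Gamma_liminf}\eqref{item:Gamma_liminf_L_1} in both cases. For $\Omega=\R^n$, the $\Gamma\mhypen\limsup$ inequality follows at once from \cref{res:Gamma_limsup}\eqref{item:Gamma_limsup_BV} with the constant sequence $f_\alpha\equiv f$, since $|Df|(\overline{\R^n})=|Df|(\R^n)$. All the content lies in the $\Gamma\mhypen\limsup$ inequality for $\Omega$ bounded with Lipschitz boundary, where the constant sequence only yields the bound $|Df|(\overline\Omega)$, which may exceed $|Df|(\Omega)$ when $|Df|(\partial\Omega)>0$ (as happens for $f=\chi_\Omega$). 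The goal is thus to construct a genuine recovery sequence that erases the boundary mass of~$|Df|$ in the limit.

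The idea is to push~$f$ outward across $\partial\Omega$ via a smooth diffeomorphism. Using the Lipschitz regularity of~$\partial\Omega$, a standard partition-of-unity argument produces a vector field $N\in C^\infty_c(\R^n;\R^n)$ with $\supp N\subset\{x\in\overline\Omega:\dist(x,\partial\Omega)<\delta\}$ for some small $\delta>0$ and $N(x)\cdot\nu_\Omega(x)\ge c_0>0$ for $\Haus{n-1}$-a.e.\ $x\in\partial\Omega$. Let $(\Psi_t)$ be the associated $C^\infty$-flow and set $f_t:=f\circ\Psi_t^{-1}$ for $t>0$ small. Since $N$ vanishes on $\R^n\setminus\overline\Omega$, $\Psi_t$ is the identity there; combined with the outward condition $\Psi_t(\partial\Omega)\subset\R^n\setminus\overline\Omega$, this yields the key inclusion $\Psi_t^{-1}(\overline\Omega)\subset\Omega$. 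Openness of~$\Omega$ then gives $\chi_{\Psi_t^{-1}(\overline\Omega)}\to\chi_\Omega$ pointwise on~$\R^n$ as $t\to0^+$, and the standard continuity of the diffeomorphism action on~$L^1$ gives $f_t\to f$ in $L^1(\R^n)$.

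The heart of the argument is the estimate $\limsup_{t\to0^+}|Df_t|(\overline\Omega)\le|Df|(\Omega)$. By testing the distributional definition of $Df_t$ against $\phi\in C^\infty_c(\R^n;\R^n)$ and changing variables $y=\Psi_t^{-1}(x)$ via the standard identity $\div\bigl(|\det D\Psi_t|\,(D\Psi_t)^{-1}(\phi\circ\Psi_t)\bigr)=(\div\phi)\circ\Psi_t\cdot|\det D\Psi_t|$, one derives the pushforward bound $|Df_t|(A)\le\int_{\Psi_t^{-1}(A)}|\det D\Psi_t(y)|\,\|(D\Psi_t(y))^{-1}\|\,d|Df|(y)$ for every Borel set $A\subset\R^n$. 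Since the prefactor tends to $1$ uniformly on $\supp N$ as $t\to0^+$, and Lebesgue's Dominated Convergence Theorem applied to the finite measure~$|Df|$ gives $|Df|(\Psi_t^{-1}(\overline\Omega))\to|Df|(\Omega)$, the estimate follows. Combining with \cref{res:Gamma_limsup}\eqref{item:Gamma_limsup_BV} applied to each~$f_t$ and with \cite{B02}*{Proposition~1.28} (lower semicontinuity of the $\Gamma\mhypen\limsup$ along $L^1$-converging sequences) yields
\[
\Gamma(L^1)\mhypen\limsup_{\alpha\to1^-}|D^\alpha f|(\Omega)\le\liminf_{t\to0^+}|Df_t|(\overline\Omega)\le|Df|(\Omega),
\]
which completes the proof. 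I expect the main technical hurdles to be the distributional change-of-variables bound on~$|Df_t|$ and the verification that the outward flow can be arranged to pull $\overline\Omega$ strictly inside~$\Omega$; both are classical but need careful bookkeeping since the outer unit normal $\nu_\Omega$ only exists $\Haus{n-1}$-a.e.\ on a Lipschitz boundary.
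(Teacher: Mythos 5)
Your overall architecture coincides with the paper's: the $\Gamma\mhypen\liminf$ inequality from \cref{res:Gamma_liminf}\eqref{item:Gamma_liminf_L_1}, the case $\Omega=\R^n$ and the bound $\limsup_\alpha|D^\alpha g|(\Omega)\le|Dg|(\overline\Omega)$ from \cref{res:Gamma_limsup}\eqref{item:Gamma_limsup_BV}, and the reduction of the $\Gamma\mhypen\limsup$ on a bounded Lipschitz $\Omega$ to producing $L^1(\R^n)$-approximants of $f$ in $BV(\R^n)$ carrying no variation on $\partial\Omega$, concluded via \cite{B02}*{Proposition~1.28}. You differ only in how those approximants are built: the paper truncates at infinity, extends $f|_\Omega$ by the $BV$ extension operator together with the coarea formula to get $|Dh_k|(\de\Omega)=0$, and glues (\cref{res:BV_loc_approx_no_mass}); you deform $f$ by an outward flow. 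Your route is viable in principle (and would automatically yield characteristic-function recovery sequences when $f=\chi_E$), but as written it has a genuine flaw.

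The flaw is that the two conditions you impose on $N$ are mutually incompatible. If $N$ is continuous with $\supp N\subset\set{x\in\overline\Omega:\dist(x,\de\Omega)<\delta}\subset\overline\Omega$, then $N\equiv0$ on the open set $\R^n\setminus\overline\Omega$; since $\Omega$ has Lipschitz boundary, every point of $\de\Omega$ is a limit of points of $\R^n\setminus\overline\Omega$, so continuity forces $N=0$ on $\de\Omega$ as well. Then $N\cdot\nu_\Omega=0$ (not $\ge c_0>0$) $\Haus{n-1}$-a.e.\ on $\de\Omega$, the flow fixes $\de\Omega$, hence $\Psi_t(\de\Omega)=\de\Omega\not\subset\R^n\setminus\overline\Omega$ and the key inclusion $\Psi_t^{-1}(\overline\Omega)\subset\Omega$ fails. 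The fix is to let $N$ be supported in a full two-sided neighbourhood of $\de\Omega$; but then $\Psi_t$ is no longer the identity on $\R^n\setminus\overline\Omega$, and the needed inclusion $\Psi_t(\R^n\setminus\Omega)\subset\R^n\setminus\overline\Omega$ must be proved by a genuine forward-invariance argument (in local graph coordinates $\Omega=\set{x_n<\gamma(x')}$ one shows that $x_n(t)-\gamma(x'(t))$ is strictly increasing along trajectories starting in $\R^n\setminus\Omega$, using the uniform Lipschitz cone condition quantitatively, and then patches these local statements). This is precisely the step you deferred to ``careful bookkeeping'', and it is where the real content of your construction lies; nothing in the properties you actually stated implies it. The remaining ingredients — the Piola-identity change of variables giving $|Df_t|(A)\le\int_{\Psi_t^{-1}(A)}|\det D\Psi_t|\,\|(D\Psi_t)^{-1}\|\,d|Df|$, the $L^1$ convergence $f_t\to f$, and the passage to the limit via \cite{B02}*{Proposition~1.28} — are correct. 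If you prefer to avoid the deformation altogether, the paper's extension-plus-coarea construction in \cref{res:BV_loc_approx_no_mass} achieves the same goal with softer tools.
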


\begin{proof}
The case $\Omega=\R^n$ follows immediately by~\cite{D93}*{Proposition~8.1(c)} combining \cref{res:Gamma_liminf}\eqref{item:Gamma_liminf_L_1} with \cref{res:Gamma_limsup}\eqref{item:Gamma_limsup_BV}. We can thus assume that~$\Omega$ is a bounded open set with Lipschitz boundary and argue as in the proof of \cref{res:gamma_lim_alpha_to_1}.
By \cref{res:Gamma_liminf}\eqref{item:Gamma_liminf_L_1}, we already know that 
\begin{equation*}
\Gamma(L^1)\mhypen\liminf_{\alpha\to1^-}|D^\alpha f|(\Omega)
\ge |Df|(\Omega),
\end{equation*} 
so we just need to prove the $\Gamma(L^1)\mhypen\limsup$ inequality. Without loss of generality, we can assume $|Df|(\Omega)<+\infty$. Now let $(f_k)_{k\in\N}\subset BV(\R^n)$ be given by \cref{res:BV_loc_approx_no_mass}. Since $|Df_k|(\de\Omega)=0$ for all $k\in\N$, by \cref{res:Gamma_limsup} we know that
\begin{equation*}
\Gamma(L^1)\mhypen\limsup_{\alpha\to1^-}|D^\alpha f_k|(\Omega)
\le|Df_k|(\overline{\Omega})
=|Df_k|(\Omega)
\end{equation*}
for all $k\in\N$. Since $f_k\to f$ in $L^1(\R^n)$ and $|D^\alpha f_k|(\Omega)\to|D^\alpha f|(\Omega)$ as $k\to+\infty$, by~\cite{B02}*{Proposition~1.28} we get that
\begin{align*}
\Gamma(L^1)\mhypen\limsup_{\alpha\to1^-}|D^\alpha f|(\Omega)
&\le\liminf_{k\to+\infty}\big(\Gamma(L^1)\mhypen\limsup_{\alpha\to1^-}|D^\alpha f_k|(\Omega)\big)\\
&\le\lim_{k\to+\infty} |Df_k|(\Omega)
=|Df|(\Omega)
\end{align*}
and the proof is complete.
\end{proof}

\begin{remark}
Thanks to \cref{res:gamma_lim_BV_alpha_to_1}, we can slightly improve \cref{res:gamma_lim_alpha_to_1}. Indeed, if $\chi_{E} \in BV(\R^{n})$, then we also have
\begin{equation*}
\Gamma(L^1)\mhypen\lim_{\alpha\to1^-}|D^\alpha\chi_E|(\Omega)
=|D\chi_E|(\Omega)
\end{equation*}	
for any open set $\Omega\subset\R^n$ such that either $\Omega$ is bounded with Lipschitz boundary or~$\Omega=\R^n$.
\end{remark}

\section{Asymptotic behavior of fractional \texorpdfstring{$\beta$}{beta}-variation as \texorpdfstring{$\beta\to\alpha^-$}{beta tends to alpha-}}
\label{sec:asymptotic_beta_to_alpha}

\subsection{Convergence of~\texorpdfstring{$\nabla^\beta$}{nablaˆbeta} and~\texorpdfstring{$\div^\beta$}{divˆbeta} as \texorpdfstring{$\beta\to\alpha$}{beta tends to alpha}}

We begin with the following simple result about the $L^1$-convergence of the operators~$\nabla^\beta$ and~$\div^\beta$ as $\beta\to\alpha$ with $\alpha\in(0,1)$.

\begin{lemma}\label{res:L1_conv_alpha}
Let $\alpha\in(0,1)$. If $f\in W^{\alpha,1}(\R^n)$ and $\phi\in W^{\alpha,1}(\R^n;\R^n)$, then
\begin{equation}\label{eq:L1_conv_alpha}
\lim_{\beta\to\alpha^-}\|\nabla^\beta f-\nabla^\alpha f\|_{L^1(\R^n;\,\R^n)}=0,
\qquad
\lim_{\beta\to\alpha^-}\|\div^\beta\phi-\div^\alpha\phi\|_{L^1(\R^n)}=0.
\end{equation} 
\end{lemma}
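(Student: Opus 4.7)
The plan is to write $\nabla^\beta f - \nabla^\alpha f$ as a single integral against a difference of kernels that depends continuously on $\beta$, and then to pass to the limit under the integral sign by Lebesgue's Dominated Convergence Theorem after applying Fubini's Theorem.

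First I would observe that, since $f \in L^1(\R^n) \cap W^{\alpha,1}(\R^n)$, we actually have $f \in W^{\beta,1}(\R^n)$ for every $\beta \in (0, \alpha]$: splitting the Gagliardo double integral into the regions $\set*{|x-y| \le 1}$ and $\set*{|x-y| > 1}$, the near-diagonal part is bounded by $[f]_{W^{\alpha,1}(\R^n)}$ (because $|x-y|^{-n-\beta} \le |x-y|^{-n-\alpha}$ there), while the far part is bounded by $2\|f\|_{L^1(\R^n)}\int_{|z|>1}|z|^{-n-\beta}\,dz<+\infty$. In particular, by the bound $\|\nabla^\gamma f\|_{L^1(\R^n;\,\R^n)} \le \mu_{n,\gamma}[f]_{W^{\gamma,1}(\R^n)}$ from~\cite{CS18}*{Theorem~3.18}, both $\nabla^\beta f$ and $\nabla^\alpha f$ belong to $L^1(\R^n;\R^n)$, and for a.e.\ $x \in \R^n$ one may write
\begin{equation*}
\nabla^\beta f(x) - \nabla^\alpha f(x) = \int_{\R^n} \bigl[K_\beta(z) - K_\alpha(z)\bigr] (f(x+z) - f(x)) \, dz,
\qquad K_\gamma(z) := \mu_{n,\gamma} \frac{z}{|z|^{n+\gamma+1}}.
\end{equation*}

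Applying Tonelli's Theorem in the variables $x$ and $z$, I would then obtain
\begin{equation*}
\|\nabla^\beta f - \nabla^\alpha f\|_{L^1(\R^n;\,\R^n)} \le \int_{\R^n} h_\beta(z) \, dz,
\qquad h_\beta(z) := \bigl|\mu_{n,\beta} |z|^{-n-\beta} - \mu_{n,\alpha}|z|^{-n-\alpha}\bigr| \, \Phi_f(z),
\end{equation*}
with $\Phi_f(z) := \int_{\R^n}|f(x+z)-f(x)|\,dx$ satisfying $\Phi_f(z) \le 2\|f\|_{L^1(\R^n)}$ as well as the identity $\int_{\R^n}\Phi_f(z)\,|z|^{-n-\alpha}\,dz = [f]_{W^{\alpha,1}(\R^n)}$. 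For each fixed $z \ne 0$, the continuity of $\gamma \mapsto \mu_{n,\gamma}$ and of $\gamma \mapsto |z|^{-n-\gamma}$ gives $h_\beta(z) \to 0$ as $\beta \to \alpha^-$.

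To apply Dominated Convergence, I would restrict to $\beta \in [\alpha/2, \alpha)$, so that $M := \sup_{\beta \in [\alpha/2, \alpha]} \mu_{n,\beta} < +\infty$ by continuity of the Gamma function, and split $\R^n$ into $B_1$ and $\R^n \setminus B_1$. On $B_1$, one has $|z|^{-n-\beta} \le |z|^{-n-\alpha}$ and therefore $h_\beta(z) \le 2M\,\Phi_f(z)/|z|^{n+\alpha}$, which is integrable on $B_1$ by the above identity. On $\R^n \setminus B_1$, one has $|z|^{-n-\beta} \le |z|^{-n-\alpha/2}$ and therefore $h_\beta(z) \le 4M\|f\|_{L^1(\R^n)}|z|^{-n-\alpha/2}$, which is integrable on $\R^n \setminus B_1$ since $\alpha/2 > 0$. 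Dominated convergence then yields the first limit in~\eqref{eq:L1_conv_alpha}.

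The second limit, for $\div^\beta \phi$, would follow from exactly the same argument applied componentwise, with $(y-x)\cdot(\phi(y)-\phi(x))$ in place of $(y-x)(f(y)-f(x))$; no new difficulty arises. The only genuine technical point is the construction of an integrable majorant uniformly in $\beta$, which is handled by the near/far split: the near-diagonal contribution is controlled by the $W^{\alpha,1}$ seminorm, while the far-field contribution is controlled by the $L^1$ norm together with an extra margin in the exponent obtained by pushing $\beta$ down to $\alpha/2$.
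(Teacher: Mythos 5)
Your proof is correct and follows essentially the same route as the paper's: both reduce the difference $\nabla^\beta f-\nabla^\alpha f$ to an integral against the kernel difference $\mu_{n,\beta}|z|^{-n-\beta}-\mu_{n,\alpha}|z|^{-n-\alpha}$, split into the regions $|z|<1$ and $|z|\ge1$, dominate the near-diagonal part by the $W^{\alpha,1}$-seminorm and the far part by the $L^1$-norm with the exponent margin gained from restricting to $\beta>\alpha/2$, and conclude by dominated convergence. The only cosmetic difference is that the paper first peels off the term $|\mu_{n,\beta}-\mu_{n,\alpha}|\,[f]_{W^{\alpha,1}(\R^n)}$ by the triangle inequality, whereas you dominate the full kernel difference at once.
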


\begin{proof}
Given $\beta\in(0,\alpha)$, we can estimate
\begin{align*}
\int_{\R^n}|\nabla^\beta f(x)-\nabla^\alpha f(x)|\,dx
&\le|\mu_{n,\beta}-\mu_{n,\alpha}|\,[f]_{W^{\alpha,1}(\R^n)}\\
&\quad+\mu_{n,\beta}\int_{\R^n}\int_{\R^n}\frac{|f(y)-f(x)|}{|y-x|^n}\,\abs*{\frac{1}{|y-x|^\beta}-\frac{1}{|y-x|^\alpha}}\,dy\,dx.
\end{align*} 
Since the $\Gamma$~function is continuous (see~\cite{A64}), we clearly have
\begin{equation*}
\lim_{\beta\to\alpha^-}|\mu_{n,\beta}-\mu_{n,\alpha}|\,[f]_{W^{\alpha,1}(\R^n)}=0.
\end{equation*}
Now write
\begin{align*}
&\int_{\R^n}\int_{\R^n}\frac{|f(y)-f(x)|}{|y-x|^n}\,\abs*{\frac{1}{|y-x|^\beta}-\frac{1}{|y-x|^\alpha}}\,dy\,dx\\
&=\int_{\R^n}\int_{\R^n}\frac{|f(y)-f(x)|}{|y-x|^n}\,\abs*{\frac{1}{|y-x|^\beta}-\frac{1}{|y-x|^\alpha}}\,\chi_{(0,1)}(|y-x|)\,dy\,dx\\
&\quad+\int_{\R^n}\int_{\R^n}\frac{|f(y)-f(x)|}{|y-x|^n}\,\abs*{\frac{1}{|y-x|^\beta}-\frac{1}{|y-x|^\alpha}}\,\chi_{[1,+\infty)}(|y-x|)\,dy\,dx.
\end{align*}
On the one hand, since $f\in W^{\alpha,1}(\R^n)$, we have
\begin{align*}
\frac{|f(y)-f(x)|}{|y-x|^n}\,&\abs*{\frac{1}{|y-x|^\beta}-\frac{1}{|y-x|^\alpha}}\,\chi_{(0,1)}(|y-x|)\\
&=\frac{|f(y)-f(x)|}{|y-x|^n}\,\left(\frac{1}{|y-x|^\alpha}-\frac{1}{|y-x|^\beta}\right)\chi_{(0,1)}(|y-x|)\\
&\le\frac{|f(y)-f(x)|}{|y-x|^{n+\alpha}}\,\chi_{(0,1)}(|y-x|)
\in L^1_{x,y}(\R^{2n})
\end{align*}
and thus, by Lebesgue's Dominated Convergence Theorem, we get that
\begin{equation*}
\lim_{\beta\to\alpha^-}\int_{\R^n}\int_{\R^n}\frac{|f(y)-f(x)|}{|y-x|^n}\,\abs*{\frac{1}{|y-x|^\beta}-\frac{1}{|y-x|^\alpha}}\,\chi_{(0,1)}(|y-x|)\,dy\,dx=0.	
\end{equation*}
On the other hand, since one has 
\begin{align*}
[f]_{W^{\beta,1}(\R^n)}
&=\int_{\R^n}\int_{\set*{|h|<1}}\frac{|f(x+h)-f(x)|}{|h|^{n+\beta}}\,dh\,dx
+\int_{\R^n}\int_{\set*{|h|\ge1}}\frac{|f(x+h)-f(x)|}{|h|^{n+\beta}}\,dh\,dx\\
&\le[f]_{W^{\alpha,1}(\R^n)}
+\int_{\set*{|h|\ge1}}\frac{1}{|h|^{n+\beta}}\int_{\R^n}|f(x+h)|+|f(x)|\,dx\,dh\\
&=[f]_{W^{\alpha,1}(\R^n)}+\frac{2n\omega_n}{\beta}\,\|f\|_{L^1(\R^n)}
\end{align*}
for all $\beta\in(0,\alpha)$, we can estimate
\begin{align*}
\frac{|f(y)-f(x)|}{|y-x|^n}\,&\abs*{\frac{1}{|y-x|^\beta}-\frac{1}{|y-x|^\alpha}}\,\chi_{[1,+\infty)}(|y-x|)\\
&=\frac{|f(y)-f(x)|}{|y-x|^n}\,\left(\frac{1}{|y-x|^\beta}-\frac{1}{|y-x|^\alpha}\right)\chi_{[1,+\infty)}(|y-x|)\\
&\le\frac{|f(y)-f(x)|}{|y-x|^{n+\beta}}\,\chi_{[1,+\infty)}(|y-x|)\\
&\le\frac{|f(y)-f(x)|}{|y-x|^{n+\frac{\alpha}{2}}}\,\chi_{[1,+\infty)}(|y-x|)
\in L^1_{x,y}(\R^{2n})
\end{align*}
for all $\beta\in\left(\frac{\alpha}{2},\alpha\right)$ and thus, by Lebesgue's Dominated Convergence Theorem, we get that
\begin{equation*}
\lim_{\beta\to\alpha^-}\int_{\R^n}\int_{\R^n}\frac{|f(y)-f(x)|}{|y-x|^n}\,\abs*{\frac{1}{|y-x|^\beta}-\frac{1}{|y-x|^\alpha}}\,\chi_{[1,+\infty)}(|y-x|)\,dy\,dx=0	
\end{equation*}
and the first limit in~\eqref{eq:L1_conv_alpha} follows. The second limit in~\eqref{eq:L1_conv_alpha} follows similarly and we leave the proof to the reader.
\end{proof}

\begin{remark}
Let $\alpha\in(0,1)$. If $f\in W^{\alpha+\eps,1}(\R^n)$ and $\phi\in W^{\alpha+\eps,1}(\R^n)$ for some $\eps\in(0,1-\alpha)$, then, arguing as in the proof of \cref{res:L1_conv_alpha}, one can also prove that 
\begin{equation*}
\lim_{\beta\to\alpha^+}\|\nabla^\beta f-\nabla^\alpha f\|_{L^1(\R^n;\,\R^n)}=0,
\qquad
\lim_{\beta\to\alpha^+}\|\div^\beta\phi-\div^\alpha\phi\|_{L^1(\R^n)}=0.
\end{equation*}
We leave the details of proof of this result to the interested reader.
\end{remark}

If one deals with more regular functions, then \cref{res:L1_conv_alpha} can be improved as follows.

\begin{lemma} \label{res:Linfty_conv_beta_alpha}
Let $\alpha\in(0,1)$ and $p \in [1, + \infty]$. If $f\in \Lip_{c}(\R^n)$ and $\phi\in \Lip_{c}(\R^n;\R^n)$, then
\begin{equation}\label{eq:Linfty_conv_beta_alpha}
\lim_{\beta\to\alpha^-}\|\nabla^\beta f-\nabla^\alpha f\|_{L^p(\R^n;\,\R^n)}=0,
\qquad
\lim_{\beta\to\alpha^-}\|\div^\beta\phi-\div^\alpha\phi\|_{L^p(\R^n)}=0.
\end{equation}
\end{lemma}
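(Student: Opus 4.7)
Fix $\alpha\in(0,1)$ and $f\in\Lip_c(\R^n)$. The plan is to establish the $L^1$ and $L^\infty$ convergences separately and then interpolate for intermediate exponents. The $L^1$ case is immediate from \cref{res:L1_conv_alpha}, since $\Lip_c(\R^n)\subset W^{\alpha,1}(\R^n)$.

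For the $L^\infty$ case, the key tool is the representation $\nabla^\gamma f = I_{1-\gamma}\nabla f = K_\gamma*\nabla f$ from \cref{prop:frac_div_repr}, where
\begin{equation*}
K_\gamma(z):=\frac{\mu_{n,\gamma}}{(n-1+\gamma)\,|z|^{n-1+\gamma}},\qquad z\in\R^n\setminus\set{0}.
\end{equation*}
I would decompose $K_\gamma=K_\gamma\chi_{B_1}+K_\gamma\chi_{\R^n\setminus B_1}$; since $\nabla f\in L^1(\R^n;\R^n)\cap L^\infty(\R^n;\R^n)$, Young's convolution inequality gives
\begin{equation*}
\|\nabla^\beta f-\nabla^\alpha f\|_{L^\infty(\R^n;\,\R^n)}
\le\|(K_\beta-K_\alpha)\chi_{B_1}\|_{L^1(\R^n)}\|\nabla f\|_{L^\infty(\R^n;\,\R^n)}
+\|(K_\beta-K_\alpha)\chi_{\R^n\setminus B_1}\|_{L^\infty(\R^n)}\|\nabla f\|_{L^1(\R^n;\,\R^n)}.
\end{equation*}
The first summand vanishes by dominated convergence, using an $L^1(B_1)$-majorant of the form $C\,|z|^{-(n-1+\alpha)}$ valid for $\beta\in(\alpha/2,\alpha)$, which is integrable on $B_1$ since $\alpha<1$. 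For the second summand, the mean value theorem applied to $\gamma\mapsto|z|^{-(n-1+\gamma)}=e^{-(n-1+\gamma)\log|z|}$ yields
\begin{equation*}
\big||z|^{-(n-1+\beta)}-|z|^{-(n-1+\alpha)}\big|\le|\alpha-\beta|\,\log|z|\,|z|^{-(n-1+\alpha/2)}
\end{equation*}
for $|z|\ge1$ and $\beta\in(\alpha/2,\alpha)$, and $\sup_{r\ge1}(\log r)\,r^{-(n-1+\alpha/2)}<+\infty$. Combined with the continuity of $\gamma\mapsto\mu_{n,\gamma}/(n-1+\gamma)$, this gives $\|(K_\beta-K_\alpha)\chi_{\R^n\setminus B_1}\|_{L^\infty(\R^n)}\to 0$ as $\beta\to\alpha^-$.

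For general $p\in(1,+\infty)$, I would invoke the standard interpolation inequality $\|g\|_{L^p}\le\|g\|_{L^1}^{1/p}\|g\|_{L^\infty}^{1-1/p}$, applied to $g=\nabla^\beta f-\nabla^\alpha f$, which belongs to $L^1(\R^n;\R^n)\cap L^\infty(\R^n;\R^n)$ by \cref{prop:frac_div_repr}, to conclude from the $L^1$ and $L^\infty$ cases. The argument for $\div^\beta\phi$ is identical after replacing $\nabla f$ by $\div\phi$.

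The main technical point is the uniform control of $K_\beta-K_\alpha$ on $\R^n\setminus B_1$: one must check that the logarithmic factor arising from the mean value estimate is absorbed by the polynomial decay of the kernel. All remaining steps reduce to dominated convergence or to inequalities already available in the paper.
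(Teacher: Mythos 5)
Your proof is correct. The skeleton (get $p=1$ from \cref{res:L1_conv_alpha}, prove $p=+\infty$ separately, interpolate via $\|g\|_{L^p}\le\|g\|_{L^1}^{1/p}\|g\|_{L^\infty}^{1-1/p}$) matches the paper's, but your $L^\infty$ estimate takes a genuinely different route. The paper works directly with the finite-difference definition of $\nabla^\gamma f$: it bounds $|\nabla^\alpha f(x)-\nabla^\beta f(x)|$ pointwise by $|\mu_{n,\beta}-\mu_{n,\alpha}|$ times a $W^{\alpha,1}$-type integral plus $\mu_{n,\beta}$ times $\int|f(x+z)-f(x)|\,|z|^{-n}\bigl||z|^{-\beta}-|z|^{-\alpha}\bigr|\,dz$, splitting $\{|z|\le1\}$ (controlled by $\Lip(f)$, with $\int_{\{|z|\le1\}}|z|^{1-n}(|z|^{-\alpha}-|z|^{-\beta})\,dz=O(\alpha-\beta)$ computed explicitly) from $\{|z|>1\}$ (controlled by $\|f\|_{L^\infty}$), arriving at an explicit bound $c_{n,\alpha}\max\{\Lip(f),\|f\|_{L^\infty}\}(|\mu_{n,\beta}-\mu_{n,\alpha}|+(\alpha-\beta))$. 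You instead pass through the representation $\nabla^\gamma f=K_\gamma*\nabla f$ of \cref{prop:frac_div_repr} and apply Young's inequality to the kernel difference, handling $K_\beta-K_\alpha$ on $B_1$ by dominated convergence and on $\R^n\setminus B_1$ by the mean value theorem in the exponent; all your estimates check out (in particular the majorant $|z|^{-(n-1+\alpha)}$ on $B_1$ and the absorption of $\log|z|$ by $|z|^{-(n-1+\alpha/2)}$ are valid for $\beta\in(\alpha/2,\alpha)$). The trade-off: your argument is cleaner at the level of bookkeeping and produces an explicit rate for $\|K_\beta-K_\alpha\|$, but it genuinely uses $\nabla f\in L^1$, i.e.\ the compact support; the paper's direct estimate uses only $\Lip(f)$ and $\|f\|_{L^\infty}$ and hence its $L^\infty$ bound extends verbatim to $\Lip_b(\R^n)$. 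For the lemma as stated both are equally adequate.
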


\begin{proof}
Since clearly $f \in W^{\alpha, 1}(\R^{n})$ for any $\alpha \in (0, 1)$, the first limit in~\eqref{eq:Linfty_conv_beta_alpha} for the case $p=1$ follows from \cref{res:L1_conv_alpha}. Hence, we just need to prove the validity of the same limit for the case $p = + \infty$, since then the conclusion simply follows by an interpolation argument.

Let $\beta \in (0, \alpha)$ and $x \in \R^{n}$. We have
\begin{align*}
|\nabla^{\alpha} f(x) - \nabla^{\beta} f(x)| 
&\le |\mu_{n, \beta} - \mu_{n, \alpha}| \int_{\R^{n}} \frac{|f(x) - f(y)|}{|x - y|^{n + \alpha}} \, dy\\
& \quad + \mu_{n, \beta} \int_{\R^{n}} \frac{|f(x) - f(y)|}{|x - y|^{n}} \, \bigg| \frac{1}{|x - y|^{\beta}} - \frac{1}{|x - y|^{\alpha}} \bigg | \, dy \\
& = |\mu_{n, \beta} - \mu_{n, \alpha}| \int_{\R^{n}} \frac{|f(x + z) - f(x)|}{|z|^{n + \alpha}} \, dz \\
& \quad + \mu_{n, \beta} \int_{\R^{n}} \frac{|f(x + z) - f(x)|}{|z|^{n}} \, \bigg | \frac{1}{|z|^{\beta}} - \frac{1}{|z|^{\alpha}} \bigg | \, dz.
\end{align*}
Since
\begin{align*}
\int_{\R^{n}} \frac{|f(x + z) - f(x)|}{|z|^{n + \alpha}} \, dz & \le \int_{\{ |z| \le 1 \}} \frac{\Lip(f)}{|z|^{n + \alpha - 1}} \, dz + \int_{\{ |z| > 1 \}} \frac{2 \|f\|_{L^{\infty}(\R^{n})}}{|z|^{n + \alpha}} \, dz \\
& \le n \omega_{n} \left ( \frac{\Lip(f)}{1 - \alpha} + \frac{2 \|f\|_{L^{\infty}(\R^{n})}}{\alpha} \right )
\end{align*}
and
\begin{align*}
\int_{\R^{n}} \frac{|f(x+ z) - f(z)|}{|z|^{n}} \, \bigg | \frac{1}{|z|^{\beta}} - \frac{1}{|z|^{\alpha}} \bigg | \, dz 
& \le \int_{\{ |z| \le 1 \}} \frac{\Lip(f)}{|z|^{n - 1}} \left ( \frac{1}{|z|^{\alpha}} - \frac{1}{|z|^{\beta}} \right ) dz \\
& \quad + \int_{\{ |z| > 1 \}} \frac{2 \|f\|_{L^{\infty}(\R^{n})}}{|z|^{n}} \bigg ( \frac{1}{|z|^{\beta}} - \frac{1}{|z|^{\alpha}} \bigg ) \, dz \\
& \le (\alpha - \beta) n \omega_{n} \, \bigg ( \frac{\Lip(f)}{(1 - \alpha)(1 - \beta)} + \frac{2 \|f\|_{L^{\infty}(\R^{n})}}{\alpha \beta} \bigg ),
\end{align*}
for all $\beta \in \big(\frac{\alpha}{2}, \alpha\big)$ we obtain
\begin{equation*}
\|\nabla^{\alpha} f - \nabla^{\beta} f\|_{L^{\infty}(\R^{n};\, \R^{n})} \le c_{n, \alpha}\max\set*{\Lip(f),\|f\|_{L^\infty(\R^n)}} \, \big( |\mu_{n, \beta} - \mu_{n, \alpha}| + (\alpha - \beta)\big),
\end{equation*}
for some constant $c_{n, \alpha} > 0$ depending only on~$n$ and~$\alpha$. Thus the conclusion follows since $\mu_{n,\beta}\to\mu_{n,\alpha}$ as $\beta\to\alpha^-$.
The second limit in~\eqref{eq:Linfty_conv_beta_alpha} follows similarly and we leave the proof to the reader.
\end{proof}

\subsection{Weak convergence of \texorpdfstring{$\beta$}{beta}-variation as \texorpdfstring{$\beta\to\alpha^-$}{beta tends to alpha-}}

In \cref{res:weak_conv_nabla_beta_alpha} below, we prove the weak convergence of the $\beta$-variation as $\beta\to\alpha^-$. The proof is very similar to those of \cref{res:weak_conv_nabla_alpha_to_1} and \cref{res:weak_conv_tot_var} and is thus left to the reader.

\begin{theorem}\label{res:weak_conv_nabla_beta_alpha}
Let $\alpha\in(0,1)$. If $f \in BV^\alpha(\R^{n})$, then 
\begin{equation*}
D^\beta f\weakto D^\alpha f
\quad
\text{and}
\quad
|D^\beta f|\weakto |D^\alpha f|
\quad\text{as $\beta\to\alpha^-$}.
\end{equation*}
Moreover, we have
\begin{equation}\label{eq:conv_tot_mass_alpha_beta}
\lim_{\beta\to\alpha^-}|D^\beta f|(\R^n)=|D^\alpha f|(\R^n).	
\end{equation}
\end{theorem}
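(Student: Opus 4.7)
The plan is to mirror the strategy of \cref{res:weak_conv_nabla_alpha_to_1} and \cref{res:weak_conv_tot_var}, playing $\div^\alpha$ off against its approximants $\div^\beta$ via the $L^\infty$-convergence of \cref{res:Linfty_conv_beta_alpha}, and then exploiting the representation formula $\nabla^\beta f=I_{\alpha-\beta}D^\alpha f$ from \cref{res:repres_formula_BV_alpha_beta} together with \cref{res:weak_conv_riesz} to transfer Riesz-potential asymptotics to the $\beta$-variation.

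First I would establish $D^\beta f\weakto D^\alpha f$. Fix $\phi\in C^\infty_c(\R^n;\R^n)$. By \cref{res:repres_formula_BV_alpha_beta} we have $f\in W^{\beta,1}(\R^n)$ for every $\beta\in(0,\alpha)$, so $D^\beta f=\nabla^\beta f\,\Leb{n}$ and the integration-by-parts identity $\int_{\R^n}\phi\cdot\nabla^\beta f\,dx=-\int_{\R^n} f\,\div^\beta\phi\,dx$ holds. Since $\phi\in\Lip_c(\R^n;\R^n)$, \cref{res:Linfty_conv_beta_alpha} yields $\div^\beta\phi\to\div^\alpha\phi$ in $L^\infty(\R^n)$, and combining this with $f\in L^1(\R^n)$ gives
\begin{equation*}
\lim_{\beta\to\alpha^-}\int_{\R^n}\phi\cdot\nabla^\beta f\,dx=-\int_{\R^n}f\,\div^\alpha\phi\,dx=\int_{\R^n}\phi\cdot dD^\alpha f.
\end{equation*}
To upgrade the test class to $C^0_c(\R^n;\R^n)$ I would approximate $\phi\in C^0_c(\R^n;\R^n)$ uniformly by $\psi_\eps\in C^\infty_c(\R^n;\R^n)$ supported in a fixed bounded open set $U$, controlling the error via the uniform-in-$\beta$ local estimate~\eqref{eq:weak_frac_grad_loc_estimate_alpha_beta}, exactly as in Step~1 of the proof of \cref{res:weak_conv_nabla_alpha_to_1}.

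Next I would prove $|D^\beta f|\weakto|D^\alpha f|$. The weak* convergence of the vector measures, together with \cite{M12}*{Proposition~4.29}, already gives $|D^\alpha f|(A)\le\liminf_{\beta\to\alpha^-}|D^\beta f|(A)$ for every open set $A\subset\R^n$. For the reverse inequality on compact sets, the representation $\nabla^\beta f=I_{\alpha-\beta}D^\alpha f$ yields the pointwise bound $|\nabla^\beta f|\le I_{\alpha-\beta}|D^\alpha f|$ and hence $|D^\beta f|(K)\le\int_K I_{\alpha-\beta}|D^\alpha f|\,dx$ for every compact $K\subset\R^n$. Applying \cref{res:weak_conv_riesz} to the finite measure $|D^\alpha f|$ gives $(I_{\alpha-\beta}|D^\alpha f|)\,\Leb{n}\weakto|D^\alpha f|$ as $\beta\to\alpha^-$, and \cite{M12}*{Proposition~4.26} then yields $\limsup_{\beta\to\alpha^-}|D^\beta f|(K)\le|D^\alpha f|(K)$. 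Combining both inequalities gives the weak* convergence of the total variations.

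For the last assertion, $|D^\alpha f|(\R^n)\le\liminf_{\beta\to\alpha^-}|D^\beta f|(\R^n)$ is immediate from the liminf inequality just established applied to the open set $\R^n$. The matching limsup is the only real work: the interpolation estimate~\eqref{eq:Davila_estimate_alpha_beta_interp} reads
\begin{equation*}
|D^\beta f|(\R^n)=\|\nabla^\beta f\|_{L^1(\R^n;\R^n)}\le C(n,\alpha,\beta)\,\|f\|_{L^1(\R^n)}^{1-\frac{\beta}{\alpha}}\,|D^\alpha f|(\R^n)^{\frac{\beta}{\alpha}},
\end{equation*}
and I would verify that $\lim_{\beta\to\alpha^-}C(n,\alpha,\beta)=1$. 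The subtle point is that $C(n,\alpha,\beta)$ contains the singular factor $(\alpha-\beta)^{-1}$, but the Riesz-potential normalisation $\mu_{n,\cdot}$ appearing in the numerator vanishes linearly as $\beta\to\alpha^-$ by the asymptotic~\eqref{eq:def_mu_alpha_asym} of \cref{lem:mu_alpha_bound}, exactly cancelling the singularity; together with $\omega_{n,1}=n\omega_n$ and the continuity of the remaining factors at $\beta=\alpha$, this produces the value $1$ in the limit. Verifying this cancellation is the main technical obstacle; the rest of the argument is a direct transcription of the $\alpha\to1^-$ template.
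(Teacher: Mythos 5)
Your proposal is correct and follows essentially the same route as the paper: the same integration by parts against $\Lip_c$ fields combined with the $L^\infty$-convergence $\div^\beta\phi\to\div^\alpha\phi$, the same upgrade to $C^0_c$ tests via \eqref{eq:weak_frac_grad_loc_estimate_alpha_beta}, and the same use of $\nabla^\beta f=I_{\alpha-\beta}D^\alpha f$ with \cref{res:weak_conv_riesz} and the open/compact criteria of \cite{M12} for the weak* convergence of the total variations. The only (harmless) deviation is in the final $\limsup$: the paper applies \eqref{eq:Davila_estimate_alpha_beta} directly with $A=\R^n$ and $r=1$, which avoids the exponents $\beta/\alpha$ and the degenerate case $\|f\|_{L^1(\R^n)}=0$, whereas you use the optimised form \eqref{eq:Davila_estimate_alpha_beta_interp}; your cancellation of $(\alpha-\beta)^{-1}$ against $\mu_{n,1+\beta-\alpha}\sim(\alpha-\beta)/\omega_n$ is indeed correct and yields the same conclusion.
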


\subsection{\texorpdfstring{$\Gamma$}{Gamma}-convergence of \texorpdfstring{$\beta$}{beta}-variation as \texorpdfstring{$\beta\to\alpha^-$}{beta tends to alpha-}}

In this section, we study the $\Gamma$-con\-vergence of the fractional $\beta$-variation as $\beta\to\alpha^-$, partially extending the results obtained in \cref{subsec:Gamma_conv}. 

We begin with the $\Gamma\mhypen\liminf$ inequality.

\begin{theorem}[$\Gamma\mhypen\liminf$ inequality for $\beta\to\alpha^-$]\label{res:Gamma_liminf_alpha_beta}
Let $\alpha\in(0,1)$ and let $\Omega\subset\R^n$ be an open set. If $(f_\beta)_{\beta\in(0,\alpha)}\subset L^1(\R^n)$ satisfies $f_\beta\to f$ in $L^1(\R^n)$ as $\beta\to\alpha^-$, then
\begin{equation}\label{eq:Gamma_liminf_alpha_beta}
|D^\alpha f|(\Omega)\le\liminf\limits_{\beta\to\alpha^-}|D^\beta f_\beta|(\Omega).
\end{equation}
\end{theorem}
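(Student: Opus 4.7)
The plan is to mimic the argument given for \cref{res:Gamma_liminf}\eqref{item:Gamma_liminf_L_1}, replacing the role of the asymptotics $\mathrm{div}^\alpha\phi\to\mathrm{div}\,\phi$ (handled there by \cref{res:L1_converg}) with the asymptotics $\mathrm{div}^\beta\phi\to\mathrm{div}^\alpha\phi$ provided by \cref{res:Linfty_conv_beta_alpha} in the case $p=+\infty$. The whole argument is therefore a standard duality computation.

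Concretely, I would fix $\phi\in C^\infty_c(\Omega;\R^n)$ with $\|\phi\|_{L^\infty(\Omega;\R^n)}\le1$. Then $\phi\in\Lip_c(\R^n;\R^n)$, so by \cref{prop:frac_div_repr} we have $\mathrm{div}^\alpha\phi\in L^\infty(\R^n)$ and by \cref{res:Linfty_conv_beta_alpha} (with $p=+\infty$) also $\|\mathrm{div}^\beta\phi-\mathrm{div}^\alpha\phi\|_{L^\infty(\R^n)}\to0$ as $\beta\to\alpha^-$. Writing
\begin{align*}
\bigg|\int_{\R^n}f_\beta\,\mathrm{div}^\beta\phi\,dx-\int_{\R^n}f\,\mathrm{div}^\alpha\phi\,dx\bigg|
&\le\int_{\R^n}|f_\beta-f|\,|\mathrm{div}^\alpha\phi|\,dx+\int_{\R^n}|f_\beta|\,|\mathrm{div}^\beta\phi-\mathrm{div}^\alpha\phi|\,dx\\
&\le\|\mathrm{div}^\alpha\phi\|_{L^\infty(\R^n)}\,\|f_\beta-f\|_{L^1(\R^n)}\\
&\quad+\|\mathrm{div}^\beta\phi-\mathrm{div}^\alpha\phi\|_{L^\infty(\R^n)}\,\|f_\beta\|_{L^1(\R^n)},
\end{align*}
and recalling that $\sup_{\beta\in(0,\alpha)}\|f_\beta\|_{L^1(\R^n)}<+\infty$ since $f_\beta\to f$ in $L^1(\R^n)$, both terms on the right-hand side vanish as $\beta\to\alpha^-$.

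Hence
\[
\int_{\R^n}f\,\mathrm{div}^\alpha\phi\,dx
=\lim_{\beta\to\alpha^-}\int_{\R^n}f_\beta\,\mathrm{div}^\beta\phi\,dx
\le\liminf_{\beta\to\alpha^-}|D^\beta f_\beta|(\Omega),
\]
where in the last inequality we used $\mathrm{supp}\,\phi\subset\Omega$ and $\|\phi\|_{L^\infty(\Omega;\R^n)}\le1$ together with the definition of $|D^\beta f_\beta|(\Omega)$. Taking the supremum over all admissible $\phi$ yields~\eqref{eq:Gamma_liminf_alpha_beta}.

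There is no genuine obstacle here: the only thing to check is that $\mathrm{div}^\alpha\phi$ is well defined and bounded (so that the first error term is controlled) and that $\mathrm{div}^\beta\phi\to\mathrm{div}^\alpha\phi$ uniformly (so that the second error term is controlled), both of which are already available from the preceding preliminary results.
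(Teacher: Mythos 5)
Your proposal is correct and follows essentially the same route as the paper: a duality estimate against test fields $\phi\in C^\infty_c(\Omega;\R^n)$ with $\|\phi\|_{L^\infty}\le1$, using the uniform convergence $\div^\beta\phi\to\div^\alpha\phi$ from \cref{res:Linfty_conv_beta_alpha}. The only (immaterial) difference is the splitting of the error term: the paper pairs $|f_\beta-f|$ with $|\div^\beta\phi|$ (controlled via \eqref{eq:frac_div_repr_Lip_estimate}) and $|f|$ with $|\div^\beta\phi-\div^\alpha\phi|$, whereas you pair $|f_\beta-f|$ with $|\div^\alpha\phi|$ and $|f_\beta|$ with the difference, invoking the uniform $L^1$-bound on $(f_\beta)$.
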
  

\begin{proof}
We argue as in the proof of \cref{res:Gamma_liminf}\eqref{item:Gamma_liminf_L_1}. Let $\phi\in C^\infty_c(\Omega;\R^n)$ be such that $\|\phi\|_{L^\infty(\Omega;\R^n)}\le1$. Let $U\subset\R^n$ be a bounded open set such that $\supp\phi\subset U$. By~\eqref{eq:frac_div_repr_Lip_estimate}, we can estimate
\begin{align*}
\bigg|\int_{\R^n}f_\beta\,\div^\beta\phi\,dx&-\int_{\R^n}f\,\div^\alpha\phi\,dx\bigg|
\le\int_{\R^n}\abs*{f_\beta-f}\,|\div^\beta\phi|\,dx
+\int_{\R^n}|f|\,|\div^\beta\phi-\div^\alpha\phi|\,dx\\
&\le C_{n,\beta,U}\|\div\phi\|_{L^\infty(\R^n;\,\R^n)}\|f_\beta-f\|_{L^1(\R^n)}+\int_{\R^n}|f|\,|\div^\beta\phi-\div^\alpha\phi|\,dx
\end{align*}
for all $\beta\in(0,\alpha)$. Since $\div^\beta\phi \to\div^\alpha\phi$ in $L^{\infty}(\R^{n})$ as~$\beta\to\alpha^-$ by~\eqref{eq:Linfty_conv_beta_alpha}, we easily obtain
\begin{equation*}
\lim_{\beta\to\alpha^-}\int_{\R^n}|f|\,|\div^\beta\phi-\div^\alpha\phi|\,dx = 0.
\end{equation*}
Hence, we get
\begin{align*}
\int_{\R^n}f\,\div^\alpha\phi\,dx
=\lim_{\beta\to\alpha^-}\int_{\R^n}f_\beta\,\div^\beta\phi\,dx
\le\liminf_{\beta\to\alpha^-}|D^\beta f_\beta|(\Omega)
\end{align*}
and the conclusion follows. 
\end{proof}

We now pass to the $\Gamma\mhypen\limsup$ inequality.

\begin{theorem}[$\Gamma\mhypen\limsup$ inequality for $\beta\to\alpha^-$]\label{res:Gamma_limsup_alpha_beta}
Let $\alpha\in(0,1)$ and let $\Omega\subset\R^n$ be an open set. If $f\in BV^\alpha(\R^n)$ and either $\Omega$ is bounded or $\Omega=\R^n$, then
\begin{equation}\label{eq:limsup_alpha_beta}
\limsup_{\beta\to\alpha^-}|D^\beta f|(\Omega)\le |D^\alpha f|(\overline{\Omega}).
\end{equation}
\end{theorem}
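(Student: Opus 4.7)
The proof is a direct consequence of the weak convergence results established in \cref{res:weak_conv_nabla_beta_alpha}, and closely mirrors the argument used for \cref{res:Gamma_limsup}\eqref{item:Gamma_limsup_BV}. The plan is to split into the two cases according to whether $\Omega = \R^n$ or $\Omega$ is bounded, and invoke, respectively, the convergence of total masses~\eqref{eq:conv_tot_mass_alpha_beta} and the weak convergence $|D^\beta f| \weakto |D^\alpha f|$ as $\beta \to \alpha^-$.

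If $\Omega = \R^n$, there is nothing to do: \eqref{eq:limsup_alpha_beta} is literally the limit~\eqref{eq:conv_tot_mass_alpha_beta} (which even gives equality, and with $\closure{\Omega} = \R^n$ on the right-hand side).

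If $\Omega$ is bounded, then $\closure{\Omega}$ is a compact subset of $\R^n$. By \cref{res:weak_conv_nabla_beta_alpha}, we have $|D^\beta f| \weakto |D^\alpha f|$ as $\beta \to \alpha^-$ in $\mathscr M(\R^n)$, and the measures $|D^\beta f|$ have uniformly bounded total variation (again by~\eqref{eq:conv_tot_mass_alpha_beta}). Applying the standard upper semicontinuity of weak-star convergent measures on compact sets (\cite{M12}*{Proposition~4.26}), we obtain
\begin{equation*}
\limsup_{\beta \to \alpha^-} |D^\beta f|(\Omega)
\le \limsup_{\beta \to \alpha^-} |D^\beta f|(\closure{\Omega})
\le |D^\alpha f|(\closure{\Omega}),
\end{equation*}
which is exactly~\eqref{eq:limsup_alpha_beta}.

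Since both ingredients (the weak convergence of the total variations and the convergence of masses on $\R^n$) are already established in \cref{res:weak_conv_nabla_beta_alpha}, there is no genuine obstacle here; the argument is a word-for-word transcription of the proof of \cref{res:Gamma_limsup}\eqref{item:Gamma_limsup_BV} with $\alpha \to 1^-$ replaced by $\beta \to \alpha^-$ and $|Df|$ replaced by $|D^\alpha f|$. Note that, unlike in \cref{res:Gamma_limsup}, no analogue of the $BV_{\loc}$ statement~\eqref{item:Gamma_limsup_BV_loc} is asserted, so we do not need the extra truncation argument based on \cref{res:BV_cut_ball}.
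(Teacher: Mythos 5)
Your proof is correct and follows essentially the same route as the paper: the authors likewise deduce~\eqref{eq:limsup_alpha_beta} from the weak* convergence $|D^\beta f|\weakto|D^\alpha f|$ of \cref{res:weak_conv_nabla_beta_alpha} together with the upper semicontinuity on compact sets from \cite{M12}*{Proposition~4.26} and the total mass convergence~\eqref{eq:conv_tot_mass_alpha_beta}. No gaps.
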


\begin{proof}
We argue as in the proof of \cref{res:Gamma_limsup}. By \cref{res:weak_conv_nabla_beta_alpha}, we know that $|D^\beta f|\weakto|D^\alpha f|$ as $\beta\to\alpha^-$. Thus, by~\cite{M12}*{Proposition~4.26} and~\eqref{eq:conv_tot_mass_alpha_beta}, we get that
\begin{equation}\label{eq:limsup_BV_Omega_bounded_alpha_beta}
\limsup_{\beta\to\alpha^-}|D^\beta f|(\Omega)
\le\limsup_{\beta\to\alpha^-}|D^\beta f|(\closure[-.5]{\Omega})
\le|D^\alpha f|(\closure[-.5]{\Omega})
\end{equation}
for any open set $\Omega\subset\R^n$ such that either $\Omega$ is bounded or $\Omega=\R^n$.
\end{proof}

\begin{corollary}[$\Gamma(L^1)\mhypen\lim$ of variations in $\R^n$ as $\beta\to \alpha^-$]\label{res:gamma_lim_BV_beta_to_alpha}
Let $\alpha \in (0, 1)$. For every $f\in BV^{\alpha}(\R^n)$, we have
\begin{equation*}
\Gamma(L^1)\mhypen\lim_{\beta\to \alpha^-}|D^\beta f|(\R^{n})
=|D^{\alpha}f|(\R^{n}).
\end{equation*}
In particular, the constant sequence is a recovery sequence.
\end{corollary}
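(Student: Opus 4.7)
The plan is to recognize that this statement is an essentially immediate consequence of the two preceding theorems, since taking $\Omega=\R^n$ in \cref{res:Gamma_limsup_alpha_beta} gives $\overline{\Omega}=\R^n$, which makes the boundary issue disappear.

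First, I would invoke \cref{res:Gamma_liminf_alpha_beta} with $\Omega=\R^n$: for any family $(f_\beta)_{\beta\in(0,\alpha)}\subset L^1(\R^n)$ with $f_\beta\to f$ in $L^1(\R^n)$ as $\beta\to\alpha^-$, one has
\begin{equation*}
|D^\alpha f|(\R^n)\le\liminf_{\beta\to\alpha^-}|D^\beta f_\beta|(\R^n).
\end{equation*}
This is precisely the $\Gamma(L^1)\mhypen\liminf$ inequality for the functionals $f\mapsto|D^\beta f|(\R^n)$ at the point $f$.

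Next, I would apply \cref{res:Gamma_limsup_alpha_beta} with $\Omega=\R^n$, noting that $\overline{\Omega}=\R^n$, so that the constant sequence $f_\beta\equiv f$ satisfies
\begin{equation*}
\limsup_{\beta\to\alpha^-}|D^\beta f|(\R^n)\le|D^\alpha f|(\R^n).
\end{equation*}
Since the constant sequence trivially converges to $f$ in $L^1(\R^n)$, this shows both that the $\Gamma(L^1)\mhypen\limsup$ at $f$ is bounded above by $|D^\alpha f|(\R^n)$ and that this bound is achieved by the constant recovery sequence.

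Combining the two inequalities, the $\Gamma(L^1)$-limit exists and equals $|D^\alpha f|(\R^n)$, with $f_\beta\equiv f$ being a recovery sequence. There is no genuine obstacle here: the role of the assumption $\Omega=\R^n$ in \cref{res:Gamma_limsup_alpha_beta} is precisely to ensure $|D^\alpha f|(\overline\Omega)=|D^\alpha f|(\Omega)$, so that the pointwise limsup matches the $\Gamma$-liminf; for a general bounded $\Omega$ one would need to additionally require $|D^\alpha f|(\partial\Omega)=0$, which is why the statement of the corollary is restricted to the case $\Omega=\R^n$.
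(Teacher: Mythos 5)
Your proof is correct and follows exactly the same route as the paper: combine the $\Gamma\mhypen\liminf$ inequality of \cref{res:Gamma_liminf_alpha_beta} with the $\limsup$ bound of \cref{res:Gamma_limsup_alpha_beta}, both taken with $\Omega=\R^n$ so that $\closure[-.5]{\Omega}=\R^n$ and the constant sequence is a recovery sequence. Your closing remark about why $\Omega=\R^n$ (or more generally $|D^\alpha f|(\de\Omega)=0$) is needed matches the discussion in the remark following the corollary.
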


\begin{proof}
The result follows easily by combining \eqref{eq:Gamma_liminf_alpha_beta} and \eqref{eq:limsup_alpha_beta} in the case $\Omega = \R^{n}$.
\end{proof}

\begin{remark}
We recall that, by~\cite{CS18}*{Theorem 3.25}, $f \in BV^{\alpha}(\R^{n})$ satisfies $|D^{\alpha} f| \ll \Leb{n}$ if and only if $f \in S^{\alpha, 1}(\R^{n})$. Therefore, if $f \in S^{\alpha, 1}(\R^{n})$, then $|D^{\alpha}f|(\partial \Omega) = 0$ for any bounded open set $\Omega\subset\R^n$ such that $\Leb{n}(\partial \Omega) = 0$ (for instance, $\Omega$ with Lipschitz boundary). Thus, we can actually obtain the $\Gamma$-convergence of the fractional $\beta$-variation as~$\beta\to\alpha^-$ on bounded open sets with Lipschitz boundary for any $f \in S^{\alpha, 1}(\R^{n})$ too. Indeed, it is enough to combine \eqref{eq:Gamma_liminf_alpha_beta} and \eqref{eq:limsup_alpha_beta} and then exploit the fact that \mbox{$|D^{\alpha} f|(\de\Omega)=0$} to get
\begin{equation*}
\Gamma(L^1)\mhypen\lim_{\beta\to \alpha^-}|D^\beta f|(\Omega)
=|D^{\alpha}f|(\Omega)
\end{equation*}
for any $f \in S^{\alpha, 1}(\R^{n})$.
\end{remark}

We were not able to find a reference for the analogue of \cref{res:gamma_lim_BV_beta_to_alpha} for the usual fractional Sobolev seminorms. 
For the sake of completeness, we state and prove it below for all $p\in[1,+\infty)$ on a general open set.

\begin{theorem}[$\Gamma(L^p)\mhypen\lim$ of $W^{\beta, p}$-seminorm as $\beta\to \alpha^-$]
Let $\Omega\subset\R^n$ be a non-empty open set, $\alpha \in (0, 1)$ and $p\in[1,+\infty)$. 
For every $f\in W^{\alpha, p}(\Omega)$, we have
\begin{equation*}
\Gamma(L^p)\mhypen\lim_{\beta\to \alpha^-} [f]_{W^{\beta,p}(\Omega)}
=[f]_{W^{\alpha,p}(\Omega)}.
\end{equation*}
In particular, the constant sequence is a recovery sequence.
\end{theorem}

\begin{proof}
Let $(f_\beta)_{\beta\in(0,\alpha)}\subset L^p(\Omega)$ be such that $f_\beta\to f$ in $L^p(\Omega)$ as $\beta\to\alpha^-$. 
Let $(\beta_k) \subset (0, \alpha)$ be such that $\beta_k\to\alpha$ as $k\to+\infty$ and
\begin{equation*}
\liminf_{\beta \to \alpha^-} [f_{\beta}]_{W^{\beta,p}(\Omega;\,\R^m)} = \lim_{k \to + \infty} [f_{\beta_k}]_{W^{\beta_k,p}(\Omega;\,\R^m)}.
\end{equation*}
Up to extract a further subsequence, we can assume that $f_{\beta_k}(x) \to f(x)$ as $k \to + \infty$ for a.e.\ $x \in \Omega$.
Then we can estimate
\begin{align*}
\lim_{k \to + \infty} \int_\Omega\int_\Omega\frac{|f_{\beta_k}(x)-f_{\beta_k}(y)|^p}{|x-y|^{n+p\beta_k}}\,dx\,dy & \ge \int_\Omega\int_\Omega \liminf_{k \to + \infty} \frac{|f_{\beta_k}(x)-f_{\beta_k}(y)|^p}{|x-y|^{n+p\beta_k}}\,dx\,dy \\
& \ge \int_\Omega\int_\Omega\frac{|f(x)-f(y)|^p}{|x-y|^{n+p\alpha}}\,dx\,dy
\end{align*}
by Fatou's Lemma. 
We thus get that
\begin{equation*}
\Gamma(L^p)\mhypen\liminf_{\beta\to \alpha^-} \,
[f]_{W^{\beta,p}(\Omega)}
\ge
[f]_{W^{\alpha,p}(\Omega)}.
\end{equation*}
Since
\begin{align*}
\lim_{\beta \to \alpha^-} \int_\Omega\int_\Omega\frac{|f(x)-f(y)|^p}{|x-y|^{n+p\beta}}\,dx\,dy 
&= 
\lim_{\beta \to \alpha^-} \int_\Omega\int_\Omega\frac{|f(x)-f(y)|^p}{|x-y|^{n+p\beta}}\,\chi_{\set*{|x-y|<1}}\,dx\,dy 
\\
&\quad+
\lim_{\beta \to \alpha^-} \int_\Omega\int_\Omega\frac{|f(x)-f(y)|^p}{|x-y|^{n+p\beta}}\,\chi_{\set*{|x-y|>1}}\,dx\,dy 
\\
&=
\int_\Omega\int_\Omega\frac{|f(x)-f(y)|^p}{|x-y|^{n+p\alpha}}\,dx\,dy
\end{align*}
by the Monotone Convergence Theorem, we also have that 
\begin{equation*}
\Gamma(L^p)\mhypen\limsup_{\beta\to \alpha^-} \,[f]_{W^{\beta,p}(\Omega)}
\le
[f]_{W^{\alpha,p}(\Omega)}
\end{equation*} 
and the conclusion immediately follows.
\end{proof}


\appendix

\section{Truncation and approximation of \texorpdfstring{$BV$}{BV} functions}
\label{sec:appendix}

In this appendix, we deal with two results on $BV$ functions and sets with locally finite perimeter.
These results are well known to experts, but we decided to state and prove them here because either we were not able to find them formulated in the exact form we needed  or the results available in the literature were not proved in full correctness (see \cref{rem:gappone} below).

\subsection{Truncation of \texorpdfstring{$BV$}{BV} functions}
Following~\cite{AFP00}*{Section~3.6} and~\cite{EG15}*{Section~5.9}, given $f\in L^1_{\loc}(\R^n)$, we define its precise representative~$f^\star\colon\R^n\to[-\infty,+\infty]$ as
\begin{equation}\label{eq:def_precise_repres}
f^\star(x):=\lim_{r\to0^+}\frac{1}{\omega_n r^n}\int_{B_r(x)} f(y)\,dy,
\quad x\in\R^n,
\end{equation} 
if the limit exists, otherwise we let $f^\star(x)=0$ by convention.

\begin{theorem}[Truncation of $BV$ functions]\label{res:BV_cut_ball}
If $f\in BV_{\loc}(\R^n)$, then
\begin{equation}\label{eq:BV_cut_ball}
f\chi_{B_r}\in BV(\R^n),
\ \text{with}\ 
D(f\chi_{B_r})=\chi_{B_r}^\star Df+f^\star D\chi_{B_r},
\end{equation}
for $\Leb{1}$-a.e.\ $r>0$. If, in addition, $f\in L^\infty(\R^n)$, then~\eqref{eq:BV_cut_ball} holds for all~$r>0$. 
\end{theorem}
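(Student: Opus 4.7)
The natural strategy is to appeal to the Leibniz rule for products of $BV$ functions (see, e.g., \cite{AFP00}*{Theorem~3.96} or \cite{EG15}*{Section~5.9}), treating first the bounded case and then deducing the general case by a truncation argument.

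Assume first $f\in L^\infty(\R^n)\cap BV_{\loc}(\R^n)$. Then both $f$ and $\chi_{B_r}$ belong to $L^\infty_{\loc}(\R^n)\cap BV_{\loc}(\R^n)$, so the classical Leibniz rule yields
\begin{equation*}
D(f\chi_{B_r}) = \chi_{B_r}^\star Df + f^\star D\chi_{B_r}
\quad\text{in }\mathscr{M}_{\loc}(\R^n;\R^n)
\end{equation*}
for \emph{every} $r>0$. The first term is supported in $\closure{B_r}$ with mass bounded by $\|f\|_{L^\infty(\R^n)}|Df|(\closure{B_r})<+\infty$; the second is concentrated on $\partial B_r$, and since $f^\star$ is essentially bounded and $|D\chi_{B_r}|=\Haus{n-1}\mres\partial B_r$ is finite, it has finite mass. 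Together with $f\chi_{B_r}\in L^1(\R^n)$, this gives $f\chi_{B_r}\in BV(\R^n)$.

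For the general case $f\in BV_{\loc}(\R^n)$, I would approximate $f$ by its truncations $f_k:=(-k)\vee(f\wedge k)\in L^\infty(\R^n)\cap BV_{\loc}(\R^n)$, which satisfy $|Df_k|\le|Df|$ as measures and $f_k\to f$ pointwise a.e.\ and in $L^1_{\loc}(\R^n)$. Applying the bounded case to each $f_k$ provides
\begin{equation*}
D(f_k\chi_{B_r}) = \chi_{B_r}^\star Df_k + f_k^\star D\chi_{B_r},\qquad\forall r>0.
\end{equation*}
Next, pass to the limit in $k$ for those radii $r>0$ satisfying simultaneously $|Df|(\partial B_r)=0$ and $\int_{\partial B_r}|f^\star|\,d\Haus{n-1}<+\infty$: the first condition excludes only the countable set of jump points of the monotone map $r\mapsto|Df|(B_r)$, while the second, by Fubini--Tonelli, holds for $\Leb{1}$-a.e.\ $r>0$ since $\int_{B_R}|f|\,dx<+\infty$ for every $R>0$. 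For such $r$, dominated convergence (with dominating bounds $|f_k^\star|\le|f^\star|$ and $|\chi_{B_r}^\star|\le1$) yields $f_k^\star D\chi_{B_r}\to f^\star D\chi_{B_r}$ in $\mathscr{M}(\R^n;\R^n)$, while $\chi_{B_r}^\star Df_k\to\chi_{B_r}^\star Df$ follows from the weak convergence $Df_k\weakto Df$ in $\mathscr{M}_{\loc}$ combined with the nullity of $|Df|$ and all $|Df_k|$ on $\partial B_r$. Since also $f_k\chi_{B_r}\to f\chi_{B_r}$ in $L^1(\R^n)$ by dominated convergence, the claimed identity survives in the limit.

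The main technical obstacle is the careful handling of precise representatives on the intersection $J_f\cap\partial B_r$ of the jump set of $f$ with the sphere $\partial B_r$. In the bounded case, the Leibniz rule holds for \emph{every} $r>0$ precisely because the specific choice $\chi_{B_r}^\star=\frac12$ on $\partial B_r$ symmetrically splits the jump contributions between the two factors; checking this compatibility amounts to verifying, via the fine properties of $BV$ functions, the elementary identity $u^+v^+-u^-v^-=\tilde u(v^+-v^-)+\tilde v(u^+-u^-)$ whenever $\tilde u,\tilde v$ are the symmetric averages of the one-sided traces. In the unbounded case, the truncation forces us to exclude the $\Leb{1}$-null set of radii at which $|Df|$ or $f^\star$ concentrates on $\partial B_r$, which is precisely the origin of the ``$\Leb{1}$-a.e.'' conclusion.
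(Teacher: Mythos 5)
Your argument is correct, but it follows a genuinely different route from the paper's. The paper never invokes the $BV\cap L^\infty$ product rule: it mollifies, $f_\eps:=f*\rho_\eps$, applies the smooth Leibniz rule and Fubini to get $\int f_\eps\chi_{B_r}\div\phi\,dx=-\int f_\eps\phi\cdot dD\chi_{B_r}-\int\rho_\eps*(\chi_{B_r}\phi)\cdot dDf$, and passes to the limit in $\eps$; the only fine-property input is that $f_\eps\to f^\star$ $\Haus{n-1}$-a.e.\ (\cite{AFP00}*{Theorem~3.78, Corollary~3.80}), and in the unbounded case the good radii are selected by a Fatou argument applied to $\int_{\de B_r}|f_\eps-f^\star|\,d\Haus{n-1}$ together with the radii where $f^\star\in L^1(\de B_r,\Haus{n-1})$. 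You instead front-load the fine analysis into the classical product rule for $BV\cap L^\infty$ pairs (which is \cite{AFP00}*{Example~3.97} rather than Theorem~3.96 — the latter is the chain rule — and whose jump-set verification indeed reduces to the identity you state, provided the jump normals of the two factors are chosen consistently on $J_u\cap J_v$), and then remove the boundedness assumption by the vertical truncations $f_k$ and a limit in $k$. This buys you an essentially immediate bounded case valid for every $r$, at the price of a more delicate limiting step: you must restrict to radii with $|Df|(\de B_r)=0$ (to pass $\chi_{B_r}^\star Df_k\weakto\chi_{B_r}^\star Df$ against the discontinuous weight) in addition to $f^\star\in L^1(\de B_r,\Haus{n-1})$, so your exceptional set is a priori larger (though still $\Leb{1}$-null, since the first condition fails only for countably many $r$). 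All the individual claims check out — in particular $|Df_k|\le|Df|$ by the coarea formula, and $|f_k^\star|\le|f^\star|$ $\Haus{n-1}$-a.e.\ because truncation commutes with one-sided traces and $|T_k(a)+T_k(b)|\le|a+b|$ — so the proposal stands as a valid alternative proof; the paper's version is merely more self-contained.
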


\begin{proof}
Fix $\phi\in C^\infty_c(\R^n;\R^n)$ and let $U\subset\R^n$ be a bounded open set such that $\supp(\phi)\subset U$. Let $(\rho_\eps)_{\eps>0}\subset C^\infty_c(\R^n)$ be a family of standard mollifiers as in~\cite{CS18}*{Section~3.3} and set $f_\eps:=f*\rho_\eps$ for all $\eps>0$. Note that $\supp\big(\rho_\eps*(\chi_{B_r}\phi)\big)\subset U$ and $\supp\big(\rho_\eps*(\chi_{B_r}\div\phi)\big)\subset U$ for all $\eps>0$ sufficiently small and for all $r>0$. Given $r>0$, by Leibniz's rule and Fubini's Theorem, we have
\begin{equation}\label{eq:split_cut_ball_test}
\begin{split}
\int_{\R^n}f_\eps\chi_{B_r}\,\div\phi\,dx
&=\int_{\R^n}\chi_{B_r}\div(f_\eps\phi)\,dx
-\int_{\R^n}\chi_{B_r}\phi\cdot\nabla f_\eps\,dx\\
&=-\int_{\R^n}f_\eps\phi\cdot\,dD\chi_{B_r}
-\int_{\R^n}\rho_\eps*(\chi_{B_r}\phi)\cdot dDf.	
\end{split}	
\end{equation}
Since $f_\eps\to f$ a.e.\ in~$\R^n$ as $\eps\to0^+$ and
\begin{equation*}
|f|\,\rho_\eps*(\chi_{B_r}|\div\phi|)\le|f|\chi_U\|\div\phi\|_{L^\infty(\R^n)}\in L^1(\R^n)
\end{equation*}
for all $\eps>0$, by Lebesgue's Dominated Convergence Theorem we have
\begin{equation*}
\lim_{\eps\to0^+}\int_{\R^n}f_\eps\chi_{B_r}\,\div\phi\,dx
=\int_{\R^n}f\chi_{B_r}\,\div\phi\,dx
\end{equation*}
for all $r>0$. Thus, since $\rho_\eps*(\chi_{B_r}\phi)\to\chi_{B_r}^\star\phi$ pointwise in~$\R^n$ as $\eps\to0^+$ and 
\begin{equation*}
|\rho_\eps*(\chi_{B_r}\phi)|\le\|\phi\|_{L^\infty(\R^n;\,\R^n)}\chi_U\in L^1(\R^n,|Df|)
\end{equation*}
for all $\eps>0$ sufficiently small, again by Lebesgue's Dominated Convergence Theorem we have
\begin{equation*}
\lim_{\eps\to0^+}\int_{\R^n}\rho_\eps*(\chi_{B_r}\phi)\cdot dDf
=\int_{\R^n}\chi_{B_r}^\star\phi\cdot dDf
\end{equation*}
for all $r>0$. Now, by~\cite{AFP00}*{Theorem~3.78 and Corollary~3.80}, we know that $f_\eps\to f^\star$ $\Haus{n-1}$-a.e.\ in~$\R^n$ as $\eps\to0^+$. As a consequence, given any $r>0$, we get that $f_\eps\to f^\star$ $|D\chi_{B_r}|$-a.e.\ in~$\R^n$ as $\eps\to0^+$. Thus, if $f\in L^\infty(\R^n)$, then
\begin{equation*}
|f_\eps\phi|\le\|f\|_{L^\infty(\R^n)}|\phi|\in L^1(\R^n,|D\chi_{B_r}|)
\end{equation*}
for all $\eps>0$ and so, again by Lebesgue's Dominated Convergence Theorem, we have
\begin{align*}
\lim_{\eps\to0^+}\int_{\R^n}f_\eps\phi\cdot\,dD\chi_{B_r}
=\int_{\R^n}f^\star\phi\cdot\,dD\chi_{B_r}
\end{align*}
for all $r>0$. Therefore, if $f\in L^\infty(\R^n)$, then we can pass to the limit as $\eps\to0^+$ in~\eqref{eq:split_cut_ball_test} and get
\begin{equation*}
\int_{\R^n}f\chi_{B_r}\,\div\phi\,dx
=-\int_{\R^n}f^\star\phi\cdot\,dD\chi_{B_r}
-\int_{\R^n}\chi_{B_r}^\star\phi\cdot dDf
\end{equation*}
for all $\phi\in C^\infty_c(\R^n;\R^n)$ and for all $r>0$. Since $\|f^\star\|_{L^\infty(\R^n)}\le\|f\|_{L^\infty(\R^n)}$, this proves~\eqref{eq:BV_cut_ball} for all $r>0$. 

If~$f$ is not necessarily bounded, then we argue as follows. We start by observing that, since
\begin{equation*}
\int_0^R\int_{\de B_r}|f^\star|\,d\Haus{n-1}\,dr
=\int_{B_R}|f^\star|\,dx = \int_{B_R}|f|\,dx <+\infty,
\end{equation*}
the set
\begin{equation*}
W:=\set*{r>0 : \int_{\de B_r}|f^\star|\,d\Haus{n-1}\,dr<+\infty}
\end{equation*}
satisfies $\Leb{1}((0,+\infty)\setminus W)=0$. 
Without loss of generality, assume that $\|\phi\|_{L^\infty(\R^n;\,\R^n)}\le1$. 
Hence, for all $r \in W$, we can estimate
\begin{equation}\label{eq:simple_estimate_cut_ball}
\abs*{\int_{\R^n}f_\eps\phi\cdot\,dD\chi_{B_r}
-\int_{\R^n}f^\star\phi\cdot\,dD\chi_{B_r}}
\le
\int_{\de B_r}|f_\eps-f^\star|\,d\Haus{n-1}.
\end{equation}
Given any $R > 0$, by Fatou's Lemma we thus get that
\begin{align*}
\int_0^R\liminf_{\eps\to0^+}\int_{\de B_r}|f_\eps-f^\star|\,d\Haus{n-1}\,dr & \le\liminf_{\eps\to0^+}\int_0^R\int_{\de B_r}|f_\eps-f^\star|\,d\Haus{n-1}\,dr\\
&=\lim_{\eps\to0^+}\int_{B_R}|f_\eps-f^\star|\,dx
=0.
\end{align*}
Hence, the set
\begin{equation}\label{eq:def_Z_cut_ball}
Z:=\set*{r>0 : \liminf_{\eps\to0^+}\int_{\de B_r}|f_\eps-f^\star|\,d\Haus{n-1}=0}
\end{equation}
satisfies $\Leb{1}((0,+\infty)\setminus Z)=0$ and clearly does not depend on the choice of~$\phi$. Now fix $r\in Z \cap W$ and let $(\eps_k)_{k\in\N}$ be any sequence realising the $\liminf$ in~\eqref{eq:def_Z_cut_ball}. By~\eqref{eq:simple_estimate_cut_ball}, we thus get
\begin{equation*}
\lim_{k\to+\infty}\int_{\R^n}f_{\eps_k}\phi\cdot\,dD\chi_{B_r}
=\int_{\R^n}f^\star \phi\cdot\,dD\chi_{B_r}
\end{equation*}
uniformly for all~$\phi$ satisfying $\|\phi\|_{L^\infty(\R^n;\,\R^n)}\le1$. Passing to the limit along the sequence $(\eps_k)_{k\in\N}$ as $k\to+\infty$ in~\eqref{eq:split_cut_ball_test}, we get that
\begin{equation*}
\int_{\R^n}f\chi_{B_r}\,\div\phi\,dx
=-\int_{\R^n}f^\star\phi\cdot\,dD\chi_{B_r}
-\int_{\R^n}\chi_{B_r}^\star\phi\cdot dDf
\end{equation*}
for all $\phi\in C^\infty_c(\R^n;\R^n)$ with $\|\phi\|_{L^\infty(\R^n;\,\R^n)}\le1$. Thus~\eqref{eq:BV_cut_ball} follows for all $r\in W\cap Z$ and the proof is concluded.
\end{proof}

\subsection{Approximation by sets with polyhedral boundary}

In this section we state and prove standard approximation results for sets with finite perimeter or, more generally, $BV_{\loc}(\R^n)$ functions, in a sufficiently regular bounded open set.

We need the following two preliminary lemmas.

\begin{lemma}\label{res:rotation_baire}
Let $V,W\subset\mathbb{S}^{n-1}$, with $V$ finite and $W$ at most countable. For any $\eps>0$, there exists $\mathcal{R}\in\mathrm{SO}(n)$ with $|\mathcal{R}-\Id|<\eps$, where $\Id$ is the identity matrix, such that $\mathcal{R}(V)\cap W=\varnothing$.
\end{lemma}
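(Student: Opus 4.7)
The plan is to view the statement as a Baire category (equivalently, measure-theoretic) assertion in the Lie group $\mathrm{SO}(n)$: the set of rotations $\mathcal{R}$ for which $\mathcal{R}(V)\cap W\neq\varnothing$ should be a countable union of proper submanifolds, hence negligible, so the desired~$\mathcal{R}$ can be found arbitrarily close to~$\Id$. We may assume $n\ge 2$, since for $n=1$ the group $\mathrm{SO}(1)$ is trivial and the statement is either vacuous or a tautology depending on whether $V\cap W=\varnothing$.

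For each pair $(v,w)\in V\times W$, I would consider the set
\begin{equation*}
B_{v,w} := \set*{\mathcal{R}\in\mathrm{SO}(n) : \mathcal{R}v=w}.
\end{equation*}
Because $\mathrm{SO}(n)$ acts transitively on~$\mathbb{S}^{n-1}$, the set $B_{v,w}$ is non-empty and is a left coset of the stabilizer of~$v$, which is isomorphic to $\mathrm{SO}(n-1)$. Hence $B_{v,w}$ is a closed smooth submanifold of $\mathrm{SO}(n)$ of dimension $(n-1)(n-2)/2$, strictly less than $\dim\mathrm{SO}(n)=n(n-1)/2$ (using $n\ge2$). In particular, $B_{v,w}$ is closed, nowhere dense, and has zero Haar measure on $\mathrm{SO}(n)$.

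Next, I would set
\begin{equation*}
B := \bigcup_{v\in V,\,w\in W} B_{v,w}.
\end{equation*}
Since $V$ is finite and~$W$ is at most countable, $B$ is an at most countable union of nowhere dense sets, hence meager by the Baire Category Theorem (alternatively, it is a countable union of Haar-null sets, hence Haar-null). The open ball $U_\eps := \set*{\mathcal{R}\in\mathrm{SO}(n) : |\mathcal{R}-\Id|<\eps}$ is non-empty and open in the connected Lie group $\mathrm{SO}(n)$, so it has positive Haar measure and is not meager. Consequently, $U_\eps\setminus B\neq\varnothing$, and any $\mathcal{R}\in U_\eps\setminus B$ satisfies $\mathcal{R}v\neq w$ for every $v\in V$ and every $w\in W$, that is, $\mathcal{R}(V)\cap W=\varnothing$.

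There is no real obstacle here: once the structure of $B_{v,w}$ as a proper submanifold coming from the homogeneous-space identification $\mathbb{S}^{n-1}\cong\mathrm{SO}(n)/\mathrm{SO}(n-1)$ is recognised, the conclusion is immediate from Baire (or from Haar measure). The only mild care concerns the degenerate case $n=1$, which is handled by inspection, and the verification that $U_\eps$ is genuinely non-empty and open in $\mathrm{SO}(n)$, which follows from the fact that $\Id\in\mathrm{SO}(n)$.
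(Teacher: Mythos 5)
Your proof is correct, and it rests on the same underlying mechanism as the paper's: exhibit the ``bad'' set of rotations as a countable union of closed nowhere dense subsets of $\mathrm{SO}(n)$ and invoke Baire's theorem to find a good rotation inside the ball $\set{|\mathcal{R}-\Id|<\eps}$. The difference lies in the decomposition and in how nowhere-density is certified. The paper fixes $v_i\in V$, first treats a \emph{finite} $W$ by showing that $\set{\mathcal{R}:\mathcal{R}(v_i)\in W}$ is closed with empty interior via an ad hoc perturbation of a hypothetical interior point, and then exhausts a countable $W$ by finite truncations $W_M$ before applying Baire to the sets $A^{W_M}$. You instead decompose over pairs $(v,w)$ and identify $B_{v,w}=\set{\mathcal{R}:\mathcal{R}v=w}$ as a left coset of the stabiliser of $v$, i.e.\ a closed embedded copy of $\mathrm{SO}(n-1)$, whose dimension $(n-1)(n-2)/2$ is strictly below $\dim\mathrm{SO}(n)$; nowhere-density (and Haar-negligibility) is then automatic. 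This is arguably more robust: the paper's perturbation $\mathcal{R}+\frac{\eps}{2^k}\frac{\Id}{|\Id|}$ does not stay inside $\mathrm{SO}(n)$ as written, whereas your homogeneous-space argument needs no such repair, and it bypasses the finite-truncation step entirely since a countable union of nowhere dense sets is already meager. One small caveat on your $n=1$ remark: when $V\cap W\neq\varnothing$ the statement is actually \emph{false} for $n=1$ (the only element of $\mathrm{SO}(1)$ is the identity), not vacuous; like the paper, you are implicitly assuming $n\ge2$, which is the only case relevant to the application.
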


\begin{proof}
Let $N\in\N$ be such that $V=\set*{v_i\in\mathbb{S}^{n-1} : i=1,\dots,N}$. We divide the proof in two steps.

\smallskip

\textit{Step~1}. Assume that $W$ is finite and set $A_i:=\set*{\mathcal{R}\in\mathrm{SO}(n) : \mathcal{R}(v_i)\notin W}$
for all $i=1,\dots,N$. We now claim that~$A_i$ is an open and dense subset of $\mathrm{SO}(n)$ for all $i=1,\dots,N$. Indeed, given any $i=1,\dots,N$, since $W$ is finite, the set $A_i^c=\mathrm{SO}(n)\setminus A_i$ is closed in~$\mathrm{SO}(n)$. Moreover, we claim that $\mathrm{int}(A_i^c) = \varnothing$. Indeed, by contradiction, let us assume that $\mathrm{int}(A_i^c)\ne\varnothing$. Then there exist $\eps>0$ and $\mathcal{R}\in A_i^c$ such that any $\mathcal{S}\in\mathrm{SO}(n)$ with $|\mathcal{S}-\mathcal{R}|<\eps$ satisfies $\mathcal{S}\in A_i^c$.
Now let
\begin{equation*}
\mathcal Q_\theta
:=
\begin{pmatrix}
\begin{matrix}
\cos\theta & -\sin\theta\\
\sin\theta & \cos\theta\\	
\end{matrix}
& \scalebox{1.5}{$0$}\\[5mm]
\scalebox{1.5}{$0$} & \scalebox{1.25}{$\mathcal{I}_{n-2}$}\\
\end{pmatrix}
\in\mathrm{SO}(n)
\end{equation*}
and define $\mathcal S_\theta := \mathcal Q_\theta\,\mathcal R\in\mathrm{SO}(n)$ for all $\theta\in[0,2\pi)$.
Since
\begin{equation*}
|\mathcal S_\theta - \mathcal R|
=
|(\mathcal Q_\theta - \mathcal I)\,\mathcal R|
\le
|\mathcal Q_\theta - \mathcal I|\,|\mathcal R|
<
\eps
\end{equation*} 
for all $\theta\in[0,\delta]$ for some $\delta>0$ depending only on $\eps$ and $\mathcal R$, we get that $\mathcal S_\theta\in A_i^c$ for all $\theta\in[0,\delta]$. Therefore $\mathcal S_\theta(v_i)\in W$ for all $\theta\in[0,\delta]$, in contrast with the fact that~$W$ is finite. 
Thus, $A_i$ is an open and dense subset of $\mathrm{SO}(n)$ for all $i=1,\dots,N$, and so also the set
\begin{equation*}
A^W:=\bigcap_{i=1}^N A_i=\set*{\mathcal{R}\in\mathrm{SO}(n) : \mathcal{R}(v_i)\notin W\ \forall i=1,\dots,N}
\end{equation*}  
is an open and dense subset of $\mathrm{SO}(n)$. The result is thus proved for any finite set~$W$. 

\smallskip

\textit{Step~2}. Now assume that $W$ is countable, $W=\set*{w_k\in\mathbb{S}^{n-1} : k\in\N}$. For all $M\in\N$, set $W_M:=\set*{w_k\in W : k\le M}$. By Step~1, we know that $A^{W_M}$ is an open and dense subset of $\mathrm{SO}(n)$ for all $M\in\N$. Since $\mathrm{SO}(n)\subset\R^{n^2}$ is compact, by Baire's Theorem $A:=\bigcap_{M\in\N} A^{W_M}$ is a dense subset of $\mathrm{SO}(n)$. This concludes the proof.
\end{proof}

Since $\det\colon\mathrm{GL}(n)\to\R$ is a continuous map, there exists a dimensional constant $\delta_n\in(0,1)$ such that $\det\mathcal{R}\ge\frac{1}{2}$ for all $\mathcal{R}\in\mathrm{GL}(n)$ with $|\mathcal{R}-\Id|<\delta_n$. 

\begin{lemma}\label{res:rotation_estimate_vol}
Let $\eps\in(0,\delta_n)$ and let $E\subset\R^n$ be a bounded set with $P(E)<+\infty$. If $\mathcal{R}\in\mathrm{SO}(n)$ satisfies $|\mathcal{R}-\Id|<\eps$, then
\begin{equation*}
|\mathcal{R}(E)\bigtriangleup E|\le 2\eps r_E \, P(E),
\end{equation*}
where $r_E:=\sup\set*{r>0 : |E\setminus B_r|>0}$.
\end{lemma}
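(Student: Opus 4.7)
The plan is to approximate $\chi_E$ by smooth functions and estimate the symmetric difference by the mean-value inequality along the straight segment joining $x$ to $\mathcal R^{-1}x$. Let $(\rho_\delta)_{\delta>0}$ be a standard family of mollifiers with $\supp\rho_\delta\subset B_\delta$ and set $f_\delta:=\rho_\delta*\chi_E$. Then $f_\delta\in C^\infty(\R^n)$, $\supp f_\delta\subset\overline{B_{r_E+\delta}}$, $f_\delta\to\chi_E$ in $L^1(\R^n)$ as $\delta\to 0^+$, and the standard approximation theory of $BV$ functions gives $\|\nabla f_\delta\|_{L^1(\R^n;\R^n)}\to P(E)$.

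For $x\in\R^n$ and $t\in[0,1]$, set $\phi_t(x):=(1-t)x+t\mathcal R^{-1}x=M_tx$ where $M_t:=(1-t)\Id+t\mathcal R^{-1}$. By the fundamental theorem of calculus applied to $t\mapsto f_\delta(\phi_t(x))$,
\begin{equation*}
f_\delta(\mathcal R^{-1}x)-f_\delta(x)=\int_0^1\nabla f_\delta(\phi_t(x))\cdot(\mathcal R^{-1}x-x)\,dt.
\end{equation*}
Since $\mathcal R^{-1}=\mathcal R^{T}$ for $\mathcal R\in\mathrm{SO}(n)$, we have $|\mathcal R^{-1}-\Id|=|\mathcal R-\Id|<\eps$, and hence $|\mathcal R^{-1}x-x|<\eps|x|$ for every $x\in\R^n$. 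Moreover, $\mathcal R^{-1}$ preserves the Euclidean norm, so $f_\delta(\mathcal R^{-1}x)=f_\delta(x)=0$ whenever $|x|>r_E+\delta$. Therefore the integration reduces to $B_{r_E+\delta}$, on which $|x|\le r_E+\delta$, and we obtain
\begin{equation*}
\int_{\R^n}|f_\delta(\mathcal R^{-1}x)-f_\delta(x)|\,dx\le\eps(r_E+\delta)\int_0^1\int_{B_{r_E+\delta}}|\nabla f_\delta(\phi_t(x))|\,dx\,dt.
\end{equation*}

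For each $t\in[0,1]$ we have $|M_t-\Id|=t|\mathcal R^{-1}-\Id|<\eps<\delta_n$, so $\det M_t\ge 1/2$ by the very definition of $\delta_n$. The change of variable $y=\phi_t(x)=M_tx$ then gives
\begin{equation*}
\int_{B_{r_E+\delta}}|\nabla f_\delta(\phi_t(x))|\,dx=\int_{\phi_t(B_{r_E+\delta})}|\nabla f_\delta(y)|\,|\det M_t|^{-1}\,dy\le 2\|\nabla f_\delta\|_{L^1(\R^n;\R^n)},
\end{equation*}
and combining with the previous display yields
\begin{equation*}
\int_{\R^n}|f_\delta(\mathcal R^{-1}x)-f_\delta(x)|\,dx\le 2\eps(r_E+\delta)\|\nabla f_\delta\|_{L^1(\R^n;\R^n)}.
\end{equation*}
Passing to the limit as $\delta\to 0^+$, the left-hand side converges to $\int_{\R^n}|\chi_E(\mathcal R^{-1}x)-\chi_E(x)|\,dx=|\mathcal R(E)\bigtriangleup E|$ (using that rotations are $L^1$-iso\-me\-tries on $\R^n$), while the right-hand side tends to $2\eps\, r_E\, P(E)$.

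The principal technical ingredient is the Jacobian bound $|\det M_t|^{-1}\le 2$, which is precisely what the constant $\delta_n$ was defined before the lemma to guarantee. The remaining steps — the mollification of $\chi_E$, differentiating $f_\delta$ along a straight segment, and the observation that $\mathcal R^{-1}$ maps each Euclidean ball to itself — are standard.
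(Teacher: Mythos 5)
Your proof is correct and follows essentially the same route as the paper's: approximate $\chi_E$ by smooth compactly supported functions, apply the fundamental theorem of calculus along the segment $t\mapsto(1-t)x+t\mathcal{R}^{-1}x$, and use the Jacobian bound $\det M_t\ge\tfrac12$ guaranteed by the choice of $\delta_n$ to control the resulting gradient integral. The only differences are cosmetic (mollification instead of the $BV$ smooth approximation theorem, working with $\mathcal{R}^{-1}$ so that the limit is directly $\chi_{\mathcal{R}(E)}$, and $L^1$-convergence instead of Fatou's Lemma in the final passage to the limit).
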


\begin{proof}
We divide the proof in two steps.

\smallskip

\textit{Step~1}. Let $r>0$ and let $f\in C^\infty_c(\R^n)$. Setting $\mathcal{R}_t:=(1-t)\Id+t\mathcal{R}$ for all $t\in[0,1]$, we can estimate
\begin{align*}
\int_{B_r}|f(\mathcal{R}(x))-f(x)|\,dx
&=\int_{B_r}\abs*{\int_0^1 \nabla f(\mathcal{R}_t(x)) \cdot (\mathcal{R}(x)-x)\,dt}\,dx\\
&\le|\mathcal{R}-\Id|\,r\int_0^1\int_{B_r}|\nabla f(\mathcal{R}_t(x))|\,dx\,dt.
\end{align*}
Since $|\mathcal{R}_t-\Id|=t|\mathcal{R}-\Id|<t\eps<\delta_n$ for all $t\in[0,1]$, $\mathcal{R}_t$ is invertible with $\det(\mathcal{R}_t^{-1})\le 2$ for all $t\in[0,1]$. Hence we can estimate
\begin{align*}
\int_{B_r}|\nabla f(\mathcal{R}_t(x))|\,dx
=\int_{\mathcal{R}_t(B_r)}|\nabla f(y)|\,|\det(\mathcal{R}^{-1}_t)|\,dy
\le 2\int_{\R^n}|\nabla f(y)|\,dy,
\end{align*} 
so that
\begin{equation}\label{eq:rotation_estimate_test}
\int_{B_r}|f(\mathcal{R}(x))-f(x)|\,dx
\le2\eps r\|\nabla f\|_{L^{1}(\R^n; \R^{n})}.
\end{equation}

\smallskip

\textit{Step~2}. Since $\chi_E\in BV(\R^n)$, by combining~\cite{EG15}*{Theorem~5.3} with a standard cut-off approximation argument, we find $(f_k)_{k\in\N}\subset C^\infty_c(\R^n)$ such that $f_k\to\chi_E$ pointwise a.e.\ in~$\R^n$ and $|\nabla f_k|(\R^n)\to P(E)$ as $k\to+\infty$. Given any $r>0$, by~\eqref{eq:rotation_estimate_test} in Step~1 we have
\begin{equation*}
\int_{B_r}|f_k(\mathcal{R}(x))-f_k(x)|\,dx
\le 2\eps r\|\nabla f_k\|_{L^{1}(\R^n; \R^{n})}
\end{equation*}
for all $k\in\N$. Passing to the limit as $k\to+\infty$, by Fatou's Lemma we get that
\begin{equation*}
|(\mathcal{R}(E)\bigtriangleup E)\cap B_r|\le 2\eps r\, P(E).
\end{equation*} 
Since $E\subset B_{r_E}$ up to $\Leb{n}$-negligible sets, also $\mathcal{R}(E)\subset B_{r_E}$ up to $\Leb{n}$-negligible sets. Thus we can choose $r=r_E$ and the proof is complete.
\end{proof}

We are now ready to prove the main approximation result, see also~\cite{ADPM11}*{Proposition~15}. 

\begin{theorem}\label{res:approx_polyhedral}
Let $\Omega\subset\R^n$ be a bounded open set with Lipschitz boundary and let $E\subset\R^n$ be a measurable set such that $P(E;\Omega)<+\infty$. There exists a sequence $(E_k)_{k\in\N}$ of bounded open sets with polyhedral boundary such that
\begin{equation}\label{eq:approx_polyhedral_1}
P(E_k;\de\Omega)=0
\end{equation}
for all $k\in\N$ and
\begin{equation}\label{eq:approx_polyhedral_2}
\chi_{E_k}\to\chi_E\ 
\text{in $L^1_{\loc}(\R^n)$} 
\quad\text{and}\quad
P(E_k;\Omega)\to P(E;\Omega)
\end{equation}
as $k\to+\infty$.
\end{theorem}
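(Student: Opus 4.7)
The plan is to produce $(E_k)$ in three stages: truncation to a bounded set of finite perimeter; polyhedral approximation delivering $P(P_k;\Omega)\to P(E;\Omega)$; and a small rotation via \cref{res:rotation_baire} killing the $\Haus{n-1}$-mass of the face hyperplanes on $\partial\Omega$, with the Lebesgue error controlled by \cref{res:rotation_estimate_vol}.

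In the first stage I fix $R>0$ with $\Omega\Subset B_R$ and $P(E;\partial B_R)=0$ (valid for $\Leb{1}$-a.e.\ $R$), so that $\hat E:=E\cap B_R$ has finite perimeter in $\R^n$ and satisfies $P(\hat E;\Omega)=P(E;\Omega)$. In the second, I combine the Anzellotti--Giaquinta strict smooth approximation of $\chi_{\hat E}$ in $BV(\Omega)$ with coarea and Sard to extract bounded open sets $F_j$ with smooth boundary such that $\chi_{F_j}\to\chi_E$ in $L^1_{\loc}(\R^n)$, $P(F_j;\Omega)\to P(E;\Omega)$ and, by selecting the coarea level to be a regular value of $u_j\rvert_{\partial\Omega}$, $\Haus{n-1}(\partial F_j\cap\partial\Omega)=0$. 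Triangulating each $\partial F_j$ at progressively finer scales and extracting a diagonal sequence produces bounded polyhedral sets $P_k\subset B_R$ with $\chi_{P_k}\to\chi_E$ in $L^1_{\loc}(\R^n)$, $P(P_k;\Omega)\to P(E;\Omega)$, and $\Haus{n-1}(\partial P_k\cap\partial\Omega)\to 0$.

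The third stage is the key new ingredient. Lipschitz regularity of $\partial\Omega$ gives the $\Haus{n-1}$-a.e.\ existence of the outward unit normal $\nu_\Omega$; since the level sets $\set*{x\in\partial\Omega:\nu_\Omega(x)=\nu}$ are pairwise disjoint subsets of a set of finite $\Haus{n-1}$-measure, the family
\begin{equation*}
W:=\set*{\nu\in\mathbb{S}^{n-1}:\Haus{n-1}(\set*{x\in\partial\Omega:\nu_\Omega(x)=\pm\nu})>0}
\end{equation*}
is at most countable. With $V_k\subset\mathbb{S}^{n-1}$ the finite set of outward unit normals to the faces of $P_k$, \cref{res:rotation_baire} provides $\mathcal{R}_k\in\mathrm{SO}(n)$ with $\mathcal{R}_k(V_k)\cap W=\varnothing$ and $|\mathcal{R}_k-\Id|<\eps_k$ for any prescribed $\eps_k>0$. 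I set $E_k:=\mathcal{R}_k(P_k)$: each face of $\partial E_k$ lies on a hyperplane $H$ with $\pm\nu_H\notin W$, and countable $\Haus{n-1}$-rectifiability of both $H$ and $\partial\Omega$ forces $\Haus{n-1}(H\cap\partial\Omega)=0$ (at $\Haus{n-1}$-a.e.\ point of $H\cap\partial\Omega$ both sets admit approximate tangent planes, which must then coincide, so $\nu_\Omega=\pm\nu_H$ on this set which is therefore $\Haus{n-1}$-null by definition of $W$). Summing over the finitely many faces yields $P(E_k;\partial\Omega)=0$.

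Convergence of $\chi_{E_k}$ to $\chi_E$ in $L^1_{\loc}(\R^n)$ follows from $\chi_{P_k}\to\chi_E$ and the estimate $|E_k\bigtriangleup P_k|\le 2\eps_k\, r_{P_k}\, P(P_k)$ of \cref{res:rotation_estimate_vol}, provided $\eps_k\to 0$ is chosen fast enough (both $r_{P_k}\le R$ and $P(P_k)$ are under control along the sequence). The main obstacle is the perimeter convergence $P(E_k;\Omega)\to P(E;\Omega)$: lower semi-continuity gives $\liminf_k P(E_k;\Omega)\ge P(E;\Omega)$ for free, but rotation does not preserve $P(\cdot;\Omega)$, and the naive bound
\begin{equation*}
|P(E_k;\Omega)-P(P_k;\Omega)|\le \sum_i\Haus{n-1}(\Delta_i^k\cap(\Omega\bigtriangleup\mathcal{R}_k^{-1}(\Omega)))\le\Haus{n-1}(\partial P_k\cap N_{c\eps_k}(\partial\Omega))
\end{equation*}
(with $\Delta_i^k$ the faces of $P_k$ and $c$ depending on the Lipschitz constant of $\partial\Omega$) would be trapped by $\Haus{n-1}(\partial P_k\cap\partial\Omega)$ without further care. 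This is precisely why stage two is refined to ensure $\Haus{n-1}(\partial P_k\cap\partial\Omega)\to 0$: combined with continuity of $\Haus{n-1}\mres\partial P_k$ along the decreasing family $N_{c\eps_k}(\partial\Omega)\searrow\partial P_k\cap\partial\Omega$ and a sufficiently rapid choice $\eps_k\to 0$, the tubular error vanishes and $P(E_k;\Omega)\to P(E;\Omega)$ follows from $P(P_k;\Omega)\to P(E;\Omega)$.
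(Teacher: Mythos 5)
Your overall architecture (truncation, polyhedral approximation with small perimeter near $\partial\Omega$, then a small rotation via \cref{res:rotation_baire} and \cref{res:rotation_estimate_vol}) matches the paper's, and your third stage is essentially its final step: the tangent-plane locality argument giving $\Haus{n-1}(H\cap\partial\Omega)=0$ for face hyperplanes with $\pm\nu_H\notin W$, and the control of $|P(E_k;\Omega)-P(P_k;\Omega)|$ by the $\Haus{n-1}$-mass of $\partial P_k$ in a tubular neighbourhood of $\partial\Omega$, are both correct and correctly identified as the crux. The gaps are in the first two stages. First, the hypothesis is only $P(E;\Omega)<+\infty$: nothing controls $D\chi_E$ on $B_R\setminus\overline{\Omega}$, so $\hat E=E\cap B_R$ need \emph{not} have finite perimeter in $\R^n$, and the claim ``$P(E;\partial B_R)=0$ for a.e.\ $R$'' already presupposes locally finite perimeter near $\partial B_R$, which is not available. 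The part of $E$ outside $\overline{\Omega}$ must be handled separately (it matters only for the $L^1_{\loc}$ convergence); as written, your $F_j$, obtained by smoothing inside $\Omega$, cannot converge to $\chi_E$ in $L^1_{\loc}(\R^n)$ unless $|E\setminus\Omega|=0$.

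Second, and more seriously, the mechanism producing approximants with vanishing perimeter on $\partial\Omega$ is unsupported. The strict smooth approximation in $BV(\Omega)$ yields $u_j$ smooth only \emph{inside} $\Omega$, so $u_j\rvert_{\partial\Omega}$ is not defined, let alone $C^1$; and Sard's theorem is unavailable in any case since $\partial\Omega$ is merely Lipschitz. What is actually needed here is a genuine extension step across $\partial\Omega$ — as in the paper's proof, a $BV(\R^n)$ extension $v_k$ of $\chi_{E\cap\Omega}$ supported in a small neighbourhood of $\Omega$ with $|Dv_k|(\partial\Omega)=0$, whose generic superlevel sets $F$ satisfy $F\cap\Omega=E\cap\Omega$ and $P(F;\partial\Omega)=0$. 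Only with such sets in hand does the polyhedral approximation give $\Haus{n-1}(\partial P_k\cap\partial\Omega)\to0$, via $\limsup_m P(P_{k,m};U)\le P(F_k;U)$ for a neighbourhood $U$ of $\partial\Omega$ of small mass; mere Hausdorff closeness of a triangulated boundary to $\partial F_j$ gives no control whatsoever on $\Haus{n-1}(\partial P_k\cap\partial\Omega)$ (a face could lie inside a flat piece of $\partial\Omega$ even though $\partial F_j$ crosses it transversally). Since your stage-three perimeter estimate rests entirely on $\Haus{n-1}(\partial P_k\cap\partial\Omega)\to0$, this missing extension step — precisely the one the paper's closing remark flags as unavoidable — is a genuine gap.
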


\begin{proof}
We divide the proof in four steps.

\smallskip

\textit{Step~1: cut-off}. Since $\Omega$ is bounded, we find $R_0>0$ such that $\overline{\Omega}\subset B_{R_0}$. Let us define $R_k=R_0+k$ and
\begin{equation*}
C_k:=\set*{x\in\Omega^c : \dist(x,\de\Omega)\le\frac{1}{k}}
\end{equation*}
for all $k\in\N$. We set $E^1_k:=E\cap B_{R_k}\cap C_k^c$ for all $k\in\N$. Note that $E_k^1$ is a bounded measurable set such that 
\begin{equation*}
\chi_{E_k^1}\to\chi_E\
\text{in~$L^1_{\loc}(\R^n)$ as $k\to+\infty$}
\end{equation*}
and
\begin{equation*}
P(E_k^1;\Omega)=P(E;\Omega)\ 
\text{for all $k\in\N$}.
\end{equation*}

\smallskip

\textit{Step~2: extension}. Let us define
\begin{equation*}
A_k:=\set*{x\in\R^n : \dist(x,\Omega)<\frac{1}{4k}}
\end{equation*}
for all $k\in\N$. Since $\chi_{E_k^1\cap\Omega}\in BV(\Omega)$ for all $k\in\N$, by~\cite{AFP00}*{Definition~3.20 and Proposition~3.21} there exists a sequence $(v_k)_{k\in\N}\subset BV(\R^n)$ such that 
\begin{equation*}
v_k=0 \text{ a.e.\ in $A^c_k$},
\quad
v_k=\chi_{E_k^1} \text{ in $\Omega$},
\quad
|Dv_k|(\de\Omega)=0
\end{equation*} 
for all $k\in\N$. Let us define $F^t_k:=\set*{v_k>t}$ for all $t\in(0,1)$. Given $k\in\N$, by the coarea formula~\cite{AFP00}*{Theorem~3.40}, for a.e.\ $t\in(0,1)$ the set $F_k^t$ has finite perimeter in~$\R^n$ and satisfies
\begin{equation*}
F_k^t\subset A_k,
\quad
F_k^t\cap\Omega=E_k^1\cap\Omega,
\quad
P(F_k^t;\de\Omega)=0
\end{equation*} 
for all $k\in\N$. We choose any such $t_k\in(0,1)$ for each $k\in\N$ and define $E_k^2:=E_k^1\cup F_k^{t_k}$ for all $k\in\N$. Note that $E_k^2$ is a bounded set with finite perimeter in~$\R^n$ such that
\begin{equation*}
\chi_{E_k^2}\to\chi_E\
\text{in~$L^1_{\loc}(\R^n)$ as $k\to+\infty$}
\end{equation*}
and
\begin{equation*}
P(E_k^2;\Omega)=P(E;\Omega)\
\quad\text{and}\quad
P(E_k^2;\de\Omega)=0
\quad
\text{for all $k\in\N$}.
\end{equation*}

\smallskip

\textit{Step~3: approximation}. Let us define
\begin{equation*}
D_k:=\set*{x\in\Omega^c : \dist(x,\de\Omega)\in\left[\frac{1}{4k},\frac{3}{4k}\right]}
\end{equation*}
for all $k\in\N$. First arguing as in the first part of the proof of~\cite{M12}*{Theorem~13.8} taking~\cite{M12}*{Remark~13.13} into account, and then performing a standard diagonal argument, we find a sequence of bounded open sets $(E_k^3)_{k\in\N}$ with polyhedral boundary such that
\begin{equation*}
E^3_k\subset D_k^c\
\text{for all $k\in\N$}
\end{equation*}
and
\begin{equation*}
\chi_{E_k^3}\to\chi_E\ 
\text{in $L^1_{\loc}(\R^n)$}, 
\quad
P(E_k^3;\Omega)\to P(E;\Omega)
\quad\text{and}\quad
P(E_k^3;\de\Omega)\to0
\end{equation*}
as $k\to+\infty$. If there exists a subsequence $(E_{k_j}^3)_{j\in\N}$ such that $P(E_{k_j}^3;\de\Omega)=0$ for all $j\in\N$, then we can set $E_j:=E_{k_j}$ for all $j\in\N$ and the proof is concluded. If this is not the case, then we need to proceed with the next last step. 

\smallskip

\textit{Step~4: rotation}. We now argue as in the last part of the proof of~\cite{ADPM11}*{Proposition~15}. Fix $k\in\N$ and assume $P(E^3_k;\de\Omega)>0$. Since $E_k^3$ has polyhedral boundary, we have $\Haus{n-1}(\de E^3_k\cap\de\Omega)>0$ if and only if there exist $\nu\in\mathbb{S}^{n-1}$ and $U\subset\redb\Omega$ such that $\Haus{n-1}(U)>0$, $\nu_\Omega(x)=\nu$ for all $x\in U$ and~$U\subset\de H$ for some (affine) half-space~$H$ satisfying $\nu_H=\nu$. Since $P(\Omega)=\Haus{n-1}(\de\Omega)<+\infty$, the set
\begin{align*}
W:&=\set*{\nu\in\mathbb{S}^{n-1} : \Haus{n-1}\left(\set*{x\in\de\Omega : \nu_\Omega(x)=\nu}\right)>0}\\
&=\bigcup_{h\in\N}\set*{\nu\in\mathbb{S}^{n-1} : \tfrac{P(\Omega)}{h}\ge\Haus{n-1}\left(\set*{x\in\de\Omega : \nu_\Omega(x)=\nu}\right)>\tfrac{P(\Omega)}{h+1})}
\end{align*} 
is at most countable. Since $E_k^3$ has polyhedral boundary, the set
\begin{equation*}
V_k:=\set*{\nu\in\mathbb{S}^{n-1} : \Haus{n-1}\left(\set*{x\in\de E^3_k : \nu_{E^3_k}(x)=\nu}\right)>0}
\end{equation*}
is finite. By \cref{res:rotation_baire}, given $\eps_k>0$, there exists $\mathcal{R}_k\in\mathrm{SO}(n)$ with $|\mathcal{R}_k-\Id|<\eps_k$ such that $\mathcal{R}_k(V_k)\cap W=\varnothing$. Hence the set $E^4_k:=\mathcal{R}_k(E^3_k)$ must satisfy $P(E^4_k;\de\Omega)=0$. By \cref{res:rotation_estimate_vol}, we can choose $\eps_k>0$ sufficiently small in order to ensure that $|E^4_k\bigtriangleup E^3_k|<\frac{1}{k}$. Now choose $\eta_k\in\left(0,\frac{1}{2k}\right)$ such that $P(E^3_k;Q_k)\le 2P(E^3_k;\de\Omega)$, where
\begin{equation*}
Q_k:=\set*{x\in\R^n : \dist(x,\de\Omega)<\eta_k}.
\end{equation*}
Since $\Omega$ is bounded, possibly choosing $\eps_k>0$ even smaller, we can also ensure that $\Omega\bigtriangleup\mathcal{R}^{-1}(\Omega)\subset Q_k$. Hence we can estimate
\begin{align*}
|P(E^4_k;\Omega)-P(E^3_k;\Omega)|
&=|\Haus{n-1}(\partial E^3_k\cap\mathcal{R}^{-1}(\Omega))-\Haus{n-1}(\partial E^3_k\cap\Omega)|\\
&\le\Haus{n-1}\big(\partial E^3_k\cap(\Omega\bigtriangleup\mathcal{R}^{-1}(\Omega))\big)\\
&\le\Haus{n-1}(\partial E^3_k\cap Q_k).
\end{align*}
We can thus set $E_k:=E^4_k$ for all $k\in\N$ and the proof is complete.
\end{proof}

\begin{remark}[A minor gap in the proof of~\cite{ADPM11}*{Proposition~15}]
\label{rem:gappone}
We warn the reader that the cut-off and the extension steps presented above were not mentioned in the proof of~\cite{ADPM11}*{Proposition~15}, although they are unavoidable for the correct implementation of the rotation argument in the last step. Indeed, in general, one cannot expect the existence of a rotation $\mathcal{R}\in\mathrm{SO}(n)$ arbitrarily close to the identity map such that $P(\mathcal{R}(E);\de\Omega)=0$ and, at the same time, the difference between $P(\mathcal{R}(E);\Omega)$ and $P(E;\Omega)$ is small. For example, one can consider $n=2$,
\begin{equation*}
\Omega=\set*{(x_1,x_2)\in A : x_1^2+x_2^2<25}
\end{equation*}
and
\begin{equation*}
E=\set*{(x_1,x_2)\in A : 1<x_1^2+x_2^2<4}
\cup\set*{(x_1,x_2)\in A^c : 9<x_1^2+x_2^2<16},
\end{equation*}
where $A=\set*{(x_1,x_2)\in\R^2 : x_1>0,\ x_2>0}$. In this case, for any rotation $\mathcal{R}\in\mathrm{SO}(2)$ arbitrarily close to the identity map, we have $P(\mathcal{R}(E);\Omega)>2+P(E;\Omega)$.
\end{remark}

We conclude this section with the following result, establishing an approximation of $BV_{\loc}$ functions similar to that given in \cref{res:approx_polyhedral}.

\begin{theorem}\label{res:BV_loc_approx_no_mass}
Let $\Omega\subset\R^n$ be a bounded open set with Lipschitz boundary and let $f\in BV_{\loc}(\R^n)$. There exists $(f_k)_{k\in\N}\subset BV(\R^n)$ such that
\begin{equation*}
|Df_k|(\de\Omega)=0
\end{equation*}
for all $k\in\N$ and
\begin{equation*}
f_k\to f\
\text{in $L^1_{\loc}(\R^n)$}
\quad\text{and}\quad
|Df_k|(\Omega)\to|Df|(\Omega)
\end{equation*}
as $k\to+\infty$. If, in addition, $f\in L^1(\R^n)$, then $f_k\to f$ in $L^1(\R^n)$ as $k\to+\infty$.
\end{theorem}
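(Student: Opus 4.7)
The plan is to adapt the cut-off + extension strategy used in the proof of \cref{res:approx_polyhedral} to BV functions. The two building blocks will be the truncation result \cref{res:BV_cut_ball}, which provides globally integrable cut-offs $g_R := f\chi_{B_R} \in BV(\R^n)$ agreeing with $f$ inside any ball containing $\overline\Omega$, and the classical BV-extension theorem for Lipschitz domains (see \cite{AFP00}*{Proposition~3.21}), which yields a single compactly supported $\hat f \in BV(\R^n)$ with $\hat f = f$ a.e.\ in $\Omega$ and the crucial property $|D\hat f|(\partial\Omega) = 0$. A smooth partition of unity concentrated near $\partial\Omega$ will then glue these two functions together to produce the desired sequence.

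Concretely, I would first pick $R_k \to +\infty$ with $\overline\Omega \Subset B_{R_k - 1}$ and such that \cref{res:BV_cut_ball} applies to $R_k$, giving $g_k := f\chi_{B_{R_k}} \in BV(\R^n)$ with $g_k = f$ a.e.\ on $B_{R_k}$. Next, I would apply \cite{AFP00}*{Proposition~3.21} to the restriction $f|_\Omega \in BV(\Omega)$ to obtain the fixed compactly supported $\hat f \in BV(\R^n)$ as above. Finally, I would pick $\psi_k \in C^\infty_c(\R^n;[0,1])$ with $\psi_k \equiv 1$ on $\{x : \dist(x,\partial\Omega) < 1/(2k)\}$ and $\supp\psi_k \subset \{x : \dist(x,\partial\Omega) < 1/k\}$, and set
\[
f_k := (1 - \psi_k)\,g_k + \psi_k\,\hat f,
\]
which lies in $BV(\R^n)$ by the Leibniz rule for products of BV functions with smooth compactly supported ones.

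The verification would then be essentially immediate. Inside $\Omega$, for $k$ large enough that $\overline\Omega \subset B_{R_k}$, both $g_k = f$ and $\hat f = f$ hold a.e., so $f_k = f$ a.e.\ on $\Omega$; by locality of the total variation measure on open sets this gives $|Df_k|(\Omega) = |Df|(\Omega)$, trivially yielding the convergence of total variations. In the neighborhood $\{\dist(\cdot,\partial\Omega) < 1/(2k)\}$ of $\partial\Omega$, one has $\psi_k \equiv 1$ and therefore $f_k = \hat f$, so $|Df_k|(\partial\Omega) = |D\hat f|(\partial\Omega) = 0$. For the $L^1_{\loc}$ convergence, the decomposition $f_k - f = (1 - \psi_k)(g_k - f) + \psi_k(\hat f - f)$ makes it clear that the first summand is supported in $\R^n \setminus B_{R_k}$ and vanishes on any fixed compact set for $k$ large, while the second is supported in the collar $\{\dist(\cdot,\partial\Omega) < 1/k\}$, whose Lebesgue measure tends to zero, so its integral on any fixed compact set tends to zero by absolute continuity of the Lebesgue integral. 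The global $L^1(\R^n)$ convergence when $f \in L^1(\R^n)$ follows by the same decomposition, using that both $f$ and $\hat f$ lie in $L^1(\R^n)$ (the latter because $\hat f$ has compact support).

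The only subtle point I foresee is a purely bookkeeping one: one must be sure that $f_k|_\Omega = f|_\Omega$ a.e.\ is enough to force $|Df_k|(\Omega) = |Df|(\Omega)$, ruling out any jump contribution appearing at $\partial\Omega$ from the extension side. This is however immediate because $|Df_k|$ restricted to the open set $\Omega$ only sees test vector fields compactly supported in $\Omega$, so it depends only on $f_k|_\Omega$; no trace coming from the other side of $\partial\Omega$ can influence $|Df_k|(\Omega)$. All other steps are direct applications of \cref{res:BV_cut_ball}, \cite{AFP00}*{Proposition~3.21}, the BV Leibniz rule, and dominated convergence.
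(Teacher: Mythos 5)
Your proof is correct, and it rests on the same two pillars as the paper's: the truncation result \cref{res:BV_cut_ball} to pass from $BV_{\loc}$ to $BV(\R^n)$ far from $\Omega$, and the extension theorem \cite{AFP00}*{Proposition~3.21} to produce a competitor with no variation on $\de\Omega$. The difference lies in the gluing. The paper applies the extension theorem for each $k$, obtaining extensions $h_k$ of $f\chi_\Omega$ with supports shrinking to $\overline\Omega$, and writes $f_k=h_k+v_kg_k$ with $v_k$ vanishing near $\overline\Omega$; this forces it to invoke an extra feature of the construction in \cite{AFP00} (that the mass of $h_k$ outside $\Omega$ tends to zero) to get $L^1_{\loc}$ convergence. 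You instead fix a single compactly supported extension $\hat f$ once and for all and interpolate, $f_k=(1-\psi_k)g_k+\psi_k\hat f$, shrinking the collar where $\hat f$ is used rather than the support of the extension; the $L^1_{\loc}$ convergence then follows from absolute continuity of the integral of the fixed function $|\hat f-f|$ over sets shrinking to the Lebesgue-null set $\de\Omega$, with no need to inspect the proof of the extension theorem. Both constructions give $f_k=f$ a.e.\ on $\Omega$ (hence $|Df_k|(\Omega)=|Df|(\Omega)$ exactly) and $f_k$ equal to the extension on an open neighbourhood of $\de\Omega$ (hence $|Df_k|(\de\Omega)=0$), so the verification is the same; your variant is marginally cleaner, at the cost of only working because the interior of $\Omega$ is left untouched, which is all the statement requires.
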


\begin{proof}

We argue as in the proof of \cref{res:approx_polyhedral}, in two steps.

\smallskip

\textit{Step~1: cut-off at infinity}. Since $\Omega$ is bounded, we find $R_0>0$ such that $\overline{\Omega}\subset B_{R_0}$. Given $(R_k)_k\subset(R_0,+\infty)$, we set $g_k:=f\chi_{B_{R_k}}$ for all $k\in\N$. By \cref{res:BV_cut_ball}, we have $g_k\in BV(\R^n)$ for a suitable choice of the sequence $(R_k)_{k\in\N}$, with $|Dg_k|(\Omega)=|Df|(\Omega)$ for all $k\in\N$ and $g_k\to f$ in~$L^1_{\loc}(\R^n)$ as $k\to+\infty$. If, in addition, $f\in L^1(\R^n)$, then $g_k\to f$ in~$L^1(\R^n)$ as $k\to+\infty$.

\smallskip

\textit{Step~2: extension and cut-off near~$\Omega$}. Let us define 
\begin{equation*}
A_k:=\set*{x\in\R^n : \dist(x,\Omega)<\frac{1}{k}}
\end{equation*}
for all $k\in\N$. Since $g_k\chi_\Omega\in BV(\Omega)$ with $|Dg_k|(\Omega)=|Df|(\Omega)$ for all $k\in\N$, by~\cite{AFP00}*{Definition~3.20 and Proposition~3.21} there exists a sequence $(h_k)_{k\in\N}\subset BV(\R^n)$ such that 
\begin{equation*}
\supp h_k\subset A_{2k},
\quad
h_k=g_k \text{ in $\Omega$},
\quad
|Dh_k|(\de\Omega)=0
\end{equation*} 
for all $k\in\N$ and
\begin{equation*}
\lim_{k\to+\infty}\int_{A_{2k}\setminus\Omega}|h_k|\,dx=0
\end{equation*}
(the latter property easily follows from the construction performed in the proof of~\cite{AFP00}*{Proposition~3.21}). Now let $(v_k)_{k\in\N}\subset C^\infty_c(\R^n)$ be such that $\supp v_k\subset A^c_k$ and $0\le v_k\le 1$ for all $k\in\N$ and $v_k\to\chi_{\closure[0]{\Omega}^c}$ pointwise in~$\R^n$ as $k\to+\infty$. We can thus set $f_k:=h_k+v_kg_k$ for all $k\in\N$. By~\cite{AFP00}*{Propositon~3.2(b)}, we have $v_kg_k\in BV(\R^n)$ for all $k\in\N$, so that $f_k\in BV(\R^n)$ for all $k\in\N$. Since we can estimate
\begin{align*}
|f_k-f|
&\le|h_k-f\chi_\Omega|+|v_k-\chi_{\Omega^c}|\,|g_k|+|g_k-f|\,\chi_{\Omega^c}\\
&=|h_k|\,\chi_{A_{2k}\setminus\Omega}+|v_k-\chi_{\Omega^c}|\,|g_k|+|g_k-f|\,\chi_{\Omega^c}
\end{align*}
for all $k\in\N$, we have $f_k\to f$ in~$L^1_{\loc}(\R^n)$ as $k\to+\infty$ (because $\partial \Omega$ is Lipschitz, so $\Leb{n}(\partial \Omega) = 0$), with $f_k\to f$ in~$L^1(\R^n)$ as $k\to+\infty$ if $f\in L^1(\R^n)$. By construction, we also have
\begin{equation*}
|Df_k|(\Omega)=|Dh_k|(\Omega)
\quad\text{and}\quad
|Df_k|(\de\Omega)=|Dh_k|(\de\Omega)=0
\end{equation*}
for all $k\in\N$. The proof is complete.
\end{proof}


\begin{bibdiv}
\begin{biblist}

\bib{AH96}{book}{
   author={Adams, David R.},
   author={Hedberg, Lars Inge},
   title={Function spaces and potential theory},
   series={Grundlehren der Mathematischen Wissenschaften [Fundamental
   Principles of Mathematical Sciences]},
   volume={314},
   publisher={Springer-Verlag, Berlin},
   date={1996},
}

\bib{ACPS20}{article}{
   author={Alberico, Angela},
   author={Cianchi, Andrea},
   author={Pick, Lubo\v{s}},
   author={Slav\'{\i}kov\'{a}, Lenka},
   title={On the limit as $s\to 0^+$ of fractional Orlicz-Sobolev spaces},
   journal={J. Fourier Anal. Appl.},
   volume={26},
   date={2020},
   number={6},
   pages={Paper No. 80, 19},
}

\bib{ADPM11}{article}{
   author={Ambrosio, Luigi},
   author={De Philippis, Guido},
   author={Martinazzi, Luca},
   title={Gamma-convergence of nonlocal perimeter functionals},
   journal={Manuscripta Math.},
   volume={134},
   date={2011},
   number={3-4},
   pages={377--403},
}

\bib{AFP00}{book}{
   author={Ambrosio, Luigi},
   author={Fusco, Nicola},
   author={Pallara, Diego},
   title={Functions of bounded variation and free discontinuity problems},
   series={Oxford Mathematical Monographs},
   publisher={The Clarendon Press, Oxford University Press, New York},
   date={2000},
}

\bib{A20}{article}{
   author={Ambrosio, Vincenzo},
   title={On some convergence results for fractional periodic Sobolev spaces},
   journal={Opuscula Math.},
   volume={40},
   date={2020},
   number={1},
   pages={5--20},
}

\bib{AGMP18}{article}{
   author={Antonucci, Clara},
   author={Gobbino, Massimo},
   author={Migliorini, Matteo},
   author={Picenni, Nicola},
   title={On the shape factor of interaction laws for a non-local approximation of the Sobolev norm and the total variation},
   journal={C. R. Math. Acad. Sci. Paris},
   volume={356},
   date={2018},
   number={8},
   pages={859--864},
}

\bib{AGMP20}{article}{
   author={Antonucci, Clara},
   author={Gobbino, Massimo},
   author={Migliorini, Matteo},
   author={Picenni, Nicola},
   title={Optimal constants for a nonlocal approximation of Sobolev norms and total variation},
   journal={Anal. PDE},
   volume={13},
   date={2020},
   number={2},
   pages={595--625},
}

\bib{AGP20}{article}{
   author={Antonucci, Clara},
   author={Gobbino, Massimo},
   author={Picenni, Nicola},
   title={On the gap between the Gamma-limit and the pointwise limit for a nonlocal approximation of the total variation},
   journal={Anal. PDE},
   volume={13},
   date={2020},
   number={3},
   pages={627--649},
}

\bib{A64}{book}{
   author={Artin, Emil},
   title={The Gamma function},
   series={Translated by Michael Butler. Athena Series: Selected Topics in Mathematics},
   publisher={Holt, Rinehart and Winston, New York-Toronto-London},
   date={1964},
   pages={vii+39},
}

\bib{AK09}{article}{
   author={Aubert, Gilles},
   author={Kornprobst, Pierre},
   title={Can the nonlocal characterization of Sobolev spaces by Bourgain et al. be useful for solving variational problems?},
   journal={SIAM J. Numer. Anal.},
   volume={47},
   date={2009},
   number={2},
   pages={844--860},
}

\bib{BMR20}{article}{
   author={Bal, Kaushik},
   author={Mohanta, Kaushik},
   author={Roy, Prosenjit},
   title={Bourgain-Brezis-Mironescu domains},
   journal={Nonlinear Anal.},
   volume={199},
   date={2020},
   pages={111928, 10},
}

\bib{B11}{article}{
   author={Barbieri, Davide},
   title={Approximations of Sobolev norms in Carnot groups},
   journal={Commun. Contemp. Math.},
   volume={13},
   date={2011},
   number={5},
   pages={765--794},
}

\bib{BL76}{book}{
   author={Bergh, J\"{o}ran},
   author={L\"{o}fstr\"{o}m, J\"{o}rgen},
   title={Interpolation spaces. An introduction},
   publisher={Springer-Verlag, Berlin-New York},
   date={1976},
}

\bib{BBM01}{article}{
   author={Bourgain, Jean},
   author={Brezis, Ha\"{\i}m},
   author={Mironescu, Petru},
   title={Another look at Sobolev spaces},
   conference={
      title={Optimal control and partial differential equations},
   },
   book={
      publisher={IOS, Amsterdam},
   },
   date={2001},
   pages={439--455},
}

\bib{BBM02-bis}{article}{
   author={Bourgain, Jean},
   author={Brezis, Ha\"{\i}m},
   author={Mironescu, Petru},
   title={Limiting embedding theorems for $W^{s,p}$ when $s\uparrow1$ and applications},
   note={Dedicated to the memory of Thomas H. Wolff},
   journal={J. Anal. Math.},
   volume={87},
   date={2002},
   pages={77--101},
}

\bib{BN06}{article}{
   author={Bourgain, Jean},
   author={Nguyen, Hoai-Minh},
   title={A new characterization of Sobolev spaces},
   journal={C. R. Math. Acad. Sci. Paris},
   volume={343},
   date={2006},
   number={2},
   pages={75--80},
}

\bib{B02}{book}{
   author={Braides, Andrea},
   title={$\Gamma$-convergence for beginners},
   series={Oxford Lecture Series in Mathematics and its Applications},
   volume={22},
   publisher={Oxford University Press, Oxford},
   date={2002},
}

\bib{Brezis02}{article}{
   author={Brezis, Ha\"{\i}m},
   title={How to recognize constant functions. A connection with Sobolev spaces},
   language={Russian, with Russian summary},
   journal={Uspekhi Mat. Nauk},
   volume={57},
   date={2002},
   number={4(346)},
   pages={59--74},
   translation={
      journal={Russian Math. Surveys},
      volume={57},
      date={2002},
      number={4},
      pages={693--708},
      issn={0036-0279},
   },
}

\bib{Brezis11}{book}{
   author={Brezis, Ha\"{\i}m},
   title={Functional analysis, Sobolev spaces and Partial Differential Equations},
   series={Universitext},
   publisher={Springer, New York},
   date={2011},
   pages={xiv+599},
}

\bib{B15}{article}{
   author={Brezis, Ha\"{\i}m},
   title={New approximations of the total variation and filters in imaging},
   journal={Atti Accad. Naz. Lincei Rend. Lincei Mat. Appl.},
   volume={26},
   date={2015},
   number={2},
   pages={223--240},
}

\bib{BN16}{article}{
   author={Brezis, Ha\"{\i}m},
   author={Nguyen, Hoai-Minh},
   title={The BBM formula revisited},
   journal={Atti Accad. Naz. Lincei Rend. Lincei Mat. Appl.},
   volume={27},
   date={2016},
   number={4},
   pages={515--533},
}

\bib{BN16-bis}{article}{
   author={Brezis, Ha\"{\i}m},
   author={Nguyen, Hoai-Minh},
   title={Two subtle convex nonlocal approximations of the BV-norm},
   journal={Nonlinear Anal.},
   volume={137},
   date={2016},
   pages={222--245},
}

\bib{BN18}{article}{
   author={Brezis, Ha\"{\i}m},
   author={Nguyen, Hoai-Minh},
   title={Non-local functionals related to the total variation and connections with image processing},
   journal={Ann. PDE},
   volume={4},
   date={2018},
   number={1},
   pages={Paper No. 9, 77},
}

\bib{BN20}{article}{
   author={Brezis, Ha\"{\i}m},
   author={Nguyen, Hoai-Minh},
   title={Non-local, non-convex functionals converging to Sobolev norms},
   journal={Nonlinear Anal.},
   volume={191},
   date={2020},
   pages={111626, 9},
}

\bib{BVY20}{article}{
   author={Brezis, Ha\"{\i}m},
   author={Van Schaftingen, Jean},
   author={Yung, Po-Lam},
   title={A surprising formula for Sobolev norms},
   journal={Proc. Natl. Acad. Sci. USA},
   volume={118},
   date={2021},
   number={8},
   pages={Paper No. e2025254118, 6},
}

\bib{BCCS}{article}{
   author={Brué, Elia},
   author={Calzi, Mattia},
   author={Comi, Giovanni Eugenio},
   author={Stefani, Giorgio},
   title={A distributional approach to fractional Sobolev spaces and fractional variation: asymptotics II},
   journal={C. R. Math.},
   status={to appear},
   eprint={https://arxiv.org/abs/2011.03928},
}

\bib{CS18}{article}{
   author={Comi, Giovanni E.},
   author={Stefani, Giorgio},
   title={A distributional approach to fractional Sobolev spaces and
   fractional variation: existence of blow-up},
   journal={J. Funct. Anal.},
   volume={277},
   date={2019},
   number={10},
   pages={3373--3435},
}

\bib{CF17}{article}{
   author={Cozzi, Matteo},
   author={Figalli, Alessio},
   title={Regularity theory for local and nonlocal minimal surfaces: an
   overview},
   conference={
      title={Nonlocal and nonlinear diffusions and interactions: new methods
      and directions},
   },
   book={
      series={Lecture Notes in Math.},
      volume={2186},
      publisher={Springer, Cham},
   },
   date={2017},
   pages={117--158},
}

\bib{D93}{book}{
   author={Dal Maso, Gianni},
   title={An introduction to $\Gamma$-convergence},
   series={Progress in Nonlinear Differential Equations and their
   Applications},
   volume={8},
   publisher={Birkh\"{a}user Boston, Inc., Boston, MA},
   date={1993},
}

\bib{D02}{article}{
   author={D\'{a}vila, J.},
   title={On an open question about functions of bounded variation},
   journal={Calc. Var. Partial Differential Equations},
   volume={15},
   date={2002},
   number={4},
   pages={519--527},
}

\bib{DiMS19}{article}{
   author={Di Marino, Simone},
   author={Squassina, Marco},
   title={New characterizations of Sobolev metric spaces},
   journal={J. Funct. Anal.},
   volume={276},
   date={2019},
   number={6},
   pages={1853--1874},
}

\bib{DiNPV12}{article}{
   author={Di Nezza, Eleonora},
   author={Palatucci, Giampiero},
   author={Valdinoci, Enrico},
   title={Hitchhiker's guide to the fractional Sobolev spaces},
   journal={Bull. Sci. Math.},
   volume={136},
   date={2012},
   number={5},
   pages={521--573},
}

\bib{DM20}{article}{
   author={Dominguez, Oscar},
   author={Milman, Mario},
   title={New Brezis-Van Schaftingen-Yung Sobolev type inequalities connected with maximal inequalities and one parameter families of operators},
   date={2020},
   eprint={https://arxiv.org/abs/2010.15873},
   status={preprint}
}

\bib{EG15}{book}{
   author={Evans, Lawrence C.},
   author={Gariepy, Ronald F.},
   title={Measure theory and fine properties of functions},
   series={Textbooks in Mathematics},
   edition={Revised edition},
   publisher={CRC Press, Boca Raton, FL},
   date={2015},
}

\bib{F69}{book}{
   author={Federer, Herbert},
   title={Geometric measure theory},
   series={Die Grundlehren der mathematischen Wissenschaften, Band 153},
   publisher={Springer-Verlag New York Inc., New York},
   date={1969},
}

\bib{FS19}{article}{
   author={Fern\'{a}ndez Bonder, Juli\'{a}n},
   author={Salort, Ariel M.},
   title={Fractional order Orlicz-Sobolev spaces},
   journal={J. Funct. Anal.},
   volume={277},
   date={2019},
   number={2},
   pages={333--367},
}

\bib{FHR20}{article}{
   author={Ferreira, Rita},
   author={H\"{a}st\"{o}, Peter},
   author={Ribeiro, Ana Margarida},
   title={Characterization of generalized Orlicz spaces},
   journal={Commun. Contemp. Math.},
   volume={22},
   date={2020},
   number={2},
   pages={1850079, 25},
}

\bib{FS08}{article}{
   author={Frank, Rupert L.},
   author={Seiringer, Robert},
   title={Non-linear ground state representations and sharp Hardy inequalities},
   journal={J. Funct. Anal.},
   volume={255},
   date={2008},
   number={12},
   pages={3407--3430},
}

\bib{G14-C}{book}{
   author={Grafakos, Loukas},
   title={Classical Fourier analysis},
   series={Graduate Texts in Mathematics},
   volume={249},
   edition={3},
   publisher={Springer, New York},
   date={2014},
}

\bib{G14}{book}{
   author={Grafakos, Loukas},
   title={Modern Fourier analysis},
   series={Graduate Texts in Mathematics},
   volume={250},
   edition={3},
   publisher={Springer, New York},
   date={2014},
}

\bib{G60}{article}{
   author={Gustin, William},
   title={Boxing inequalities},
   journal={J. Math. Mech.},
   volume={9},
   date={1960},
   pages={229--239},
}

\bib{H59}{article}{
   author={Horv\'ath, J.},
   title={On some composition formulas},
   journal={Proc. Amer. Math. Soc.},
   volume={10},
   date={1959},
   pages={433--437},
}

\bib{KL05}{article}{
   author={Kolyada, V. I.},
   author={Lerner, A. K.},
   title={On limiting embeddings of Besov spaces},
   journal={Studia Math.},
   volume={171},
   date={2005},
   number={1},
   pages={1--13},
}

\bib{KM19}{article}{
   author={Kreuml, Andreas},
   author={Mordhorst, Olaf},
   title={Fractional Sobolev norms and BV functions on manifolds},
   journal={Nonlinear Anal.},
   volume={187},
   date={2019},
   pages={450--466},
}

\bib{LMP19}{article}{
   author={Lam, Nguyen},
   author={Maalaoui, Ali},
   author={Pinamonti, Andrea},
   title={Characterizations of anisotropic high order Sobolev spaces},
   journal={Asymptot. Anal.},
   volume={113},
   date={2019},
   number={4},
   pages={239--260},
}

\bib{L09}{book}{
   author={Leoni, Giovanni},
   title={A first course in Sobolev spaces},
   series={Graduate Studies in Mathematics},
   volume={105},
   publisher={American Mathematical Society, Providence, RI},
   date={2009},
}

\bib{LS11}{article}{
   author={Leoni, Giovanni},
   author={Spector, Daniel},
   title={Characterization of Sobolev and $BV$ spaces},
   journal={J. Funct. Anal.},
   volume={261},
   date={2011},
   number={10},
   pages={2926--2958},
}

\bib{LS14}{article}{
   author={Leoni, Giovanni},
   author={Spector, Daniel},
   title={Corrigendum to ``Characterization of Sobolev and $BV$ spaces'' [J. Funct. Anal. 261 (10) (2011) 2926--2958]},
   journal={J. Funct. Anal.},
   volume={266},
   date={2014},
   number={2},
   pages={1106--1114},
}

\bib{MP19}{article}{
   author={Maalaoui, Ali},
   author={Pinamonti, Andrea},
   title={Interpolations and fractional Sobolev spaces in Carnot groups},
   journal={Nonlinear Anal.},
   volume={179},
   date={2019},
   pages={91--104},
}

\bib{M12}{book}{
   author={Maggi, Francesco},
   title={Sets of finite perimeter and geometric variational problems},
   series={Cambridge Studies in Advanced Mathematics},
   volume={135},
   publisher={Cambridge University Press, Cambridge},
   date={2012},
}

\bib{MS02}{article}{
   author={Maz\cprime ya, V.},
   author={Shaposhnikova, T.},
   title={On the Bourgain, Brezis, and Mironescu theorem concerning limiting
   embeddings of fractional Sobolev spaces},
   journal={J. Funct. Anal.},
   volume={195},
   date={2002},
   number={2},
   pages={230--238},
}

\bib{MS03}{article}{
   author={Maz\cprime ya, V.},
   author={Shaposhnikova, T.},
   title={Erratum to: ``On the Bourgain, Brezis and Mironescu theorem
   concerning limiting embeddings of fractional Sobolev spaces'' [J. Funct.
   Anal. {\bf 195} (2002), no. 2, 230--238; MR1940355 (2003j:46051)]},
   journal={J. Funct. Anal.},
   volume={201},
   date={2003},
   number={1},
   pages={298--300},
}

\bib{MS15}{article}{
   author={Mengesha, Tadele},
   author={Spector, Daniel},
   title={Localization of nonlocal gradients in various topologies},
   journal={Calc. Var. Partial Differential Equations},
   volume={52},
   date={2015},
   number={1-2},
   pages={253--279},
}

\bib{M05}{article}{
   author={Milman, Mario},
   title={Notes on limits of Sobolev spaces and the continuity of interpolation scales},
   journal={Trans. Amer. Math. Soc.},
   volume={357},
   date={2005},
   number={9},
   pages={3425--3442},
}

\bib{N07}{article}{
   author={Nguyen, Hoai-Minh},
   title={$\Gamma$-convergence and Sobolev norms},
   journal={C. R. Math. Acad. Sci. Paris},
   volume={345},
   date={2007},
   number={12},
   pages={679--684},
}

\bib{N08}{article}{
   author={Nguyen, Hoai-Minh},
   title={Further characterizations of Sobolev spaces},
   journal={J. Eur. Math. Soc. (JEMS)},
   volume={10},
   date={2008},
   number={1},
   pages={191--229},
}

\bib{N11}{article}{
   author={Nguyen, Hoai-Minh},
   title={$\Gamma$-convergence, Sobolev norms, and BV functions},
   journal={Duke Math. J.},
   volume={157},
   date={2011},
   number={3},
   pages={495--533},
}

\bib{NS19}{article}{
   author={Nguyen, Hoai-Minh},
   author={Squassina, Marco},
   title={On anisotropic Sobolev spaces},
   journal={Commun. Contemp. Math.},
   volume={21},
   date={2019},
   number={1},
   pages={1850017, 13},
}

\bib{PSV17}{article}{
   author={Pinamonti, Andrea},
   author={Squassina, Marco},
   author={Vecchi, Eugenio},
   title={The Maz\cprime ya-Shaposhnikova limit in the magnetic setting},
   journal={J. Math. Anal. Appl.},
   volume={449},
   date={2017},
   number={2},
   pages={1152--1159},
}

\bib{PSV19}{article}{
   author={Pinamonti, Andrea},
   author={Squassina, Marco},
   author={Vecchi, Eugenio},
   title={Magnetic BV-functions and the Bourgain-Brezis-Mironescu formula},
   journal={Adv. Calc. Var.},
   volume={12},
   date={2019},
   number={3},
   pages={225--252},
}

\bib{P04-1}{article}{
   author={Ponce, Augusto C.},
   title={An estimate in the spirit of Poincar\'{e}'s inequality},
   journal={J. Eur. Math. Soc. (JEMS)},
   volume={6},
   date={2004},
   number={1},
   pages={1--15},
}

\bib{P04-2}{article}{
   author={Ponce, Augusto C.},
   title={A new approach to Sobolev spaces and connections to $\Gamma$-convergence},
   journal={Calc. Var. Partial Differential Equations},
   volume={19},
   date={2004},
   number={3},
   pages={229--255},
}

\bib{P16}{book}{
   author={Ponce, Augusto C.},
   title={Elliptic PDEs, measures and capacities},
   series={EMS Tracts in Mathematics},
   volume={23},
   publisher={European Mathematical Society (EMS), Z\"{u}rich},
   date={2016},
}

\bib{PS17-box}{article}{
   author={Ponce, Augusto C.},
   author={Spector, Daniel},
   title={A boxing inequality for the fractional perimeter},
   journal={Ann. Sc. Norm. Super. Pisa Cl. Sci. (5)},
   volume={20},
   date={2020},
   number={1},
   pages={107--141},
}

\bib{PS17}{article}{
   author={Ponce, Augusto C.},
   author={Spector, Daniel},
   title={A note on the fractional perimeter and interpolation},
   journal={C. R. Math. Acad. Sci. Paris},
   volume={355},
   date={2017},
   number={9},
   pages={960--965},
}

\bib{SSVanS17}{article}{
   author={Schikorra, Armin},
   author={Spector, Daniel},
   author={Van Schaftingen, Jean},
   title={An $L^1$-type estimate for Riesz potentials},
   journal={Rev. Mat. Iberoam.},
   volume={33},
   date={2017},
   number={1},
   pages={291--303},
}

\bib{SSS15}{article}{
   author={Schikorra, Armin},
   author={Shieh, Tien-Tsan},
   author={Spector, Daniel},
   title={$L^p$ theory for fractional gradient PDE with $VMO$ coefficients},
   journal={Atti Accad. Naz. Lincei Rend. Lincei Mat. Appl.},
   volume={26},
   date={2015},
   number={4},
   pages={433--443},
}
		
\bib{SSS18}{article}{
   author={Schikorra, Armin},
   author={Shieh, Tien-Tsan},
   author={Spector, Daniel E.},
   title={Regularity for a fractional $p$-Laplace equation},
   journal={Commun. Contemp. Math.},
   volume={20},
   date={2018},
   number={1},
   pages={1750003, 6},
}

\bib{SS15}{article}{
   author={Shieh, Tien-Tsan},
   author={Spector, Daniel E.},
   title={On a new class of fractional partial differential equations},
   journal={Adv. Calc. Var.},
   volume={8},
   date={2015},
   number={4},
   pages={321--336},
}

\bib{SS18}{article}{
   author={Shieh, Tien-Tsan},
   author={Spector, Daniel E.},
   title={On a new class of fractional partial differential equations II},
   journal={Adv. Calc. Var.},
   volume={11},
   date={2018},
   number={3},
   pages={289--307},
}

\bib{S16}{article}{
	author={\v{S}ilhav\'y, Miroslav},
	title={Beyond fractional laplacean: fractional gradient and divergence},
	date={2016-01-19},
	eprint={https://doi.org/10.13140/RG.2.1.2554.0885},
	status={slides of the talk at the Department of Mathematics Roma Tor Vergata},
}
   
\bib{S19}{article}{
   author={\v{S}ilhav\'y, Miroslav},
   title={Fractional vector analysis based on invariance requirements (Critique of coordinate approaches)},
   date={2019},
   journal={M. Continuum Mech. Thermodyn.},
   pages={1--22},
}

\bib{Spector18}{article}{
   author={Spector, Daniel},
   title={An optimal Sobolev embedding for $L^1$},
   journal={J. Funct. Anal.},
   volume={279},
   date={2020},
   number={3},
   pages={108559, 26},
}

\bib{Spector19}{article}{
   author={Spector, Daniel},
   title={A noninequality for the fractional gradient},
   journal={Port. Math.},
   volume={76},
   date={2019},
   number={2},
   pages={153--168},
}

\bib{SV16}{article}{
   author={Squassina, Marco},
   author={Volzone, Bruno},
   title={Bourgain-Br\'{e}zis-Mironescu formula for magnetic operators},
   journal={C. R. Math. Acad. Sci. Paris},
   volume={354},
   date={2016},
   number={8},
   pages={825--831},
}

\bib{S70}{book}{
   author={Stein, Elias M.},
   title={Singular integrals and differentiability properties of functions},
   series={Princeton Mathematical Series, No. 30},
   publisher={Princeton University Press, Princeton, N.J.},
   date={1970},
}

\bib{S93}{book}{
   author={Stein, Elias M.},
   title={Harmonic analysis: real-variable methods, orthogonality, and oscillatory integrals},
   series={Princeton Mathematical Series},
   volume={43},
   publisher={Princeton University Press, Princeton, NJ},
   date={1993},
}


\bib{T07}{book}{
   author={Tartar, Luc},
   title={An introduction to Sobolev spaces and interpolation spaces},
   series={Lecture Notes of the Unione Matematica Italiana},
   volume={3},
   publisher={Springer, Berlin; UMI, Bologna},
   date={2007},
}

\bib{T11}{article}{
   author={Triebel, Hans},
   title={Limits of Besov norms},
   journal={Arch. Math. (Basel)},
   volume={96},
   date={2011},
   number={2},
   pages={169--175},
}

\end{biblist}
\end{bibdiv}

\end{document}